\newcolumntype{L}{>{\centering\arraybackslash}m{2.5cm}}
\newtheorem{thm}{Theorem}
\newtheorem{prop}[thm]{Proposition}
\newtheorem{lem}[thm]{Lemma}
\newtheorem{coro}[thm]{Corollary}
\newtheorem{rmk}[thm]{Remark}
\numberwithin{equation}{section}
\newcommand{\ind}{\mathbbm{1}}
\newcommand{\maxload}{\mathrm{MaxLoad}}
\newcommand{\poss}{\mathrm{Poisson}}
\newcommand{\bernoulli}{\mathrm{Bernoulli}}
\newcommand{\R}{\mathbb{R}}
\newcommand{\N}{\mathbb{N}}
\newcommand{\Z}{\mathbb{Z}}
\newcommand{\E}{\mathbb{E}}
\newcommand{\cF}{\mathcal{F}}
\newcommand{\vdE}[3]{E_{#3}^{\,#1,#2}}
\newcommand{\vddE}[3]{\hat{E}_{#3}^{\,#1,#2}}
\newcommand{\vE}[3]{E_{#3}^{\,#1,#2}}
\newcommand{\vF}[3]{F_{#3}^{\,#1,#2}}
\newcommand{\vG}[3]{G_{#3}^{\,#1,#2}}
\newcommand{\vtG}[3]{\tilde{G}_{#3}^{\,#1,#2}}
\newcommand{\vH}[3]{H_{#3}^{\,#1,#2}}
\newcommand{\vU}[3]{U_{#3}^{\,#1,#2}}
\def\P{{\mathbb P}}
\begin{document}
\title{Long-term balanced allocation via thinning}
\author{Ohad N. Feldheim, Ori Gurel-Gurevich,
Jiange Li
\thanks{O. F. and O. G-G are with the Hebrew University of Jerusalem. J. L. is with the Harbin Institute of Technology.
E-mail: {\tt ohad.feldheim@mail.huji.ac.il, ori.gurel-gurevich@mail.huji.ac.il,  lijiange7@gmail.com}.}
}
\date{\today}
\maketitle

\begin{abstract}
We study the long-term behavior of the two-thinning variant of the classical balls-and-bins model. In this model, an overseer is provided with uniform random allocation of $m$ balls into $n$ bins in an on-line fashion. For each ball, the overseer could reject its allocation and place the ball into a new bin drawn independently at random. The purpose of the overseer is to reduce the \emph{maximum load} of the bins, which is defined as the difference between the maximum number of balls in a single bin and $m/n$, i.e., the average number of balls among all bins.

We provide tight estimates for three quantities: the lowest maximum load that could be achieved at time $m$, the lowest maximum load that could be achieved uniformly over the entire time interval $[m]:=\{1, 2, \cdots, m\}$, and the lowest \emph{typical} maximum load that could be achieved over the interval $[m]$, where the typicality means that the maximum load holds for $1-o(1)$ portion of the times in $[m]$. 

We show that when $m$ and $n$ are sufficiently large, a typical maximum load of $(\log n)^{1/2+o(1)}$ can be achieved with high probability, asymptotically the same as the optimal maximum load that could be achieved at time $m$. However, for any strategy, the maximal load among all times in the interval $[m]$ is $\Omega\big(\frac{\log n}{\log\log n}\big)$ with high probability. A strategy achieving this bound is provided.   

An explanation for this gap is provided by our optimal strategies as follows. To control the typical load, we restrain the maximum load for some time, during which we accumulate more and more bins with relatively high load. After a while, we have to employ for a short time a different strategy to reduce the number of relatively heavily loaded bins, at the expanse of temporarily inducing high load in a few bins.
\end{abstract}

{\bf Keywords:} balls-and-bins, load balancing, two-choice, two-thinning.

\section{Introduction}

In the classical \emph{balls-and-bins} model, $m$ balls are independently and uniformly at random placed into $n$ bins one after another. In this paper, we are interested in the following variant, which is called the \emph{two-thinning} model. For each ball, after a uniformly random bin, which is called the \emph{primary allocation}, has been suggested, an overseer has the choice of either accepting this bin, or placing the ball into a new bin selected independently and uniformly at random, which is called the \emph{secondary allocation}. In this model, the overseer is oblivious to the secondary allocation before deciding whether to accept the primary allocation. In contrast, in the well-known \emph{two-choice} model, which was introduced in the seminal work \cite{ABKU99}, the overseer is aware of the secondary allocation and places the ball into the bin which contains fewer balls (break ties arbitrarily). 

\subsection{Main results}
 We define the \emph{load} of a bin  as the difference between the number of balls in this bin and the average number of balls among all bins. Given a two-thinning strategy $f$ (see Section~\ref{Strat:two-thin} for a formal definition), we denote by $\maxload^f(m)$ the \emph{single-time maximum load}, which is defined as the maximum load among all bins after allocating $m$ balls using the strategy $f$, and denote by $\maxload^f([m])$ the \emph{all-time maximum load}, which is the maximum of $\maxload^f(k)$ for all $k\in[m]:=\{1, 2, \dots, m\}$. In general, we can replace $[m]$ by a subset $S\subseteq [m]$, and define $\maxload^f(S)$ in a similar manner.

\begin{thm}\label{thm:single-time load discrepancy}
For all $m,n\in \N$, there exists an explicit two-thinning strategy $f:=f_{m,n}$ such that, with high probability,
$$
\maxload^{f}(m)=
\begin{cases}
\Theta\left(\sqrt{\frac{\log n}{\log\log n-2\log (m/n)}}\right) & \Omega(n)\le m \le o(n\sqrt{\log n}),\\
\Theta(\sqrt{\log n}) & m=\Theta(n\sqrt{\log n}),\\
(\log n)^{1/2+o(1)} & m=\omega(n\sqrt{\log n}).
\end{cases}
$$
Moreover, in the first two cases the maximum loads are optimal up to some multiplicative constants, while in the third case we have a lower bound of $\Omega(\sqrt{\log n})$ for all two-thinning strategies.
\end{thm}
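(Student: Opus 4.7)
My plan is to handle the three regimes separately, establishing in each an upper bound via an explicit strategy together with the stated lower bound. The workhorse upper bound construction will be a \emph{threshold strategy} $f_h$: fix an integer $h=h(m,n)$, accept the primary allocation whenever the primary bin's load is strictly less than $h$, and reject otherwise. For the third regime a more elaborate, time-phased construction will be needed, along the lines suggested in the abstract.

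To analyze the threshold strategy I would let $y_k(t)$ denote the number of bins with load at least $k$ after $t$ balls. The key observation is that under $f_h$ the probability a single ball lands in some bin of load at least $h$ equals $(y_h/n)^2$, since both the primary and the secondary must independently hit such a bin. Consequently the dynamics $(y_k)_{k \ge h}$ are slowed by a factor $y_h/n$ relative to classical balls-and-bins, while for $k < h$ they evolve essentially as in the single-choice model. A drift analysis combined with a supermartingale concentration argument should then yield: for $m = O(n\sqrt{\log n})$, choosing $h \asymp \sqrt{\log n/(\log\log n - 2\log(m/n))}$ (capped at $\Theta(\sqrt{\log n})$ in regime 2) makes $\E[y_h(m)] = O(1)$ and, with high probability, no bin exceeds load $h + O(1)$. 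For $m = \omega(n\sqrt{\log n})$ a single static threshold is insufficient because the sub-$h$ dynamics push too many bins above $h$; I would then try a \emph{phased} construction alternating a long threshold phase (which allows near-threshold bins to accumulate) with a short redistribution phase that exploits rejection to depopulate those bins, paying a $(\log n)^{o(1)}$ factor in the transient max load during the redistribution window.

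For the lower bounds, the decisive structural fact is that the secondary allocation is uniform and independent of the strategy: every ball lands in every bin with probability at least $1/n^2$ per step, no matter what the overseer does. I would couple the load sequence under any two-thinning strategy with a simpler exchangeable process and apply a Poisson-type anticoncentration bound to conclude that, with high probability, some bin exceeds the claimed threshold. This should give the matching lower bounds in regimes 1 and 2 and the weaker $\Omega(\sqrt{\log n})$ in regime 3.

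I expect the main obstacle to be the fluid analysis at the critical level $k=h$, where fluctuations in $y_k$ are of comparable order to its mean; this will need either an exponential potential $\Phi(t) = \sum_k \alpha_k y_k(t)$ with rapidly growing $\alpha_k$, or an inductive coupling that propagates concentration across levels. A secondary difficulty will be designing the phased strategy in regime 3 and controlling the transient spikes it creates; the announced $\Omega(\sqrt{\log n})$ lower bound strongly hints that this trade-off is unavoidable but offers no canonical construction.
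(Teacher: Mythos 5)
Your lower-bound plan has a genuine gap. The structural fact you lean on --- that every ball lands in every bin with probability at least $1/n^2$ regardless of the strategy --- is far too weak to force the stated bounds: even if those events were independent across balls, $m=\Theta(n)$ balls each hitting a fixed bin with probability $1/n^2$ give a maximum occupancy of $O(1)$ over all $n$ bins with high probability, nowhere near $\sqrt{\log n/\log\log n}$, so no coupling/anticoncentration argument built only on that fact can work. The paper's lower bound (Section 5) is instead a dichotomy on the number $r$ of retries: if $r$ is smaller than the number of bins that are \emph{suggested} as primary allocations at least $t+\ell$ times (which is $n^{1-o(1)}$ with high probability by Poissonization), then some such bin accepts $t+\ell$ primaries and has load $\ge\ell$; and if $r\ge r^*$, then the $r^*$ secondary allocations, which are uniform \emph{conditioned on rejection}, by themselves create a bin with $t+\ell$ secondary balls (a max-load bound for uniform throws). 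The uniformity you must exploit is that of the secondary given a rejection, combined with a forced lower bound on the number of rejections; your proposal contains neither ingredient.

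On the upper bounds: in regimes 1--2 your threshold strategy is essentially the paper's, but the quantitative claims you aim for are off in a way that hides the main work. With $h\asymp\ell$ the number of bins reaching the threshold is $n^{1-\Theta(1/\ell)}$, not $O(1)$, and the comparable number of rejected balls, re-thrown uniformly, themselves load some bin with $\Theta(\ell)$ secondary balls, so ``no bin exceeds $h+O(1)$'' is false; the actual proof bounds the total number of retries (a Poisson/Chernoff estimate) and then the maximum of the secondary allocations, giving $(2+\varepsilon)\ell$, which still yields the $\Theta(\cdot)$ statement. Your proposed level-set/exponential-potential analysis is not carried out and would have to confront exactly this retry pile-up, not only fluctuations at the critical level. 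For regime 3 you state only an intention; the paper's construction is a multi-stage threshold strategy (stages of carefully chosen lengths with decreasing thresholds, plus permanent rejection of bins marked heavily loaded at earlier stages), preceded by a drift strategy when $m=\omega(n\log n)$ so that the final $\Theta(n\log n)$ balls start from a controlled load profile; without a concrete mechanism of this kind, including one that handles arbitrarily large $m$, the $(\log n)^{1/2+o(1)}$ bound at time $m$ is not established. Note also that the transient spikes you worry about are irrelevant for the single-time load of Theorem~1.1 (they matter for the all-time and typical statements), so that trade-off is not the obstacle here.
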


\begin{thm}\label{thm:all-time load discrepancy}
There exists an explicit two-thinning strategy $f$ such that, with high probability, 
$$
\maxload^f([m])=
\begin{cases}
\Theta\left(\sqrt{\frac{\log n}{\log\log n-2\log (m/n)}}\right) & \Omega(n)\le m \le O(n\sqrt{\log n}),\\
\Theta\big(\big(\frac{m\log n}{n}\big)^{1/3}\big) & \Omega(n\sqrt{\log n})\leq m\leq o(n\log^2 n),\\
\Theta\big(\frac{\log n}{\log\log n}\big) &\Omega(n\log^{2} n)\le m\le n^{O(1)}.
\end{cases}
$$
Moreover, the all-time maximum load achieved by $f$ is optimal up to a multiplicative constant.
\end{thm}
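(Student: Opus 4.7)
The plan is to prove Theorem~\ref{thm:all-time load discrepancy} by exhibiting, for each of the three regimes, an explicit two-thinning strategy achieving the stated all-time bound, and separately proving a matching lower bound valid for every two-thinning strategy. All three upper bounds come from a common family of threshold strategies $f_h$: given the current loads $\ell_1,\dots,\ell_n$ and the suggested primary bin $b$, accept iff the normalized load $\ell_b-t/n$ is below $h$, and otherwise reroll to the (independent uniform) secondary. The threshold is set to $h(m,n)\asymp\sqrt{\log n/(\log\log n-2\log(m/n))}$, $(m\log n/n)^{1/3}$, and $\log n/\log\log n$ in the three regimes, matching the claimed bounds.

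For the upper bound analysis, the key identity is that under $f_h$ the probability that the next ball lands in bin $i$ equals $(1+p_t)/n$ when $\ell_i-t/n<h$ and $p_t/n$ otherwise, where $p_t$ is the instantaneous fraction of above-threshold bins. Thus every above-threshold bin has negative drift $-(1-p_t)/n$ per ball, which pushes its normalized load back toward $h$, while $p_t$ itself behaves quasi-stationarily with a geometric upper tail. I would first establish an a priori bound $p_t\le n^{-1-\Omega(1)}$ in regime~3 (and the analogous estimates in the other regimes) by bounding the moment generating function of the normalized load distribution along a grid of time points, leveraging Chernoff-type concentration across bins. I would then control the excursion of any single bin above $h$ using a Poissonized martingale driven by the stream of rejections; combining this with the smallness of $p_t$ and taking a union bound over the $n$ bins and the $m$ time steps yields $\maxload^{f_h}([m])=O(h)$ with high probability.

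The lower bound exploits the fact that the secondary stream is an independent sequence of uniform samples regardless of the overseer's strategy. The plan is to partition $[m]$ into windows of $\Theta(n)$ consecutive balls and to show that, conditionally on the window's starting configuration, the adaptive overseer is forced in each window into a local version of one-choice dynamics on the bins close to the current maximum, producing with constant probability a single-bin overshoot of order $\Omega(\log n/\log\log n)$ during the window. Optimizing this estimate over all $m/n$ windows, where the per-window overshoot trades off against the number of windows, yields the three regime-dependent exponents; the cubic $(m\log n/n)^{1/3}$ in regime~2 arises from balancing the per-window overshoot against the rate at which a bin's load decays back to the mean. The hardest step is making this window-wise argument robust to adaptivity: the overseer may attempt to concentrate rejections in a few windows, and verifying that no such concentration beats the claimed bounds requires a careful conditional analysis that tracks both the load distribution and the cumulative number of rejections as a joint process. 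I expect this handling of adaptivity to be the technical core of the proof, with the companion upper bound's proof of quasi-stationarity of $p_t$ over the entire interval $[m]$ as its closest counterpart in difficulty.
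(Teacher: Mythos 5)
The fatal problem is your lower bound. Its central claim---that in every window of $\Theta(n)$ balls any overseer is ``forced into a local version of one-choice dynamics'' and suffers an overshoot of order $\Omega(\log n/\log\log n)$ with constant probability---contradicts the upper bounds of the very theorem you are proving: for $m=\Theta(n)$ (a single window) two-thinning attains all-time maximum load $O\big(\sqrt{\log n/\log\log n}\big)$, and in the middle regime the optimum $\Theta\big((m\log n/n)^{1/3}\big)$ is $o(\log n/\log\log n)$ whenever $m=o(n\log^2 n)$. The overseer is not forced into one-choice behaviour inside a window, so no balancing of ``per-window overshoot against number of windows'' built on that claim can yield the three exponents. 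The argument that works (Proposition~\ref{prop:all-time lower bound case 1}) is a dichotomy on the \emph{total} number of retries $r$, with $\ell=\lfloor(t\log n)^{1/3}\rfloor$ and $r^*=\tfrac n2 e^{-\ell^2/t}$: if $r<r^*$, then the primary stream alone---whose statistics do not depend on the strategy---produces more than $r^*$ bins suggested at least $t+\ell$ times, so the load at the \emph{final} time is already at least $\ell$; if $r\ge r^*$, some batch of $n$ consecutive balls contains at least $r^*/t$ retries, and since the secondary allocations are i.i.d.\ uniform and independent of everything else, Lemma~\ref{lem:max load} forces some bin of nonnegative load at the start of that batch to receive at least $\ell$ secondaries within it, so the all-time maximum is at least $\ell$ at the end of that batch. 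This completely sidesteps the ``robustness to adaptivity'' difficulty you anticipate, because both quantities used are functions of strategy-independent randomness; no joint conditional tracking of loads and rejection counts is needed. (For $m=O(n\sqrt{\log n})$ the lower bound is just the single-time bound of Theorem~\ref{thm:single-time load discrepancy}, and for $m=\Omega(n\log^2 n)$ it follows from monotonicity of $\maxload^f([m])$ in $m$.)

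On the upper bound, your load-threshold family is close in spirit to the paper's relative threshold strategy (which caps the number of \emph{accepted primaries} at $\tfrac{k-1}{n}+\ell$ and always accepts when the load is below $-\log n$), and for $m=o(n\log^2 n)$ your sketch could plausibly be completed along the lines of Proposition~\ref{prop:all time upper bound case 1}: the work there is to bound the retries per batch by Poissonization (Lemma~\ref{lem:retry bound}) and then the maximum number of secondaries any bin receives---no quasi-stationarity statement about $p_t$ is required. For $m$ up to $n^{O(1)}$, however, the claim $p_t\le n^{-1-\Omega(1)}$ is unsupported and stronger than a threshold dynamic plausibly provides, and, more importantly, you give no mechanism for controlling the load profile uniformly over polynomially many balls; that uniform control is exactly the hard part, and the paper obtains it by switching to the $\ell$-varying drift strategy realized through coupled point processes (Proposition~\ref{prop:interval upper bound case 2}), whose explicit negative drift yields exponential tails valid over the whole interval. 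As written, the proposal's lower bound fails at its key step and its long-time upper bound rests on an unproved quantitative assumption.
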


For $\varepsilon>0$, we denote by $\maxload^f_\varepsilon([m])$  the \emph{$\varepsilon$-typical maximum load}, which is defined as the largest $\ell>0$ such that $\maxload^f(k)> \ell$ holds for at least $\varepsilon m$ many  $k\in[m]$. Clearly, we have $\maxload^f(m)\le \maxload^f_\varepsilon([m])\le \maxload^f([m])$.  

Theorems \ref{thm:single-time load discrepancy} and \ref{thm:all-time load discrepancy} show that for $m=O(n\sqrt{\log n})$, the difference between the optimal single-time and all-time maximum loads is at most a multiplicative constant and hence the optimal typical maximum load also has the same asymptotic behaviour. For $m=\omega(n\sqrt{\log n})$, however, there is a gap between the optimal single-time and all-time maximum loads. The next theorem shows that in this regime, the typical maximum load behaves like the single time maximum load, so is the gap between the optimal typical and all-time maximum loads. 

\begin{thm}\label{thm:typical load discrepancy}
Let $m,n\in \N$, and write $\varepsilon=e^{-\frac{1}{2}\sqrt{\log\log\log n}}$.
There exists an explicit two-thinning strategy $f=f_{m,n}$ such that for $n$ large enough and for all $m$,
$$
\maxload^f_\varepsilon([m])\le (\log n)^{\frac{1}{2}+o(1)},
$$
holds with high probability.
\end{thm}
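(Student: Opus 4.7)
My strategy $f$ partitions $[m]$ into epochs of length $L = n(\log n)^{1+o(1)}$, and within each epoch alternates between two phases. The first is a long \emph{control phase} of length $(1-\varepsilon/2)L$, during which $f$ applies a threshold-based rejection rule tuned to achieve single-time load $(\log n)^{1/2+o(1)}$ as in Theorem~\ref{thm:single-time load discrepancy}. The second is a short \emph{rebalancing phase} of length $\Theta(\varepsilon L)$, during which $f$ switches to a more aggressive rule (akin to the all-time strategy of Theorem~\ref{thm:all-time load discrepancy}) that actively depletes the population of heavily loaded bins, at the cost of allowing a few bins to temporarily reach loads up to $O(\log n/\log\log n)$.

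For the control phase, the key claim is that starting from a \emph{good} configuration---one in which, for each $\tau \le (\log n)^{1/2+o(1)}$, only a controlled number of bins have load exceeding $\tau$---the threshold strategy maintains $\maxload(k) \le (\log n)^{1/2+o(1)}$ at every step for the entire $(1-\varepsilon/2)L$ duration, with high probability. I would prove this by tracking the upper-tail profile of the loads and establishing negative drift below the typical load threshold, following the template of the single-time analysis of Theorem~\ref{thm:single-time load discrepancy}. For the rebalancing phase, the claim is that $\Theta(\varepsilon L)$ steps suffice to restore the system to such a good configuration, again by a drift analysis of the upper tail; during this phase the max load may exceed $(\log n)^{1/2+o(1)}$, but since rebalancing phases collectively occupy only an $\varepsilon$-fraction of $[m]$, this does not contribute to $\maxload^f_\varepsilon([m])$.

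The main obstacle is the interface between the two phases: precisely quantifying the good configuration passed from rebalancing to control in a manner that simultaneously makes the control-phase drift argument and the rebalancing-phase termination argument go through. This requires tracking the full upper-tail profile of bin loads rather than only the maximum, and carefully calibrating the epoch length $L$ and the rebalancing threshold against the prescribed $\varepsilon = e^{-\frac{1}{2}\sqrt{\log\log\log n}}$, which is vanishing but extremely slowly. A secondary technical point is handling the edge cases $m = O(n\sqrt{\log n})$, for which Theorem~\ref{thm:all-time load discrepancy} already yields the stronger bound $\maxload^f([m]) \le (\log n)^{1/2+o(1)}$ and so the cyclic structure is not needed.
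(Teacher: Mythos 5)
Your high-level idea (long ``control'' stretches with load $(\log n)^{1/2+o(1)}$, interrupted by short ``release steam'' stretches that are excluded from the typical count) is indeed the paper's guiding philosophy, but the concrete scheme you propose cannot work, and the failure is not a technicality. Your key claim is that from a good configuration a threshold-type strategy maintains $\maxload^f(k)\le(\log n)^{1/2+o(1)}$ at \emph{every} step of a control phase of length $(1-\varepsilon/2)L$ with $L=n(\log n)^{1+o(1)}$. This contradicts the paper's own all-time lower bound (Proposition~\ref{prop:all-time lower bound case 1}, i.e.\ the lower bound in Theorem~\ref{thm:all-time load discrepancy}): that argument only uses that at some time in the window there are at least $n/(\ell+1)$ bins of nonnegative load --- which is automatic whenever the current maximum load is small, since loads sum to zero --- and it shows that \emph{any} two-thinning strategy must, within $tn$ consecutive allocations, reach a load of order $(t\log n)^{1/3}$ somewhere in the window. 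With $t=(\log n)^{1+o(1)}$ this is $(\log n)^{2/3-o(1)}\gg(\log n)^{1/2+o(1)}$. So no tuning of the rejection rule or of the ``good configuration'' handed over by the rebalancing phase can make your control-phase lemma true; uninterrupted control segments can have length at most about $n(\log n)^{1/2+o(1)}$ if the load is to stay at $(\log n)^{1/2+o(1)}$ throughout.

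This is precisely why the paper does not use a flat two-phase epoch structure but the nested $Q$-multi-scale strategy: the base segments are relative-threshold runs of length only $n\lfloor\log^{\alpha_1}n\rfloor$ with $\alpha_1=\tfrac12+o(1)$, separated by short multi-stage-threshold regulating segments; and because each regulating segment cannot restore the configuration to its original quality (the population of moderately loaded bins accumulates), the tolerated load level $Q+Q^{i,j}$ is allowed to creep upward with the iteration index, iterations are grouped recursively into roughly $\sqrt{\log\log\log n}$ scales, and for unboundedly large $m$ a genuine reset via the drift strategy (Propositions~\ref{prop:phase-1 minload} and \ref{prop:phase-2 typicaly m2}) is inserted before restarting. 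The choice $\varepsilon=e^{-\frac12\sqrt{\log\log\log n}}$ is exactly the outcome of this bookkeeping: each scale excludes a fraction about $(\log n)^{\alpha_i'-\alpha_i}\le e^{-\sqrt{\log\log\log n}}$ of the time and there are about $\sqrt{\log\log\log n}$ scales. So the gap in your proposal is twofold: the single-scale control claim is false (it collides with the all-time lower bound), and even with correctly shortened control segments a non-recursive iteration does not close, because the hand-off configuration degrades from one iteration to the next and must be managed by the multi-scale allowances and the drift reset.
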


It is worth pointing out that for $m\le n^{O(1)}$ our strategy actually governs the loads in some \textbf{predetermined}, large (i.e., $1-\varepsilon$ portion) set of times in $[m]$, with high probability (see Proposition~\ref{prop:typical-load-poly-time}).

\subsection{Discussion}

The classical balls-and-bins model and its two-choice variant have been extensively studied
in probability theory, random graph theory, and computer science. Many applications have been found
in various areas, such as hashing, load balancing and resource allocation in parallel and distributed
systems (see e.g., \cite{ABKU99}, \cite{ACMR98}, \cite{KLM92}, \cite{SEK03}, \cite{Ste96}). 
In the balls-and-bins model, it is known that for $m=\Theta(n)$, the maximum load is $(1+o(1))\frac{\log n}{\log\log n}$ with high probability, and for $m\gg n$, the maximum load is $\Theta\Big(\sqrt{\frac{m\log n}{n}}\Big)$ with high probability (see e.g. \cite{RS98}). In the seminal paper \cite{ABKU99}, Azar, Broder, Karlin and Upfal showed that in the two-choice model, for $m=\Theta(n)$, the maximum load is $\frac{\log\log n}{\log 2}+O(1)$ with high probability -- an exponential improvement over the balls-and-bins model. In fact, this phenomenon was first noticed by Karp, Luby and Meyer auf der Heide \cite{KLM92} in the context of PRAM simulations when switching from one hash function to two. In \cite{ABKU99}, the $d$-choice setting, where the overseer is given $d>2$ choices, was also considered.
In this setting, an optimal maximum load of $\frac{\log\log n}{\log d}+O(1)$ can be achieved with high probability; that is, compared with the case $d=2$, the performance improves by merely a multiplicative factor for larger values of $d$. We refer the reader to the survey \cite{MRS01} for more details about the two-choice model. 

The long-term behavior of the two-choice model, in which case the number of balls $m$ can be super linear in $n$, proved to be more challenging. In the seminal paper \cite{BCSV06}, Berenbrink, Czumaj, Steger and V\"{o}cking showed that for arbitrarily large $m$, one can achieve the maximum load of $\frac{\log\log n}{\log 2}+O(1)$ with high probability. A simpler proof of this result with a weaker lower order term was given by Talwar and Wieder  \cite{TW14}. Since this result is achieved via a single greedy strategy at all times, a simple union bound argument implies that this strategy also maintains this bound as the all-time and the typical maximum loads for $m$ polynomially large in $n$. 

Different variants of the two-choice model have been studied under weaker constraints from practical considerations.
These include load balancing with limited memory \cite{AGGL10, BM12, MPS02}, relaxation on the possible pairs the overseer may select from (known as two choices on graphs) \cite{KP06, PTW15} and a hypergraph variant of it \cite{God08}. Other relaxations include bins with different selection probabilities \cite{BBFN14} and balls with different weights \cite{TW07}. An important purpose of this course of study is to understand the robustness of the load reduction achieved by the power of two choices, understanding the impact of constraints on memory, information and choice patterns. Particularly, Peres, Talwar and Wieder \cite{PTW15} studied the setting of two choices with errors, which is known as the $(1+\beta)$-choice model. In this setting, with probability $\beta$ the ball is allocated using the two-choice model, and with probability $1-\beta$ the ball is assigned to a random bin as in the balls-and-bins model. The authors showed that, irrespective of $m$, the gap between the maximum load and the average is $O\big(\frac{\log n}{\beta}\big)$. Since this result is irrespective of $m$, a simple union bound argument implies that this bound is also valid for the all-time and the typical maximum loads for $m$ polynomially large in $n$.

The \emph{two-thinning} variant is a different relaxation of the two-choice model which arises naturally in a statistical scenario, where one collects samples one-by-one and is allowed to decide whether to keep each sample or not, under the constraint of never discarding two consecutive samples. In \cite{DFGR19}, Dwivedi, Ramdas and the first two authors showed that two-thinning could reduce the discrepancy of a sequence of random points selected independently and uniformly at random
from the interval $[0, 1]$ to be near optimal. The first two authors studied the two-thinning variant of the balls-and-bins model in \cite{FGG18}. They showed that for $m=\Theta(n)$, the optimal maximum load is $(2+o(1))\sqrt{\frac{2\log n}{\log\log n}}$ with high probability, a polynomial improvement over the balls-and-bins model. Hence, this model is in some sense more powerful than the $(1+\beta)$-choice model.
The authors also conjectured the upper bound $O\Big(\sqrt{\frac{\log n}{\log\log n}}\Big)$ for all $m\gg n$. Los and Sauerwald \cite{LS21} recently disproved this conjecture by showing a lower bound of $\Omega(\sqrt{\log n})$ for $m=\Theta(n\sqrt {\log n})$, a bound which we show here holds for all $m=\Omega(n\sqrt {\log n})$. They also showed that a load of $\Omega(\log n/ \log \log n)$ holds for at least $\Omega(n\log n / \log\log n)$ times in $[1, n\log^2 n]$. Our work sheds more light on this phenomenon. 
The results in \cite{FGG18} were extended by the first and third authors \cite{FL20} to the $d$-thinning setting and the optimal maximum load of $(d+o(1))\big(\frac{d\log n}{\log\log n}\big)^{1/d}$ could be achieved with high probability.

Another relaxation of the two-choice model was recently studied by Los and Sauerwald \cite{LS21}. They considered the situation that each ball is offered two random bins and is allowed to send up to $k$ binary queries, each to one of
the two bins. In one model, it inquires whether the absolute load crosses some threshold, and in the other model, it inquires whether the number of bins with loads higher than that of the queried bin is greater than some percentile. The $k=1$ case is equivalent to our two-thinning model. They showed that in both models a maximum load of $O(k (\log n)^{1/k})$ can be achieved with high probability.

Here, we study the long-term behavior of the two-thinning model. Our discussions above and Theorems \ref{thm:single-time load discrepancy} and \ref{thm:typical load discrepancy} show that, in the balls-and-bins and the two-choice models, the optimal single-time and the typical maximum loads are asymptotically nearly identical. However, in contrast with these two models, there is a big gap between the optimal typical and the all-time maximum loads in the two-thinning setting. We attribute this difference to the fact that in the two-thinning setting, short periods of relative high maximum loads are necessary for the process to ``release steam'' with the benefit of arriving at low maximum loads at the end of these periods. A comparison of the maximum loads in these three models is given in the following table. 

\begin{table}[h!]
\def\arraystretch{1.7}
\centering
\begin{tabular}{||p{23mm} | c | c | c | c||} 
 \hline
 \hline
 \phantom{\Big(}            &  $m=\Theta(n\log^\alpha n)$    & $\maxload^f(m)$  &   $\maxload_\varepsilon^f([m])$  &  $\maxload^f([m])$ \\
 \hline\hline
 \multirow{2}{*}{Balls-and-bins}  & $\alpha<1$ &  $\Theta\big(\frac{\log n}{\log\log n}\big)$   & $\Theta\big(\frac{\log n}{\log\log n}\big)$ & $\Theta\big(\frac{\log n}{\log\log n}\big)$ \\
        & $\alpha\ge 1$ & $\Theta\Big(\sqrt{\frac{m\log n}{n}}\Big)$   & $\Theta\Big(\sqrt{\frac{m\log n}{n}}\Big)$ & $\Theta\Big(\sqrt{\frac{m\log n}{n}}\Big)$\\[3pt]
 \hline
 \multirow{4}{*}{Two-thinning} & $\alpha\in[0,\tfrac12)$ & $\Theta\left(\sqrt{\frac{\log n}{\log\log n}}\right)$ & $\Theta\left(\sqrt{\frac{\log n}{\log\log n}}\right)$ &$\Theta\left(\sqrt{\frac{\log n}{\log\log n}}\right)$\\
                               & $\alpha=\tfrac12$ & $\Theta(\sqrt{\log n})$ & $\Theta(\sqrt{\log n})$ & $\Theta(\sqrt{\log n})$\\ 
                               & $\alpha\in(\tfrac{1}{2},2)$ &
                             $(\log n)^{1/2+o(1)}$& $(\log n)^{1/2+o(1)}$ & $\Theta\big((\log n)^{\frac{1+\alpha}{3}}\big)$\\
                             & $\alpha\geq 2$ &
                             $(\log n)^{1/2+o(1)}$& $(\log n)^{1/2+o(1)}$ & $\Theta\big(\frac{\log n}{\log\log n}\big)$\\
                             \hline
 Two-choice & $\alpha\ge  0$ & $\Theta(\log\log n)$ & $\Theta(\log\log n)$ & $\Theta(\log\log n)$\\
 \hline
 \hline
\end{tabular}
\caption{A comparison of the single-time, all-time and typical maximum loads. Here, we write $m=\Theta(n\log^\alpha n)$ and select $\varepsilon=o(1)$, where $\alpha$ is allowed to depend on $n$, but some of the results require $m$ to be at most polynomially large in $n$. In the two-thinning model, the results for $\alpha=0$ appear in \cite{FGG18} and the lower bound for $\alpha=1/2$ appears in \cite{LS21} and the remaining results are new. In the two-choice model, the results follow from \cite{BCSV06}.  In the balls-and-bins model, the results are classical (see e.g. \cite{RS98}).}
\label{table:1}
\end{table}
\pagebreak

\subsection{Upper bound strategies and lower bound techniques}

In the following, we give a brief description of our strategies that achieve the upper bounds in our main results as well as techniques for establishing the lower bounds. We write $m=nt$ for $t\in\N$. Different strategies are required for values of $t$ in different ranges. 

\textbf{The single-time maximum load}. For $t=O(\sqrt{\log n})$, our upper bound is achieved by the \emph{threshold strategy} employed in \cite{FGG18}, which retries a ball if the number of primary allocations accepted by the suggested bin reaches certain threshold. For $t\geq \omega(\sqrt{\log n})$, the threshold strategy alone is not sufficient since the optimal choice of the threshold would be $t+\Theta((t\log n)^{1/3})$ and this yields a maximum load of $O((t\log n)^{1/3})$, which is much larger than our desired upper bound $(\log n)^{1/2+o(1)}$. Instead, we divide the process into multiple shorter stages and, in each stage, apply the threshold strategy with a smaller threshold. It is likely that this will cause more retries and even a temporarily high maximum load. To prevent this from causing a high load at the end of the process, we always retry a ball if its primary allocation is a heavily loaded bin. The number of retries caused by this requirement is relatively small since the number of heavily loaded bins is small. This, together with a careful selection of time lengths of the stages, enables us to achieve the maximum load of $(\log n)^{1/2+o(1)}$ at the end of the process. We call this strategy the \emph{multi-stage threshold strategy}. For $t\geq \omega(\log n)$, we need another ingredient in the form of a \emph{drift strategy}. 
Under this strategy we retry a ball with positive probability if its primary allocation has a positive load,
and surely if its load is very high. This creates a drift in the load of positively loaded bins towards zero, resulting in a load distribution with exponential tail and a maximum load of $\Theta(\tfrac{\log n}{\log\log n})$ (in some sense, this is an improvement of a similar strategy in \cite{DFGR19}). For $t\geq \omega(\log n)$, we first apply this drift strategy up to $\Theta(\log n)$ time before the end, and then apply the aforementioned multi-stage threshold strategy  to allocate the remaining $\Theta(n\log n)$ balls. Our lower bound follows from the simple observation that if we retry too many balls, the secondary allocations will cause a high maximum load, and if we retry too few balls, the primary allocations will cause a high maximum load.

\textbf{The all-time maximum load}. Our upper bound strategy is a time-adaptive version of the threshold strategy for the single-time maximum load, which we call a \emph{relative threshold strategy}. 
We use a threshold strategy where the threshold after throwing $tn$ balls, is $t+\ell$ for a fixed $\ell>0$. This strategy results in a uniform control of the maximum load throughout the process. Our lower bound follows from the observation that a uniform bound on the maximum load in the process upper-bounds the number of retries in the allocation of each batch of $n$ balls, and hence -- the total number of retries in the entire process. Subject to this constraint, we consider the maximum load after all balls have been allocated and show it to be large. 

\textbf{The typical maximum load}. As mentioned before Theorem \ref{thm:typical load discrepancy}, it suffices to consider the case $t\geq \omega(\sqrt{\log n})$. For $\omega(\sqrt{\log n})\leq t\leq O(\log n)$, we apply a multi-scale strategy. Each scale consists of iterations of two strategies. In the first, longer part of each iteration, we apply the strategy of a smaller scale, while in the second, shorter part,  we use a different regulating strategy. The strategy in the smallest scale is simply the relative threshold strategy, while the regulating strategy is the multi-stage threshold strategy used to control the single-time maximum load. These regulating segments play the role of ``releasing steam'' from the process -- although they result in a high maximum load for a short period of time,  they yield good control the maximum loads at the end of these segments, so that we can re-initiate the next iteration. For $t\geq \omega(\log n)$, we iterate over long segments of this strategy, separated by short segments of the drift strategy followed by the multi-stage threshold strategy.

\subsection{Outline}
This paper is organized as follows. In the next section, we introduce two-thinning strategies that are used to achieve the desired bounds on three types of maximum loads as stated in Theorems \ref{thm:single-time load discrepancy}, \ref{thm:all-time load discrepancy} and \ref{thm:typical load discrepancy}. We provide some preliminary tools in Section 3, which are used in  the analysis of different two-thinning strategies and the proofs of the main results. The proof of Theorem \ref{thm:single-time load discrepancy} on the single-time maximum load is provided in Section \ref{sec:single-time load upper bound} (upper bound) and Section \ref{sec:single-time load lower bound} (lower bound). The proof of Theorem \ref{thm:all-time load discrepancy} on the all-time maximum load is provided in Section \ref{sec:all time upper bound} (upper bound) and \ref{sec:all time lower bound} (lower bound). In the last section, we prove Theorem \ref{thm:typical load discrepancy} on the $\varepsilon$-typical maximum load.


\numberwithin{thm}{section}

\section{Strategies for taming the maximum loads}\label{sec: strategies}


In this section, we provide two-thinning strategies that are used to control the maximum loads. We give the formal definition of a two-thinning strategy in Section~\ref{Strat:two-thin}, and provide an alternative, indirect way of describing a two-thinning strategy in Section~\ref{Strat:realizable}. Several basic two-thinning strategies are given in Section~\ref{sec:base strat}, which are building blocks of more advanced strategies in Section~\ref{sec:combin}. We provide an outline of how these strategies are used to obtain the main theorems in Section \ref{sec:opt strat}.

\subsection{Two-thinning strategy}\label{Strat:two-thin}


A \emph{decision strategy} is a function
$$ 
f:[n]\times [0,1] \to \{1,2\},
$$
which, given a primary allocation and an external random number in $[0,1]$, decides whether to accept (denoted by 1) or reject (denoted by 2) the suggested allocation.
Given $Z_1,Z_2$, a pair of independent random variables, uniform on $[n]$ and $U$ uniform on $[0,1]$, we can consider the \emph{output} of a decision strategy given by $Z_{f(Z_1,U)}$.

A \emph{thinning strategy} $f$ is a sequence of functions $\{f_k\}_{k\in \N}$, where the function
$$
f_k:\left([n]\times [n]\times \{1,2\}\right)^{k-1} \times [n] \times [0,1] \to \{1,2\},
$$ 
given the history $\mathcal H\in ([n]\times [n]\times\{1,2\})^{k-1}$ of the process up to time $k-1$ (that is, the primary allocations, the final allocations and the decisions of the first $k-1$ balls), the primary allocation at time $k$ and an external random number in $[0,1]$, decides whether to accept or reject the suggested allocation. Hence, given the history of the process, the thinning strategy provides a decision strategy for the next allocation.

A thinning strategy $f$ generates the \emph{decisions} sequence  $\{D_k\}_{k\in\N}$ and the \emph{allocations} sequence  $\{Z_k\}_{k\in\N}$ in the following way. We denote by $\{Z^1_k\}_{k\in\N}$ and $\{Z^2_k\}_{k\in\N}$ two independent sequences of random variables, which are independent and uniformly distributed in $[n]$. Here, $Z^1_k$ represents the primary allocation of the $k$-th ball, while $\{Z^2_k\}_{k\in\N}$ is used as a pool of secondary allocations. Set $R_0=0$ and we denote by $R_k$ the number of rejections among the first $k$ primary allocations. Let $\{U_k\}_{k\in\N}$ be a collection of uniform  random variables on $[0,1]$. For the $k$-th allocation, we can inductively define
\begin{align}
D_k&= f_k\left(\{Z^1_j\}_{j\in[k-1]},\{Z_j\}_{j\in[k-1]},\{D_j\}_{j\in[k-1]},Z^1_{k},U_k\right), \notag\\
R_{k}&=R_{k-1}+D_k-1, \label{eq:def-retry}\\
Z_k&=\begin{cases}
Z^1_k & \text{if }D_k=1,\notag\\
Z^2_{R_{k}} & \text{if }D_k=2.\notag
\end{cases}
\end{align}
In other words, we look at the history $\mathcal H$ of the process up to time $k-1$ and at the primary allocation $Z^1_k$ at time $k$ along with an additional source of randomness $U_k$ and apply $f$ to determine whether to accept $Z^1_k$ or not. If we reject $Z^1_k$, we will then allocate the $k$-th ball to the next unused secondary allocation $Z^2_{R_k}$ from our pool.

We allow bins to start with some initial loads $\{L_i(0)\}_{i\in [n]}$ satisfying $\sum_{i=1}^{n}L_i(0)=0$, where $L_i(0)$ is the initial load of the $i$-th bin. Let $m\in \N$ and let $i\in[n]$. The \emph{load} of bin $i$ after allocating $m$ balls using the thinning strategy $f$ is defined as 
\begin{equation}\label{eq:load of bin i}
L_i^f(m) =L_i(0)+\sum_{k=1}^m \ind_{\{Z_k=i\}}-\frac{m}{n}.
\end{equation}
For any $M\subseteq [m]$, we define
\begin{equation}\label{eq:primary and secondary load}
\begin{split}
L_{1, i}^f(M) &=\big|\big\{k\in M: Z_k^1=i ~\text{and}~ D_k=1\big\}\big|,\\
L_{2, i}^f(M) &=\big|\big\{k\in M: Z_{R_k}^2=i\big\}\big|.
\end{split}
\end{equation}
Hence, $L^f_{1, i}([m])$ represents the number of primary allocations accepted by bin $i$ after allocating $m$ balls, and $L_{2, i}([m])$ represents the number balls that bin $i$ receive from secondary allocations. It is clear that $L_i^f(m)=L_i(0)+L^f_{1, i}([m])+L^f_{2, i}([m])-m/n$. For any $S\subseteq[n]$ and $\ell\in \R$, we define
\begin{equation}\label{eq:maxload level set}
\phi_S^\ell(m)=\left |\left\{i\in S: L_i^f(m)\geq \ell\right\}\right |,
\end{equation}
which is the number of bins in $S$ with loads at least $\ell$ after allocating $m$ balls using the thinning strategy $f$, and
\begin{equation}\label{eq:primay allocation level set}
\psi_S^\ell(M)=\left |\left\{i\in S: \sum_{k \in M} \ind_{\{Z_k^0=i\}}\geq \ell\right\}\right |,
\end{equation}
which is the number of bins in $S$ that are suggested as primary allocations at least $\ell$ times during the allocations of balls in $M$. The maximum load over a set of bins $S$ after allocating $m$ balls using the thinning strategy $f$ is defined as
\begin{equation}\label{eq:maxload}
\maxload_S^f(m)=\max_{i\in S}L^f_i(m).
\end{equation}
We will omit the index $S$ in these notations when $S=[n]$. For any $M\subseteq [m]$, we define the  maximum load achieved during the allocation of balls in $M$ as
\begin{equation}\label{eq:all-time-S-maxload}
\maxload^f(M)=\max_{k\in M}\maxload^f(k).
\end{equation}
The $\varepsilon$-typical maximum load $\maxload_\varepsilon^f(M)$ over the set $M$ is defined as 
\begin{equation}\label{eq:typical-S-maxload}
\maxload_\varepsilon^f(M)=\max\big\{\ell>0: \big|\big\{k\in M: \maxload^f(k)\geq \ell\big\}\big|\geq \varepsilon|M|\big\}.
\end{equation}







\subsection{A realizability criterion}\label{Strat:realizable}

Under certain circumstances, instead of providing an explicit, formal description of a two-thinning strategy, we only show the realizability. The following result provides a criterion for a probability distribution to be realized by some two-thinning strategy.

\begin{lem}\label{lem:realizability}
Any probability distribution $\mathcal{P}$ on $[n]$ with probability mass function $\{p_i\}_{i\in [n]}$ for which
$$
\frac{c}{n}\leq p_i\leq \frac{1+c}{n}
$$
for some $c>0$ and  for every $i\in[n]$, is the distribution of the output of a two-thinning decision strategy.
\end{lem}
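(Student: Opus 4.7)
The plan is to construct the decision strategy explicitly through its acceptance probabilities. Any decision strategy $f:[n]\times[0,1]\to\{1,2\}$ is determined, up to the distribution of its output, by the acceptance probabilities $q_i := \P(f(i,U)=1)$ where $U$ is uniform on $[0,1]$; conversely, any vector $(q_i)_{i\in[n]}\in[0,1]^n$ is realized by the strategy $f(i,u)=1$ if $u\le q_i$ and $f(i,u)=2$ otherwise. So it suffices to produce such a vector whose induced output distribution has the prescribed marginals $(p_i)$.

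Letting $Z_1,Z_2$ be independent uniforms on $[n]$, independent of $U$, and writing $\bar q = \frac{1}{n}\sum_{j\in[n]} q_j$, conditioning on whether the primary allocation is accepted gives
\[
\P\bigl(Z_{f(Z_1,U)}=i\bigr) \;=\; \frac{1}{n}q_i + \frac{1}{n}(1-\bar q) \;=\; \frac{q_i+1-\bar q}{n}.
\]
Inverting this relation, I would propose the ansatz $q_i := np_i - c$. The hypothesis $c/n \le p_i \le (1+c)/n$ immediately forces $q_i \in [0,1]$; averaging the ansatz gives $\bar q = 1 - c$; and substituting back verifies $(q_i + 1 - \bar q)/n = p_i$, as desired.

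The argument is essentially routine; the only point that requires any care is that $\bar q$ is not a free parameter but rather a self-consistent average of the $q_i$'s, and this consistency is what pins down the additive constant to be $c$ itself in the definition $q_i = np_i - c$. Note also that summing the lower bound $p_i \ge c/n$ over $i$ and using $\sum_i p_i = 1$ forces $c \le 1$, so $\bar q = 1 - c \in [0,1]$ is automatic. Once this bookkeeping is in place, admissibility of the proposed $q_i$ reduces to plugging the two bounds on $p_i$ into the formula.
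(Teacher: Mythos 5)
Your proposal is correct and is essentially the paper's own proof: the paper defines the strategy by accepting allocation $i$ exactly when the external uniform $u$ satisfies $u\le np_i-c$, i.e.\ with acceptance probability $q_i=np_i-c$, and then verifies $\P(Z=i)=\frac{1}{n}(np_i-c)+\frac{1}{n}\cdot\frac{1}{n}\sum_j(1+c-np_j)=p_i$, which is precisely your computation with $\bar q=1-c$. No substantive difference.
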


\begin{proof}
Let $Z_1, Z_2, U$ be independent random variables uniformly distributed in $[n]$. Here, $U$ is the external randomness.  We define the two-thinning function $f: [n]\times [0, 1]\to \{1, 2\}$ as
$$
f(i, u)=
\begin{cases}
1, \ np_i-c\geq u,\\
2, \ np_i-c<u.
\end{cases}
$$
Let $Z=Z_{f(Z_1,U)}$ be the output of $f$. For any $i\in [n]$, we have
\begin{align*}
\P(Z=i) &= \P(Z_1=i, f(Z_1, U)=1)+\P(Z_2=i, f(Z_1, U)=2)\\
&=\P(Z_1=i)\P(f(i, U)=1)+\P(Z_2=i)\sum_{j=1}^n\P(Z_1=j)\P( f(j, U)=2)\\
&= \frac{1}{n}\cdot (np_i-c)+\frac{1}{n}\cdot \frac{1}{n}\sum_{j=1}^n(1+c-np_j)\\
&= p_i.
\end{align*}
The second identity follows from the joint independence among $Z_1, Z_2, U$.
\end{proof}

\subsection{The basic strategies}\label{sec:base strat}

Here, we introduce some basic two-thinning strategies, which are building blocks of more advanced strategies in the next section. The first two thinning strategies are  deterministic and rather natural.

\textbf{The threshold strategy.} The \emph{$\ell$-threshold strategy} accepts the primary allocation of a given ball whenver the suggested bin has accepted thus far less than $\ell$ primary allocations. 
In other words,
$$f_k(\mathcal H, i,u)=\begin{cases}
1 & \text{if }L^f_{1,i}(k)<\ell,\\
2 & \text{if }L^f_{1,i}(k)\ge \ell.
\end{cases}$$
This strategy is used to control the single-time maximum load of allocating $O(n\sqrt {\log n})$ balls.

\textbf{The relative threshold strategy.} The \emph{$\ell$-relative threshold strategy} accepts the $k$-th primary allocation if the suggested bin has accepted less than $\ell+\frac{k-1}n$ primary allocations or if the load of the suggested bin is below $- \log n$. In other words,
$$f_k(\mathcal H, i,u)=\begin{cases}
1 & \text{if }L^f_{1,i}(k)<\ell+\frac{k-1}n\text{ or }L^f_i(k)<- \log n,\\
2 & \text{if }L^f_{1,i}(k)\ge\ell +\frac{k-1}n\text{ and }L^f_i(k)\ge - \log n.
\end{cases}$$
This strategy is designed to control the all-time maximum load of allocating $o(n \log^2 n)$ balls.

\textbf{The drift strategy}. The third strategy relies on a coupling of the allocation process and a continuous time random process. This strategy can be used to achieve appropriate initial conditions for other strategies as it is very robust and can rather quickly reduce the load vector to a stationary distribution with an exponential tail. We denote by $\{X_i(t)\}_{i\in [n]}$ a collection of  independent regular point processes with initial values $X_i(0)=L_i(0)$ and conditional intensity functions 
\begin{equation}\label{eq:intensity xi}
\lambda_i(t)=
\begin{cases}
1+\theta, & X_i(t)<t,\\
1-\theta, & X_i(t)\geq t.
\end{cases}
\end{equation}
Write $X(t)=\sum_{i=1}^nX_i(t)$. 
We define the random process $\{Z_k\}_{k\in\N}$ as follows. For any $k\in\N$, we set 
\begin{equation}\label{eq:def zk}
\text{$Z_k=i$ if the $k$-th point of $X(t)$ for $t>0$ is a point of $X_i(t)$}.    
\end{equation}
We will show that, conditioned on $Z_1,\dots,Z_{k-1}$, the variable $Z_k$ meets the conditions of Lemma~\ref{lem:realizability}.
Hence $\{Z_k\}_{k\in\N}$ is realizable as the  output of a two-thinning strategy.
We call this strategy the \emph{$\theta$-drift strategy}.

We write $\cF_t$ for the natural filtration of $X(t)$ and denote by  $T_k=\inf\{t : X(t)=k\}$.
To see that the conditions of Lemma \ref{lem:realizability} are indeed satisfied, it suffices to show that there exists some $c>0$ such that
\begin{equation}\label{eq:realizability}
\frac{c}{n}\leq \P(Z_k=i\ |\  Z_1, \cdots, Z_{k-1},\cF_{T_{k-1}})\leq \frac{1+c}{n}
\end{equation}
holds for all $k\in\N$ and all $i\in [n]$. By the definition of $\{Z_k\}_{k\in\N}$, we have
$$
\frac{1-\theta}{n(1+\theta)}=\frac{\inf\limits_{t\geq 0}\lambda_i(t)}{n\max\limits_{j\in [n]}\sup\limits_{t\geq 0}\lambda_j(t)} 
\leq \P(Z_k=i\ |\  Z_1, \cdots, Z_{k-1},\cF_{T_{k-1}})
\leq\frac{\sup\limits_{t\geq 0}\lambda_i(t)}{n\min\limits_{j\in [n]}\inf\limits_{t\geq 0}\lambda_j(t)}\leq\frac{1+\theta}{n(1-\theta)}.
$$
One can check that the criterion \eqref{eq:realizability} holds for all $0<\theta\leq \sqrt 5-2 $.

\textbf{A varying drift strategy}. Our forth strategy is a modified drift strategy where the downwards drift is extremely strong for bins with loads above certain level $\ell$. We denote by $\{X_i(t)\}_{i\in[n]}$ a collection of independent regular point processes with initial values $X_i(0)=0$ and conditional intensity functions given by
\begin{equation}\label{eq:intensity xi a}
\lambda_i(t)=
\begin{cases}
1+\theta_1, & X_i(t)<t,\\
1-\theta_2, & t\leq X_i(t)\leq t+\ell,\\
\theta_3, & X_i(t)> t+\ell.
\end{cases}
\end{equation}
Here, we set $\theta_1=\theta_2=\frac{1}{\sqrt{\log n}}$ and $\theta_3=\frac{12}{\sqrt{\log n}}$.  We write $X(t)=\sum_{i=1}^nX_i(t)$.  For any $k\in\N$, we set $Z_k=i$ if the $k$-th point of $X(t)$ for $t>0$ is a point of the process $X_i(t)$. We write $\cF_t$ for the natural filtration of $X(t)$ and denote by $T_k=\inf\{t : X(t)=k\}$. 
Unlike in the case of the drift strategy, in certain situations, the distribution of $Z_k$ given $Z_0,\dots,Z_{k-1}$ is not the output of any two-thinning decision strategy. However, as the next lemma shows, this does not happen as long as the number of bins with very high load is not too large. We call the strategy which realizes $Z_k$ for as long as possible (and, say, accepts all primary allocations from that time and on, for the sake of completion), the \emph{$\ell$-varying drift strategy}.


\begin{lem}\label{lem:realizibility criterion}
For sufficiently large $n$, for any $k\in \N$, if 
\begin{equation}\label{eq:realizibility criterion}
\big|\big\{i\in[n]: X_i(T_{k-1})>T_{k-1}+\ell\big\}\big|\leq \frac{n}{\sqrt{\log n}},
\end{equation}
then the distribution of $Z_k$ given $Z_0,\dots,Z_{k-1}$ can be realized by a two-thinning decision strategy.
\end{lem}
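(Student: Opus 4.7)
The plan is to apply Lemma~\ref{lem:realizability} to the conditional distribution of $Z_k$, following the same template used for the (ordinary) drift strategy. By the coupling construction, for each $i \in [n]$,
\[
\P\bigl(Z_k = i \,\big|\, Z_1,\dots,Z_{k-1},\,\cF_{T_{k-1}}\bigr) \;=\; \frac{\lambda_i(T_{k-1})}{S}, \qquad S := \sum_{j=1}^n \lambda_j(T_{k-1}),
\]
so it suffices to exhibit a common $c > 0$ with $c/n \le \lambda_i(T_{k-1})/S \le (1+c)/n$ for all $i$, under the hypothesis \eqref{eq:realizibility criterion}. Realizability of $Z_k$ given $Z_1,\dots,Z_{k-1}$ then follows by averaging these pointwise bounds in $\cF_{T_{k-1}}$ and invoking Lemma~\ref{lem:realizability}.

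The key step is bounding $S$ from both sides. Partition $[n] = A \sqcup B \sqcup C$ according to the three cases of \eqref{eq:intensity xi a}, so that $\lambda_i(T_{k-1})$ equals $1+\theta_1$, $1-\theta_2$, $\theta_3$ on $A$, $B$, $C$ respectively, and by \eqref{eq:realizibility criterion} we have $|C| \le n/\sqrt{\log n}$. The upper bound $S \le (1+\theta_1)n$ is immediate from $1-\theta_2,\,\theta_3 \le 1+\theta_1$. For the lower bound, I would simply discard the nonnegative $C$-contribution:
\[
S \;\ge\; (1-\theta_2)(|A|+|B|) \;\ge\; (1-\theta_2)\bigl(n - n/\sqrt{\log n}\bigr) \;=\; n\bigl(1 - 1/\sqrt{\log n}\bigr)^{\!2} \;\ge\; n\bigl(1 - 2/\sqrt{\log n}\bigr).
\]

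Combining these with $\max_i \lambda_i(T_{k-1}) \le 1+\theta_1$ and $\min_i \lambda_i(T_{k-1}) \ge \theta_3$ (the case of empty $C$, where the minimum is $1-\theta_2$, is easier), we get, for $n$ large enough,
\[
n \cdot \max_i \frac{\lambda_i(T_{k-1})}{S} \;\le\; \frac{1 + 1/\sqrt{\log n}}{1 - 2/\sqrt{\log n}} \;\le\; 1 + \frac{6}{\sqrt{\log n}},\qquad n \cdot \min_i \frac{\lambda_i(T_{k-1})}{S} \;\ge\; \frac{12/\sqrt{\log n}}{1 + 1/\sqrt{\log n}} \;\ge\; \frac{11}{\sqrt{\log n}}.
\]
Choosing, for instance, $c = 8/\sqrt{\log n}$ verifies both inequalities of Lemma~\ref{lem:realizability} simultaneously, completing the argument.

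The only delicate point — and the main motivation for the specific constants in \eqref{eq:intensity xi a} — is finding a single $c$ compatible with \emph{both} the upper bound on $np_i - 1$ and the lower bound on $np_i$. This requires the gap $\theta_3 - (\theta_1 + \theta_2)$ to dominate the $O(1/\sqrt{\log n})$ slack generated by the bound on $|C|$; the generous choice $\theta_3 = 12/\sqrt{\log n}$ versus $\theta_1 = \theta_2 = 1/\sqrt{\log n}$ is exactly what makes this hold for sufficiently large $n$.
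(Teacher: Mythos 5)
Your overall route is the same as the paper's: verify the hypothesis of Lemma~\ref{lem:realizability} with $c=\Theta(1/\sqrt{\log n})$ by sandwiching the conditional probability between ratios of extreme intensity values, using the hypothesis \eqref{eq:realizibility criterion} to keep the total intensity close to $n$, and your final numerical bounds (and the choice $c=8/\sqrt{\log n}$, versus the paper's $c=4/\sqrt{\log n}$) are fine. The one step I would not accept as written is the exact identity $\P(Z_k=i\mid Z_1,\dots,Z_{k-1},\cF_{T_{k-1}})=\lambda_i(T_{k-1})/\sum_j\lambda_j(T_{k-1})$. The intensities in \eqref{eq:intensity xi a} are time-varying even in the absence of new points: on $[T_{k-1},T_k)$ every $X_i$ is constant while $t$ increases, so $X_i(t)-t$ drifts downward and a bin can change regime (from intensity $\theta_3$ to $1-\theta_2$, or from $1-\theta_2$ to $1+\theta_1$) before the next arrival. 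Consequently the conditional law of $Z_k$ is not given by the intensity ratio frozen at time $T_{k-1}$; rather it is an expectation of $\lambda_i(t)/\sum_j\lambda_j(t)$ evaluated at the (random) arrival time $T_k$, which is why the paper writes only a two-sided inequality at this point rather than an equality.

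The gap is repairable, and essentially for the reason implicit in your computation: since no point of the superposition occurs on $[T_{k-1},T_k)$, bins can only leave the high-load set $\{i: X_i(t)>t+\ell\}$ during that interval, so $|C(t)|\le n_0\le n/\sqrt{\log n}$ for all such $t$; moreover $\theta_3\le\lambda_i(t)\le 1+\theta_1$ and $(1-\theta_2)(n-|C(t)|)\le\sum_j\lambda_j(t)\le(1+\theta_1)n$ hold pointwise in $t$. Hence the ratio bounds you derived at time $T_{k-1}$ hold uniformly on the inter-arrival interval and therefore bound the conditional probability itself, which is exactly the paper's sandwich. With the equality replaced by this uniform-in-time inequality (or simply by the paper's display), your argument goes through and coincides with the published proof.
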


\begin{proof}
We need to verify that the distribution of $Z_k$ given $Z_0,\dots,Z_{k-1}$ satisfies the condition of Lemma \ref{lem:realizability}. To this end, it is enough to show that there exists some $c>0$, which could depend on $n$, such that for sufficiently large $n$, for all $i\in [n]$ we have, 
\begin{equation}\label{eq:realizability criterion}
\frac{c}{n}\leq \P(Z_k=i~|~ Z_1, \cdots, Z_{k-1},\cF_{T_{k-1}})\leq \frac{1+c}{n}.
\end{equation}
Denote $n_0=|\{i\in[n]: X_i(T_{k-1})>T_{k-1}+\ell\}|$. Then, the condition \eqref{eq:realizibility criterion} says that $n_0\leq \frac{n}{\sqrt{\log n}}$. By the definition of $Z_k$, we have
$$
 \frac{\theta_3}{(n-n_0)(1+\theta_1)+n_0\theta_3} 
\leq \P(Z_k=i~|~ Z_1, \cdots, Z_{k-1},\cF_{T_{k-1}})
\leq\frac{1+\theta_1}{(n-n_0)(1-\theta_2)+n_0\theta_3}
$$
Using the fact that the denominators above are maximized when $n_0=0$ and are minimized when $n_0=\frac{n}{\sqrt{\log n}}$, we obtain
$$\frac{6}{n\sqrt{\log{n}}}\le\P(Z_k=i~|~ Z_1, \cdots, Z_{k-1},\cF_{T_{k-1}})\le
\frac{1+\frac{1}{\sqrt {\log n}}}{n\big(1-\frac{1}{\sqrt {\log n}}\big)^2}\leq\frac{1}{n}\left(1+\frac{4}{\sqrt{\log n}}\right)
$$ 
for all $n$ sufficiently large. Thus, inequality \eqref{eq:realizability criterion} holds with $c=\frac{4}{\sqrt{\log n}}$.
\end{proof}

We shall see in Section~\ref{subsection:case 2} that for $\ell=\frac{2\log n}{\log\log n}$, the condition of Lemma~\ref{lem:realizibility criterion} is indeed satisfied with high probability over polynomially long time in $n$.

\subsection{Combinations of the basic strategies}\label{sec:combin}

In many scenarios, particularly when the number of balls is large, we need to adjust and combine the basic strategies in an appropriate way to obtain the upper bounds in our main results. The following are several such combinations. 


\textbf{The multi-stage $(t,L_0,\ell)$-threshold strategy}.
Set $t_0=0$ and $k=\big\lfloor\frac{\log\log n}{3\log\log\log n}\big\rfloor$. We divide the process into $k$ stages, where the $i$-th stage proceeds from time $ n t_{i-1}$ to time $n t_i$, where the definition of $t_i$ as a function of $t$ is given at the end of this description. We write $H_0$ for the set of bins with loads greater than $L_0$ at time $t_0$. We inductively define $H_i$ as the set of bins in $(\cup_{j=1}^{i-1}H_j)^c$ (or in $H^c_0$ in the case $i=1$) whose loads at the end of the $i$-th stage are at least $L_0+2i\ell$. 
Then our strategy can be stated as follows. In the first stage, we retry a ball if its primary allocation bin has a load of at least $-\log n$ and either it is in $H_0$ or it has accepted $t_1-t_0+\ell$ primary allocations in the first stage so far. In $i$-th stage for $i\geq 2$, we retry a ball if its primary allocation bin has a load of at least $-\log n$ and either it is in $\cup_{j=1}^{i-1}H_j$, or it is a bin that has accepted $t_i-t_{i-1}+\ell$ primary allocations during the $i$-th stage so far.

Now we conclude the description with the definitions of $\{t_i\}_{i\in [k]}$.  Denote $\alpha=\frac{\log t}{\log\log n}$.  Given $\eta\in[0,\tfrac{\alpha-1/2}{4k-2}]$, we set $\beta=\alpha+\eta$, $\varepsilon=\frac{2\beta-1}{2(k+1)}$, and $\beta_i=\beta-\frac{(2\beta-1-\varepsilon)i}{2k+1}$. We then define $t_i=\lfloor t-\log^{\beta_i}\! n\rfloor$ for $1\leq i\leq k-1$, and $t_k=t$.

\begin{rmk}
It might be worthwhile to point out that after the first stage, we do not retry primary allocations that are bins in $H_0$ unless they consist of bins with load at least $-\log n$ and already accepted $\ell$ primary allocations more than the average in the current stage. Hence, the initial set of heavily loaded bins $H_0$ will play the same role as any other bins from stage two and on.
\end{rmk}

This multi-stage threshold strategy is designed to control the single-time maximum load for time $t\geq \omega(\sqrt{\log n})$, in which case the threshold strategy alone is not sufficient. Indeed, optimizing the choice of the threshold in the threshold strategy gives $t+\Theta((t\log n)^{1/3})$, which, in turn, yields a maximum load of $O((t\log n)^{1/3})$; much larger than the desired upper bound $(\log n)^{1/2+o(1)}$. Hence, we divide the process into multiple shorter stages and in each stage apply the threshold strategy with a smaller threshold. This is likely to cause more retries and even a temporarily higher maximum load. To prevent this from causing high load at the end of the process, we identify at the beginning of every stage heavily loaded bins ($H_i$) and from this time and on retry a ball if its primary allocation is one of these. The number of retries caused by this requirement is relatively small since the number of heavily loaded bins is small. This, together with a careful selection of time lengths of the stages, will effectively reduce the maximum load to $(\log n)^{1/2+o(1)}$ at the end of the process.

A sketch of the analysis of the strategy is as follows. We first control the maximum load after the first stage, and the number of relatively heavily loaded bins at the end of it (i.e., $H_1$).  In every subsequent stage $i$ there are two causes for retries: either the suggested bin already accumulated $\ell$ primary allocations more than the average in this stage, or it was marked as heavily loaded in previous stages (i.e., it is in $\cup_{j=1}^{i-1}H_{j}$). By inductive bounds on these, we are able to control the number of such retries. For a bin to be included into $H_{i}$, it must accumulate at least $2\ell$ allocations above average, so that at least $\ell$ of them are secondary. Using binomial estimates we can control the number of such bins with high probability and establish our bound on $H_i$. Similar computations also allow us to control the maximum load in bins $\cup_{j=1}^{i}H_{j}$, taking advantage of the negative drift of the load in $\cup_{j=1}^{i-1}H_{j}$, caused by the fact that they are always rejected as primary allocations (except if the load is already lower than $-\log n$).

\textbf{The drift-multi-stage $(\theta,t',t,L_0,\ell)$-threshold.}
This strategy is a combination of the drift strategy and the multi-stage  threshold strategy. It is designed 
 to control the single-time maximum load for $t\gg \log n$. This is simply done by applying the $\theta$-drift strategy up to time $t'$ followed by the multi-stage $(t,L_0,\ell)$-threshold strategy starting at time $t'$ and ending at time $t'+t$.

\textbf{The $Q$-multi-scale strategy}. This strategy is designed for controlling the typical maximum load for about $n(\log n)^{1+o(1)}$ time. The strategy is formed by multiple scales, each of which extends the previous one and consists of multiple iterations of the previous scale strategy separated by a different regulating strategy. Whenever we initiate a new strategy at some time, we treat this time point as the initiation time and the current loads as the initial loads for the new strategy. To avoid countless rounding operations, each strategy is applied for a not-necessarily integer time, and our policy is that if an integer point falls within the time domain of a strategy, then this strategy is applied to it.

We now give the exact description of the strategy, which is accompanied by an algorithmic description and a demonstration of the first three scales in Figure~\ref{fig:multi-key-fig}. 
We postpone the technical definitions of the parameters $L>0, k\in\N, \{\alpha_i,\alpha'_i,\ell_i\}_{i\in\N}$ after the description. We write $N_{i}=\lceil\frac{L}{3k\ell_i}\rceil$ and $Q^{i,j}=(2k+1)(j-1)\ell_i$.
In the first scale, we simply apply the $L$-relative threshold strategy up to time $n\lfloor\log^{\alpha_1} \!n\rfloor$.
In the second scale, we apply $N_1$ iterations of the first scale strategy (the last iteration may be incomplete) and the $j$-th iteration is followed by the multi-stage $(\log^{\alpha_1'}\!n,Q+Q^{1, j}+\ell_1,\ell_1)$-threshold strategy. The value of $Q$ in the $j$-th iteration of the first scale strategy is increased by $Q^{1, j}$. Generally, in the $(i+1)$-th scale, we apply $N_{i}$ iterations of $i$-th scale strategy and the $j$-th iteration is followed by  the multi-stage $(\log^{\alpha_i'}\!n,Q+Q^{i, j}+\ell_i,\ell_i)$-threshold strategy. In the $j$-th iteration, all values of $Q$ in the nested multi-scale strategies are increased by $Q^{i, j}$ (in comparison with the value of $Q$ in the current scale).

The technical definitions of the aforementioned parameters are given as follows. We set $\alpha_1=\frac{1}{2}+\frac{2}{\lfloor\sqrt{\log\log\log n}\rfloor+1/4}$, $L=(\log n)^{\frac{1+\alpha_1}{3}}$ and $k=\big\lfloor\frac{\log\log n}{3\log\log\log n}\big\rfloor$. We inductively define the sequences $\{\alpha_i,\alpha'_i,\ell_i\}_{i\in\N}$ via the following equations
\begin{align}
\varepsilon_i&=\frac{2\alpha_i-1}{2(k+1)},\notag \\
\ell_i&=(\log n)^{\frac{1}{2}+\frac{\alpha_i-1/2+k\varepsilon_i}{2k+1}}, \notag\\
\alpha_i'&=\alpha_i-\frac{1}{5}\cdot\frac{2\alpha_i-1-\varepsilon_i}{2k+1},
\label{eq:alpha-i-prime-iteration}\\
\log^{\alpha_{i+1}} \!n&=N_i(\lfloor\log^{\alpha_i}\!n\rfloor+\lfloor\log^{\alpha_i'}\! n\rfloor).\label{eq:alpha-i-iteration}
\end{align}
According to the description of our strategy, the first part of each iteration runs for $n\log^{\alpha_i}\! n$ time, and the second part runs for $n\log^{\alpha_i'}\!n$ time, so that the $(i+1)$-th scale runs for $n\log^{\alpha_{i+1}}\!n$ time in total.  

The idea behind this strategy is as follows. In each scale of the strategy, most of the time we apply the lower scale strategy, which yields a good control of the typical maximum load. However, the number of bins with loads close to the threshold will accumulate along the time. In order to mitigate this effect, we need to apply the multi-stage threshold strategy with a low threshold for a short period of time. This enables us to dramatically reduce the number of such relatively high  loaded bins at the end of each regulating period, although it is possible that during these regulating periods, certain bins may temporarily accumulate very high loads. Once the regulating period is over, the small number of relatively high load bins allows us to iterate the lower scale strategy once again.

In the following figure, we provide an algorithmic description of the $Q$-multi-scale strategy and a demonstration of the first three scales of the strategy.


\begin{algorithm}
\caption{\emph{$Q$-multi-scale} (Scale=$i+1$) }\label{alg:cap}
\begin{algorithmic}
\If{$i=0$}
    \State \textbf{Run} \emph{$L$-relative threshold} for $\log^{\alpha_1} n$ time
\Else
    \For{$j=1$ to $N_{i}$}:
        \State \textbf{Run} \emph{$\big(Q+Q^{i, j}\big)$-multi-scale}\big($i$\big) for $\log^{\alpha_{i}}\!n$ time 
        \State \textbf{Run} \emph{multi-stage} $(\log^{\alpha_i'}\!n,Q+Q^{i,j}+\ell_i,\ell_i)$\emph{-threshold} 
    \EndFor
\EndIf
\end{algorithmic}
\end{algorithm}
\begin{figure}[h!]
 \centering
 \vspace{-10pt}
  \caption{\textbf{Above:} an algorithmic description of the $Q$-multi-scale strategy. \textbf{Below:} the first three scales of this strategy. The first scale is the $L$-relative threshold strategy. The second scale consists of $N_1$ iterations, the $j$-th of which 
 incorporates the strategy of the first scale followed by the multi-stage $(\log^{\alpha'_1}n,Q+Q^{1, j}+\ell_1, \ell_1)$-threshold strategy. The third scale consists of $N_2$ iterations, each of which  consists of the second scale strategy with its $Q$ set to be $Q+Q^{2, j}$, followed by the multi-stage $(\log^{\alpha'_2}n, Q+Q^{2, j}+\ell_2, \ell_2)$-threshold strategy.
 }
 \vspace{12pt}
 \begin{overpic}[width=1\textwidth,tics=5]{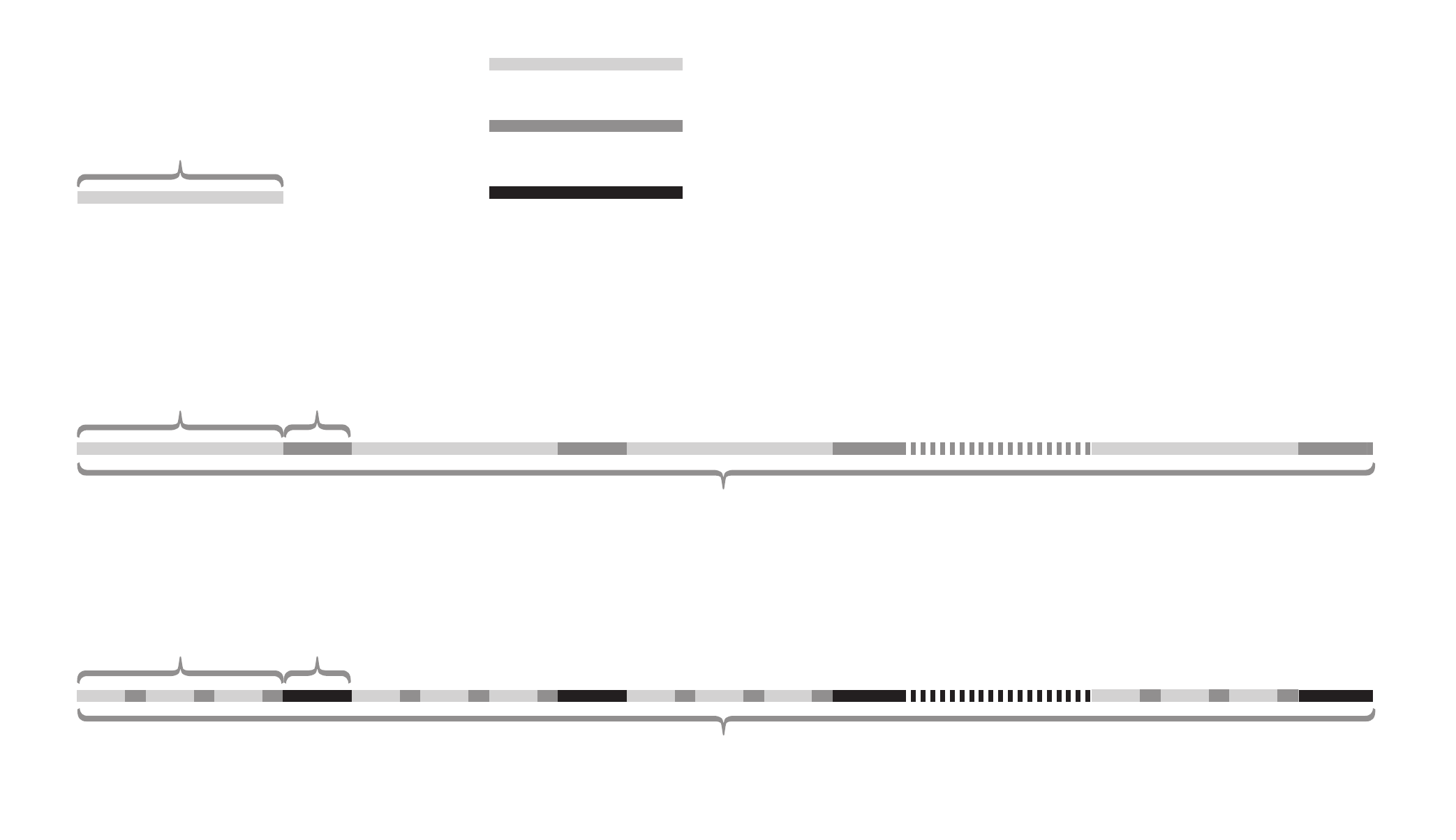}
\put(50,53){$L$-relative threshold strategy}
\put(50,48.5){$(\cdot,\cdot,\ell_1)$-multi-stage threshold strategy}
\put(50,43.9){$(\cdot,\cdot,\ell_2)$-multi-stage threshold strategy}
 \put(0,54){\Large 1\textsuperscript{st} scale}
 \put(7,47.5){$\lfloor \log^{\alpha_1} n\rfloor$}
 \put(0,37){\Large 2\textsuperscript{nd} scale}
 \put(7,30.5){$\lfloor \log^{\alpha_1} n\rfloor$}
 \put(17,30.5){$\lfloor \log^{\alpha_1'} n\rfloor$}
 \put(43,22){$N_1$-iterations}
  \put(0,19){\Large 3\textsuperscript{rd} scale}
 \put(7,13.5){$\lfloor \log^{\alpha_2} n\rfloor$}
 \put(17,13.5){$\lfloor \log^{\alpha_2'} n\rfloor$}
 \put(43,5){$N_2$-iterations}
 \end{overpic}\label{fig:multi-key-fig}
 \end{figure}

\pagebreak
\textbf{The $d$-multi-scale long-term  combined strategy}. 
This strategy is used to control the typical maximum load for arbitrarily long time and it consists of multiple iterations. As in the $Q$-multi-scale strategy, we set $\alpha_1=\frac{1}{2}+\frac{2}{\lfloor\sqrt{\log\log\log n}\rfloor+1/4}$, $L=(\log n)^{\frac{1+\alpha_1}{3}}$ and $k=\big\lfloor\frac{\log\log n}{3\log\log\log n}\big\rfloor$. The sequence $\{\alpha_i\}_{i\in\N}$ is defined in \eqref{eq:alpha-i-prime-iteration} and \eqref{eq:alpha-i-iteration}.
We denote by $i_{\max}=\max\{i\in \N: \alpha_i\leq 1\}$. We set
\begin{equation}\label{eq:Q-L-A}
Q=L=(\log n)^{\frac{1+\alpha_1}{3}},~~A=\sqrt{6d (\log n)^{1+\alpha_{i_{\max}+1}}},
\end{equation}
\begin{equation}\label{eq:ms}
m_0=\lfloor200dn\log n\rfloor,~~m_1 =n(\log n)^{\alpha_{i_{\max}+1}},~~m_2 =\lceil16nA\rceil.
\end{equation}
\begin{equation}\label{eq:L0}
L_0=\big\lfloor(\log n)^{\frac{1}{2}+\left(2-\frac{1}{2k+1}\right)\frac{\alpha-1/2}{2k+1}}\big\rfloor,~\text{where}~\alpha=\frac{\log (m_0/n)}{\log\log n}.
\end{equation}
In this strategy, a \emph{standard iteration} consists of three phases: The first one consists of 
the allocation of $m_0$ balls according to the multi-stage ($m_0/n, L_0, L_0$)-threshold strategy defined in Section \ref{sec:combin}; the second phase consists of the allocation of $m_1$ balls using the $Q$-multi-scale strategy; the third phase consists of the allocation of balls according to the 1/5-drift strategy given in Section \ref{sec:base strat}, until the first time $m$ when the following three conditions are satisfied
\begin{itemize}
\item At least $m_2$ balls were allocated during this phase,
\item $\max_{i\in[n]}\big|L_i^f(m)\big|\le 100d\log n$,
\item $\left|\left\{i\in[n] : L_i^f(m)>L_0\right\}\right|< 4000ne^{-L_0 /15}$.
\end{itemize}
The strategy itself consists of applying such iterations indefinitely, with the exception that we skip the first phase in the first iteration. The purpose of this exception is to make this strategy an extension of the $Q$-multi-scale strategy. 

\subsection{Optimal strategies}\label{sec:opt strat}

We summarize in Table \ref{table:2} the strategies and the time intervals where these strategies are employed to control the single-time, all-time and typical maximum loads. Notice that strategies that work for larger values of $m$ encapsulate those that work for smaller values so that the more advanced strategy could be also used for smaller values of $m$.

\begin{table}[H]
\def\arraystretch{1.7}
\centering
\resizebox{\columnwidth}{!}{
\begin{tabular}{||L | L  | L | L | L | L ||} 
 \hline
 \hline
 \phantom{\Big(}            &  \scalebox{0.97}{$m\!\le\!O(n\sqrt{\log n})$}    & \scalebox{0.97}{$m\le O(n\log n)$}  &  \scalebox{0.97}{$m\le O(n\log^2 n)$}&  \scalebox{0.97}{$m\le n^{O(1)}$}  &  \hspace{-50pt}generic $m$ \\
 \hline\hline
 {Maximum load at time $m$}  & Threshold strategy &  Multi-stage threshold strategy   & \multicolumn{3}{c||}{Drift multi-stage threshold strategy}  \\[3pt]
 \hline
 {Maximum load up to time $m$}  & \multicolumn{2}{c|}{Relative threshold strategy} &  \multicolumn{2}{c|}{Varying drift strategy }   & - \\[3pt]
                             \hline
 Typical load up to time $m$ & Relative threshold Strategy & \multicolumn{2}{c|}{$Q$-multi-scale threshold strategy} & \multicolumn{2}{c||}{$d$-multi-scale long-term  combined strategy}\\
 \hline
 \hline
\end{tabular}}
\caption{Optimal strategies for the single-time, all-time and typical maximum loads.}
\label{table:2}
\end{table}


%

\section{Preliminaries}\label{eq:preliminaries}

\subsection{Poisson approximation}

One difficulty of analyzing the balls-and-bins model is the correlation among the loads of different bins. The following result shows that the joint distribution of the loads of different bins can be well approximated by assuming that the loads of these bins are independent $\poss (m/n)$ random variables.

Let $\N_0=\N\cup\{0\}$. Given $x, y\in(\N_0)^n$, we say that $x\leq y$ if $x_i\leq y_i$ for all $i\in[n]$. A subset $S\subset(\N_0)^n$ is called \emph{monotone decreasing (resp. increasing)} if $x\in S$ implies that $y\in S$ for all $y\leq x$ (resp. $x\leq y$).

\begin{lem}[\cite{MU05}, Theorem 5.10]\label{lem:poisson approximation}
Let $\{X_i\}_{i\in[n]}$ be the number of balls in bins $i\in[n]$ when $m$ balls are independently and uniformly placed into $n$ bins. Let $\{Y_i\}_{i\in[n]}$ be independent $\poss (m/n)$ random variables. For any monotone set $S\subseteq[n]$, we have
$$
\P((X_1, \cdots, X_n)\in S)\leq 2\P((Y_1, \cdots, Y_n)\in S).
$$
\end{lem}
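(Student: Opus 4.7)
The plan is to use the classical Poissonization technique: conditioning the independent Poisson counts on their total recovers the multinomial balls-and-bins distribution, and a Poisson median estimate at the integer mean yields the prefactor of $2$.

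First, I would verify the standard identity that, conditional on $T := \sum_{i=1}^n Y_i$, the vector $(Y_1, \ldots, Y_n)$ has exactly the distribution of the bin occupancies $(X_1^{(T)}, \ldots, X_n^{(T)})$ produced by throwing $T$ balls uniformly and independently into $n$ bins; this is a direct check of probability mass functions (a vector with prescribed sum $k$ has, under both laws, probability proportional to $\prod_i 1/x_i!$). Since $T\sim\poss(m)$, this yields the mixture representation
$$
\P\bigl((Y_1,\ldots,Y_n)\in S\bigr)=\sum_{k=0}^\infty \P\bigl((X_1^{(k)},\ldots,X_n^{(k)})\in S\bigr)\,\P(\poss(m)=k),
$$
where $(X_1^{(k)},\ldots,X_n^{(k)})$ denotes the occupancy vector after $k$ balls.

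The second ingredient is a monotonicity in $k$: for a monotone decreasing set $S$, the map $k\mapsto \P((X_1^{(k)},\ldots,X_n^{(k)})\in S)$ is nonincreasing, and for a monotone increasing set it is nondecreasing. I would prove this by the obvious coupling — construct $(X^{(k+1)})$ from $(X^{(k)})$ by adding a single ball in a uniformly chosen bin, so that $X^{(k+1)}\geq X^{(k)}$ componentwise; the down-set (respectively up-set) property of $S$ then transfers through the coupling.

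Combining the two previous steps, in the monotone decreasing case I truncate the mixture sum to $k\leq m$ to obtain
$$
\P\bigl((Y_1,\ldots,Y_n)\in S\bigr)\;\geq\;\P\bigl((X_1^{(m)},\ldots,X_n^{(m)})\in S\bigr)\cdot \P\bigl(\poss(m)\leq m\bigr),
$$
and in the monotone increasing case I truncate to $k\geq m$ to obtain the analogous inequality with $\P(\poss(m)\geq m)$. The last step, which I expect to be the main technicality, is the classical fact that for every integer $m\geq 1$ both $\P(\poss(m)\leq m)\geq \tfrac12$ and $\P(\poss(m)\geq m)\geq \tfrac12$ — that is, $m$ is a median of $\poss(m)$. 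This is plausible from the unimodality of $\poss(m)$ together with the equal-mass mode at $\{m-1,m\}$ (since the ratio of consecutive pmf values is $m/k$), but a rigorous justification needs a slightly more delicate pairing or generating function argument which is standard in the literature on Poisson medians; I would quote it rather than reprove it. Dividing either of the two displayed bounds by $\tfrac12$ gives the desired factor of $2$.
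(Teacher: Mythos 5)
Your proposal is correct, and it is essentially the standard Poissonization argument behind the cited result ([MU05], Theorem~5.10), which the paper itself does not reprove: condition the Poisson vector on its sum, use monotonicity of $k\mapsto\P((X_1^{(k)},\dots,X_n^{(k)})\in S)$ via the ball-adding coupling, and invoke $\P(\poss(m)\le m)\ge\tfrac12$ and $\P(\poss(m)\ge m)\ge\tfrac12$ for integer $m$. Quoting the Poisson-median fact rather than reproving it is fine, since it is classical; just note that the monotone set $S$ lives in $(\N_0)^n$ (the statement's ``$S\subseteq[n]$'' is a typo), which your argument already treats correctly.
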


We borrow the following lemma from \cite{FGG18}, which provides a concentration bound on the maximum load
over a subset of bins.

\begin{lem}[\cite{FGG18}, Lemma 2.2] \label{lem:max load}
Let $\{X_i\}_{i\in[n]}$ be the number of balls in bins $i\in[n]$ when  $\lfloor\theta n\rfloor, 0\leq\theta\leq1$, balls are independently and uniformly placed into $n$ bins. For $k\in\lfloor\theta n\rfloor$ and $S\subseteq [n]$, we have
$$
\P\left(\max_{i\in S}X_i<k\right)\leq 2\exp\left(-\frac{\theta^k|S|}{ek!}\right).
$$
\end{lem}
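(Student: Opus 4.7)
The plan is to reduce to independent Poisson random variables via Lemma~\ref{lem:poisson approximation}, then bound the tail of a single Poisson and exploit independence to assemble a product-form estimate. First, I would observe that $\{\max_{i\in S}X_i<k\}=\bigcap_{i\in S}\{X_i<k\}$, which is a monotone decreasing subset of $(\N_0)^n$ since lowering any coordinate only reinforces the inequalities $X_i<k$. Lemma~\ref{lem:poisson approximation} then gives
$$
\P\Big(\max_{i\in S}X_i<k\Big)\le 2\,\P\Big(\max_{i\in S}Y_i<k\Big),
$$
where $\{Y_i\}_{i\in[n]}$ are i.i.d.\ $\poss(\lambda)$ variables with $\lambda=\lfloor\theta n\rfloor/n$. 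Joint independence immediately factorises the right-hand side as $\P(Y_1<k)^{|S|}$, reducing the task to a one-dimensional tail estimate.

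Next, I would lower bound the one-dimensional tail by its $k$th summand,
$$
\P(Y_1\ge k)\ge\P(Y_1=k)=e^{-\lambda}\frac{\lambda^k}{k!}\ge\frac{\theta^k}{e\,k!}.
$$
Here $e^{-\lambda}\ge e^{-1}$ is immediate from $0\le\lambda\le\theta\le 1$, while the comparison $\lambda^k$ vs.\ $\theta^k$ relies on the hypothesis $k\le\lfloor\theta n\rfloor$, which in particular forces $\theta n\ge 1$ and leaves only a floor slack of order $1/n$ between $\lambda$ and $\theta$, routinely absorbed into the constant in the exponent. Consequently $\P(Y_1<k)\le 1-\theta^k/(e\,k!)$, and applying $(1-x)^{|S|}\le e^{-x|S|}$ to the product representation yields the desired
$$
\P\Big(\max_{i\in S}X_i<k\Big)\le 2\exp\Big(-\frac{\theta^k|S|}{e\,k!}\Big).
$$

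The only delicate point is the careful bookkeeping needed when comparing $\lambda^k$ to $\theta^k$; an alternative route that sidesteps this issue entirely is to work directly with the binomial marginals $\bino(m,1/n)$ and invoke negative association of the load vector $\{X_i\}_{i\in[n]}$, which replaces the Poisson-approximation factor $2$ by $1$ at the cost of some additional bookkeeping with binomial coefficients. Either way, no novel ideas are required: the argument is essentially a clean assembly of Poisson approximation (or negative association) with the standard $\P(Y_1=k)$ single-term lower bound and the convexity inequality $1-x\le e^{-x}$.
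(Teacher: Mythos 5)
The paper itself offers no proof of this lemma -- it is quoted verbatim from \cite{FGG18} -- so your argument can only be judged on its own merits; its skeleton (monotonicity of the event $\{\max_{i\in S}X_i<k\}$, Poissonization via Lemma~\ref{lem:poisson approximation}, factorization by independence, a one-term lower bound on the Poisson tail, and $1-x\le e^{-x}$) is indeed the standard route. The genuine gap is the step $e^{-\lambda}\lambda^k/k!\ge \theta^k/(e\,k!)$. Since $\lambda=\lfloor\theta n\rfloor/n\le\theta$, the comparison of $\lambda^k$ with $\theta^k$ goes the \emph{wrong} way, and the floor slack cannot be ``absorbed into the constant in the exponent'', because that constant is fixed by the statement you are proving. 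What your estimates actually yield is $e^{-\lambda}\ge e^{-1}$ together with $(\lambda/\theta)^k\ge (m/(m+1))^{m}\ge e^{-1}$ (where $m=\lfloor\theta n\rfloor\ge k$), hence only $\P(Y_1\ge k)\ge \theta^k/(e^2k!)$ and the strictly weaker conclusion $2\exp(-\theta^k|S|/(e^2k!))$. The one-term inequality you assert can genuinely fail: for $n=10$, $\theta=0.99$ (so $m=9$, $\lambda=0.9$) and $k=2$ one has $e^{-0.9}(0.9)^2/2!\approx 0.165$, whereas $\theta^2/(2e)\approx 0.180$.

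To recover the stated constant $e$ some additional work is unavoidable. One route is to use more of the tail, $\P(Y_1\ge k)\ge e^{-\lambda}\frac{\lambda^k}{k!}\bigl(1+\frac{\lambda}{k+1}\bigr)$, and check that this correction dominates the deficiency factor $(\theta/\lambda)^k e^{\lambda-1}\le e^{k/m-(1-\lambda)}$; this requires restricting attention to the regime where the bound is non-vacuous (when $\theta^k|S|/(ek!)\le\log 2$ the prefactor $2$ makes the claim trivial), which forces $k!=O(n)$ and makes the deficiency of size $1+O(k/n)$, smaller than the tail correction $1+\Omega(1/k)$. Alternatively one performs that case split explicitly. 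Your proposed detour through the binomial marginals and negative association does remove the factor $2$, but it does not remove this issue: $\bino(m,1/n)$ also has mean $\lambda<\theta$, so the same bookkeeping between $\lambda$ and $\theta$ reappears. As written, the proof establishes the lemma only with $e^2$ in place of $e$.
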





\subsection{Poisson tail estimate}

 Let $X$ be a $\poss(\lambda)$ random variable. When $\lambda$ is an integer, $X$ can be seen as the sum of $\lambda$ independent $\poss(1)$ random variables. As a consequence of Cram\'er's Theorem (e.g., \cite{DZ10:book}, Theorem 2.2.3), $\lambda^{-1} X$ satisfies the Large Deviation Principle (LDP), namely, for any closed set $F\subset \R$,
$$
\limsup_{\lambda\rightarrow\infty}\frac{1}{\lambda}\log \P(\lambda^{-1}X\in F)\leq -\inf_{x\in F}\Lambda^*(x),
$$
and for any open set $J\subset \R$,
$$
\liminf_{\lambda\rightarrow\infty}\frac{1}{\lambda}\log\P(\lambda^{-1}X\in J)\geq -\inf_{x\in J}\Lambda^*(x),
$$
where the rate function
$$
\Lambda^*(x)=
\begin{cases}
1-x+x\log x, & x>0\\
+\infty, & \text{otherwise}.
\end{cases}
$$
The statement actually holds for general $\lambda$ that is not necessarily an integer. This LDP readily implies the following Poisson tail bounds.

\begin{lem}\label{lem:poss tail}
Let $X$ be a $\poss(\lambda)$ random variable. For sufficiently large $\lambda$ and any $\kappa>0$,
\begin{equation}\label{eq:poss upper tail}
e^{-2\lambda I(\kappa/\lambda)}\leq\P(X\geq \lambda+\kappa)\leq e^{-\lambda I(\kappa/\lambda)},
\end{equation}
and  for any $0<\kappa<\lambda$, 
\begin{equation}\label{eq:poss lower tail}
e^{-2\lambda I(-\kappa/\lambda)}\leq\P(X\leq \lambda-\kappa)\leq e^{-\lambda I(-\kappa/\lambda)},
\end{equation}
where $I(x)=\Lambda^*(1+x)=(1+x)\log(1+x)-x$ for $x\in(-1, \infty)$.

\end{lem}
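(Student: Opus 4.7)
The plan is to prove the two upper bounds by the standard Chernoff-Markov method, and the two lower bounds by invoking the Cramér LDP quoted just above the lemma (Theorem 2.2.3 of DZ10).

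For the upper bounds I would first recall that if $X\sim\poss(\lambda)$ then $\E[e^{tX}]=\exp(\lambda(e^t-1))$. For the upper tail, for any $t>0$, Markov's inequality gives
\[
\P(X\geq \lambda+\kappa)\leq \exp\bigl(\lambda(e^t-1)-t(\lambda+\kappa)\bigr).
\]
Minimizing the right-hand side in $t$ yields the optimizer $t^\star=\log(1+\kappa/\lambda)>0$, and substituting this back produces exactly $\exp(-\lambda I(\kappa/\lambda))$. The lower tail is handled by the mirror argument with $t<0$ and optimizer $t^\star=\log(1-\kappa/\lambda)$; the minimum value of the exponent is $-\lambda I(-\kappa/\lambda)$, which gives the upper bound in \eqref{eq:poss lower tail}. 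These computations are entirely routine once the MGF is in hand.

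For the lower bounds I would apply the LDP lower bound to $\lambda^{-1}X$. For the upper tail, take the open set $J=(1+\kappa/\lambda,\infty)$; since $\Lambda^\star$ is continuous and strictly increasing on $[1,\infty)$, we have $\inf_{x\in J}\Lambda^\star(x)=\Lambda^\star(1+\kappa/\lambda)=I(\kappa/\lambda)$, so the LDP gives, for every fixed $\varepsilon>0$ and all $\lambda$ large enough,
\[
\P(X\geq \lambda+\kappa)\;\geq\; \P\bigl(\lambda^{-1}X\in J\bigr)\;\geq\; \exp\bigl(-\lambda\bigl(I(\kappa/\lambda)+\varepsilon\bigr)\bigr).
\]
Choosing $\varepsilon=I(\kappa/\lambda)$ (permissible for $\lambda$ large enough once $I(\kappa/\lambda)$ is bounded away from $0$) turns this into the claimed $\exp(-2\lambda I(\kappa/\lambda))$ bound. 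The lower-tail lower bound is obtained identically using the open set $J=(-\infty,1-\kappa/\lambda)$ and $\inf_{J}\Lambda^\star=I(-\kappa/\lambda)$, valid because $0<\kappa<\lambda$ keeps the interval inside the effective domain of $\Lambda^\star$.

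The only delicate step is justifying that the $\varepsilon\lambda$ slack in the LDP can be absorbed into the doubled rate function, since $\kappa$ is allowed to depend on $\lambda$ and $I(\kappa/\lambda)$ could a priori be tiny. The clean way is to separate two regimes. When $\lambda I(\kappa/\lambda)\to\infty$, the LDP argument above works directly: the $\varepsilon\lambda$ error is eventually below $\lambda I(\kappa/\lambda)$. When $\lambda I(\kappa/\lambda)$ stays bounded, I would instead give a direct proof using the explicit Poisson probabilities: set $k=\lceil\lambda+\kappa\rceil$, use $\P(X\geq \lambda+\kappa)\geq \P(X=k)=e^{-\lambda}\lambda^k/k!$, and apply Stirling to get $\P(X=k)\geq c\lambda^{-1/2}\exp(-\lambda I(\kappa/\lambda))$ for an absolute constant $c$; the $\lambda^{-1/2}$ prefactor is then harmlessly absorbed by the extra factor $\exp(-\lambda I(\kappa/\lambda))$ since the latter is bounded below by a constant in this regime. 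An analogous Stirling estimate on $\P(X=\lfloor\lambda-\kappa\rfloor)$ handles the lower-tail lower bound. Handling this small-$I$ regime uniformly is really the only place where care is needed; the rest of the proof is a direct application of the tools already displayed above the lemma statement.
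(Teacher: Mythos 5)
Your Chernoff computations for the two upper bounds are correct and match what the paper itself intends (its remark after the lemma says exactly this), and your overall plan is the same as the paper's, which offers no more detail than asserting that the LDP "readily implies" the bounds. The gap is in your lower bounds. First, the LDP quoted above the lemma is an asymptotic statement for a \emph{fixed} open set $J$: the threshold "for all $\lambda$ large enough" depends on $J$ and on $\varepsilon$, so it cannot be invoked for $J=(1+\kappa/\lambda,\infty)$ when $\kappa$ varies with $\lambda$ — and the lemma must hold uniformly in $\kappa$, since it is applied with $\kappa$ depending on $n$. Taking "$\varepsilon=I(\kappa/\lambda)$" also makes $\varepsilon$ $\lambda$-dependent, which the LDP statement does not permit. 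So your "regime 1" argument is not a proof as written.

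Second, and more seriously, your two regimes are swapped. The point-mass bound gives $\P(X\geq\lambda+\kappa)\geq\P(X=k)\geq (e\sqrt{k})^{-1}e^{-\lambda I((k-\lambda)/\lambda)}$ with $k=\lceil\lambda+\kappa\rceil$; to beat $e^{-2\lambda I(\kappa/\lambda)}$ you must absorb both the rounding cost (at most $\log(1+(\kappa+1)/\lambda)$ in the exponent, since $I'(x)=\log(1+x)$) and the prefactor $1+\tfrac12\log k$ into the spare copy of $\lambda I(\kappa/\lambda)$. This is possible precisely when $\lambda I(\kappa/\lambda)\gtrsim\log(\lambda+\kappa)$, i.e.\ when $\lambda I$ is \emph{large}, and impossible when $\lambda I(\kappa/\lambda)=O(1)$: there $e^{-2\lambda I(\kappa/\lambda)}$ is of constant order while your Stirling bound decays like $\lambda^{-1/2}$, so the prefactor is not "harmlessly absorbed" — your sentence claims the opposite of what happens. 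Indeed, in that small-$\kappa$ regime the stated inequality is simply false: for fixed $\kappa$ one has $\P(X\geq\lambda+\kappa)\to\tfrac12$ while $e^{-2\lambda I(\kappa/\lambda)}\to 1$, and for $\kappa=\sqrt{\lambda}$ the left side tends to about $0.16$ while the right side tends to $e^{-1}$ (the paper's remark tacitly restricts attention to genuinely large deviations, which is the only regime in which it uses the lemma). The repair is to run your Stirling argument — for the lower tail, analogously at $\lfloor\lambda-\kappa\rfloor$ — exactly in the regime $\lambda I(\pm\kappa/\lambda)\geq C\log(\lambda+\kappa)$, where the factor $2$ in the exponent swallows both the rounding and the polynomial prefactor, and not to claim the complementary regime at all.
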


\begin{rmk}
In fact, the upper bounds hold for any $\lambda>0$ and this readily follows from Chernoff's argument. As the name, LDP, indicates, Lemma \ref{lem:poss tail} provides a good approximation of the Poisson tail when $\kappa$ is larger than the standard deviation $\sqrt \lambda$.  The following approximation of the rate function $I(x)$ will be repeatedly used. For $0\leq x\leq 4$, we have
\begin{equation}\label{eq:approx-I}
\frac{x^2}{4}\leq I(x)\leq \frac{x^2}{2}
\end{equation}
and, for $x\geq 4$, we have
\begin{equation}\label{eq:approx-I-1}
x\log\frac{x}{e}\leq I(x)\leq 3x\log\frac{x}{e}.
\end{equation}
\end{rmk}

The following result will be repeatedly used in later sections to estimate the number of retries in the allocation of balls using the threshold strategy.

\begin{lem}\label{lem:retry bound}
Let $\{X_i\}_{i\in[n]}$ be independent $\poss(\lambda)$ random variables. Let $\ell>0$. We define $Y_i=\max\{0, X_i-\lambda-\ell\}$ and $Y=\sum_{i=1}^nY_i$. Set $r^*=6ne^{-\lambda I(\ell/\lambda)}/\log(1+\ell/\lambda)$, where the function $I(x)$ is given in Lemma \ref{lem:poss tail}. Then we have
\begin{equation}
\P(Y>r^*)<\exp\left(-ne^{-\lambda I(\ell/\lambda)}\right).
\end{equation}
\end{lem}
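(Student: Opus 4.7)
The plan is to execute a standard Chernoff-type computation. I would apply Markov's inequality to $e^{tY}$ for a well-chosen $t > 0$, use independence to factor $\E[e^{tY}] = \prod_{i=1}^n \E[e^{tY_i}]$, and then optimize in $t$. The one mildly non-standard point is that $Y_i = (X_i - \lambda - \ell)_+$ is a truncation, so the Poisson moment generating function cannot be applied to $Y_i$ directly.

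The device that handles the truncation is the inequality
$$e^{tY_i} \le 1 + e^{t(X_i - \lambda - \ell)},$$
valid for every $t>0$: in the case $X_i \le \lambda+\ell$ both sides satisfy $e^{tY_i} = 1 \le 1 + e^{t(X_i-\lambda-\ell)}$, and in the case $X_i > \lambda+\ell$ the second term on the right already equals $e^{tY_i}$. Taking expectation and using $\E[e^{tX_i}] = e^{\lambda(e^t-1)}$ gives
$$\E[e^{tY_i}] \le 1 + e^{\lambda(e^t-1) - t(\lambda+\ell)} \le \exp\bigl(e^{\lambda(e^t-1) - t(\lambda+\ell)}\bigr)$$
via $1+x \le e^x$. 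Combining with independence and Markov's inequality yields
$$\P(Y > r^*) \le \exp\bigl(-tr^* + n\, e^{\lambda(e^t-1) - t(\lambda+\ell)}\bigr).$$

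The natural optimizing choice is the canonical Chernoff tilt $t = \log(1+\ell/\lambda)$, under which $\lambda(e^t-1) = \ell$ and $t(\lambda+\ell) = (\lambda+\ell)\log(1+\ell/\lambda)$, so the inner exponent simplifies to
$$\ell - (\lambda+\ell)\log(1+\ell/\lambda) = -\lambda\, I(\ell/\lambda),$$
by the definition $I(x) = (1+x)\log(1+x) - x$ from Lemma~\ref{lem:poss tail}. With the stated $r^* = 6n e^{-\lambda I(\ell/\lambda)}/\log(1+\ell/\lambda)$, the linear term evaluates to $tr^* = 6n e^{-\lambda I(\ell/\lambda)}$, and the bound becomes
$$\P(Y > r^*) \le \exp\bigl(-5n\, e^{-\lambda I(\ell/\lambda)}\bigr),$$
which is a fortiori smaller than $\exp\bigl(-n\, e^{-\lambda I(\ell/\lambda)}\bigr)$, as claimed.

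I do not anticipate any real obstacle; the whole argument is a one-page exponential-moment bound. The only subtlety worth pointing out is the positive-part truncation of $Y_i$, which is absorbed cleanly by the inequality $e^{tY_i} \le 1 + e^{t(X_i-\lambda-\ell)}$. The role of the constant $6$ in $r^*$ is simply to ensure that $tr^*$ dominates the Chernoff term $n e^{-\lambda I(\ell/\lambda)}$ with slack enough to leave a factor $e^{-n e^{-\lambda I(\ell/\lambda)}}$, and one observes that any constant strictly larger than $2$ would suffice to reach the stated conclusion.
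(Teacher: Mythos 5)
Your proof is correct, and it takes a genuinely different (and in fact more streamlined) route than the paper. You handle the truncation with the elementary bound $e^{tY_i}\le 1+e^{t(X_i-\lambda-\ell)}$ and then use the \emph{exact} Poisson moment generating function at the canonical tilt $t=\log(1+\ell/\lambda)$, for which $\lambda(e^t-1)-t(\lambda+\ell)=-\lambda I(\ell/\lambda)$ identically; together with $1+x\le e^x$, independence and Markov this yields $\P(Y>r^*)\le\exp\bigl(-tr^*+ne^{-\lambda I(\ell/\lambda)}\bigr)=\exp\bigl(-5ne^{-\lambda I(\ell/\lambda)}\bigr)$, which is even stronger than the stated bound. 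The paper instead estimates $\E e^{uY_1}$ by a summation-by-parts over the Poisson tail, invoking the tail estimates of Lemma~\ref{lem:poss tail} and a concavity/ratio argument for $g_u(x)=(1+u)x-(1+x)\log(1+x)$; that argument needs geometric decay of consecutive terms, which forces the half-tilt $u^*=\tfrac12\log(1+\ell/\lambda)$ (at the full tilt the ratio bound degenerates to $1$), and the constant $6$ in $r^*$ is then exactly what is needed to beat the resulting $\exp(2e^{-\lambda I(\ell/\lambda)})$ bound on $\E e^{u^*Y_1}$. Your version is more elementary, avoids any reliance on the tail estimates (and hence on any largeness assumption on $\lambda$), and makes transparent your correct side observation that, for this argument, any constant strictly larger than $2$ in the definition of $r^*$ would already suffice; the paper's constant $6$ is what its half-tilt argument genuinely requires.
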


\begin{proof}
The statement follows from the classical Chernoff's argument. For any $u>0$, we have
\begin{align*}
\E e^{u Y_1} &< 1+e^{-u \ell}\sum_{k=\lceil\lambda+\ell\rceil }^\infty e^{u (k-\lambda)}\cdot\P(X_1= k)\\
&=1+e^{-u \ell}\sum_{k=\lceil \lambda+\ell\rceil }^\infty e^{u (k-\lambda)}\Big(\P(X_1\ge k)-\P(X_1\ge k+1)\Big)\\
&=1+e^{-u \ell}\left(\sum_{k=\lceil \lambda+\ell\rceil }^\infty e^{u (k-\lambda)}\cdot\P(X_1\geq k)-e^{-u}\sum_{k=\lceil \lambda+\ell\rceil+1 }^\infty e^{u (k-\lambda)}\cdot\P(X_1\geq k) \right)\\
&=1+e^{-u \ell}\left(e^{u (\lceil \lambda+\ell \rceil-\lambda)}\cdot\P(X_1\geq \lceil\lambda+\ell \rceil)+(1-e^{-u})\sum_{k=\lceil \lambda+\ell \rceil+1}^\infty e^{u (k-\lambda)}\cdot\P(X_1\geq k) \right).
\end{align*}
Write $\ell^*=\lceil \lambda+\ell \rceil-\lambda$ and $j_k=k-\lambda$. We obtain 
\begin{equation}\label{eq:laplace trans}
\E e^{u Y_1}< 1+e^{-u \ell}\left(e^{u \ell^*}\cdot\P(X_1\geq \lambda + \ell^* )+(1-e^{-u})\sum_{k=\lambda+\ell^*+1}^\infty e^{u j_k}\cdot\P(X_1\geq \lambda + j_k) \right).
\end{equation}
For any $k>0$, we apply Lemma \ref{lem:poss tail} to obtain
\begin{equation}\label{eq:g-lambda func}
e^{u k}\cdot\P(X_1\geq \lambda+k)\leq e^{\lambda g_u(k/\lambda)},
\end{equation}
where $g_u(x)=(1+u)x-(1+x)\log(1+x)$. One can check that $g_u'(x)=u-\log(1+x)$ and that $g_u''(x)=-(1+x)^{-1}<0$. Let $u^*=\frac{1}{2}\log(1+\ell/\lambda)$. Then, $g_{u^*}(x)$ is a decreasing and concave function for $x\geq\ell/\lambda$. Hence, we have for any $k\geq \ell$ that
\begin{equation}\label{eq:ratio bound}
\frac{e^{\lambda g_{u^*}((k+1)/\lambda)}}{e^{\lambda g_{u^*}(k/\lambda)}}=\exp\left(\frac{g_{u^*}((k+1)/\lambda)-g_{u^*}(k/\lambda)}{1/\lambda}\right)<e^{g_{u^*}'(k/\lambda)}\leq e^{g_{u^*}'(\ell/\lambda)}=e^{-u^*},
\end{equation}
where the second equality follows from the formula for $g'_{u^*}(x)$ and our choice of $u^*$. Combining \eqref{eq:laplace trans}, \eqref{eq:g-lambda func} and \eqref{eq:ratio bound}, we have
\begin{align*}
\E e^{u^* Y_1} &<1+e^{-u^* \ell}\left(e^{\lambda g_{u^*}(\ell^*/\lambda)}+(1-e^{-u^*})\sum_{k=\lambda+\ell^*+1}^\infty e^{\lambda g_{u^*}(j_k/\lambda)} \right)\\
&< 1+e^{-u^* \ell}\left(e^{\lambda g_{u^*}(\ell^*/\lambda)}+e^{\lambda g_{u^*}((\ell^*+1)/\lambda)}\right)\\
&<1+2e^{-u^* \ell}\cdot e^{\lambda g_{u^*}(\ell/\lambda)}=1+2e^{-\lambda I(\ell/\lambda)}\\
&<\exp\left(2e^{-\lambda I(\ell/\lambda)}\right),
\end{align*}
where the second last inequality follows from the fact that $\ell^*\geq \ell$ and that $g_{u^*}(x)$ is  decreasing for $x\geq\ell/\lambda$. Then we apply Markov's inequality to obtain for any $r>0$ that
$$
\P(Y>r)\leq e^{-u^*r}\E e^{u^*Y}=e^{-u^*r}\left(\E e^{u^* Y_1}\right)^n<\exp\left(2ne^{-\lambda I(\ell/\lambda)}-u^*r\right).
$$
Recall that $u^*=\frac{1}{2}\log(1+\ell/\lambda)$. In particular, for $r^*=6ne^{-\lambda I(\ell/\lambda)}/\log(1+\ell/\lambda)$, we  have
$$
\P(Y>r^*)<\exp\left(-ne^{-\lambda I(\ell/\lambda)}\right).
$$
This concludes the proof.
\end{proof}


\subsection{Concentration bounds for the drift strategy}

As our drift strategy is based on a coupling of the allocation process and a continuous time random process, our concentration bounds for the drift strategy rely on the study of a particular type of temporal point processes. We refer the interested readers to \cite{DVJ03:book, DVJ08:book} for more details of general temporal point processes.

\textbf{$\theta$-standardizing point process.} 
A temporal point process $X(t)$ is called \emph{$\theta$-standardizing} if the conditional intensity function $\lambda(t)$ satisfies 
\begin{align}
   \lambda(t)&< 1-\theta,~~ \text{if}~X(t)\geq t, \label{eq:upper-theta-stand}\\
   \lambda(t)&\geq 1+\theta,~~ \text{if}~X(t)<t \label{eq:lower-theta-stand}.
\end{align}
We say that $X(t)$ is \emph{upper $\theta$-standardizing} if \eqref{eq:upper-theta-stand} holds, and that $X(t)$ is \emph{lower $\theta$-standardizing} if  \eqref{eq:lower-theta-stand} holds.

\begin{lem}\label{lem:concentration-theta-standarizing}
Let $\{X(t)\}_{t\geq0}$ be a temporal point process adapted to the filtration $\{\cF_t\}_{t\geq 0}$. Let $s\geq 0$ be a stopping time with respect to $\{\cF_t\}_{t\geq 0}$ and let $\eta\in[0, 1]$ be a $\cF_s$ measurable random variable. Denote $Y(t)=X(t)-t$.
\begin{enumerate}

\item If $X(t)$ is upper $2\theta$-standarizing, then 
we have 
\begin{equation}\label{eq:con-upper-theta}
    \E \big[ e^{\theta Y(s+\eta)}\ |\ \cF_s\big]\leq e^{-\theta^2\eta}\cdot e^{\theta Y(s)}+e^{2\theta},
\end{equation}
and for any $\lambda$ satisfying $(1-2\theta)e^\lambda<\lambda/2$, 
we have 
\begin{equation}\label{eq:con-upper-theta-0}
    \E \big[ e^{\lambda Y(s+\eta)}\ |\ \cF_s\big]\leq e^{-\frac{\lambda}2\eta}\cdot e^{\lambda Y(s)}+e^{2\lambda}.
\end{equation}

\item If $X(t)$ is lower $2\theta$-standarizing, then we have
\begin{equation}\label{eq:con-lower-theta}
    \E \big[ e^{-\theta Y(s+\eta)}\ |\ \cF_s\big]\leq e^{-\theta^2\eta}\cdot e^{-\theta Y(s)}+e^\theta.
\end{equation}

\item If $X(t)$ is $2\theta$-standarizing, then we have
\begin{equation}\label{eq:con-theta}
    \E \big[ e^{\theta|Y(s+\eta)|}\ |\ \cF_s\big]\leq e^{-\theta^2\eta}\cdot e^{\theta |Y(s)|}+3e^{2\theta}.
\end{equation}
\end{enumerate}
\end{lem}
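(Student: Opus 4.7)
The plan is to apply It\^{o}'s formula for pure-jump processes to exponential functionals of $Y(t) = X(t) - t$, identify regions where the compensator drift is provably non-positive, and handle the complementary regions via the pointwise bounds $e^{\theta Y} \leq 1$ on $\{Y < 0\}$ and $e^{-\theta Y} \leq 1$ on $\{Y \geq 0\}$.

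For part (1), I would start by considering $M(t) = e^{\theta Y(t) + \theta^2 (t-s)}$. It\^{o}'s formula, together with the compensator $\int_s^t \lambda(u)\,du$ of the jump count $N(t) = X(t) - X(s)$, gives
\begin{equation*}
M(t) - M(s) = \int_s^t M(u^-)(e^\theta - 1)\,d\widetilde N(u) + \int_s^t M(u^-)\bigl[(e^\theta - 1)\lambda(u) + \theta^2 - \theta\bigr]\,du,
\end{equation*}
where $\widetilde N$ is the compensated jump martingale. On $\{Y(u) \geq 0\}$ the upper $2\theta$-standardizing hypothesis gives $\lambda(u) < 1 - 2\theta$, and a second-order Taylor expansion at $\theta = 0$ verifies that $(e^\theta - 1)(1-2\theta) + \theta^2 - \theta \leq 0$ for all sufficiently small $\theta$, so $M(t)$ behaves like a supermartingale whenever $Y$ is non-negative. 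The variant with $\lambda$ replacing $\theta$ is completely analogous: the hypothesis $(1-2\theta)e^\lambda < \lambda/2$ is precisely what forces the corresponding drift coefficient to be at most $-\lambda/2$, yielding an exponential decay rate of $\lambda/2$.

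Setting $\phi(t) = \E[e^{\theta Y(t)} \mid \cF_s]$ and taking expectations, this leads to a differential inequality of the form $\phi'(t) \leq -\theta^2 \phi(t) + R(t)$, where $R(t)$ is the generator contribution from $\{Y(u) < 0\}$. On that set $e^{\theta Y(u)} \leq 1$, so the only jumps that can push $e^{\theta Y}$ above this threshold are those carrying $Y$ from $[-1, 0)$ into $[0, 1)$, and each such jump raises $e^{\theta Y}$ by at most $e^\theta - 1$. The identity $X(t) - X(s) = Y(t) - Y(s) + (t-s)$ lets me bound the aggregate rate of these upward crossings, yielding $R(t) \leq \theta^2 e^{2\theta}$ uniformly in $t$; Gr\"{o}nwall's lemma applied to $\phi(t) - e^{2\theta}$ then delivers the first bound. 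Part (2) is symmetric, using $M'(t) = e^{-\theta Y(t) + \theta^2(t-s)}$, whose drift is non-positive on $\{Y(u) < 0\}$ under the lower $2\theta$-standardizing hypothesis; here the downward transitions of $Y$ across level $0$ occur only by continuous descent at unit rate (no jumps), which accounts for the smaller additive constant $e^\theta$. Part (3) follows from the elementary inequality $e^{\theta|Y|} \leq e^{\theta Y} + e^{-\theta Y}$ by summing the bounds from (1) and (2) and using $e^{\theta Y(s)} + e^{-\theta Y(s)} \leq 2 e^{\theta|Y(s)|}$, where the factor $2$ can be absorbed into a slightly larger additive constant, giving $3e^{2\theta}$.

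The hardest part will be the quantitative bound on the remainder $R(t)$, equivalently, making the above excursion-rate heuristic precise without any additional upper bound on $\lambda(t)$ on $\{Y < 0\}$. A cleaner route I would pursue to sidestep this difficulty is to show directly that $\bigl(e^{\theta Y(t)} - e^{2\theta}\bigr)_+ e^{\theta^2 (t-s)}$ is a supermartingale: when $e^{\theta Y(t)} > e^{2\theta}$, equivalently $Y(t) > 2$, the upper $2\theta$-standardizing bound applies and the It\^{o} computation from the second paragraph goes through; when $e^{\theta Y(t)} \leq e^{2\theta}$ the process vanishes, and a unit-size upward jump can only increase it by a bounded amount before $Y$ re-enters the controlled regime. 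This bypasses the delicate upcrossing/downcrossing analysis and yields the desired form $\phi(t) \leq e^{-\theta^2(t-s)}\phi(s) + e^{2\theta}$ in one shot.
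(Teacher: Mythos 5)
Your overall strategy (exploit the intensity bound where it applies, pay an additive constant for the region at and below zero, then add the two one-sided bounds for part (3)) is in the right spirit, but both concrete routes you propose for part (1) have genuine gaps, and they differ from the paper's argument, which is a short stopping-time/Poisson-domination proof rather than an It\^{o}/Gr\"{o}nwall computation.

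The first gap is the remainder bound in the Gr\"{o}nwall route. For an upper $2\theta$-standardizing process the hypothesis constrains $\lambda(t)$ only on $\{Y(t)\ge 0\}$; on $\{Y(t)<0\}$ the intensity may be arbitrarily large, and your $R(t)$ is essentially $\E\big[\ind_{\{Y(t^-)<0\}}e^{\theta Y(t^-)}(e^\theta-1)\lambda(t)\big]$, so no uniform bound of the form $\theta^2e^{2\theta}$ is available; the appeal to $X(t)-X(s)=Y(t)-Y(s)+(t-s)$ is never made quantitative, and in expectation it reintroduces the very quantity you are trying to control. The second, ``cleaner'' route rests on a false claim: $V(t)=\big(e^{\theta Y(t)}-e^{2\theta}\big)_+e^{\theta^2(t-s)}$ is not a supermartingale. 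When $Y(t)\in(1,2]$ we have $V(t)=0$, the process cannot decrease below $0$, and a unit jump (occurring at rate $\lambda(t)>0$) sends it to a strictly positive value, so the conditional drift of $V$ is strictly positive there. A sanity check confirms something must be wrong: your supermartingale argument makes no use of $\eta\le 1$, so it would give $\E[e^{\theta Y(t)}\,|\,\cF_s]\le e^{-\theta^2(t-s)}e^{\theta Y(s)}+e^{2\theta}$ for all $t$, whereas for small $\theta$ the (near-)stationary law of $Y$ has an exponential tail of rate about $2\theta$ above zero, making $\E e^{\theta Y}$ of order $1/\theta$, incompatible with an additive constant $e^{2\theta}=O(1)$ (this is why Corollary~\ref{coro:concentration-theta-standarizing-a} carries the larger constant $2e^{2\theta}/\theta^2$). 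The paper exploits $\eta\le1$ precisely here: it stops at $s_*$, the first time in $[s,s+\eta]$ with $Y\ge1$ (or $s+\eta$ if none); after $s_*$ one has $Y\ge 0$ throughout, since $Y$ can decrease by at most $\eta\le1$, so $Y(s+\eta)-Y(s_*)$ is stochastically dominated by $Z\big((1-2\theta)(s+\eta-s_*)\big)-(s+\eta-s_*)$ with $Z(\beta)\sim\poss(\beta)$, and the two cases $s_*=s$ and $s_*>s$ (where $Y(s_*)<2$) produce exactly the two terms of \eqref{eq:con-upper-theta}; part (2) is handled analogously via the event $\{Y\le 0 \text{ on } [s,s+\eta]\}$, on whose complement $Y(s+\eta)\ge-1$. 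Finally, in your part (3) you cannot ``absorb'' the factor $2$ from $e^{\theta Y(s)}+e^{-\theta Y(s)}\le 2e^{\theta|Y(s)|}$ into the additive constant, since $e^{\theta|Y(s)|}$ is unbounded; use instead that one of the two terms is at most $1$, so their sum is at most $e^{\theta|Y(s)|}+1$.
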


\begin{proof}
We denote by $Z(\beta)$ a $\poss(\beta)$ random variable throughout the proof. We first prove inequalities \eqref{eq:con-upper-theta} and \eqref{eq:con-upper-theta-0}. We need to estimate the Laplace transform of $Z(\alpha(1-2\theta))$ for any $\alpha>0$ as follows
\begin{align}
\E e^{\lambda[Z(\alpha(1-2\theta))-\alpha]}&=e^{\alpha(1-2\theta)(e^{\lambda}-1)-\alpha\lambda}\notag\\
&\leq 
\begin{cases}\label{eq:Z-laplace}
e^{\alpha(1-2\theta)(\lambda+\lambda^2)-\alpha\lambda}\le e^{\alpha(\lambda^2-2\lambda\theta)} & 0\leq \lambda\leq1,\\
e^{\alpha(1-2\theta)e^{\lambda}-\alpha\lambda}\leq e^{-\lambda\alpha/2} & (1-2\theta)e^\lambda\le \lambda/2.
\end{cases}
\end{align}
We define $s_*=\min\{t\in [s,s+\eta]\ :\ Y(t)\geq1\}$ and set $s_*=s+\eta$ if the minimum is taken over an empty set. Then, $s_*$ is a stopping time with respect to $\{\cF_t\}_{t\geq 0}$. 
We have
 \begin{align}\label{eq:con-upper-theta-1}
 \E \big[e^{\lambda Y(s+\eta)}\ |\ \cF_{s_*}\big] 
 &=e^{\lambda Y(s_*)}\cdot\E \big[e^{\lambda[Y(s+\eta)-Y(s_*)]}\ |\ \cF_{s_*}\big] \notag\\
 &\le e^{\lambda Y(s_*)}\cdot\E \big[e^{\lambda[Z((1-2\theta)(s+\eta-s_*))-(s+\eta-s_*)]}\ |\ \cF_{s_*}\big]\notag\\
 &\leq \begin{cases}
 e^{-\theta^2(s+\eta-s_*)}\cdot e^{\theta Y(s_*)} & \lambda=\theta,\\
 e^{-\frac{\lambda}{2}(s+\eta-s_*)}\cdot e^{\lambda Y(s_*)} & (1-2\theta)e^\lambda\le \lambda/2. 
 \end{cases} \notag\\
 &\leq  \begin{cases}
 e^{-\theta^2\eta}\cdot e^{\theta Y(s)}+ e^{2\theta}& \lambda=\theta,\\
 e^{-\frac{\lambda}{2}\eta}\cdot e^{\lambda Y(s)}+e^{2\lambda} & (1-2\theta)e^\lambda\le \lambda/2. 
 \end{cases}
 \end{align}
To see the first inequality, observe that $Y(t)=X(t)-t\geq 0$ for $t\in[s_*, s+\eta]$. Since $X(t)$ is upper $2\theta$-standardizing, $Y(s+\eta)-Y(s_*)=X(s+\eta)-X(s_*)-(s+\eta-s_*)$ is dominated by $Z((1-2\theta)(s+\eta-s_*))-(s+\eta-s_*)$. The second inequality follows from \eqref{eq:Z-laplace}. In each case of \eqref{eq:con-upper-theta-1}, the first term is an upper bound for the case $s_*=s$, while the second term uses the fact that $Y(s_*)< 2$ when $s_*\neq s$. Inequalities \eqref{eq:con-upper-theta} and \eqref{eq:con-upper-theta-0} follow from the tower property of conditional expectation and \eqref{eq:con-upper-theta-1}.

Next we prove \eqref{eq:con-lower-theta}. Write $E=\{Y(t)\le 0~ \text{for all}~ t\in [s,s+\eta]\}$. Observe that, whenever $E^c$ occurs, we have $Y(s+\eta)\ge -1$. Hence, 
\begin{align*}
\E \big[ e^{-\theta Y(s+\eta)}\ |\ \cF_s\big] 
&=\E \big[e^{-\theta Y(s+\eta)}\ind_E+e^{-\theta Y(s+\eta)}\ind_{E^c} \ |\ \cF_s\big]\\
&\leq \E \big[e^{-\theta Y(s+\eta)}\ind_E\ |\ \cF_s\big]+e^\theta\\
&= e^{-\theta Y(s)} \cdot\E \big[e^{-\theta [Y(s+\eta)-Y(s)]}\ind_E\ |\ \cF_s\big]+e^\theta\\
&\le e^{-\theta Y(s)} \cdot \E \big[e^{-\theta [Z((1+2\theta)\eta)-\eta]}\ |\ \cF_s\big]+e^\theta\\
&\le e^{-\theta^2 \eta}\cdot e^{-\theta Y(s)}+e^\theta.
\end{align*}
To see the second inequality, observe that, whenever $E$ occurs, we have $Y(t)=X(t)-t\leq 0$ for all $t\in [s, s+\eta]$. Since $X(t)$ is $2\theta$-standardizing,  $Y(s+\eta)-Y(s)=X(s+\eta)-X(s)-\eta$ dominates $Z((1+2\theta)\eta)-\eta$. The last inequality follows from that for any $\alpha> 0$, 
$$
\E e^{-\theta [Z(\alpha(1+2\theta))-\alpha]}=e^{\alpha(1+2\theta)(e^{-\theta}-1)+\alpha\theta}<e^{\alpha(1+2\theta)(-\theta+\theta^2/2)+\alpha\theta}<e^{-\alpha\theta^2}.
$$

When $X(t)$ is $2\theta$-standarizing, it is both upper and lower $2\theta$-standarizing. Hence, inequalities \eqref{eq:con-upper-theta} and \eqref{eq:con-lower-theta} hold. Observe that 
\begin{align*}
 \E \big[ e^{\theta |Y(s+\eta)|}\ |\ \cF_s\big] &\leq \E \big[ e^{\theta Y(s+\eta)}\ |\ \cF_s\big]+\E \big[ e^{-\theta Y(s+\eta)}\ |\ \cF_s\big].
\end{align*}
This, together with \eqref{eq:con-upper-theta} and \eqref{eq:con-lower-theta}, yields \eqref{eq:con-theta}.
\end{proof}

\begin{coro}\label{coro:concentration-theta-standarizing-a}
Let $\{X(t)\}_{t\geq0}$ be a temporal point process adapted to the filtration $\{\cF_t\}_{t\geq 0}$. Denote $Y(t)=X(t)-t$. 
\begin{enumerate}
\item If $X(t)$ is upper $2\theta$-standarizing,  we have for any $t\geq s$, 
\begin{equation}\label{eq:con-upper-theta-a}
    \E \big[ e^{\theta Y(t)}\ |\ \cF_s\big]\leq e^{-\theta^2(t-s)}\cdot e^{\theta Y(s)}+\frac{2e^{2\theta}}{\theta^2},
\end{equation}
and for any $\lambda$ satisfying $(1-2\theta)e^\lambda<\lambda/2$,
\begin{equation}\label{eq:concentration-theta-one-sided-a}
\E \big[ e^{\lambda Y(t)}\ |\ \cF_s \big]<e^{-\frac{\lambda}{2}(t-s)}\cdot e^{\lambda Y(s)}+\frac{2e^{2\lambda}}{1-e^{-\lambda/2}}.
\end{equation}

\item If $X(t)$ is lower $2\theta$-standarizing, we have for any $t\geq s$,
\begin{equation}\label{eq:con-lower-theta-a}
    \E \big[ e^{-\theta Y(t)}\ |\ \cF_s]\leq e^{-\theta^2(t-s)}\cdot e^{-\theta Y(s)}+\frac{2e^{\theta}}{\theta^2}.
\end{equation}

\item If $X(t)$ is $2\theta$-standarizing, we have for any $t\geq s$,
\begin{equation}\label{eq:con-theta-a}
    \E \big[ e^{\theta |Y(t)|}\ |\ \cF_s\big]\leq e^{-\theta^2(t-s)}\cdot e^{\theta |Y(s)|}+\frac{6e^{2\theta}}{\theta^2}.
\end{equation}
\end{enumerate}
\end{coro}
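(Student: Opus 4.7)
The plan is to deduce this corollary from Lemma~\ref{lem:concentration-theta-standarizing} by a simple iteration over unit time intervals, combined with the tower property of conditional expectations. All three parts have the same structure, so I describe the argument for \eqref{eq:con-upper-theta-a} in detail and indicate the modifications required for the remaining inequalities.

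Fix deterministic times $s\le t$ and set $n=\lceil t-s\rceil\ge 1$. I would partition $[s,t]$ into sub-intervals of length at most $1$ via the choice $t_0=s$, $t_i=t-(n-i)$ for $i=1,\dots,n$, so that $t_1-t_0\in(0,1]$ and $t_i-t_{i-1}=1$ for $i=2,\dots,n$. Each $t_i$ is a (deterministic, hence trivially stopping) time, and each increment $t_i-t_{i-1}$ is constant, hence $\cF_{t_{i-1}}$-measurable and valued in $[0,1]$. Thus, the hypotheses of Lemma~\ref{lem:concentration-theta-standarizing} apply at each step with $s$ replaced by $t_{i-1}$ and $\eta$ replaced by $t_i-t_{i-1}$.

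For \eqref{eq:con-upper-theta-a}, applying \eqref{eq:con-upper-theta} at each $t_i$ gives
\[
\E\bigl[e^{\theta Y(t_i)}\,\big|\,\cF_{t_{i-1}}\bigr]\le e^{-\theta^2(t_i-t_{i-1})}e^{\theta Y(t_{i-1})}+e^{2\theta}.
\]
Taking the conditional expectation given $\cF_s$ and iterating via the tower property, a straightforward induction on $i$ yields
\[
\E\bigl[e^{\theta Y(t_n)}\,\big|\,\cF_s\bigr]\le e^{-\theta^2(t_n-s)}e^{\theta Y(s)}+e^{2\theta}\sum_{j=1}^{n}e^{-\theta^2(t_n-t_j)}.
\]
By construction, $t_n-t_j=n-j$ for $1\le j\le n$, so the residual sum is the geometric series
\[
\sum_{j=1}^{n}e^{-\theta^2(n-j)}=\sum_{k=0}^{n-1}e^{-\theta^2 k}\le \frac{1}{1-e^{-\theta^2}}\le \frac{2}{\theta^2},
\]
where the last bound uses $1-e^{-x}\ge x/2$ for $x\in[0,1]$ (and we are in the regime $\theta\in(0,1)$). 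This produces \eqref{eq:con-upper-theta-a}. For completeness, if $t-s\le 1$ so that $n=1$, the bound follows directly from Lemma~\ref{lem:concentration-theta-standarizing}(1) since $e^{2\theta}\le 2e^{2\theta}/\theta^2$.

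The three remaining inequalities follow identically, using the appropriate one-step estimate and the same partition. For \eqref{eq:concentration-theta-one-sided-a}, one iterates \eqref{eq:con-upper-theta-0} and bounds the telescoping sum by $\sum_{k=0}^{n-1}e^{-\lambda k/2}\le (1-e^{-\lambda/2})^{-1}$, yielding the constant $2e^{2\lambda}/(1-e^{-\lambda/2})$ after absorbing a harmless factor of $2$ that handles the edge case $n=1$. For \eqref{eq:con-lower-theta-a} one iterates \eqref{eq:con-lower-theta}, carrying through the per-step constant $e^{\theta}$ instead of $e^{2\theta}$; and for \eqref{eq:con-theta-a} one iterates \eqref{eq:con-theta}, with per-step constant $3e^{2\theta}$. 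In each case the geometric-sum bound $1/(1-e^{-\theta^2})\le 2/\theta^2$ produces exactly the claimed constants. The only delicate point is ensuring that the summable geometric factor at each step is $e^{-\theta^2(t_i-t_{i-1})}$ rather than something larger; this is guaranteed because the one-step estimates in Lemma~\ref{lem:concentration-theta-standarizing} hold uniformly for $\eta\in[0,1]$. No other subtlety arises.
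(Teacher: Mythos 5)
Your proof is correct and follows essentially the same route as the paper: both iterate the one-step bounds of Lemma~\ref{lem:concentration-theta-standarizing} over intervals of length at most one and then sum the resulting geometric series, bounding $1/(1-e^{-\theta^2})$ by $2/\theta^2$ (and similarly for the $\lambda$ case). The only cosmetic difference is that the paper packages the integer-step iteration as a supermartingale and treats the fractional remainder $t-s-\lfloor t-s\rfloor$ in a final extra step, whereas you place the short subinterval first and run an explicit induction.
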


\begin{proof}
We only prove \eqref{eq:con-upper-theta-a} and inequalities \eqref{eq:concentration-theta-one-sided-a}, \eqref{eq:con-lower-theta-a}, \eqref{eq:con-theta-a} can be proved in a similar manner.
Lemma~\ref{lem:concentration-theta-standarizing} yields that for any $k\in\N$,
$$
\E\left[ \left(e^{\theta Y(s+k)}-\frac{e^{2\theta}}{1-e^{-\theta^2}}\right)e^{\theta^2(s+k)}\ \big|\ \cF_{s+k-1}\right]
\leq  \left(e^{\theta Y(s+k-1)}-\frac{e^{2\theta}}{1-e^{-\theta^2}}\right)e^{\theta^2 (s+k-1)}.
$$
Hence, $\left\{\left(e^{\theta Y(s+k)}-\frac{e^{2\theta}}{1-e^{-\theta^2}}\right)e^{\theta^2(s+k)}\right\}_{k\in\N}$ is a supermartingale and for any $k\in\N$, we have
\begin{equation}\label{eq:concentration-theta-standarizing-c}
\E\big[e^{\theta Y(s+k)}~|~\cF_s \big]\leq e^{-\theta^2k}\cdot e^{\theta Y(s)}+\frac{e^{2\theta}(1-e^{-\theta^2k})}{1-e^{-\theta^2}}.
\end{equation} 
For any $t\geq s$, we have
\begin{align*}
\E\big[e^{\theta Y(t)}~|~\cF_s \big] &= \E\big[\E\big[e^{\theta Y(t)}~|~\cF_{s+\lfloor t-s\rfloor} \big]~|~\cF_s\big]\\
&\leq e^{-\theta^2(t-s-\lfloor t-s\rfloor)}\cdot\E\big[e^{\theta Y(s+\lfloor t-s\rfloor)}~|~\cF_s\big]+e^{2\theta}\\
&\leq e^{-\theta^2(t-s)}\cdot e^{\theta Y(s)}+\frac{2e^{2\theta}}{1-e^{-\theta^2}}\\
&\leq e^{-\theta^2(t-s)}\cdot e^{\theta Y(s)}+\frac{2e^{2\theta}}{\theta^2}.
\end{align*}
In the first inequality, we use Lemma~\ref{lem:concentration-theta-standarizing}, and in the second inequality, we use \eqref{eq:concentration-theta-standarizing-c}. The last inequality follows from $e^{-x}>1-x$. 
\end{proof}

\begin{coro}\label{coro:con-theta-sum}
We denote by $\{X_i(t)\}_{i\in[n]}$ independent  $2\theta$-standarizing point processes with initial values $\{X_i(0)\}_{i\in [n]}$ such that $|X_i(0)|\leq L$ for all $i\in [n]$. For all $t\geq L/\theta$, we have
\begin{equation}\label{eq:con-theta-b}
\E e^{\theta |X_i(t)-t|}\leq \frac{20}{\theta^2}.
\end{equation}
Write $Y(t)=\frac{1}{n}\sum_{i=1}^nX_i(t)-t$. For all $t\geq L/\theta$, we have
\begin{equation}\label{eq:con-theta-sum}
\E e^{\theta |Y(t)|}\leq \frac{20}{\theta^2}~~~\text{and}~~~\E e^{\theta n|Y(t)|}\leq \left(\frac{20}{\theta^2}\right)^n.
\end{equation}
In addition, for $0\le t<L/\theta$, we have
\begin{equation}\label{eq:con-theta-sum2}
\E e^{\theta |X_i(t)-t|}\leq e^{\theta L}+\frac{20}{\theta^2}
~~~\text{and}~~~\E e^{\theta |Y(t)|}\leq e^{\theta L}+\frac{20}{\theta^2}.
\end{equation}
\end{coro}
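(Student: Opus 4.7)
The plan is to reduce everything to Corollary~\ref{coro:concentration-theta-standarizing-a}(3) applied to each individual process, and then pass to the average $Y(t)$ by Jensen's inequality and independence. The calculations are essentially routine once the right inequalities are in hand, so there is no single hard step; the only care is in checking constants.

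First, since each $X_i(t)$ is $2\theta$-standardizing, inequality \eqref{eq:con-theta-a} applied with $s=0$ gives
\begin{equation*}
\E e^{\theta|X_i(t)-t|}\;\le\; e^{-\theta^2 t}\cdot e^{\theta|X_i(0)|}+\frac{6e^{2\theta}}{\theta^2}\;\le\; e^{\theta L-\theta^2 t}+\frac{6e^{2\theta}}{\theta^2},
\end{equation*}
using $|X_i(0)|\le L$. For $t\ge L/\theta$ the first summand is at most $1$, and for the small values of $\theta$ relevant to us (so that $6e^{2\theta}+\theta^2\le 20$) this yields \eqref{eq:con-theta-b}. For $0\le t<L/\theta$ we simply bound $e^{-\theta^2 t}\le 1$ to obtain the first inequality of \eqref{eq:con-theta-sum2}.

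Next, by the triangle inequality,
\begin{equation*}
|Y(t)|=\Big|\tfrac{1}{n}\sum_{i=1}^n(X_i(t)-t)\Big|\;\le\;\tfrac{1}{n}\sum_{i=1}^n|X_i(t)-t|,
\end{equation*}
so by convexity of $x\mapsto e^{\theta x}$ (Jensen's inequality applied to the uniform average),
\begin{equation*}
e^{\theta|Y(t)|}\;\le\; e^{\frac{\theta}{n}\sum_i|X_i(t)-t|}\;\le\;\tfrac{1}{n}\sum_{i=1}^n e^{\theta|X_i(t)-t|}.
\end{equation*}
Taking expectations and applying the per-coordinate bound established in the previous step gives the first inequality of \eqref{eq:con-theta-sum} (for $t\ge L/\theta$) and the second inequality of \eqref{eq:con-theta-sum2} (for $0\le t<L/\theta$).

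Finally, for the product bound $\E e^{\theta n|Y(t)|}\le(20/\theta^2)^n$, we use
\begin{equation*}
\theta n|Y(t)|\;\le\;\theta\sum_{i=1}^n|X_i(t)-t|,\qquad e^{\theta n|Y(t)|}\;\le\;\prod_{i=1}^n e^{\theta|X_i(t)-t|},
\end{equation*}
and now the independence of the processes $\{X_i(t)\}_{i\in[n]}$ lets us factor the expectation as $\prod_{i=1}^n\E e^{\theta|X_i(t)-t|}$, which is bounded by $(20/\theta^2)^n$ via the individual bound \eqref{eq:con-theta-b}. The only potential obstacle is keeping constants consistent with $20/\theta^2$, but this is automatic for the regime of $\theta$ in which the preceding results are used (in particular $\theta=1/\sqrt{\log n}$ and $\theta\le\sqrt5-2$ in the applications).
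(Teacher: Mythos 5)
Your proposal is correct and follows essentially the same route as the paper: apply inequality \eqref{eq:con-theta-a} with $s=0$ to each coordinate, then pass to $Y(t)$ via the triangle inequality and factor the product bound using independence. The only (minor) difference is that for $\E e^{\theta|Y(t)|}$ you use convexity of the exponential (Jensen on the uniform average), which does not even need independence, whereas the paper factors via independence and a power-mean step; both give the same constant, and your remark that $\theta^2+6e^{2\theta}\le 20$ is automatic since $2\theta$-standardizing forces $\theta<1/2$.
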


\begin{proof}
Inequality \eqref{eq:con-theta-a} and the assumption that $|X_i(0)|\leq L$ imply that 
$$
\E e^{\theta |X_i(t)-t|}\leq e^{-\theta^2t+\theta L}+\frac{6e^{2\theta}}{\theta^2}.
$$
For $t\ge L/\theta$, the RHS of the above inequality is at most $1+\frac{6e^{2\theta}}{\theta^2}\leq \frac{20}{\theta^2}$; for $0\leq t<L/\theta$, it can be trivially bounded above by $e^{\theta L}+\frac{20}{\theta^2}$. This proves inequality \eqref{eq:con-theta-b} and the first inequality of \eqref{eq:con-theta-sum2}. Then we can use inequality \eqref{eq:con-theta-b} to obtain for $t\ge L/\theta$ that
\begin{align*}
\E e^{\theta |Y(t)|} &\leq \E e^{\frac{\theta}{n}\sum_{i=1}^n|X_i(t)-t|}=\left(\prod_{i=1}^n\E e^{\theta |X_i(t)-t|}\right)^{1/n} 
\leq \frac{20}{\theta^2},
\end{align*}
and
\begin{align*}
\E e^{\theta n|Y(t)|} &\leq \E e^{\theta\sum_{i=1}^n|X_i(t)-t|}=\prod_{i=1}^n\E e^{\theta |X_i(t)-t|}\leq \left(\frac{20}{\theta^2}\right)^n.
\end{align*}
Similarly, we can use the first inequality of \eqref{eq:con-theta-sum2} to obtain the second inequality of  \eqref{eq:con-theta-sum2}.
\end{proof}

Consider a collection independent regular point processes $\{X_i(t)\}_{i\in [n]}$  with the initial value $\{L_i(0)\}_{i\in [n]}$ and conditional intensity functions $\{\lambda_i(t)\}_{i\in [n]}$ given in \eqref{eq:intensity xi}. The
process $\{Z_k\}_{k\in\N}$ defined in \eqref{eq:def zk} is the 
output of the $\theta$-drift strategy $f$ as per Section~\ref{sec:base strat}. We show the following concentration bounds on the load vector $\{L_i^f(m)\}_{i\in [n]}$.

\begin{lem}\label{lem:single time upper bound}
Suppose that $|L_i(0)|\leq L$ for all $i \in [n]$. Set $\theta=1/5$. The $\theta$-drift strategy $f$ satisfies that for any $m\geq \big( \frac{3L}{\theta}+\frac{10}{\theta}\log\frac{80}{\theta^2}\big)n$, any $i\in [n]$ and any $k>0$,
\begin{equation}\label{eq:load-discrepancy-drift}
\P\left( \big|L_i^f(m)|>k\right)\leq\frac{320}{\theta^2}\exp\left(-\frac{\theta k}{5}\right).
\end{equation}
Taking the union bound, we have
\begin{equation}\label{eq:maxload-drift}
\P\left( \max_{i\in [n]}|L_i^f(m)|>k+\frac{5}{\theta}\log\frac{320n}{\theta^2}\right)\leq \exp\left(-\frac{\theta k}{5}\right).
\end{equation}
\end{lem}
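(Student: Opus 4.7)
The plan is to exploit the coupling to the auxiliary processes $X_i(t)$ from Section~\ref{sec:base strat}. Setting $T_m:=\inf\{t\ge 0:X(t)=m\}$ with $X(t)=\sum_j X_j(t)$, the definition of the $\theta$-drift strategy gives $\sum_{k=1}^m \ind_{\{Z_k=i\}}=X_i(T_m)-X_i(0)=X_i(T_m)-L_i(0)$, so
\[
L_i^f(m)=X_i(T_m)-m/n.
\]
Writing $Y_i(t):=X_i(t)-t$ and $\bar Y(t):=X(t)/n-t=\tfrac{1}{n}\sum_j Y_j(t)$, I would sandwich the random time $T_m$ between two deterministic times $s_\pm:=m/n\pm K$. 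The identities $\{T_m>s\}=\{\bar Y(s)<m/n-s\}$ and $\{T_m<s\}=\{\bar Y(s)\ge m/n-s\}$, together with monotonicity of the counting process $X_i$ (which yields $Y_i(s_-)-K\le L_i^f(m)\le Y_i(s_+)+K$ on $\{s_-\le T_m\le s_+\}$), give
\[
\{L_i^f(m)>k\}\subseteq\{\bar Y(s_+)<-K\}\cup\{Y_i(s_+)>k-K\},
\]
and symmetrically for the lower tail, reducing everything to deterministic-time tail bounds.

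Next I would invoke Corollary~\ref{coro:con-theta-sum} with parameter $\rho:=\theta/2$: each $X_i(t)$ is $2\rho$-standardizing (its intensity lies in $\{1-\theta,1+\theta\}$) and $|X_i(0)|=|L_i(0)|\le L$, so for any deterministic $t\ge L/\rho=2L/\theta$ we have $\E e^{\rho|Y_i(t)|},\,\E e^{\rho|\bar Y(t)|}\le 20/\rho^2=80/\theta^2$. The hypothesis $m/n\ge 3L/\theta+(10/\theta)\log(80/\theta^2)$ places $s_\pm$ comfortably above $2L/\theta$ for the range of $K$ that the argument needs (for very large $k$ the right-hand side of~\eqref{eq:load-discrepancy-drift} already exceeds 1, making the bound trivial). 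Choosing $K=k/2$ and applying Markov's inequality to each of the four events gives
\[
\P(|L_i^f(m)|>k)\le 4\cdot\tfrac{80}{\theta^2}\,e^{-\rho k/2}=\tfrac{320}{\theta^2}\,e^{-\theta k/4}\le\tfrac{320}{\theta^2}\,e^{-\theta k/5},
\]
which is~\eqref{eq:load-discrepancy-drift}. Inequality~\eqref{eq:maxload-drift} follows from a union bound over $i\in[n]$ and the substitution $k\mapsto k+(5/\theta)\log(320n/\theta^2)$, engineered so that the prefactor $n\cdot(320/\theta^2)$ collapses into the exponent.

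The main obstacle is guaranteeing that the clean large-$t$ branch of Corollary~\ref{coro:con-theta-sum} is in force at $t=s_\pm$ rather than the small-$t$ branch with its extra $e^{\rho L}$ factor, which would ruin the prefactor; this is precisely the role of the additive term $(10/\theta)\log(80/\theta^2)$ in the hypothesis on $m$. A secondary subtlety is that $T_m$ is a random stopping time, not a deterministic one, so Corollary~\ref{coro:con-theta-sum} cannot be applied at $t=T_m$ directly; the monotonicity-based sandwich is the device that converts the problem to deterministic times, which is why I expect this coupling step, rather than any clever martingale manipulation at the random time $T_m$, to be the cleanest route.
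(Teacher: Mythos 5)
Your overall plan is the same as the paper's: sandwich the random time $T_m$ between the deterministic times $m/n\pm k/2$, convert the four resulting events into deterministic-time tail events for $Y_i$ and $\bar Y$, and finish with the exponential moment bounds of Corollary~\ref{coro:con-theta-sum} plus Markov. The upper-tail half and the union-bound step \eqref{eq:maxload-drift} are fine. The gap is in the lower tail, and your parenthetical dismissal of it is backwards: the right-hand side $\frac{320}{\theta^2}e^{-\theta k/5}$ exceeds $1$ only for \emph{small} $k$ (roughly $k\lesssim\frac{5}{\theta}\log\frac{320}{\theta^2}$), so for large $k$ the claim is highly non-trivial, yet that is exactly where $s_-=m/n-k/2$ drops below $2L/\theta$ (and eventually below $0$). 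Concretely, whenever $k>2m/n-4L/\theta$ the clean branch of Corollary~\ref{coro:con-theta-sum} is not available at $t=s_-$, and the lower-tail probability is not trivially zero either, since $L_i^f(m)\ge -L-m/n$ only kills it once $k\ge L+m/n$. So your argument, as written, does not cover the window $2m/n-4L/\theta<k<2m/n$ (and the adjacent range up to $k\ge 2m/n$, which needs its own, easy, remark).

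The paper closes exactly this window by a case analysis: for $k\ge 2m/n$ one has $t_*=0$, $X(t_*)=0$ and $L_i^f(m)\ge -L-m/n>-\tfrac{4m}{3n}>-k$ (using the hypothesis on $m$), so both lower-tail terms vanish; for $k_0:=2m/n-4L/\theta<k<2m/n$ it uses the small-$t$ branch of Corollary~\ref{coro:con-theta-sum}, paying the factor $e^{\theta L/2}+\tfrac{80}{\theta^2}$, and the additive term $\tfrac{10}{\theta}\log\tfrac{80}{\theta^2}$ (together with $\tfrac{3L}{\theta}$) in the hypothesis on $m$ is used precisely to show $e^{\theta L/2}+\tfrac{80}{\theta^2}\le e^{\theta k_0/20}<e^{\theta k/20}$, absorbing the penalty at the cost of weakening the exponent from $\theta k/4$ to $\theta k/5$. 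This is why the lemma is stated with $e^{-\theta k/5}$ at all; if the clean branch were in force for every relevant $k$, as you assume, $e^{-\theta k/4}$ would hold and the hypothesis on $m$ would be doing much less work than it does. To repair your proof, keep your sandwich but split the lower-tail estimate into the three ranges $k\le k_0$, $k_0<k<2m/n$, $k\ge 2m/n$ and run the paper's computation (or an equivalent one) in the middle and last ranges.
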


\begin{proof}

Set $t^*=m/n+k/2$ and $t_*=\max(m/n-k/2,0)$. We denote by $E=\{X(t^*)\geq m\}$ and $F=\{X(t_*)\leq m\}$.
Using the law of total probability, we obtain
\begin{align}\label{eq:load-discrepancy-drift-a}
\P\left( |L_i^f(m)|>k\right) &=\P\left( L_i^f(m)>k\right)+\P\left( L_i^f(m)<-k\right)\notag\\
&\leq \P\left( L_i^f(m)>k, E\right)+\P(E^c)+\P\left( L_i^f(m)<-k, F\right) +\P(F^c).
\end{align}

We now estimate the first two terms of \eqref{eq:load-discrepancy-drift-a}.
Since $X_i(t)$ given in \eqref{eq:intensity xi} is $\theta$-standardizing, we apply the first inequality of \eqref{eq:con-theta-sum} and Markov's inequality to obtain
\begin{equation}\label{eq:e complement}
\P(E^c) = \P\left(\frac{X(t^*)}{n}<t^*-\frac{k}{2}\right)\leq e^{-\frac{\theta k}{4}} \cdot\E \exp\left(\frac{\theta}{2}\Big|\frac{X(t^*)}{n}-t^*\Big|\right)\leq \frac{80}{\theta^2} \exp\left(-\frac{\theta k}{4}\right).
\end{equation}
Whenever $E$ occurs, we have $L_i^f(m)\leq X_i(t^*)-m/n$. This, together with inequality \eqref{eq:con-theta-b} and Markov's inequality, yields
\begin{align}\label{eq:load-discrepancy-drift-a-1st term}
\P\left( L_i^f(m)>k,  E\right) &\leq \P\left( X_i(t^*)>\frac{m}{n}+k\right)=\P\left( X_i(t^*)>t^*+\frac{k}{2}\right)\notag \notag\\
&\leq e^{-\frac{\theta k}{4}} \cdot\E \exp\left(\frac{\theta}{2}|X_i(t^*)-t^*|\right)\leq \frac{80}{\theta^2}\exp\left({-\frac{\theta k}{4}}\right).
\end{align}

We next estimate the last two terms of \eqref{eq:load-discrepancy-drift-a}. We first estimate $\P(F^c)$. For $k\ge2m/n$ we have $t_*=0$ and $X(t_*)=\sum_{i\in [n]}L_i(0)=0$. This yields $\P(F^c)=0$. For $k< 2m/n$, we use the fact that $t_*=m/n-k/2$ to rewrite $F=\{X(t_*)/n\le t_*+k/2\}$. Set $k_0=2m/n-4L/\theta$. One can check that $t_*>2L/\theta$ for $0<k<k_0$ and that $0<t_*<2L/\theta$ for $k_0<k<2m/n$. We apply the first inequality of \eqref{eq:con-theta-sum}, the second inequality of \eqref{eq:con-theta-sum2} and Markov's inequality to obtain
\begin{align}\label{eq:f complement}
\P(F^c) & =\P\left(\frac{X(t_*)}{n}>t_*+\frac{k}{2}\right)\leq e^{-\frac{\theta k}{4}} \cdot\E \exp\left(\frac{\theta}{2}\Big|\frac{X(t_*)}{n}-t_*\Big|\right)\notag\\
&\leq \begin{cases}
\frac{80}{\theta^2}e^{-\theta k/4}, & 0<k\le k_0 \\
\left(e^{\theta L/2}+\frac{80}{\theta^2}\right)e^{-\theta k/4}, & k_0<k< 2m/n 
\end{cases}\notag\\
&\le \begin{cases}
\frac{80}{\theta^2}e^{-\theta k/4}, & 0<k\le k_0, \\
e^{-\theta k/5}, & k_0<k< 2m/n, 
\end{cases}
\end{align}
where the second case of inequality \eqref{eq:f complement} follows from $e^{\theta L/2}+80/\theta^2\le e^{\theta k_0/20}<e^{\theta k/20}$. To see this, we observe that our assumption on $m$ and our choice of $\theta=1/5$ imply that 
$$
\frac{2L}{\theta}+\frac{10}{\theta}\log\left(e^{\theta L/2}+\frac{80}{\theta^2}\right)\leq \frac{2L}{\theta}+\frac{10}{\theta}\log\big(e^{\theta L/2}\big)+\frac{10}{\theta}\log\frac{80}{\theta^2}=\frac{3L}{\theta}+\frac{10}{\theta}\log\frac{80}{\theta^2}\leq \frac{m}{n}.
$$
This can be rewritten as $k_0/2\geq \frac{10}{\theta}\log\left(e^{\theta L/2}+\frac{80}{\theta^2}\right)$, which is equivalent to the desired statement.

We now estimate the third term of \eqref{eq:load-discrepancy-drift-a}. For $k\ge2m/n$, we derive from the assumption on $m$ that
$$
L_i^f(m)\ge -L-\frac{m}{n}>-\frac{4m}{3n}>-k.
$$
In this case, we have $\P\big(L_i^f(m)< -k, F\big)=0$. We now deal with the case that $k<2m/n$. Whenever $F$ occurs, we have $L_i^f(m)\geq X_i(t_*)-m/n$. Together with $t_*= m/n-k/2$, this yields
$$
\P\left( L_i^f(m)<-k,  F\right) \leq \P\left( X_i(t_*)<\frac{m}{n}-k\right)=\P\left( X_i(t_*)<t_*-\frac{k}{2}\right).
$$
Recall that $k_0=2m/n-4L/\theta$ and the fact that $t_*>2L/\theta$ for $0<k<k_0$ and that $t_*<2L/\theta$ for $k_0<k<2m/n$. We  apply inequality \eqref{eq:con-theta-b}, the first inequality of \eqref{eq:con-theta-sum2} and Markov's inequality to obtain
\begin{align}\label{eq:load-discrepancy-drift-a-2nd term}
\P\left( L_i^f(m)<-k,  F\right) 
&\leq e^{-\frac{\theta k}{4}} \cdot\E \exp\left(\frac{\theta}{2}|X_i(t_*)-t_*|\right)\notag\\
&\le \begin{cases}
\frac{80}{\theta^2}e^{-\theta k/4}, & 0<k\le k_0 \\
\left(e^{\theta L/2}+\frac{80}{\theta^2}\right)e^{-\theta k/4}, & k_0<k<2m/n 
\end{cases}\notag\\
&\le \begin{cases}
\frac{80}{\theta^2}e^{-\theta k/4}, & 0<k\le k_0, \\
e^{-\theta k/5}, & k_0<k<2m/n,
\end{cases}
\end{align}
where the second case of inequality \eqref{eq:load-discrepancy-drift-a-2nd term} again uses $e^{\theta L/2}+80/\theta^2\le e^{\theta k_0/20}<e^{\theta k/20}$.

Combining \eqref{eq:load-discrepancy-drift-a}-\eqref{eq:load-discrepancy-drift-a-2nd term}, we obtain \eqref{eq:load-discrepancy-drift}.
\end{proof}

\begin{lem}\label{lem:level set bound}
Suppose that $|L_i(0)|\leq L$ for all $i \in [n]$. Set $\theta=1/5$ and $k_0=1+\frac{2}{\theta}\log\frac{80}{\theta^2}$. The $\theta$-drift strategy $f$ satisfies that for any $m\geq 2nL/\theta$ and any $k\geq 3k_0$,
\begin{equation}\label{eq:level set bound}
\P\left(\left |\left\{i\in [n]: L_i^f(m)>k\right\}\right|\geq \frac{160}{\theta^2}ne^{-\frac{\theta k}{3}}\right)\leq 2\exp\left(-2n\left(\frac{80}{\theta^2}\right)^2e^{-\frac{2\theta k}{3}}\right).
\end{equation}
\end{lem}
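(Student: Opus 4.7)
The plan is to bound the count $N := |\{i \in [n] : L_i^f(m) > k\}|$ by a sum of independent Bernoulli variables, obtained by coupling with the underlying continuous-time processes $\{X_i(t)\}_{i\in[n]}$ from the definition of the $\theta$-drift strategy, and then to apply Hoeffding's inequality with parameters chosen so that the exponent collapses exactly to the one claimed.

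Set $t^* := m/n + k/3$ and write $Y_i(t) := X_i(t) - t$. Introduce the event $E := \{X(t^*) \geq m\}$. Since each $X_i$ is non-decreasing, on $E$ one has $T_m := \inf\{t \geq 0 : X(t) = m\} \leq t^*$, so $L_i^f(m) = X_i(T_m) - m/n \leq Y_i(t^*) + k/3$. Consequently $\{i : L_i^f(m) > k\} \cap E \subseteq \mathcal{S} := \{i : Y_i(t^*) > 2k/3\}$, and $M := |\mathcal{S}|$ is a sum of independent $\{0,1\}$-valued variables because the processes $\{X_i\}_{i \in [n]}$ are independent.

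The process $X_i$ is $\theta$-standardizing, hence $2(\theta/2)$-standardizing, and $t^* \geq m/n \geq 2L/\theta$, so inequality \eqref{eq:con-theta-b} applied at parameter $\theta/2$ yields $\E e^{(\theta/2)|Y_i(t^*)|} \leq 80/\theta^2$. Markov then gives $p := \P(Y_i(t^*) > 2k/3) \leq (80/\theta^2) e^{-\theta k/3}$, and hence $\E M \leq np = T/2$ for $T := (160 n/\theta^2) e^{-\theta k/3}$. Hoeffding's inequality for sums of independent $[0,1]$-variables now gives
\[ \P(M \geq T) \leq \exp\bigl(-2(T-\E M)^2/n\bigr) \leq \exp\bigl(-T^2/(2n)\bigr) = \exp\bigl(-2n(80/\theta^2)^2 e^{-2\theta k/3}\bigr), \]
matching the target exponential exactly.

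For $\P(E^c)$, the second inequality of \eqref{eq:con-theta-sum} applied at $\theta' = \theta/2$ gives $\E e^{(\theta/2)|X(t^*) - nt^*|} \leq (80/\theta^2)^n$, and Markov on the lower tail yields $\P(E^c) \leq (80/\theta^2)^n e^{-n\theta k/6}$. A routine computation using $k_0 = 1 + (2/\theta)\log(80/\theta^2)$ shows this is at most $e^{-n\theta/2}$ for $k \geq 3 k_0$, which in turn is at most $\exp(-2n(80/\theta^2)^2 e^{-2\theta k/3})$ throughout the same range. The union bound $\P(N \geq T) \leq \P(M \geq T) + \P(E^c)$ then produces the stated factor of $2$. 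The one delicate point---and what I expect to be the main obstacle to get right---is the choice of the shift $t^* - m/n = k/3$: a larger shift weakens the inclusion enough that $\E M$ exceeds $T$, while a smaller shift makes $\P(E^c)$ unmanageable at $k$ as small as $3k_0$; the value $1/3$ is the unique shift for which Markov at parameter $\theta/2$ (the sharpest allowed by inequality \eqref{eq:con-theta-b}) delivers $\E M = T/2$, so that Hoeffding's exponent $2(T-\E M)^2/n$ collapses to the prescribed form.
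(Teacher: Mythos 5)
Your proof is correct and takes essentially the same route as the paper's: couple the discrete load at time $m$ to the point processes at a deterministic time $t^*$ slightly beyond $m/n$, control $\P(E^c)$ with the summed Laplace bound of \eqref{eq:con-theta-sum} at parameter $\theta/2$, bound each indicator probability by $\tfrac{80}{\theta^2}e^{-\theta k/3}$ via \eqref{eq:con-theta-b}, and conclude with Hoeffding plus absorption of $\P(E^c)$ using $k\ge 3k_0$. The only difference is your shift $t^*-m/n=k/3$ versus the paper's constant shift $k_0$ (both work for $k\ge 3k_0$), so your closing remark that $k/3$ is the unique viable choice is inaccurate, though it does not affect the validity of the argument.
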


\begin{proof}
Set $t^*=m/n+k_0$. Let $E=\{X(t^*)\geq m\}$. Denote $S_k=\big\{i\in[n]: L_i^f(m)\geq k\big\}$. By the law of total probability, we have
\begin{equation}\label{eq:level set est}
\P\left(|S_k|\geq \frac{160}{\theta^2}e^{-\frac{\theta k}{4}}\right)\leq\P\left(|S_k|\geq \frac{160}{\theta^2}e^{-\frac{\theta k}{4}}, E\right)+\P(E^c).
\end{equation}
The second inequality of \eqref{eq:con-theta-sum}, Markov's inequality and our choice of $k_0$ yield
\begin{equation}\label{eq:e complement-1}
\P(E^c) = \P(X(t^*)<nt^*-nk_0)\leq \left(\frac{80}{\theta^2}\right)^n \exp\left(-\frac{n\theta k_0}{2}\right)=\exp\left(-\frac{\theta n}{2}\right).
\end{equation}
To estimate the first term in \eqref{eq:level set est}, we introduce independent Bernoulli random variables $W_i$, which are indicator functions of the events that $X_i(t^*)>m/n+k$. Hence,
\begin{align*}
\P(W_i=1) &=\P\left(X_i(t^*)>\frac{m}{n}+k\right)=\P\left(X_i(t^*)>t^*+k-k_0\right)\\
&\leq \P\left(X_i(t^*)>t^*+\frac{2k}{3}\right)\leq \frac{80}{\theta^2}\exp\left({-\frac{\theta k}{3}}\right),
\end{align*}
where in the first inequality, we use the assumption that $k\geq 3k_0$, and in the second inequality, we use the fact that $X_i(t)$ is $\theta$-standarizing and \eqref{eq:con-theta-b}. Observe that, when the event $E$ occurs, we have $L_i^f(m)\leq X_i(t^*)-m/n$, which implies that $|S_k|\leq \sum_{i=1}^nW_i$. This, together with Hoeffding's inequality, yields
\begin{equation*}\label{eq:level set est-e}
\P\left(|S_k|\geq \frac{160}{\theta^2}ne^{-\frac{\theta k}{3}}, E\right)\leq \P\left(\sum_{i=1}^nW_i \geq  \frac{160}{\theta^2}ne^{-\frac{\theta k}{3}}\right)\leq \exp\left(-2n\left(\frac{80}{\theta^2}\right)^2e^{-\frac{2\theta k}{3}}\right).
\end{equation*}
This, along with \eqref{eq:level set est} and \eqref{eq:e complement-1}, gives
\begin{equation*}
\P\left(|S_k|\geq \frac{160}{\theta^2}e^{-\frac{\theta k}{4}}\right)\leq
\exp\left(-2n\left(\frac{80}{\theta^2}\right)^2e^{-\frac{2\theta k}{3}}\right)+\exp\left(-\frac{\theta n}2\right).
\end{equation*}
This, together with the condition that $k\ge 3+\frac{6}\theta\log\tfrac{80}{\theta^2}$,
yields \eqref{eq:level set bound}.
\end{proof}
We also provide a concentration bound on the time it takes the drift strategy to bring certain quantities close to stationarity.
\begin{lem}\label{lem:expected normalization time}
Suppose that $|L_i(0)|\le L$ for all $i\in[n]$. Set $\theta=1/5$. 
Denote 
\begin{align*}
    A_m&=\left\{\max_{i\in[n]}\big|L_i^f(m)\big|\le k_a+\frac{5}{\theta}\log\frac{320n}{\theta^2}\right\},\\
    B_m&=\left\{\big|\big\{i\in[n] : L_i^f(m)>k_b\big\}\big|< \frac{160}{\theta^2}ne^{-\frac{\theta k_b}{3}}\right\},
\end{align*}
and assume that
$$ \exp\left(-\frac{\theta k_a}{5}\right)+2\exp\left(-2n\left(\frac{80}{\theta^2}\right)^2e^{-\frac{2\theta k_b}{3}}\right)<\frac12.$$
If $T=\min\big\{m\in\N : A_m \cap B_m\text{ holds}\big\}$, then under the $\theta$-drift strategy, we have
\[\E(T)<Cn(L+\log n)\]
for some absolute constant $C>0$ and all large enough $n$.
\end{lem}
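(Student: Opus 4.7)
The plan is to apply Lemmas~\ref{lem:single time upper bound} and \ref{lem:level set bound} at a single carefully chosen time $m_0 := C_1 n(L+\log n)$ (for a sufficiently large absolute constant $C_1$, chosen so that $m_0$ satisfies the hypotheses of both lemmas, i.e., $m_0 \ge (3L/\theta+(10/\theta)\log(80/\theta^2))n$ and $m_0 \ge 2nL/\theta$, with $k_b \ge 3k_0$), and then to iterate via the Markov property. At this time the two lemmas give the marginal estimates $\P(A_{m_0}^c)\le \exp(-\theta k_a/5)$ and $\P(B_{m_0}^c)\le 2\exp(-2n(80/\theta^2)^2 e^{-2\theta k_b/3})$, so the assumed inequality in the lemma yields $\P(T\le m_0)\ge \P(A_{m_0}\cap B_{m_0})>1/2$.

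For the expectation, I would write
\[
 \E(T) \le m_0 + \E\!\big[(T-m_0)\,\ind_{\{T>m_0\}}\big]
 \le m_0 + \E\!\big[h(\max_i |L_i^f(m_0)|)\,\ind_{\{T>m_0\}}\big],
\]
where $h(L')$ denotes the worst-case expected hitting time of $A\cap B$ from any initial configuration of maximum absolute load $L'$; this uses the strong Markov property at time $m_0$. The ingredient that feeds into the right-hand side is the tail control of Lemma~\ref{lem:single time upper bound}, which (by integrating its tail) gives $\E[\max_i |L_i^f(m_0)|]=O(\log n)$ with an exponentially decaying tail beyond $O(\log n)$. Combined with $\P(T>m_0)<1/2$, this is exactly what is needed to set up a self-improving inequality for $h$, of the form $h(L)\le m_0 + \tfrac12\,\E\!\big[h(\max_i|L_i^f(m_0)|)\big]+\text{(lower order)}$.

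To close the recursion cleanly I would iterate it along the times $k m_0$, introducing the auxiliary event $G_k=\{\max_i |L_i^f(km_0)|\le M\}$ for $M=O(\log n+L)$; by Lemma~\ref{lem:single time upper bound} (with the Markov property applied from time $(k-1)m_0$), on $G_{k-1}$ the conditional probability $\P(A_{km_0}\cap B_{km_0}\mid \cF_{(k-1)m_0})$ is at least $1/2$, while $\P(G_k^c)$ can be made polynomially small in $n$. Telescoping, one obtains $\P(T>km_0) \le 2^{-k}+\varepsilon_n$ for $k$ not too large, summing to $O(m_0)=O(n(L+\log n))$. The main obstacle I expect is the tail in $k$: the naive iteration produces a constant additive error that is not summable when $k\to\infty$, and getting genuine geometric decay at the scale of $m_0$ requires either enlarging the window sizes $m_k$ adaptively with $k$, or invoking the Lyapunov-type control on the drift process provided by Corollary~\ref{coro:concentration-theta-standarizing-a} (with $V(L)=\sum_i e^{\theta|L_i|}$, which is uniformly bounded in expectation after $m_0$), to establish positive recurrence with a quantitative drift. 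Either route leads to $\E(T)\le Cn(L+\log n)$ for an absolute constant $C$.
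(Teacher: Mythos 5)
Your renewal scheme uses the right inputs (the marginal estimates from Lemmas~\ref{lem:single time upper bound} and~\ref{lem:level set bound}), but as written the argument has a genuine gap, which you yourself flag and do not close. With windows of fixed length $m_0=C_1n(L+\log n)$, the conditional application of the two lemmas on the $k$-th window is legitimate only on the good event $G_{k-1}$ that the maximum load at time $(k-1)m_0$ is $O(L+\log n)$; on $G_{k-1}^c$ the hypothesis of Lemma~\ref{lem:single time upper bound} (window length at least of order $n$ times the current maximum load) may fail, and that load is not bounded by any fixed $M$ --- it only has an exponential tail. Consequently the iteration gives at best $\P(T>km_0)\le 2^{-k}+k\varepsilon_n$, with errors accumulating in $k$, which is not summable against $m_0$ over all $k$; and your self-improving inequality $h(L)\le m_0+\tfrac12\,\E\big[h(\max_i|L_i^f(m_0)|)\big]$ is circular, since $h$ is evaluated at a random argument with exponential tails, so closing it requires an a priori (at most linear) growth bound on $h$, which is essentially the statement being proved. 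Neither of your two proposed repairs is carried out, so the proof is incomplete as it stands.

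For comparison, the paper's proof is exactly your first suggested repair, executed so that no error terms arise at all: it takes adaptive windows $m_{j+1}=m_j+\big\lceil \tfrac{3n}{\theta}\max_{i\in[n]}|L_i^f(m_j)|+\tfrac{10n}{\theta}\log\tfrac{80}{\theta^2}\big\rceil$, so that Lemmas~\ref{lem:single time upper bound} and~\ref{lem:level set bound} apply conditionally on $\cF_{m_j}$ for every $j$, whatever the current configuration. Hence each window succeeds with probability at least $1/2$ unconditionally, so $J=\min\{j: A_{m_j}\cap B_{m_j}\}$ satisfies $\P(J>j)\le 2^{-j}$ and $\E J\le 2$, while the exponential tail in Lemma~\ref{lem:single time upper bound} yields $\E\big(m_{j+1}-m_j\mid\cF_{m_{j-1}}\big)=O(n\log n)$ for $j\ge 1$ and $m_1=O(n(L+1))$, whence $\E T\le\E(m_J)\le Cn(L+\log n)$. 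Your alternative Lyapunov route via Corollary~\ref{coro:concentration-theta-standarizing-a} could in principle also work, but you would still need to convert the geometric drift of $\sum_i e^{\theta|Y_i(t)|}$ into control of the number of balls until both $A_m$ and the level-set event $B_m$ hold, which is more work than the adaptive-window argument.
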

\begin{proof}
Set $m_0=0$ and recursively define
$$ 
m_{j+1}=m_j+\left\lceil \tfrac{3n}{\theta}\max_{i\in[n]}\big|L_i^f(m_j)\big|+\frac{10n}{\theta}\log\frac{80}{\theta^2}\right\rceil.
$$
Denote
$$
J=\min\{j : A_{m_j} \cap B_{m_j}\}.
$$
It is obvious that $T\le m_J$. By Lemma 
\ref{lem:single time upper bound}, Lemma \ref{lem:level set bound} and the union bound, we have, conditioned on the history of the process until $m_j$ balls have been allocated, that
$$ 
\P\Big(A_{m_{j+1}}^c \cup B_{m_{j+1}}^c \ |\ \cF_{m_j}\Big) \le \exp\left(-\frac{\theta k_a}{5}\right)+2\exp\left(-2n\left(\frac{80}{\theta^2}\right)^2e^{-\frac{2\theta k_b}{3}}\right)<\frac12.
$$
Thus, we have $\P(J>j)\le 2^{-j}$ and hence
$$\E(J) \le 2.$$
For $j\geq 1$, we have by Lemma \ref{lem:single time upper bound} that
$$
\P\left( \max_{i\in [n]}\big|L_i^f(m_{j})\big|>k_a+\frac{5}{\theta}\log\frac{320n}{\theta^2} \ \Big|\ \cF_{m_{j-1}} \right)\leq \exp\left(-\frac{\theta k_a}{5}\right),
$$
which implies that
$$
\E\big(m_{j+1}-m_j \ |\ \cF_{m_{j-1}}\big) \le \left\lceil\frac{3n}{\theta} \left(\frac{5}{\theta}\log\frac{320n}{\theta^2} + \frac{1}{1-e^{-\theta/4}}\right)+\frac{10n}{\theta}\log\frac{80}{\theta^2}\right\rceil.
$$
Putting all these together, we obtain
$$
\E(T)\le \E(m_J)\le Cn(L+ \log n)
$$
for some $C>0$ and $n$ large enough. 
\end{proof}


\section{Single-time load discrepancy: upper bound}\label{sec:single-time load upper bound}

In this section, we investigate two-thinning strategies that can achieve the upper bounds on the single-time load discrepancy as stated in Theorem~\ref{thm:single-time load discrepancy}. Write $t=m/n$. Observe that for any thinning strategy $f$ and any $m\in\N$,
\begin{equation}\label{eq:integerization}
\maxload^f(\lfloor t\rfloor n)-1\leq \maxload^f(m)\leq \maxload^f(\lceil t\rceil n)+1.
\end{equation}
Hence, at the expense of an additive constant to the maximum load, we can always assume that $m$ is divisible by $n$, and then it suffices to study $\maxload^f(tn)$ for $t\in \N$.

\subsection{Case 1: 
\texorpdfstring{$t\le O(\sqrt{\log n})$
}{t ≤ O(√(log n))}} \label{sec:case 1}

In this case, we apply the $(t+\ell)$-threshold strategy introduced in \cite{FGG18} (see Section~\ref{sec:base strat}). 
Recall that this strategy retries a ball if its primary allocation is a bin which has accepted at least $t+\ell$ primary allocations. 

\begin{prop}\label{prop:single time upper bound case 1}
Assume that $L_i(0)=0$ for all $i\in [n]$ and that $t\leq e^{-9}\sqrt{\log n}$. We set $\ell=\sqrt{\frac{3\log n}{\log\log n-2\log t}}$. For any $\varepsilon>0$ and sufficiently large $n$, the   $(t+\ell)$-threshold strategy  $f$ satisfies
\begin{equation}\label{eq:single time upper bound case 1}
\P\left(\maxload^f(tn)>(2+\varepsilon)\ell\right)<3n^{-\varepsilon}.
\end{equation}
\end{prop}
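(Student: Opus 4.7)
The $(t+\ell)$-threshold strategy caps the accepted primary allocations in every bin at $t+\ell$, so a bin reaching load exceeding $(2+\varepsilon)\ell$ must absorb strictly more than $(1+\varepsilon)\ell$ secondary allocations. The proof therefore consists of (i) bounding the total number $R$ of retries, and (ii) showing that the secondary allocations, which are uniformly and independently distributed among the bins, do not overwhelm any single bin.

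Let $X=(X_1,\ldots,X_n)$ denote the vector of total primary allocations. Conditional on $X$ the strategy accepts $\min(X_i,t+\ell)$ primaries in bin $i$, produces $R=\sum_i(X_i-t-\ell)_+$ retries, and finally distributes them uniformly and independently in $[n]$, giving secondary counts $S_1,\ldots,S_n$. A simple coupling argument shows that $\P(\maxload^f(tn)>(2+\varepsilon)\ell\mid X)$ is monotone increasing in $X$: raising $X_i$ either increases a capped primary count or spawns one extra retry whose uniform-random destination can only raise some $S_j$. Applying Lemma~\ref{lem:poisson approximation} to the level sets of this conditional probability therefore reduces the problem (at the cost of a factor~$2$) to the Poisson model in which $X_i\overset{\textrm{i.i.d.}}{\sim}\poss(t)$. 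In that model, Lemma~\ref{lem:retry bound} gives $\P(R>r^*)\le \exp(-ne^{-tI(\ell/t)})$ with $r^*:=6ne^{-tI(\ell/t)}/\log(1+\ell/t)$, while on $\{R\le r^*\}$ each $S_i$ is stochastically dominated by $\bino(r^*,1/n)$, so a Chernoff bound and a union bound over the $n$ bins yield
\[
\P\bigl(\textstyle\max_i S_i\ge(1+\varepsilon)\ell\,\big|\,R\le r^*\bigr)\;\le\;n\!\left(\frac{e\,r^*}{n(1+\varepsilon)\ell}\right)^{(1+\varepsilon)\ell}.
\]

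It remains to substitute $\ell^2=3\log n/L$ with $L:=\log\log n-2\log t$ and verify that the sum of the two error terms is at most $\tfrac{3}{2}n^{-\varepsilon}$. The hypothesis $t\le e^{-9}\sqrt{\log n}$ forces $L\ge 18$ and, via a short calculation, $\ell/t\ge 4$; Lemma~\ref{lem:poss tail} then supplies $tI(\ell/t)\ge \ell\log(\ell/(te))=\ell\bigl(\tfrac{L}{2}-\tfrac12\log L+\tfrac12\log 3-1\bigr)$. Using $\log n/\ell=\ell L/3$, taking logarithms in the Chernoff term and dividing by $(1+\varepsilon)\ell$ reduces the needed inequality to
\[
\tfrac{L}{3}\;\le\;\tfrac{L}{2}-\tfrac12\log L+\tfrac12\log 3-1+\tfrac{1}{\ell}\log\!\Bigl(\tfrac{(1+\varepsilon)\ell\log(1+\ell/t)}{6e}\Bigr),
\]
which, since the function $L\mapsto L/6-\tfrac12\log L+\tfrac12\log 3-1$ is increasing on $L\ge 3$ and already exceeds~$1$ at $L=18$, holds comfortably for every admissible $L$ once $n$ (and hence $\ell$) is large enough. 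The retry-count term $\exp(-ne^{-tI(\ell/t)})$ is doubly exponentially small in $n$, since $tI(\ell/t)\le 3\ell\log\ell=o(\log n)$.

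The chief technical difficulty is the final verification step: securing the specific constant~$3$ in $\ell^2=3\log n/L$ (rather than the na\"ive~$2$) requires faithfully tracking the sub-leading logarithmic corrections coming from both the lower bound $I(x)\ge x\log(x/e)$ and Stirling's approximation hidden inside the Chernoff bound, and the margin $L\ge 18$ afforded by $t\le e^{-9}\sqrt{\log n}$ is precisely what makes these corrections uniformly harmless across all admissible $t$.
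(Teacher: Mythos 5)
Your proposal is correct and follows essentially the same route as the paper: cap the accepted primaries at $t+\ell$, bound the total number of retries by $r^*=6ne^{-tI(\ell/t)}/\log(1+\ell/t)$ via Lemmata~\ref{lem:poisson approximation} and~\ref{lem:retry bound}, bound the maximum number of secondary allocations received by any bin on the event $\{r\le r^*\}$, and close with the same arithmetic verification driven by $\log\log n-2\log t\ge 18$. The only cosmetic differences are that you Poissonize the primary counts once globally through a monotone-coupling/level-set argument and handle the secondaries with a binomial Chernoff bound, whereas the paper applies Lemma~\ref{lem:poisson approximation} a second time to the secondary pool together with Lemma~\ref{lem:poss tail}; note also that the retry term is of size $\exp(-n^{1-o(1)})$ (stretched-exponentially small) rather than ``doubly exponentially small,'' which does not affect the conclusion.
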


\begin{proof}
We write $r:=R_{nt}$ for the total number of retries throughout the process. 
The strategy $f$ guarantees that no bins accept more than $t+\ell$ primary allocations, i.e., $L^f_{1, i}([tn])\leq t+\ell$. This, together with the equation $L_i^f(tn)=L^f_{1, i}([tn])+L^f_{2, i}([tn])-t$, implies that
\begin{equation}\label{eq:upper bound maxload}
\P\left(\maxload^f(tn)>(2+\varepsilon)\ell\right) \leq \P\left(\max_{i\in [n]}L^f_{2, i}([tn])>(1+\varepsilon)\ell\right),
\end{equation}
where $L^f_{2, i}([tn])$ defined in \eqref{eq:primary and secondary load} represents the number balls that bin $i$ receives from secondary allocations. Set $r^*=6ne^{-t I(\ell/t)}/\log(1+\ell/t)$. By the law of total probability, we have
\begin{align}\label{eq:upper bound maxload a}
\P\left(\max_{i\in [n]}L^f_{2, i}([tn])>(1+\varepsilon)\ell\right) &\leq \P\left(\max_{i\in [n]}L^f_{2, i}([tn])>(1+\varepsilon)\ell,  r\leq r^*\right)+\P(r>r^*).
\end{align}

First, we estimate the second term of \eqref{eq:upper bound maxload a}. We write $\{X_i\}_{i\in[n]}$ for independent $\poss(t)$ random variables. Define $Y_i=\max\{0, X_i-t-\ell\}$ and $Y=\sum_{i=1}^nY_i$.  Lemmata \ref{lem:poisson approximation} and \ref{lem:retry bound} provide the following tail bound
\begin{align}\label{eq:retry bound}
\P(r>r^*) &\leq 2\P(Y>r^*)<2\exp\left(-ne^{-t I(\ell/t)}\right) \notag\\
&<2\exp\left(-n\left(\frac{et}{\ell}\right)^{3\ell}\right)=\exp\left(-n^{1-o(1)}\right),
\end{align}
where the last inequality follows from the upper bound in \eqref{eq:approx-I-1} and the fact that $\ell\geq 4t$ for large enough $n$.


Next, we estimate the first term of \eqref{eq:upper bound maxload a}. Again, using  the lower bound in \eqref{eq:approx-I-1},  we obtain $r^*<6n(et/\ell)^{\ell}$ for $n$ large enough. Set $\lambda=6(et/\ell)^{\ell}$. We denote by $\{W_i\}_{i\in[n]}$ independent $\poss(\lambda)$ random variables. Lemma \ref{lem:poisson approximation} and the union bound argument yield
\begin{align}
\P\left(\max_{i\in [n]}L^f_{2, i}([tn])>(1+\varepsilon)\ell,  r\leq r^*\right)&\le \P\left(\max_{i\in [n]}|\{s\le r^*\ :\ Z^2_s=i\}|>(1+\varepsilon)\ell\right)\notag\\
&\leq 2\P\left(\max_{i\in[n]}W_i>(1+\varepsilon)\ell\right) \notag\\
&\leq 2n\P(W_1>(1+\varepsilon)\ell). \label{eq:retry-maxload}
\end{align}
Apply Lemma \ref{lem:poss tail} and  the lower bound of $I(x)$ in \eqref{eq:approx-I-1} to obtain
\begin{equation}\label{eq:single bin retry load}
\P(W_1>(1+\varepsilon)\ell)\leq e^{-\lambda I((1+\varepsilon)\ell/\lambda)}<\left(\frac{6e}{(1+\varepsilon)\ell}\left(\frac{et}{\ell}\right)^{\ell}\right)^{(1+\varepsilon)\ell}<\left(\frac{et}{\ell}\right)^{(1+\varepsilon)\ell^2}.
\end{equation}
One can check that
$$
\left(\frac{et}{\ell}\right)^{(1+\varepsilon)\ell^2}=\exp\left(-(1+\varepsilon)\cdot\frac{3}{2}\left(1-\frac{\log(\log\log n-2\log t)+2-\log 3}{\log\log n-2\log t}\right)\log n\right).
$$
Our assumption of $t$ yields that $\log\log n-2\log t\geq 18$. This, together with the fact that $x^{-1}\log x$ is decreasing for $x>e$, yields that
$$
\frac{\log(\log\log n-2\log t)+2-\log 3}{\log\log n-2\log t}\leq \frac{\log (18)+2-\log 3}{18}<\frac{1}{3}.
$$
Hence, we obtain
$$
\left(\frac{et}{\ell}\right)^{(1+\varepsilon)\ell^2}\leq n^{-(1+\varepsilon)}.
$$
This, combined with \eqref{eq:retry-maxload}, \eqref{eq:single bin retry load}, yields
\begin{equation}\label{eq:retry-maxload 1}
\P\left(\max_{i\in [n]}L^f_{2, i}([tn])>(1+\varepsilon)\ell,  r\leq r^*\right)<2  n^{-\varepsilon}.
\end{equation}
The desired statement \eqref{eq:single time upper bound case 1} follows from \eqref{eq:upper bound maxload}, \eqref{eq:upper bound maxload a}, \eqref{eq:retry bound} and \eqref{eq:retry-maxload 1}.
\end{proof}

Our next result complements the proof of the case $t\leq O(\sqrt{\log n})$. Moreover, it also provides a tight upper bound for the maximum load for $t=(\log n)^{1/2+o(1)}$. 

\begin{prop}\label{prop:single time upper bound case 2}
Assume that $L_i(0)=0$ for all $i\in [n]$ and that $\Omega(\log^{1/2} n)\le t\le o(\log^2 n)$. We set $\ell=(ct\log n)^{1/3}$, where $c$ is an absolute constant such that $\ell\leq t$. For any $\varepsilon>0$ and sufficiently large $n$, the $(t+\ell)$-threshold strategy $f$ satisfies
\begin{equation}\label{eq:single time upper bound case 2}
\P\left(\maxload^f(tn)>\left(\frac{4(1+\varepsilon)}{c}+1\right)\ell\right)<3n^{-\varepsilon}.
\end{equation}
\end{prop}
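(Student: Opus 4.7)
The plan is to follow the structure of the proof of Proposition~\ref{prop:single time upper bound case 1} line by line, substituting the logarithmic regime of the Poisson rate function $I$ by its quadratic regime, since here $\ell\le t$ puts us in the range where \eqref{eq:approx-I} applies rather than \eqref{eq:approx-I-1}. The $(t+\ell)$-threshold rule guarantees $L^f_{1,i}([tn])\le t+\ell$, so
\[ \maxload^f(tn)\le \ell+\max_{i\in[n]}L^f_{2,i}([tn]). \]
Writing $K:=\tfrac{4(1+\varepsilon)}{c}$, it therefore suffices to bound $\P(\max_i L^f_{2,i}([tn])>K\ell)$. Setting $r^*:=6ne^{-tI(\ell/t)}/\log(1+\ell/t)$ and decomposing according to whether the total retry count $r$ exceeds $r^*$, I will estimate the two resulting probabilities separately.

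For the retry tail, Lemmas~\ref{lem:poisson approximation} and~\ref{lem:retry bound} yield $\P(r>r^*)\le 2\exp(-ne^{-tI(\ell/t)})$. Since $\ell\le t$, the quadratic upper bound $I(\ell/t)\le \ell^2/(2t^2)$ gives $tI(\ell/t)\le \ell^2/(2t)$, which simplifies to $c(\log n)/(2\ell)$ using $\ell^3=ct\log n$. The range assumption forces $\ell=\Omega(\sqrt{\log n})$, so $tI(\ell/t)=O(\sqrt{\log n})$ and hence $ne^{-tI(\ell/t)}=n^{1-o(1)}$, yielding subexponential decay of $\P(r>r^*)$ in $n$ -- far smaller than the target $n^{-\varepsilon}$.

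For the secondary-load term, on $\{r\le r^*\}$ the secondary loads are dominated by the occupancies of $r^*$ balls thrown uniformly into $n$ bins; applying Lemma~\ref{lem:poisson approximation} together with a union bound reduces the task to showing $n\P(W>K\ell)\le n^{-\varepsilon}$ with $W\sim\poss(\lambda)$ and $\lambda=r^*/n$. The quadratic lower bound $I(\ell/t)\ge \ell^2/(4t^2)$ gives $\lambda\le (12t/\ell)e^{-c(\log n)/(4\ell)}$, and since $\ell=o(\log n)$ throughout the range $t=o(\log^2 n)$, the ratio $K\ell/\lambda$ diverges, so Lemma~\ref{lem:poss tail} together with the logarithmic bound \eqref{eq:approx-I-1} applies. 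A direct computation shows that the dominant term of $\lambda I(K\ell/\lambda)$ is $K\ell\cdot c(\log n)/(4\ell)=(1+\varepsilon)\log n$, with the remaining $\log(\ell^2/t)$ and $\log K$ corrections being of order $O(\log\log n)$.

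The main obstacle I anticipate is the bookkeeping needed to confirm that these lower-order corrections to $\lambda I(K\ell/\lambda)$ are genuinely $o(\log n)$ uniformly over the full window $\Omega(\sqrt{\log n})\le t\le o(\log^2 n)$, so that $\P(W>K\ell)\le n^{-(1+\varepsilon)(1-o(1))}$ and hence $n\P(W>K\ell)\le 2n^{-\varepsilon}$ for $n$ large. This is the direct analogue of the numerical check at the end of the proof of Proposition~\ref{prop:single time upper bound case 1} used to pin down the exponent; once it is carried out, combining the two probability bounds yields the claimed $3n^{-\varepsilon}$.
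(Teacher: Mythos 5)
Your proposal takes essentially the same route as the paper's proof of Proposition~\ref{prop:single time upper bound case 2}: the same reduction via $L^f_{1,i}([tn])\le t+\ell$ to the maximum secondary load, the same threshold $r^*=6ne^{-tI(\ell/t)}/\log(1+\ell/t)$ handled by Lemmata~\ref{lem:poisson approximation} and~\ref{lem:retry bound}, and the same Poissonization plus union bound with the quadratic bound \eqref{eq:approx-I} at $\ell/t$ and the logarithmic bound \eqref{eq:approx-I-1} at $K\ell/\lambda$. The bookkeeping you flag is exactly the computation the paper performs at the end, and it does go through: since $\ell=o(\log n)$ the correction term is $K\ell\log\bigl(\tfrac{(1+\varepsilon)\log n}{3\ell}\bigr)+O(K\ell)$, which is of order $\ell\log(\log n/\ell)$ rather than $O(\log\log n)$ but has the favorable sign, so $\P(W>K\ell)\le n^{-(1+\varepsilon)}$ uniformly over the stated range of $t$.
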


\begin{proof}
We slightly modify the proof of Proposition \ref{prop:single time upper bound case 1}. Set $r^*=6ne^{-t I(\ell/t)}/\log(1+\ell/t)$. As before, we define independent random variables $\{X_i\}_{i\in[n]}$, $\{Y_i\}_{i\in[n]}$ and $\{W_i\}_{i\in[n]}$ where $X_i\sim \poss(t)$, $Y_i=\max\{0,X_i-t-\ell\}$ and $W_i\sim \poss(\lambda)$ for $\lambda=r^*/n$. As before, we set $r:=R_{nt}$. Similar to \eqref{eq:upper bound maxload}, \eqref{eq:upper bound maxload a} and  \eqref{eq:retry-maxload}, we have
\begin{align}\label{eq:upper bound maxload case 2}
\P\left(\maxload^f(tn)>\left(\tfrac{4(1+\varepsilon)}{c}+1\right)\ell\right) &\leq \P\left(\max_{i\in [n]}L^f_{2, i}([tn])>\frac{4(1+\varepsilon)\ell}{c}\right)\notag\\
& \leq \P\left(\max_{i\in [n]}L^f_{2, i}([tn])>\frac{4(1+\varepsilon)\ell}{c},r<r^*\right)+\P(r>r^*)\notag\\
&\leq 2\P\left(\max_{i\in[n]}W_i>\frac{4(1+\varepsilon)\ell}{c}\right)+\P(r>r^*)\notag\\
&\leq 2n\P\left(W_1>\frac{4(1+\varepsilon)\ell}{c}\right)+\P(r>r^*).
\end{align}
Similar to \eqref{eq:retry bound}, Lemmata \ref{lem:poisson approximation} and \ref{lem:retry bound} yield that
\begin{align}\label{eq:retry bound 1}
\P(r>r^*)&<2\exp\left(-ne^{-t I(\ell/t)}\right)<2\exp\left(-n\exp\left(-\frac{\ell^2}{2t}\right)\right)\notag\\
&<2\exp\big(-ne^{-\ell}\big)=\exp\big(-n^{1-o(1)}\big),
\end{align}
where the last two inequalities follow from the upper bound of $I(x)$ in \eqref{eq:approx-I} and the fact that $\ell\leq t$. Using the lower bound of $I(x)$ in \eqref{eq:approx-I} and $\log(1+x)>x/2$ for $0< x< 1$, one can check that $\lambda=r^*/n<\frac{12t}{\ell}\exp\left(-\frac{\ell^2}{4t}\right)=o(1)$. This, together with Lemma \ref{lem:poss tail} and inequality $I(x)>x\log(x/e)$ for $x>4$, yields
\begin{align*}
\P\left(W_1>\frac{4(1+\varepsilon)\ell}{c}\right) &\leq \exp\left(-\lambda I\left(\frac{3(1+\varepsilon)\ell}{c\lambda}\right)\right)\leq \left(\frac{ce\lambda}{3(1+\varepsilon)\ell}\right)^{\frac{3(1+\varepsilon)\ell}{c}}\notag\\
&\leq \left(\frac{4cet}{(1+\varepsilon)\ell^2}\exp\left(-\frac{\ell^2}{3t}\right)\right)^{\frac{3(1+\varepsilon)\ell}{c}}\notag\\
&<\exp\left(-\frac{(1+\varepsilon)\ell^3}{ct}\right)=n^{-(1+\varepsilon)}.
\end{align*}
Combining this with \eqref{eq:upper bound maxload case 2} and \eqref{eq:retry bound 1}, we can obtain \eqref{eq:single time upper bound case 2}.
\end{proof}


\subsection{Case 2:  \texorpdfstring{$\Omega(\sqrt{\log n})\le t\le O(\log n)$}{Ω(√(log n))≤t≤O(log n)}}\label{sec:case 2}

For $t=O\big((\log n)^{\frac{1}2+\frac{1}{\sqrt{\log\log\log n}}}\big)$, Theorem~\ref{thm:single-time load discrepancy} follows from Proposition~\ref{prop:single time upper bound case 2}. Thus, here we treat
$$\Omega\big((\log n)^{\frac{1}2+\frac{1}{\sqrt{\log\log\log n}}}\big) \le t\le O(\log n).$$
In this subsection, we study the allocation problem in a more general setting. The initial loads are not necessarily perfectly balanced (i.e., allowing $L_i(0)\neq 0$).  
This will play an important role in Sections \ref{sec:case 4} and \ref{sec:typical load discrepancy}.

Recall that $k=\big\lfloor\frac{\log\log n}{3\log\log\log n}\big\rfloor$. Set $\ell=\lfloor \log^{\beta_k}n\rfloor$, where $\beta_k$ is defined in Section~\ref{sec:combin}. One can check that $\ell=\big\lfloor(\log n)^{\frac{1}{2}+\left(2-\frac{1}{2k+1}\right)\frac{\alpha+\eta-1/2}{2k+1}}\big\rfloor$. Then we have the following result.

\begin{prop}\label{prop:single time upper bound case 3}
Let $t>0$ and $\alpha=\frac{\log t}{\log\log n}$ satisfying $\alpha\in\big[\tfrac{1}{2}+\tfrac{1}{\sqrt{\log\log\log n}},1+\tfrac{\sqrt{\log\log\log n}}{\log\log n}\big].$ Suppose that for $L_0 \ge0$ the following conditions hold: \phantom{$\big]$}
\begin{enumerate}
\item $\maxload (0)<c t$ for some constant $0<c<1$,
\item $|H_0| \le 3n \exp\left(-\frac{\ell^2}{4\log^{\alpha+\eta} n}\right)$, where $H_0=\{i\in [n]: L_i(0)>  L_0\}$ is the set of bins with load greater than $L_0$.
\end{enumerate} 
Then the multi-stage $(t,L_0,\ell)$-threshold strategy $f$ (as defined in Section~\ref{sec:combin}), with the parameters above, satisfies that
$$
\P\left(\maxload^f(tn)>L_0+2k\ell\right)\leq n^{-e^{\sqrt{\log\log\log n}}}.
$$
\end{prop}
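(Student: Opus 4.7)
The plan is to prove the bound by induction on the stage index $i \in \{0,1,\dots,k\}$. For each $i$, I establish with high probability: (a) a tail bound on the number of retries $R_i$ during stage $i$; (b) a super-geometrically decreasing bound $|H_i| \le h_i^*$; and (c) that no bin outside $\bigcup_{j\le i} H_j$ has load exceeding $L_0 + 2i\ell$ at time $t_i n$. The assumption on $|H_0|$ supplies the base case for (b), while the assumption $\maxload(0)<ct$ keeps the initial loads in a regime where the Poisson tail approximations used below remain tight.

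For the inductive step in stage $i$ (which has length $\Delta_i := t_i - t_{i-1} \asymp \log^{\beta_{i-1}}\!n$), I split retries by source. Threshold retries, caused by bins that have received at least $\Delta_i + \ell$ primary allocations during stage $i$, are bounded by combining Lemma~\ref{lem:poisson approximation} with Lemma~\ref{lem:retry bound}, yielding a bound of order $n \exp(-\Delta_i I(\ell/\Delta_i))/\log(1+\ell/\Delta_i)$ with failure probability $\exp(-n^{1-o(1)})$. Heavy-bin retries, coming from bins in $\bigcup_{j<i} H_j$, are controlled by a Hoeffding estimate on the primary allocations received by that small set during stage $i$, giving a bound of order $\bigl|\bigcup_{j<i} H_j\bigr|\cdot \Delta_i$; by the induction hypothesis, this term is dominated by the threshold contribution.

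To bound $|H_i|$, observe that any bin $b\notin\bigcup_{j<i} H_j$ that enters $H_i$ must gain load at least $2\ell$ during stage $i$, since its stage-$i$ initial load is at most $L_0 + 2(i-1)\ell$ and its stage-$i$ final load exceeds $L_0 + 2i\ell$. The threshold caps the primary contribution at $\ell$ above the stage average, so the number of secondary allocations received by $b$ during stage $i$ must exceed $\ell$. Conditioning on $R_i \le r_i^*$ and using that the secondaries are i.i.d.\ uniform on $[n]$, another Poisson approximation followed by the tail estimate Lemma~\ref{lem:poss tail} bounds the expected number of such bins. The parameters $\beta_i$, $\ell=\lfloor\log^{\beta_k}\!n\rfloor$, and $k=\big\lfloor\frac{\log\log n}{3\log\log\log n}\big\rfloor$ are calibrated so that the Poisson exponent $\ell^2/\Delta_i$ (and hence $-\log(h_i^*/n)$) grows geometrically in $i$, producing super-geometric decay of $h_i^*$; a standard concentration step converts the expectation bound into a high-probability bound.

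It remains to verify that $h_k^*$ itself is smaller than $n^{-e^{\sqrt{\log\log\log n}}-1}$, so that $H_k = \emptyset$ with the stated probability and thus $\maxload^f(tn) \le L_0 + 2k\ell$. This follows by plugging $\beta_k = \frac{\beta + k(1+\varepsilon)}{2k+1}$ and the choice of $k$ into the telescoping tail product and comparing to the target exponent. The principal obstacle is precisely this calibration step: tracking the two competing retry sources at each stage and ensuring that the cumulative failure probabilities, summed over all $k$ stages, still match the advertised bound. A minor auxiliary point is the $-\log n$ exception built into the strategy, which I handle via a lower-tail Poisson estimate showing that, with probability $1-n^{-\omega(1)}$, no bin ever underloads by $\log n$ during the process, so that the exception never activates and the strategy behaves as described.
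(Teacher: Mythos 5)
There is a genuine gap, and it sits exactly where the real work of this proposition lies. Your induction gives (a) retry bounds, (b) bounds on $|H_i|$, and (c) the fact that bins \emph{outside} $\bigcup_{j\le i}H_j$ have load at most $L_0+2i\ell$ at time $t_in$ — but (c) is automatic from the definition of $H_i$, and your concluding step ``$h_k^*<1$, hence $H_k=\emptyset$, hence $\maxload^f(tn)\le L_0+2k\ell$'' is a non sequitur. Emptiness of $H_k$ says nothing about the bins already placed in $H_0,H_1,\dots,H_{k-1}$, which are excluded from $H_k$ by construction, are typically non-empty (the bound $4n\lambda_i^{\ell}/\ell!$ is polynomially large in $n$ for $i<k$), and genuinely exceed $L_0+2k\ell$ during the process — a bin entering $H_j$ can sit near $L_0+(2j-1)\ell+(t_{j+1}-t_j)$, and $H_0$ bins can start as high as $ct$. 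The missing argument is that these marked bins \emph{drain back down} by time $tn$: one must bound the secondary allocations any single bin receives per stage by $t_{i+1}-t_i$ (the paper's Lemma~\ref{lem:retry maxload bound}), exploit that marked bins are always rejected (so their load decreases by $t_i-t_{i-1}$ per stage, a telescoping captured by the events $E_i^j$ in the paper's proof), and use assumption (1), $\maxload(0)<ct$, to show that $H_0$ bins lose roughly $t_1-t_0=(1-o(1))t$ during the long first stage and end at nonpositive load. Your proposal contains none of this, and it also misreads assumption (1), whose role is precisely this draining of $H_0$, not tightness of Poisson approximations.

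A second, related error is your treatment of the $-\log n$ exception. It is not true that with probability $1-n^{-\omega(1)}$ no bin's load ever drops below $-\log n$: marked bins are rejected at every primary allocation and are drained toward the $-\log n$ floor by design (an $H_0$ bin starting just above $L_0$ loses about $t\gg\log n$ during stage 1 unless the floor stops it), and even for ordinary bins a downward fluctuation of $\log n$ over time $t=\log^{\alpha}n$ with $\alpha$ near $1$ has probability about $\exp(-\log^{2-\alpha}n/2)$, which is only polynomially small, not $n^{-\omega(1)}$. So the exception does activate, and it must be used rather than dismissed — both in the $|H_i|$ bound (a bin that exhausted its threshold must have been at $-\log n$, hence needs $L_0+2i\ell+\log n$ secondaries to enter $H_i$) and in the maxima of the marked bins, where the paper keeps the $\max\{\cdot,-\log n\}$ term throughout. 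Your stage-by-stage bookkeeping of retries and of $|H_i|$ does match the paper's Lemma~\ref{lem:bound ri-hi}, but without the draining argument for $H_0,\dots,H_{k-1}$ the final inequality $\maxload^f(tn)\le L_0+2k\ell$ does not follow.
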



For
$\omega(n \sqrt{\log n}) \le m\le O(\log n)$, Theorem~\ref{thm:single-time load discrepancy} follows as an immediate consequence of the following corollary.

\begin{coro}\label{prop:single time upper bound case 3 for thm 1.1}
Let $t>0$ and $\alpha$ as above, satisfying $\alpha\in\big[\frac{1}{2}+\frac{1}{\sqrt{\log\log\log n}},1+\frac{\sqrt{\log\log\log n}}{\log\log n}\big]$. The multi-stage $(t,0, \ell)$-threshold strategy $f$
satisfies that
$$
\P\left(\maxload^f(tn)>(\log n)^{\frac{1}{2}+o(1)}\right)\leq n^{-e^{\sqrt{\log\log\log n}}}.
$$
\end{coro}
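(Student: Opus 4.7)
The plan is to obtain Corollary~\ref{prop:single time upper bound case 3 for thm 1.1} as an immediate specialisation of Proposition~\ref{prop:single time upper bound case 3} with $L_0=0$, under the standing initial condition $L_i(0)=0$ for every $i\in[n]$ inherited from Theorem~\ref{thm:single-time load discrepancy}. With this initialisation the two hypotheses of Proposition~\ref{prop:single time upper bound case 3} become trivial: $\maxload(0)=0<ct$ for any fixed $c\in(0,1)$, and $H_0=\{i\in[n]:L_i(0)>0\}=\emptyset$, so the cardinality bound on $H_0$ is vacuous. Consequently, Proposition~\ref{prop:single time upper bound case 3} yields
$$
\P\bigl(\maxload^f(tn)>2k\ell\bigr)\le n^{-e^{\sqrt{\log\log\log n}}},
$$
exactly the probability bound asserted by the corollary.

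What remains is the numerical identification $2k\ell=(\log n)^{1/2+o(1)}$, uniformly over the permissible range of $\alpha$. Since $k=\lfloor\log\log n/(3\log\log\log n)\rfloor$, we have $\log k=O(\log\log\log n)$, hence $k=(\log n)^{o(1)}$. For the exponent of $\ell$ one reads off from Proposition~\ref{prop:single time upper bound case 3} that
$$
\beta_k-\tfrac12 \;=\; \Bigl(2-\tfrac{1}{2k+1}\Bigr)\cdot\tfrac{\alpha+\eta-1/2}{2k+1} \;=\; O\bigl(1/k\bigr)\;=\;o(1),
$$
using that $\alpha\le 1+o(1)$ in the prescribed range and $\eta\in[0,(\alpha-1/2)/(4k-2)]$ is at most a constant. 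Therefore $\ell=(\log n)^{1/2+o(1)}$ and $2k\ell=(\log n)^{o(1)}\cdot(\log n)^{1/2+o(1)}=(\log n)^{1/2+o(1)}$, completing the reduction.

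I do not anticipate any substantive obstacle: the corollary is essentially a coarsening of the explicit load bound $L_0+2k\ell$ in Proposition~\ref{prop:single time upper bound case 3} into the qualitative form $(\log n)^{1/2+o(1)}$. The only subtle point worth checking is that the $o(1)$ correction in $\beta_k-1/2$ is genuinely uniform in $\alpha$ across the full admissible interval; but this follows directly from the explicit $1/(2k+1)$ factor in $\beta_k$, which decays irrespective of where $\alpha$ lies.
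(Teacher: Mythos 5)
Your proposal is correct and follows essentially the same route as the paper: specialize Proposition~\ref{prop:single time upper bound case 3} to zero initial loads (with $L_0=0$, $\eta=0$), where both hypotheses hold trivially, and then observe $k=(\log n)^{o(1)}$ and $\ell=(\log n)^{1/2+o(1)}$ so that $2k\ell=(\log n)^{1/2+o(1)}$. Your extra check that the $o(1)$ exponent correction is uniform in $\alpha$ (and in $\eta$) over the stated range is a harmless refinement of the same argument.
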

\begin{proof}
Apply Proposition~\ref{prop:single time upper bound case 3} with $\eta=0$ and $L(0)=L_0=0$ and observe that the two conditions of Proposition~\ref{prop:single time upper bound case 3} trivially hold. Hence the corollary follows from the fact that $\ell =(\log n)^{\frac{1}{2}+o(1)}$ and $k=\log^{o(1)} n$.  
\end{proof}

For $1\leq i\leq k$, we denote by $r_i$ be the number of retries in stage $i$ of the multi-stage $(t,L_0,\ell)$-threshold strategy.  
Recall our notation $H_i$ for the set of bins in $(\cup_{j=0}^{i-1}H_j)^c$ whose loads after the $i$-th stage are at least $L_0+2i\ell$. To establish Proposition~\ref{prop:single time upper bound case 3}, we use the following lemma, to inductively bound the number of retries in every stage and the size of $H_i$, the set of heavily loaded bins. 

\begin{lem}\label{lem:bound ri-hi}
Under the assumptions of Proposition~\ref{prop:single time upper bound case 3}, for all $1\leq i\leq k$, we have
\begin{equation}\label{eq:bound ri}
\P(r_i>r_i^*)\leq \exp\left(-n^{1/2-o(1)}\right),
\end{equation}
where
\begin{equation}\label{eq:r_i^*}
r_i^*:=\frac{20n\log^{\beta_{i-1}}n}{\ell}\exp\left(-\frac{\ell^2}{5\log^{\beta_{i-1}}\! n}\right).
\end{equation}
In addition, for $1\leq i\leq k-1$, we have
\begin{equation}\label{eq:bound hi}
\P\left(|H_i|> \frac{4n\lambda_i^\ell}{\ell!}\right)\leq \exp\left(-n^{1/2-o(1)}\right),
\end{equation}
where $\lambda_i:=r_i^*/n$.
\end{lem}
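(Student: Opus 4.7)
The plan is to induct on the stage index $i$, proving \eqref{eq:bound ri} and \eqref{eq:bound hi} in tandem so that the inductive bound on $|H_j|$ for $j<i$ feeds into the $r_i$ estimate and the resulting bound on $r_i$ then feeds into the $|H_i|$ estimate. Write $\Delta_i := t_i-t_{i-1}$ for the length of stage $i$; by construction $\Delta_i \le \log^{\beta_{i-1}}\!n$. The base case $i=1$ uses only the hypothesis on $|H_0|$.

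\textit{Step 1 (retries $r_i$).} I would split the retries of stage $i$ by cause: (A) the primary allocation lands in a bin of $\cup_{j<i}H_j$ (with $H_0$ playing this role when $i=1$) whose current load is at least $-\log n$; (B) the primary allocation lands in a bin that has already accepted $\Delta_i+\ell$ primaries during stage $i$. Since the primary allocations are i.i.d.\ uniform on $[n]$, the count of type (A) retries is dominated, conditionally on the history up to stage $i-1$, by $\mathrm{Binomial}(\Delta_i n, N/n)$ with $N:=|\cup_{j<i}H_j|$; the inductive bound on the $|H_j|$'s together with the hypothesis on $|H_0|$ makes $\Delta_i N \ll r_i^*$, and the Chernoff bound $\P(\mathrm{Bin}(\Delta_i n, N/n) > r_i^*/2) \le (2e\Delta_i N/r_i^*)^{r_i^*/2}$ is then well inside $\exp(-n^{1/2-o(1)})$. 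For (B), let $X_v$ be the number of primary allocations to bin $v$ in stage $i$; the vector $(X_v)$ is multinomial, and Lemma~\ref{lem:poisson approximation} dominates it (on monotone events) by independent $\poss(\Delta_i)$. The type (B) retry count equals $\sum_v \max(0,X_v-\Delta_i-\ell)$, which Lemma~\ref{lem:retry bound} bounds above by $6n e^{-\Delta_i I(\ell/\Delta_i)}/\log(1+\ell/\Delta_i)$ except with Poisson-exceptional probability $\exp(-n e^{-\Delta_i I(\ell/\Delta_i)}) = \exp(-n^{1-o(1)})$. Substituting $I(x)\ge x^2/4$ from \eqref{eq:approx-I} and $\log(1+x)\ge x/2$ on $(0,1)$, together with $\Delta_i\le\log^{\beta_{i-1}}\!n$, this is at most $\tfrac{12 n\log^{\beta_{i-1}}\!n}{\ell}\exp(-\ell^2/(4\log^{\beta_{i-1}}\!n))$, which leaves an ample multiplicative slack against the announced $r_i^*$ to absorb the type (A) contribution.

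\textit{Step 2 (heavily loaded bins $|H_i|$).} Any $v\in H_i$ lies outside $\cup_{j<i}H_j$, so the stage-$i$ threshold caps its primary count at $\Delta_i+\ell$ and thus contributes at most $\ell$ to its load above the stage average. Since the load of $v$ must grow by strictly more than $2\ell$ during stage $i$ (from $<L_0+2(i-1)\ell$ to $\ge L_0+2i\ell$), the secondary count $S_v$ received by $v$ in stage $i$ must exceed $\ell$. Conditional on the total secondary count $r_i$, the vector $(S_v)_{v\in[n]}$ is multinomial with uniform cell probabilities, so Lemma~\ref{lem:poisson approximation} dominates the monotone functional $|\{v:S_v\ge\ell\}|$ by twice its analogue under independent $\poss(\lambda_i)$ with $\lambda_i=r_i/n$. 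On the event $\{r_i\le r_i^*\}$ from Step 1, this analogue is $\mathrm{Binomial}(n, p_\ell)$ with $p_\ell \le 2\lambda_i^\ell/\ell!$ (from the Poisson-PMF tail $\P(\poss(\lambda)\ge\ell)\le \lambda^\ell e^{-\lambda}/(\ell!(1-\lambda/\ell))$, valid for $\lambda\ll\ell$), so its mean is at most $2n\lambda_i^\ell/\ell!$. A Chernoff bound then puts the count below $4n\lambda_i^\ell/\ell!$ with failure probability $\exp(-\Omega(n\lambda_i^\ell/\ell!))$, and a direct parameter calculation gives $n\lambda_i^\ell/\ell!\ge n^{1/2-o(1)}$, comfortably reaching the target.

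\textit{Main obstacle.} The principal technical burden is the \emph{constant chase}: verifying that the slack between $1/4$ and $1/5$ in the exponent of $r_i^*$, between $2$ and $4$ in the $|H_i|$ deviation, and among the Poisson-PMF, Lemma~\ref{lem:retry bound}, and Chernoff constants all line up simultaneously. A related delicacy is ensuring the type (A) contribution stays subdominant across the longest first stage $\Delta_1\approx\log^\alpha n$; this uses the hypothesis $|H_0|\le 3n\exp(-\ell^2/(4\log^{\alpha+\eta}\!n))$, which is calibrated exactly to the $1/4$--$1/5$ exponent gap built into the parameter scheme $\{\beta_i\}$.
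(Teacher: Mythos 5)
Your proposal follows essentially the same route as the paper's proof: the same interleaved induction, the same split of the stage-$i$ retries into a threshold-caused part and a heavy-bin-caused part (the paper's $r_{i,1}$ and $r_{i,2}$, handled via Lemmata~\ref{lem:poisson approximation} and~\ref{lem:retry bound}), and the same path to \eqref{eq:bound hi} through ``every bin of $H_i$ must receive at least $\ell$ secondary allocations in stage $i$'', Poissonization at rate $\lambda_i$, and a binomial concentration bound. The only cosmetic differences are that you treat the heavy-bin primaries by a Binomial--Chernoff bound where the paper Poissonizes and uses Lemma~\ref{lem:poss tail}, and you use a multiplicative Chernoff bound for $|H_i|$ where the paper uses Hoeffding (your version in fact gives a slightly stronger exponent $n\lambda_i^\ell/\ell!$ versus $n(\lambda_i^\ell/\ell!)^2$; both suffice for $\exp(-n^{1/2-o(1)})$).

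One step in Step~2 is not correct as written: the claim that for $v\in H_i$ ``the stage-$i$ threshold caps its primary count at $t_i-t_{i-1}+\ell$''. The multi-stage strategy accepts \emph{every} primary allocation whose bin currently has load below $-\log n$, so a bin outside $\cup_{j<i}H_j$ may accept more than $t_i-t_{i-1}+\ell$ primaries during the stage. The conclusion you need, $S_v\ge\ell$, still holds, but only after the extra case the paper spells out: if $v$ accepted more than $t_i-t_{i-1}+\ell$ primaries in stage $i$, its load just after its last accepted primary was below $-\log n+1$, so to finish the stage with load at least $L_0+2i\ell$ it must receive on the order of $\log n\ (\gg\ell)$ secondary allocations. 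Relatedly, ``conditional on $r_i$ the secondary counts are uniform multinomial'' is not literally justified, since later retry decisions within the stage may depend on where earlier secondaries landed; the clean statement (and what the paper uses) is that on $\{r_i\le r_i^*\}$ the stage-$i$ secondary occupancies are dominated by those of $r_i^*$ fresh i.i.d.\ uniform throws, after which your Poissonization, tail bound $p_\ell\le 2\lambda_i^\ell/\ell!$, and the parameter computation behind $n\lambda_{k-1}^\ell/\ell!\ge n^{1/2-o(1)}$ (the paper's $\ell\log\lambda_{k-1}>-\tfrac{\log n}{4}$) close the argument exactly as in the paper.
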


Before presenting the proof, we first make some technical observations. Using $\ell=\lfloor \log^{\beta_k}\! n\rfloor$ and $\beta_k=\beta-\frac{k(2\beta-1-\varepsilon)}{2k+1}$,
it is easy to check that
\begin{equation}\label{eq:technical-bound-1}
\ell\cdot \log\ell\leq(\log n)^{\frac{k(1+\varepsilon)+\beta}{2k+1}}
\cdot \frac{k(1+\varepsilon)+\beta}{2k+1}\log\log n=(\log n)^{\frac{1}{2}+o(1)}.
\end{equation}
For $n$ large enough, we have
\begin{align}\label{eq:technical-bound-2}
\frac{\ell^2}{\log^\beta\! n} 
&=\frac{\ell^2}{\log^{2\beta_k}\! n}\cdot (\log n)^{2\beta_k-\beta}>\frac{1}{2}(\log n)^{\frac{2k(1+\varepsilon)-(2k-1)\beta}{2k+1}}\notag\\
&> \frac{1}{2}(\log n)^{\frac{1-(2k-1)(\beta-1)}{2k+1}}\geq(\log\log n)^{\frac{9}{8}-o(1)},
\end{align}
where the last equality follows from that $\beta-1<\frac{1/4+o(1)}{2k-1}$ and  our choice of $k$. We also have
\begin{align}\label{eq:technical-bound-3}
\frac{\ell^3}{\log^\beta\! n}&\leq(\log n)^{2\beta-\frac{3k(2\beta-1-\varepsilon)}{2k+1}}=(\log n)^{1-\frac{2k-2}{2k+1}\big(\beta-\frac{1}{2}\big)+\frac{3k\varepsilon}{2k+1}} \notag\\
&<(\log n)^{1-\frac{2k-2}{2k+1}\big(\beta-\frac{1}{2}\big)+\frac{3}{2k+1}\big(\beta-\frac{1}{2}\big)}\notag\\
&=(\log n)^{1-\big(\beta-\frac{1}{2}\big)}=o(\log n),
\end{align}
where the last equality uses $\beta\geq\alpha>\frac{1}{2}+\frac{1}{\sqrt{\log\log\log n}}$. 
For $1\leq i\leq k$ and $n$ large enough, we have
\begin{equation}\label{eq:technical-bound-4}
0<\lambda_i<1
\end{equation}
To see this, notice that $\{\beta_i\}_{i=0}^k$ is a decreasing arithmetic progression, hence, $\{\lambda_i\}_{i=1}^k$ is a decreasing sequence and it suffices to show that  $0<\lambda_1<1$.  Observe that
$$
\lambda_1=\frac{20\log^\beta\! n}{\ell}\exp\left(-\frac{\ell^2}{5\log^\beta\! n}\right)=\exp\left(-\frac{\ell^2}{5\log^\beta\! n}+\log\frac{20\log^\beta\! n}{\ell}\right),
$$
and
$$
\log\frac{\log^\beta\! n}{\ell}<\beta\log\log n<2\log\log n.
$$
This, together with \eqref{eq:technical-bound-2}, yields that $0<\lambda_1<1$ and hence \eqref{eq:technical-bound-4}.

\begin{proof}
We prove \eqref{eq:bound ri} and \eqref{eq:bound hi} inductively by establishing the $i$-th case of \eqref{eq:bound ri} on condition that \eqref{eq:bound hi} holds for all $j<i$, and by establishing the $i$-th case of \eqref{eq:bound hi} on condition that \eqref{eq:bound ri} holds for the same $i$.
The case $i=1$ is treated separately.

\textbf{Bounding $\P(r_i>r_i^*)$ assuming that $|H_j|\leq 4n\lambda_j^\ell/\ell!$ for $j<i$}.
We denote by $r_{i, 1}$ the number of balls in the $i$-th stage whose primary allocations are bins that, at the time of the allocation, already accepted $t_i-t_{i-1}+\ell$ primary allocations during stage $i$. We write $r_{1, 2}$ for the number balls  in the first stage whose primary allocations are bins from $H_0$, and write $r_{i, 2}$ for $i\geq 2$, for the number of balls in the $i$-th stage whose primary allocations are bins from $\cup_{j=1}^{i-1}H_j$. By the definition of the strategy, we thus have $r_i\leq r_{i, 1}+r_{i, 2}$.

\textbf{Estimating $r_{i, 1}$.}
Recall that $t_i=\lfloor t-\log^{\beta_i}\! n\rfloor$ for $1\leq i\leq k-1$, $t_k=t$, $\ell=\lfloor\log^{\beta_k}\! n\rfloor$, where $\beta_i=\beta-\frac{(2\beta-1-\varepsilon)i}{2k+1}$, and observe that $\ell<t_i-t_{i-1}$ for $1\leq i\leq k$.  
Also, recall that $I(x)$ defined in Lemma \ref{lem:poss tail} is the rate function of the large deviation bound of a Poisson random variable.
We have
\begin{equation}\label{eq:bound ri-1-a}
\frac{3}{5}r_i^*> \frac{12n(t_i-t_{i-1})}{\ell}\exp\left(-\frac{\ell^2}{4(t_i-t_{i-1})}\right)\ge\frac{6ne^{-(t_i-t_{i-1})I(\ell/(t_i-t_{i-1}))}}{\log(1+\ell/(t_i-t_{i-1}))}=:r^*,
\end{equation}
where the first inequality follows from the definition of $r_i^*$ in \eqref{eq:r_i^*}, and the second inequality follows from the lower bound of $I(x)$ in \eqref{eq:approx-I} and that $\log(1+x)\geq x/2$ for $0<x<1$ (indeed  $\ell/(t_i-t_{i-1})<1$). Define $Y_j^{(i)}=\max\big\{0, X_j^{(i)}-(t_i-t_{i-1}+\ell)\big\}$, where 
$\big\{X_j^{(i)}\big\}_{j\in[n]}$ is a collection of independent $\poss(t_i-t_{i-1})$ random variables, and write $Y=\sum_{j=1}^nY_j^{(i)}$.
 By Lemmata  \ref{lem:poisson approximation}, \ref{lem:retry bound} and inequality \eqref{eq:bound ri-1-a}, we have
\begin{align}\label{eq:bound ri-1}
\P\left(r_{i, 1}>\frac{3}{5}r_i^*\right) &\leq 2\P\left(Y>\frac{3}{5}r_i^*\right) \leq 2\P\left(Y>r^*\right)\notag\\
&\leq2\exp\left(-n\exp\left(-(t_i-t_{i-1})I\left(\frac{\ell}{t_i-t_{i-1}}\right)\right)\right) \notag\\
& \leq 2\exp\left(-n\exp\left(-\frac{\ell^2}{2(t_i-t_{i-1})}\right)\right)\notag\\
&\leq 2\exp\big(-ne^{-\ell}\big)=\exp\big(-n^{1-o(1)}\big),
\end{align}
where the  last two inequalities follow from the upper bound of $I(x)$ in \eqref{eq:approx-I} and the fact that $\ell/(t_i-t_{i-1})<1$. The last identity follows from the fact that $\ell=(\log n)^{\frac{1}{2}+o(1)}$.

\textbf{Estimating $r_{1, 2}$.}
Here we estimate the number of balls  in the first stage whose primary allocations are bins from $H_0$. Using the assumption $|H_0|\leq 3n\exp\big(-\frac{\ell^2}{4\log^\beta\! n}\big)$, we have for $n$ large enough
\begin{equation}\label{eq:r-1-2-a}
2(t_1-t_0)|H_0|\leq 6n(t_1-t_0)\exp\left(-\frac{\ell^2}{4\log^\beta\! n}\right)\leq \frac{6n\log^{\beta}\! n}{\ell}\exp\left(-\frac{\ell^2}{5\log^\beta\! n}\right)<\frac{2}{5}r_1^*,
\end{equation}
where $r_1^*$ is given in \eqref{eq:r_i^*}, and the second inequality follows from $t_1-t_0<\log^\beta\! n$ and the observation that $\ell=o\big(\exp\big(\frac{\ell^2}{\log^{\beta}\! n}\big)\big)$ by \eqref{eq:technical-bound-2}. 
We denote by $\big\{X_j^{(i)}\big\}_{j\in[n]}$  independent $\poss(t_i-t_{i-1})$ random variables, and write $Z$ for a
Poisson random variable with the parameter $3n(t_1-t_0)\exp\left(-\frac{\ell^2}{4\log^\beta\! n}\right)$. Lemmata \ref{lem:poisson approximation}, \ref{lem:poss tail} and inequality \eqref{eq:r-1-2-a} yield
\begin{align}\label{eq:r1-2}
\P\left(r_{1, 2}>\frac{2}{5}r_1^*\right) &\leq 2\P\left(\sum_{j\in H_0}X_j^{(1)}>\frac{2}{5}r_1^*\right)\leq 2\P\left(Z>\frac{2}{5}r_1^*\right) \notag\\
&\leq 2\P\left(Z>6n(t_1-t_0)\exp\left(-\frac{\ell^2}{4\log^\beta\! n}\right)\right)\notag\\
&\leq 2\exp\left(-n(t_1-t_0)\exp\left(-\frac{\ell^2}{4\log^\beta\! n}\right)\right) \notag\\
&\leq 2\exp\left(-n(t_1-t_0)e^{-\ell}\right)\notag\\
&=\exp\big(-n^{1-o(1)}\big).
\end{align}
This, together with the $i=1$ case of \eqref{eq:bound ri-1}, implies the base case of \eqref{eq:bound ri}, i.e., $i=1$.

\textbf{Estimating $r_{i, 2}$ for $i\ge 2$, assuming that $|H_j|\leq \frac{4n\lambda_j^\ell}{\ell!}$  for $1\leq j\leq i-1$.} Recall that $r_{i, 2}$ is the number of balls in stage $i$ whose primary allocations are bins from $\cup_{j=1}^{i-1}H_j$. 
Again, write $\big\{X_j^{(i)}\big\}_{j\in[n]}$ for independent $\poss(t_i-t_{i-1})$ random variables, and write $Z$ for a Poisson random variable with parameter $\frac{4n(t_i-t_{i-1})}{\ell!}\sum_{j=1}^{i-1}\lambda_j^\ell$.
Define $E=\big\{|H_j|\leq \frac{4n\lambda_j^\ell}{\ell!}, 1\leq j\leq i-1\big\}$. Lemmata \ref{lem:poisson approximation} and  \ref{lem:poss tail} imply that
\begin{align}\label{eq:bound ri-2}
\P\left(r_{i, 2}> \frac{8n(t_i-t_{i-1})}{\ell!}\sum_{j=1}^{i-1}\lambda_j^\ell, ~E\right) &\le 
2\P\left(\sum_{m \in \cup_{j=1}^{i-1}H_j}X_m^{(i)}> \frac{8n(t_i-t_{i-1})}{\ell!}\sum_{j=1}^{i-1}\lambda_j^\ell, ~E\right) \notag\\
&\leq  2\P\left(Z> \frac{8n(t_i-t_{i-1})}{\ell!}\sum_{j=1}^{i-1}\lambda_j^\ell\right) \notag\\
&\leq \exp\left(-\frac{n(t_i-t_{i-1})}{\ell!}\sum_{j=1}^{i-1}\lambda_j^\ell \right) \leq \exp\left(-\frac{n\lambda_1^\ell}{\ell!}\right)\notag\\
&\leq \exp\left(-\exp\left(\log n-\frac{\ell^3}{\log^\beta\! n}-\ell\log \ell\right)\right)\notag\\
&=\exp\left(-n^{1-o(1)}\right),
\end{align}
where the penultimate transition uses the fact that $\lambda_1=r_1^*/n>\exp\big(-\frac{\ell^2}{4\log^\beta\! n}\big)$, where $r_1^*$ is given in \eqref{eq:r_i^*}, and the bound $\ell!<\ell^\ell$, and the last transition uses \eqref{eq:technical-bound-1} and \eqref{eq:technical-bound-3}. Using the fact that $0<\lambda_i<1$ and that $k<\ell$, we have
\begin{align}\label{eq:bound ri-2-a}
\frac{8n(t_i-t_{i-1})}{\ell!}\sum_{j=1}^{i-1}\lambda_j^\ell&<\frac{8kn\log^{\beta_{i-1}}\! n}{\ell!}<
\frac{8n\log^{\beta_{i-1}}\! n}{(\ell-1)!}<\frac{8n\log^{\beta_{i-1}}\! n}{\ell}\exp\left(-\frac{\ell \log \ell}{2}\right)\notag\\
&\le \frac{8n\log^{\beta_{i-1}}n}{\ell}\exp\left(-\frac{\ell^2}{5\log^{\beta_{i-1}}\! n}\right)=\frac{2}{5}r_i^*,
\end{align}
where the penultimate inequality uses Stirling's approximation and the last inequality follows the fact that $\ell<\log^{\beta_{i-1}} n$.
Combining \eqref{eq:bound ri-2} and \eqref{eq:bound ri-2-a}, we have
\begin{equation*}\label{eq:bound ri-2-b}
\P\left(r_{i, 2}> \frac{2}{5}r_i^*,~E\right)
= \exp\left(-n^{1-o(1)}\right).
\end{equation*}
This, together with \eqref{eq:bound hi} for $1\leq j\leq i-1$, implies that for $2\leq i\leq k$,
$$
\P\left(r_{i, 2}> \frac{2}{5}r_i^*\right)\leq\P\left(r_{i, 2}> \frac{2}{5}r_i^*,~ E\right)+\sum_{j=1}^{i-1}\P\left(|H_j|> \frac{4n\lambda_j^\ell}{\ell!}\right)= \exp\left(-n^{1/2-o(1)}\right).
$$
This, combined with \eqref{eq:bound ri-1} and \eqref{eq:r1-2}, yields
$$
\P(r_i>r_i^*)\leq \P\left(r_{i, 1}> \frac{3}{5}r_i^*\right)+\P\left(r_{i, 2}> \frac{2}{5}r_i^*\right)=\exp\left(-n^{1/2-o(1)}\right).
$$
This concludes the proof of the $i$-th case of \eqref{eq:r_i^*} condition on that \eqref{eq:bound hi} holds for $j<i$.

\textbf{Bounding $\P(|H_i|>2p_in)$ assuming that $r_i\leq r_i^*$ for $i\ge 1$.} 
Recall that $$H_i=\big\{j\in [n]: L_j^f(t_in)\geq L_0+2i\ell\big\}\setminus \bigcup_{i'<i}H^c_{i'}.$$ 
Let $j\in H_i$. We have $L_j^f(t_{i-1}n)<L_0+2(i-1)\ell$ (otherwise we would have $j\in H_{i-1}$). Let us show that $j$ must have received at least $\ell$ secondary allocations in the $i$-th stage.  During the $i$-th stage, if bin $j$ accepted less than $t_i-t_{i-1}+\ell$ primary allocations, it clearly must have received at least $\ell$ secondary allocations in order to belong to $H_i$. Otherwise, once $j$ accepted more than $t_i-t_{i-1}+\ell$ primary allocations (in the $i$-th stage), it rejects all further allocations unless its load is at most $-\log n$. Hence its load after accepting the last primary allocation must have been at most $-\log n$, so that in order to belong to $H_i$ it must have received at least $L_0+2i\ell+\log n$ secondary allocations.

Let $\big\{X_j^{(i)}\big\}_{j\in[n]}$ be independent $\poss(\lambda_i)$ random variables. Let $Y_j^{(i)}$ be the indicator function of the event that $X_j^{(i)}\geq \ell$. Then, $\big\{Y_j^{(i)}\big\}_{j\in[n]}$ are independent $\bernoulli(p_i)$ random variables, where $p_i=\P\big(X_1^{(i)}\geq \ell\big)$. Let $Y=\sum_{j=1}^nY_j^{(i)}$. By Lemma \ref{lem:poisson approximation} and Hoeffding's inequality,
\begin{equation}\label{eq:conditional bound hi}
\P(|H_i|>2p_in ,r_i\leq r_i^*)\leq 2\P\left(Y>2p_in\right)\leq 2e^{-2np_i^2}.
\end{equation}
Using the fact that $0<\lambda_i<1$, we have
\begin{equation*}
\frac{\lambda_i^\ell}{e\ell!}<p_i=e^{-\lambda_i}\sum_{j=\ell}^\infty\frac{\lambda_i^j}{j!}<\frac{2\lambda_i^\ell}{\ell!}.
\end{equation*}
This, together with \eqref{eq:conditional bound hi}, yields that, for $1\leq i\leq k-1$,
\begin{equation}\label{eq:conditional bound hi-1}
\P\left(|H_i|>\frac{4n\lambda_i^\ell}{\ell!} , r_i\leq r_i^*\right)<2\exp\left(-\frac{2n}{e^2}\left(\frac{\lambda_i^\ell}{\ell!}\right)^2\right).
\end{equation}
Since $\{\lambda_i\}_{i=1}^k$ is a decreasing sequence, we will upper bound the RHS of \eqref{eq:conditional bound hi-1} for $i=k-1$. Using the fact that $\ell!\leq e\sqrt \ell (\ell/e)^\ell$, by Stirling's approximation, we obtain
\begin{equation}\label{eq:conditional bound hi-2}
\frac{2n}{e^2}\left(\frac{\lambda_{k-1}^\ell}{\ell!}\right)^2\geq \frac{2n}{e^4\ell}\left(\frac{e\lambda_{k-1}}{\ell}\right)^{2\ell}=\frac{2}{e^4}\exp\left(\log n-\log\ell+2\ell\log \frac{e\lambda_{k-1}}{\ell}\right).
\end{equation}
Using $\ell=\lfloor\log^{\beta_k}\! n\rfloor<\log^{\beta_{k-1}}\! n$, \eqref{eq:r_i^*} and  \eqref{eq:bound hi}, we have
$
\lambda_{k-1}
>\exp\left(-\frac{\ell^2}{5\log^{\beta_{k-2}}n}\right),
$
and
\begin{align}\label{eq:conditional bound hi-3}
\ell\log \lambda_{k-1} &>\frac{-\ell^3}{5\log^{\beta_{k-2}}\! n}=-\frac{\ell^3}{5\log^{3\beta_k}\! n}\cdot(\log n)^{4\beta_k-\beta_{k-2}}\notag\\
&>-\frac{1}{4}(\log n)^{1+\frac{2(k+1)\varepsilon-(2\beta-1)}{2k+1}}=-\frac{\log n}{4},
\end{align}
where the second equality follows from that $\ell^3/\log^{3\beta_k}n=1-o(1)$ and $\beta_i=\beta-\frac{(2\beta-1-\varepsilon)i}{2k+1}$, and the last equality uses $\varepsilon= \frac{2\beta-1}{2(k+1)}$.
Combining \eqref{eq:conditional bound hi-1}, \eqref{eq:conditional bound hi-2}, \eqref{eq:conditional bound hi-3} and \eqref{eq:technical-bound-1}, we have
\begin{equation*}\label{eq:conditional bound hi-8}
\P\left(|H_i|> \frac{4n\lambda_i^\ell}{\ell!},r_i\leq r_i^*\right)\leq\exp\left(-n^{1/2-o(1)}\right).
\end{equation*}
This, together with \eqref{eq:bound ri}, implies that 
$$
\P\left(|H_i|> \frac{4n\lambda_i^\ell}{\ell!}\right)\leq\P\left(|H_i|> \frac{4n\lambda_i^\ell}{\ell!},r_i\leq r_i^*\right)+\P(r_i>r_i^*)\leq\exp\left(-n^{1/2-o(1)}\right).
$$
This concludes the proof of the $i$-th case of \eqref{eq:bound hi} given that the $i$-th case of \eqref{eq:bound ri} holds. This establishes the induction and thus the lemma.
\end{proof}

In the next lemma, we keep our notation  $r_i$ for the number of retries in the $i$-th stage, which proceeds from $t_{i-1}$ to $t_i$ and set $t_{k+1}:=t_k+\ell$. 

\begin{lem}\label{lem:retry maxload bound}
For $1\leq i\leq k$ we have
\begin{equation}\label{eq:retry maxload bound}
\P\left(\exists_{j\in[n]}L^f_{2, j}((t_{i-1},t_{i}])>t_{i+1}-t_i\right)\leq 2n^{-e^{2\sqrt{\log\log\log n}}}.
\end{equation}
\end{lem}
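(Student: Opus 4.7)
The plan is to decompose according to whether the high-probability event $\{r_i \le r_i^*\}$ from Lemma~\ref{lem:bound ri-hi} occurs. Outside this event we already have a bound of $\exp(-n^{1/2-o(1)})$, well within our target. Inside the event, the number of secondary allocations landing in bin $j$ during stage $i$ is dominated by $|\{s \in (R_{i-1}, R_{i-1}+r_i^*]: Z^2_s=j\}|$. Since $R_{i-1}$ is measurable with respect to the natural filtration and the untouched entries $Z^2_{R_{i-1}+1}, Z^2_{R_{i-1}+2},\ldots$ are iid uniform on $[n]$ independent of that filtration, the joint distribution of these counts is exactly that of a balls-in-bins experiment with $r_i^*$ balls into $n$ bins. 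Applying Lemma~\ref{lem:poisson approximation} to the monotone set $\{x : \max_j x_j > t_{i+1}-t_i\}$ followed by a union bound over $j$, the first term is at most $2n\,\P(\poss(\lambda_i) > t_{i+1}-t_i)$ with $\lambda_i := r_i^*/n$.

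Next, I would estimate the Poisson tail using Lemma~\ref{lem:poss tail} with the large-$x$ bound $I(x) \ge x\log(x/e)$ from \eqref{eq:approx-I-1} (justified since $t_{i+1}-t_i \gg \lambda_i$). The exponent is at least $(t_{i+1}-t_i)\log\bigl((t_{i+1}-t_i)/(e\lambda_i)\bigr)$, which by the definition \eqref{eq:r_i^*} and the observation $\log(1/\lambda_i) \sim \ell^2/(5\log^{\beta_{i-1}} n)$ simplifies to roughly $(t_{i+1}-t_i)\,\ell^2/(5\log^{\beta_{i-1}} n)$. The key arithmetic step is verifying that this equals $(\log n)^{1+\varepsilon}/(5+o(1))$ uniformly in $i$: for $1\le i\le k-1$ one uses $t_{i+1}-t_i \asymp \log^{\beta_i} n$ and computes $\beta_i + 2\beta_k - \beta_{i-1} = 1 + \varepsilon$, while for $i = k$ one uses $t_{k+1}-t_k = \ell = \lfloor\log^{\beta_k} n\rfloor$ and computes $3\beta_k - \beta_{k-1} = 1 + \varepsilon$. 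Both identities follow directly from the arithmetic progression $\beta_j = \beta - (2\beta-1-\varepsilon)j/(2k+1)$.

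Combining with the lower bound $\varepsilon \log\log n \ge 3\sqrt{\log\log\log n}$ --- a consequence of $\alpha - \tfrac12 \ge 1/\sqrt{\log\log\log n}$ and $k \asymp \log\log n/\log\log\log n$ --- one concludes $(\log n)^\varepsilon \ge \exp(3\sqrt{\log\log\log n})$, so
$$
2n \exp\bigl(-(\log n)^{1+\varepsilon}/5\bigr) \le n^{-e^{2\sqrt{\log\log\log n}}}
$$
for all sufficiently large $n$. Adding the $\exp(-n^{1/2-o(1)})$ contribution from the conditioning step gives the claimed bound with room to spare. The main obstacle is the uniform-in-$i$ arithmetic identity on the exponents, which is the reason the stage lengths $t_i$ and the threshold $\ell$ are precisely calibrated as an arithmetic progression; once this identity is in hand the rest reduces to routine Poisson tail estimation leveraging Lemma~\ref{lem:bound ri-hi}.
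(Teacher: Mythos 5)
Your proposal is correct and follows essentially the same route as the paper: condition on $\{r_i\le r_i^*\}$ via Lemma~\ref{lem:bound ri-hi}, apply the Poisson approximation of Lemma~\ref{lem:poisson approximation} with parameter $\lambda_i=r_i^*/n$, take a union bound over bins, and use the same arithmetic-progression identity $2\beta_k+\beta_i-\beta_{i-1}=1+\varepsilon$ (valid uniformly in $i$, including $i=k$) to get a per-bin bound of $\exp\left(-\Theta(\log^{1+\varepsilon}n)\right)$ and conclude via $\log^{\varepsilon}n\ge e^{3\sqrt{\log\log\log n}}$. The only cosmetic difference is that the paper bounds the Poisson tail directly through the pmf and Stirling's formula rather than through the rate function $I$, which yields the same exponent.
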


\begin{proof}
Denote $E=\big\{\exists_{j\in[n]}L^f_{2, j}((t_{i-1},t_{i}]>t_{i+1}-t_i\big\}$.
Recall that $r_i^*$ is defined in \eqref{eq:r_i^*}. Using the law of total probability, we have
\begin{equation}\label{eq:retry maxload bound-1}
\P\left(E\right)\leq \P\left(E~\middle |~r_i\leq r_i^*\right)+\P(r_i>r_i^*).
\end{equation}
We have already showed in Lemma \ref{lem:bound ri-hi} that
\begin{equation}\label{eq:retry maxload bound-2}
\P(r_i>r_i^*)\leq \exp\left(-n^{1/2-o(1)}\right).
\end{equation}
Next, we estimate the first term on the RHS of \eqref{eq:retry maxload bound-1}. Denote by $\{X_j^{(i)}\}_{j\in[n]}$ independent $\poss(\lambda_i)$ random variables, where $\lambda_i$ is given in \eqref{eq:bound hi}. By Lemma \ref{lem:poisson approximation}, we have
\begin{equation}\label{eq:retry maxload bound-3}
\P\left(E~\middle |~r_i\leq r_i^*\right)\leq 2\P\left(\exists_{j\in[n]}X_j^{(i)}> t_{i+1}-t_i\right).
\end{equation}
Using the face that $0<\lambda_i<1$ in \eqref{eq:technical-bound-4}, we have
\begin{align*}
\P\left(X_1^{(i)}\geq t_{i+1}-t_i\right) &<\frac{2\lambda_i^{ t_{i+1}-t_i}}{( t_{i+1}-t_i)!}\leq \left(\frac{e\lambda_i}{t_{i+1}-t_i}\right)^{t_{i+1}-t_i}\\
&\leq \exp\left(-\frac{\ell^2(t_{i+1}-t_i)}{4\log^{\beta_{i-1}}\! n}\right)\leq \exp\left(-\frac{\ell^2\log^{\beta_i}\! n}{5\log^{\beta_{i-1}}\! n}\right)\\
&\leq \exp\left(-\frac{(\log n)^{2\beta_k+\beta_i}}{6\log^{\beta_{i-1}}\! n}\right)=\exp\left(-\frac{1}{6}\log^{1+\varepsilon} n\right).
\end{align*}
The second inequality follows from Stirling's approximation $n!\geq \sqrt{2\pi n}(n/e)^n$ for $n\in\Z_+$. The transition to the second line uses the definition of $\lambda_i$ given in \eqref{eq:bound hi}. In the penultimate inequality, we use $t_i=\lfloor t-\log^{\beta_i}\! n\rfloor$, where $\beta_i=\beta-\frac{(2\beta-1-\varepsilon)i}{2k+1}$, and that $\log^{\beta_{i+1}}\! n=o(\log^{\beta_i}\! n)$. The last inequality uses the fact that $\ell= \lfloor\log^{\beta_k}\! n\rfloor$. Taking into account of $\varepsilon=\frac{2\beta-1}{2(k+1)}$, $k=\big\lfloor\frac{\log\log n}{3\log\log\log n}\big\rfloor$ and $\beta>\frac{1}{2}+\frac{1}{\sqrt{\log\log\log n}}$, we have $\log^\varepsilon n\geq e^{3\sqrt{\log\log\log n}}$. Taking the union bound, we have for $n$ large enough,
\begin{equation}\label{eq:retry maxload bound-4}
\P\left(\max_{j\in[n]}X_j^{(i)}> t_{i+1}-t_i\right)\leq n\exp\left(-\frac{1}{6}\log^{1+\varepsilon} n\right)\leq n^{-e^{2\sqrt{\log\log\log n}}}.
\end{equation}
The desired statement \eqref{eq:retry maxload bound} follows from \eqref{eq:retry maxload bound-1}--\eqref{eq:retry maxload bound-4}.
\end{proof}

Now we are ready to prove  Proposition \ref{prop:single time upper bound case 3}.

\begin{proof}[Proof of Proposition \ref{prop:single time upper bound case 3}]
We will estimate the maximum loads after $i$ stages for all $1\leq i\leq k$. By the definition of $H_i$, we have
\begin{equation}\label{eq:maxload-non-heavily-loaded-set}
\maxload^f_{(\cup_{j=0}^i H_j)^c}(t_in)\leq L_0+2i\ell.
\end{equation}

Next, we estimate the maximum load over $\cup_{j=1}^iH_j$ after $i$ stages. For $1\leq j\leq i\leq k$, we denote by $E_i^j=\big\{\maxload^f_{H_j}(t_in) > t_{i+1}-t_i+L_0+(2j-1)\ell\big\}$, where $t_{k+1}=t_k+\ell$. We will show that
\begin{equation}\label{eq:maxload-heavily-loaded-set}
\P\big(E_i^j\big)\leq (i-j+1)\cdot 2n^{-e^{2\sqrt{\log\log\log n}}}.
\end{equation}
We denote by $r_i$ the number of retries in the $i$-th stage. In the $i$-th stage, for a bin in $H_i$ to accept more than $t_i-t_{i-1}+\ell$ primary allocations, it is necessary that the load of this bin before accepting its last primary allocation is at most $-\log n$. Hence, we have
\begin{align*}
\maxload^f_{H_i}(t_in) &\leq \max\left\{\maxload^f_{H_i}(t_{i-1}n)+\ell, -\log n\right\}+\max_{p\in H_i}L^f_{2, p}((t_{i-1},t_{i}])\\
&\leq L_0+(2i-1)\ell+\max_{p\in H_i}L^f_{2, p}((t_{i-1},t_{i}]),
\end{align*}
where the second inequality uses the fact that $H_i\subseteq (\cup_{j=0}^{i-1} H_j)^c$ and the $i-1$ case of \eqref{eq:maxload-non-heavily-loaded-set}. Using the inequalities above and Lemma \ref{lem:retry maxload bound}, we obtain
\begin{equation}\label{eq:maxload-hi}
\P\big(E_i^i\big)\leq \P\left(\max_{p\in H_i}L^f_{2, p}((t_{i-1},t_{i}])> t_{i+1}-t_i\right)\leq 2n^{-e^{2\sqrt{\log\log\log n}}}.
\end{equation}
 For  $1\leq j\leq i-1$ and $i\geq 2$, the strategy guarantees that in the $i$-th stage, each bin of $H_j$ either accepts no primary allocations, or has a load at most $-\log n$ before accepting its last primary allocation. Hence, we have
$$
\maxload^f_{H_j}(t_in) \leq \max\left\{\maxload^f_{H_j}(t_{i-1}n)-(t_i-t_{i-1}), -\log n\right\}+\max_{p\in H_j}L^f_{2, p}((t_{i-1},t_{i}]).
$$
Hence, event $E_i^j$ occurs only if one of the two conditions holds: $\max_{p\in H_j}L^f_{2, p}(r_i)> t_{i+1}-t_i$ or  $\max\big\{\maxload^f_{H_j}(t_{i-1}n)-(t_i-t_{i-1}), -\log n\big\}>L_0+(2j-1)\ell$. The latter condition is equivalent to event  $E_{i-1}^j$. This and Lemma \ref{lem:retry maxload bound} imply that
$$
\P\big(E_i^j\big)\leq \P\big(E_{i-1}^j\big)+\P\left(\max_{p\in H_i}L^f_{2, p}((t_{i-1},t_{i}])>t_{i+1}-t_i\right)\leq \P\big(E_{i-1}^j\big)+2n^{-e^{2\sqrt{\log\log\log n}}}.
$$
Iterating this argument to obtain
$$
\P\big(E_i^j\big)\leq \P\big(E_j^j\big)+(i-j)\cdot n^{-e^{2\sqrt{\log\log\log n}}}\leq (i-j+1)\cdot 2n^{-e^{2\sqrt{\log\log\log n}}},
$$
where the second inequality follows from \eqref{eq:maxload-hi}. This concludes the proof of \eqref{eq:maxload-heavily-loaded-set}.

Now, we estimate the maximum load over $H_0$. In the first stage, each bin in $H_0$ either accepts no primary allocations or has a load at most $-\log n$ before accepting its last primary allocation. Hence, we have
\begin{equation}\label{eq:maxload-h0-iteration-a}
\maxload^f_{H_0}(t_1n) \leq \max\left\{\maxload^f_{H_0}(t_0n)-(t_1-t_0), -\log n\right\}+\max_{p\in H_0}L^f_{2, p}((t_0,t_{1}]).
\end{equation}
In general, in the $i$-th stage for $2\leq i\leq k$, for bin of $H_0$ to accept more than $t_i-t_{i-1}+\ell$ primary allocations, the load of this bin before accepting its last primary is at most $-\log n$. Hence, we obtain
\begin{equation}\label{eq:maxload-h0-iteration-b}
\maxload^f_{H_0}(t_in) \leq \max\left\{\maxload^f_{H_0}(t_{i-1}n)+\ell, -\log n\right\}+\max_{p\in H_0}L^f_{2, p}((t_{i-1},t_{i}]).
\end{equation}
Iteration of \eqref{eq:maxload-h0-iteration-b}, together with \eqref{eq:maxload-h0-iteration-a}, yields
\begin{align}\label{eq:maxload-h0}
\maxload^f_{H_0}(t_in) &\leq \max\left\{\maxload^f_{H_0}(t_1n)+\ell, -\log n\right\}+(i-2)\ell+\sum_{j=2}^i\max_{p\in H_0}L^f_{2, p}((t_{j-1},t_{j}]) \notag\\
&\leq \max\left\{\maxload^f_{H_0}(t_0n)-(t_1-t_0), -\log n\right\}\notag\\&\hspace{12pt}+(i-1)\ell+\sum_{j=1}^i\max_{p\in H_0}L^f_{2, p}((t_{j-1},t_{j}]) \notag\\
&\leq \sum_{j=1}^i\max_{p\in H_0}L^f_{2, p}((t_{j-1},t_{j}])+(i-1)\ell-\min\big\{(1-c-o(1))(t-t_0)), \log n\big\},
\end{align}
where the last inequality follows from the fact that $\maxload^f_{H_0}(t_0n)\leq c(t-t_0)$ for some constant $0<c<1$, and that  $t_1-t_0=(1-o(1))(t-t_0)$. Observe that $t_{i+1}-t_1=o(t-t_0)$, $t_{i+1}-t_1<\log n$ and $(i-1)\ell=o(t-t_0)$, $(i-1)\ell<\log n$. Hence, we have
$$
(t_{i+1}-t_i)+(i-1)\ell<\min\big\{(1-c-o(1))(t-t_0)), \log n\big\}.
$$
This, together with \eqref{eq:maxload-h0}, implies that
\begin{align}\label{eq:maxload-h0-a}
\P\left(\maxload^f_{H_0}(t_in)>0\right) &\leq \P\left(\sum_{j=1}^i\max_{p\in H_0}L^f_{2, p}((t_{j-1},t_{j}])>t_{i+1}-t_1\right)\notag\\
&\leq \sum_{j=1}^i\P\left(\max_{p\in H_0}L^f_{2, p}((t_{j-1},t_{j}])>t_{j+1}-t_j\right)\notag\\
&\leq i\cdot 2n^{-e^{2\sqrt{\log\log\log n}}},
\end{align}
where the last inequality follows from Lemma \ref{lem:retry maxload bound}. Combine inequalities \eqref{eq:maxload-non-heavily-loaded-set}, \eqref{eq:maxload-heavily-loaded-set}, \eqref{eq:maxload-h0-a} to obtain
$$
\P\left(\maxload^f(tn)>L_0+2k\ell\right)\leq k\cdot 2n^{-e^{2\sqrt{\log\log\log n}}}<n^{-e^{\sqrt{\log\log\log n}}}.
$$
This concludes the proof.
\end{proof}


\subsection{Case : \texorpdfstring{$t\ge \omega(\log n)$}{t≥ω(log n)}}\label{sec:case 4}

\begin{prop}\label{prop:single time upper bound case 4}
Denote by $f$ the $\big(\frac{1}{5},t-\tfrac7{\theta}\log n,\tfrac7{\theta}\log n,\ell,\ell\big)$-drift-threshold strategy with
$k$ and $\ell$ as in Proposition \ref{prop:single time upper bound case 3}. Then, for $n$ large enough, $f$ has 
$$
\P\left(\maxload^f(tn)>(2k+1)\ell\right)<n^{-1/7}.
$$ 
\end{prop}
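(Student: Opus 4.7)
The strategy $f$ consists of two concatenated phases: a $\theta$-drift phase of length $(t-\frac{7}{\theta}\log n)n$ balls with $\theta=1/5$, followed by a multi-stage $(\frac{7}{\theta}\log n,\ell,\ell)$-threshold phase of length $\frac{7}{\theta}n\log n=35n\log n$ balls. The plan is to use the drift phase to produce an initial configuration that satisfies the two hypotheses of Proposition \ref{prop:single time upper bound case 3}, and then apply that proposition directly to the multi-stage phase. The reason for the specific constant $7/\theta$ in the split is that it allows the post-drift maximum load $O(\log n)$ to be strictly less than a fraction of the multi-stage time $35\log n$, which is exactly what the hypothesis ``$\maxload(0)<ct$'' of Proposition \ref{prop:single time upper bound case 3} requires.

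After the drift phase, which starts from balanced loads ($L=0$ in the notation of Lemma \ref{lem:single time upper bound}) and has length $\omega(\log n)$ so that the lemmas' assumption on the number of allocated balls is satisfied, I would apply Lemma \ref{lem:single time upper bound} with $k=\frac{25}{7}\log n$ to obtain
\[
\max_{i\in[n]}|L_i^f((t-35\log n)n)|\le \frac{25}{7}\log n+\frac{5}{\theta}\log\frac{320n}{\theta^2}=: C_1\log n
\]
on an event $A$ of probability at least $1-n^{-1/7}$. In parallel, I would apply Lemma \ref{lem:level set bound} with $k=\ell$ to obtain, on an event $B$ of probability $1-\exp(-n^{1-o(1)})$, that $|\{i:L_i^f((t-35\log n)n)>\ell\}|\le \frac{160}{\theta^2}ne^{-\theta\ell/3}$. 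Note that the condition $\ell\ge 3k_0$ of Lemma \ref{lem:level set bound} holds since $\ell=(\log n)^{1/2+o(1)}\to\infty$.

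Now I would condition on $A\cap B$ and treat $\{L_i^f((t-35\log n)n)\}_{i\in[n]}$ as initial loads for the multi-stage phase, invoking Proposition \ref{prop:single time upper bound case 3} with its ``$t$'' equal to $35\log n$, $L_0=\ell$, and $\eta=0$. Its $\alpha=\log(35\log n)/\log\log n=1+o(1)$ lies in the allowed range $[\tfrac{1}{2}+\tfrac{1}{\sqrt{\log\log\log n}},1+\tfrac{\sqrt{\log\log\log n}}{\log\log n}]$ for $n$ large. The first hypothesis $\maxload(0)<c\cdot 35\log n$ for some $c<1$ follows on $A$ for any $c\in(C_1/35,1)$. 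For the second hypothesis, I need $\tfrac{160}{\theta^2}ne^{-\theta\ell/3}\le 3n\exp(-\ell^2/(4\log^{1+o(1)} n))$; taking logarithms this reduces to $\ell/\log^{1+o(1)} n\lesssim 4/15$, which holds since $\ell=\log^{1/2+o(1)} n$. Thus Proposition \ref{prop:single time upper bound case 3} yields $\maxload^f(tn)\le L_0+2k\ell=(2k+1)\ell$ with conditional probability at least $1-n^{-e^{\sqrt{\log\log\log n}}}$.

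Combining the three failure probabilities via a union bound, the total failure probability is at most $n^{-1/7}+\exp(-n^{1-o(1)})+n^{-e^{\sqrt{\log\log\log n}}}<n^{-1/7}$ for large $n$, giving the claim. The main obstacle is really a bookkeeping issue: one must verify that the constant $7/\theta=35$ was picked so that the post-drift typical maximum load $\Theta(\log n)$ (with the pre-specified $n^{-1/7}$ tail) fits strictly below the ``$ct$'' threshold of Proposition \ref{prop:single time upper bound case 3}; everything else is a clean application of the two lemmas and the proposition, with the $n^{-1/7}$ bound dominated by the drift-phase concentration (the other two failure probabilities being super-polynomially small).
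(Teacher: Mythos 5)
Your proposal follows essentially the same route as the paper's proof: run the $\tfrac15$-drift strategy for the first $(t-\tfrac7\theta\log n)n$ allocations, use Lemma \ref{lem:single time upper bound} to bound the post-drift maximum load by $O(\log n)$ and Lemma \ref{lem:level set bound} to bound the number of bins with load above $\ell$, verify the two hypotheses of Proposition \ref{prop:single time upper bound case 3} with $\eta=0$, $L_0=\ell$ and $\alpha=1+\tfrac{\log 35}{\log\log n}$, and conclude by that proposition plus a union bound — which is exactly the paper's argument.

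The only flaw is the final bookkeeping: with your choice $k=\tfrac{25}{7}\log n$ the drift-phase failure probability from Lemma \ref{lem:single time upper bound} is exactly $n^{-1/7}$, so your claimed inequality $n^{-1/7}+\exp(-n^{1-o(1)})+n^{-e^{\sqrt{\log\log\log n}}}<n^{-1/7}$ is literally false. This is trivially repaired using the slack you yourself identified ($C_1\approx 28.6<35$): take, say, $k=5\log n$, so the drift tail becomes $n^{-1/5}$ while the maximum-load bound $\big(5+\tfrac5\theta\log\tfrac{320}{\theta^2}+25\big)\log n+o(\log n)$ is still below $c\cdot 35\log n$ for some $c<1$; this is precisely what the paper does by taking the bound $\tfrac6\theta\log n$ with tail $n^{-1/6}$, after which the union bound is indeed $<n^{-1/7}$.
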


\begin{proof}
We employ the aforementioned concatenated strategy described in Section~\ref{sec:combin}. 
Inequality \eqref{eq:maxload-drift} in Lemma \ref{lem:single time upper bound} yield
$$
\P\left(\maxload^f(t_0n)>\frac{6}{\theta}\log n\right)<n^{-1/6}.
$$
Employing once again the notation
$$
H_0=\left\{i\in[n]: L_i^f(t_0n)>\ell\right\},
$$
we apply Lemma \ref{lem:level set bound} to obtain
$$
\P\left(|H_0|> \frac{160}{\theta^2}ne^{-\frac{\theta\ell }{3}}\right)\leq 2\exp\left(-2n\left(\frac{80}{\theta^2}\right)^2e^{-\frac{2\theta \ell}{3}}\right)=\exp\left(-n^{1-o(1)}\right).
$$
The inequalities above imply that, with probability at least $1-\Theta(n^{-1/6})$, the conditions in Proposition \ref{prop:single time upper bound case 3} hold with $\eta=0$ (observe that $\alpha$ there, satisfies $\alpha=1+\tfrac{\log 7-\log\theta}{\log\log n}$.
Hence, with high probability, we can apply the multi-stage $(t,\ell,\ell)$-threshold strategy in Section \ref{sec:case 2} from time $t_0$ to time $t$.  Then we can apply Proposition \ref{prop:single time upper bound case 3} to conclude the proof.
\end{proof}


\section{Single-time load discrepancy: lower bound}\label{sec:single-time load lower bound}

In this section, we show that no two-thinning strategy can achieve a maximum load better than that in Theorem~\ref{thm:single-time load discrepancy}. Due to inequality \eqref{eq:integerization}, we can again assume that $m=tn$ for $t\in \N$. The lower bound in Theorem~\ref{thm:single-time load discrepancy} is an immediate consequence of the following statement applied on the process starting from time $\max\big\{\big\lfloor t-\tfrac{ \sqrt{\log n}}{50}\big\rfloor,0\big\}$.

\begin{prop}\label{prop:single-time lower bound case 1}
Given $t\le  \frac{\sqrt{\log n}}{50}$, we set 
$\ell=\sqrt{\frac{\log n}{12(\log\log n-2\log t)}}.$ Then any two-thinning strategy $f$ with any initial load vector $\{L_i(0)\}_{i\in [n]}\in\Z^n$ satisfies
\begin{equation}\label{eq:single-time lower bound case 1}
\P\left(\maxload^f(tn)<\ell\right)<3e^{-\sqrt n}.
\end{equation}
\end{prop}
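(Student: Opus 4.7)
The plan is to combine a simple deterministic reduction with a probabilistic two-case argument based on the total number of retries $R := R_{tn}$. First I would observe that, since at each step the average load shifts by $1/n$ while any individual bin's count changes by at most $1$, the maximum load cannot drop by more than $1/n$ per step; consequently $\maxload^f(tn) \geq \max_i L_i(0) - t$, and the proposition is immediate if $\max_i L_i(0) \geq t + \ell$. One may therefore assume $\max_i L_i(0) < t + \ell$. Combined with $\sum_i L_i(0) = 0$, Markov's inequality forces most initial loads to be only mildly negative: the set $V := \{i : L_i(0) \geq -a\}$ has $|V| \geq n - n(t+\ell)/a$, so for a suitably chosen parameter $a > 0$ the set $V$ constitutes a constant (or even $1-o(1)$) fraction of $[n]$. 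The probabilistic argument is then carried out on $V$.

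Next I would fix a retry threshold $r^*$ (to be calibrated) and case-split. In \emph{Case A} ($R \geq r^*$), the first $r^*$ entries of the secondary pool $(Z^2_k)$ are $r^*$ i.i.d.\ uniform draws from $[n]$ independent of the strategy --- the strategy only controls when they are consumed, not their distribution --- and on $\{R \geq r^*\}$ every one of them has actually been placed in a bin. Lemma~\ref{lem:max load} applied to $V$ then yields, except with probability $e^{-\sqrt n}$, some $i^* \in V$ receiving at least $a + t + \ell$ secondary allocations; the load at $i^*$ is therefore at least $L_{i^*}(0) + (a+t+\ell) - t \geq \ell$. In \emph{Case B} ($R < r^*$), the primary-count vector $(X_i)$ is Multinomial$(tn, 1/n)$, independent of the strategy; Lemma~\ref{lem:poisson approximation} together with the Poisson lower tail in Lemma~\ref{lem:poss tail} then shows that the set $S := \{i \in V : X_i \geq t + K\ell\}$ has cardinality at least $|V|\,e^{-2tI(K\ell/t)}/2$ with the same error (a Chernoff bound on the Poissonized indicators). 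For each $i \in S$, either $L^f_{1,i} \geq a + t + \ell$ (which gives $L^f_i(tn) \geq \ell$ outright) or the strategy rejected more than $(K-1)\ell - a$ primary suggestions at $i$; of the latter kind there can be at most $r^*/((K-1)\ell - a)$ bins, so as soon as $|S|$ surpasses this number, a ``good'' bin must exist.

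Both cases have to hold for a \emph{single} value of $r^*$. Using the rate-function estimates $x^2/4 \leq I(x) \leq x^2/2$ (for $x \leq 4$) and $x\log(x/e) \leq I(x) \leq 3x\log(x/e)$ (for $x \geq 4$) from the remark following Lemma~\ref{lem:poss tail}, together with Stirling's approximation to $(a+t+\ell)!$, Case~A demands $r^* \gtrsim n(a+t+\ell)\exp(-\log n/(2(a+t+\ell)))$ while Case~B demands $r^* \lesssim ((K-1)\ell - a)\,|V|\,e^{-2tI(K\ell/t)}$. The \textbf{main obstacle} will be arranging $a$, $K$, and $r^*$ so that these two requirements are compatible; this is precisely what pins down the constant $12$ appearing in $\ell^2 = \log n/(12(\log\log n - 2\log t))$, and the matching has to be carried out separately in the two regimes $t \ll \ell$ (where $I(K\ell/t)$ is in its superlinear form) and $t \asymp \sqrt{\log n}/50$ (where $I$ is quadratic). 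A final union bound over the trivial initial-load case, Case~A, and Case~B delivers the stated $3 e^{-\sqrt n}$ failure probability.
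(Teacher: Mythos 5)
Your skeleton is the same as the paper's: reduce to $\max_i L_i(0)<t+\ell$, split on the total number of retries $r$ against a calibrated $r^*$, use Lemma~\ref{lem:max load} on the first $r^*$ secondary-pool draws when $r\ge r^*$, and use Poissonization (Lemmata~\ref{lem:poisson approximation}, \ref{lem:poss tail}) plus a rejection-counting argument on primary-overloaded bins when $r<r^*$. The measurability point about the secondary pool and the "each overloaded bin forces rejections" counting are handled correctly. However, the step you yourself flag as ``the main obstacle'' --- making the two requirements on $r^*$ compatible for the specific $\ell$ with the constant $12$ --- is not just unverified: with the structural choices you commit to, it fails. Insisting that $V=\{i:L_i(0)\ge -a\}$ be a constant fraction of $[n]$ forces $a>t+\ell$, hence your secondary threshold is $k_A=a+t+\ell>2\ell$ and your primary threshold $t+K\ell$ needs $(K-1)\ell>a$, i.e.\ $K>2$. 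Using the tools you cite (the lower tail $e^{-2tI(\cdot)}$ of Lemma~\ref{lem:poss tail} and $I(x)\le 3x\log(x/e)$), the usable lower bound on $\P(\mathrm{Poisson}(t)\ge t+K\ell)$ is $(et/(K\ell))^{6K\ell}$, so Case~A requires roughly $6K\,k_A\,\ell\log\!\big(K\ell/(et)\big)\le \tfrac12\log n$ up to $O(\ell\log\ell)$ corrections. With $K>2$ and $k_A>2\ell$ the left side is at least $24\ell^2\log(\ell/t)\,(1-o(1))$, while for small $t$ (say $t=O(1)$) the budget $\tfrac12\log n$ equals $12\ell^2\log(\ell/t)\,(1+o(1))$ --- a deficit by a factor of about $2$. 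So the calibration cannot be ``arranged'' within your framework and the cited estimates; it could only be rescued by sharper Poisson asymptotics, or by abandoning the constant-fraction requirement.

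That abandonment is exactly what the paper does, and it is the missing idea: take $a=0$, i.e.\ $S=\{i:L_i(0)\ge 0\}$, which by the integrality of the loads and $\maxload^f(0)<t+\ell$ has $|S|\ge n/(t+\ell+1)=n^{1-o(1)}$ --- amply large for both the Hoeffding step and Lemma~\ref{lem:max load}, since only polynomial-in-$n$ sizes matter there --- and use the threshold $t+\ell$ on both the primary and secondary sides, with the crude ``one rejection per overloaded bin'' count (the extra factor $(K-1)\ell-a$ you gain in $r^*$ only contributes $O(k_A\log\ell)$ to the exponent and is not worth the dominant cost of enlarging $K$ and $k_A$). With $r^*=\lfloor |S|e^{-2tI(\ell/t)}/2\rfloor$ the exponent becomes at most $12\ell^2\log(\ell/t)<\tfrac12\log n$, which is precisely where the constant $12$ comes from. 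A further small error in your sketch: there is no ``quadratic regime'' to treat separately, since $\ell\ge 4t$ throughout $t\le\sqrt{\log n}/50$ (and a fortiori $K\ell/t\ge 4$), so the rate function is always in its superlinear range.
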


\begin{proof}
If $\maxload^f(0)\ge t+\ell$, we will have $\maxload^f(t n)\ge \ell$ and inequality \eqref{eq:single-time lower bound case 1} trivially holds. Hence, we will assume that $\maxload^f(0)< t+\ell$. We denote $S=\big\{i\in [n]: L_i(0)\geq 0\big\}$ and $S^c=[n]\setminus S$. We first show that
\begin{equation}\label{eq:average level bound}
|S|\geq \frac{n}{t+\ell+1}.
\end{equation}
To see this, observe that
$$
0=\sum_{i\in[n]}L_i(0)=\sum_{i\in S}L_i(0)+\sum_{i\in S^c}L_i(0).
$$ 
This, together with our assumptions that $\{L_i(0)\}_{i\in [n]}\in\Z^n$ and $\maxload^f(0)< t+\ell$, yields
$$
|S^c|\leq\sum_{i\in S^c}|L_i(0)|=\sum_{i\in S}L_i(0)\leq |S|\cdot (t+\ell) .
$$
Then inequality \eqref{eq:average level bound} readily follows from the inequality above and $|S^c|=n-|S|$.

Next, we set $r^*=\lfloor |S|e^{-2 t I(\ell/ t)}/2\rfloor$, where $I(x)$ is given in Lemma \ref{lem:poss tail}. We denote by $r$ the number of retries up to time $t n$. By the law of total probability, we have
\begin{align}
\P\left(\maxload^f(tn)<\ell\right) &\le \P\left(\maxload^f_S(tn)<\ell\right) \notag \\
&=\P\left(\maxload^f_S(tn)<\ell, r<r^*\right) \label{eq:lower bound single time maxload a}\\
&~~~+\P\left(\maxload^f_S(tn)<\ell, r\geq r^*\right)
\label{eq:lower bound single time maxload b}.
\end{align}
We first estimate the probability in \eqref{eq:lower bound single time maxload a}. Recall that $\psi_S^{t+\ell}(tn)$ defined in \eqref{eq:primay allocation level set} represents the number of bins in $S$ that are suggested as primary allocations at least $t+\ell$ times up to time $tn$. Observe that if we retry fewer than $\psi_S^{t+\ell}(tn)$ balls, the maximum load  will be at least $\ell$. Hence, we have
\begin{equation}\label{eq:lower bound single time maxload a-1}
\P\left(\maxload_S^f(tn)<\ell, r<r^*\right)\leq \P\left(\psi_S^{t+\ell}(tn)<r^*\right).
\end{equation}
We denote by $\{X_i\}_{i\in [n]}$ independent $\poss(t)$ random variables. Let $W_i$ be the indicator function of the event $\{X_i\geq t+\ell\}$. Hence, $\{W_i\}_{i\in [n]}$ are independent Bernoulli random variables such that
\begin{equation}\label{eq:single-step for hoeffding}
p:=\P(W_i=1)=\P(X_i\geq t+\ell)\geq e^{-2tI(\ell/t)}\geq\frac{2r^*}{|S|}, \end{equation}
where the first inequality follows from Lemma \ref{lem:poss tail}. We then apply Lemma~\ref{lem:poisson approximation}, inequality  \eqref{eq:single-step for hoeffding} and Hoeffding's inequality to obtain

\begin{align}\label{eq:lower bound single time maxload a-2}
\P\left(\psi^{t+\ell}_S(tn)<r^*\right) &\leq2\P\left(\sum_{i\in S}W_i<r^*\right)\leq  2\P\left(\sum_{i\in S}W_i<\frac{|S|p}{2}\right)\notag
\\
&\leq 2\exp\left(-\frac{|S| p^2}{2}\right)\leq 2\exp\left(-\frac{|S|}{2}e^{-4tI(\ell/t)}\right)\notag\\
&\leq 2\exp\left(-\frac{n}{2(t+\ell+1)}\left(\frac{et}{\ell}\right)^{12\ell}\right)\notag\\
&=\exp\left(-n^{1-o(1)}\right),
\end{align}
where the penultimate transition follows from the upper bound of $I(x)$ in \eqref{eq:approx-I-1} and  the fact that $\ell>4t$ for $t\leq \frac{\sqrt{\log n}}{50}$.

Next we estimate the probability in \eqref{eq:lower bound single time maxload b}. Recall that $L_{2, i}([tn])$ defined in \eqref{eq:primary and secondary load} represents the number of balls that bin $i$ receives from secondary allocations. Then we have
\begin{equation}\label{eq:lower bound single time maxload b-1}
\P\left(\maxload_S^f(tn)<\ell, r\geq r^*\right)\leq \P\left(\max_{i\in S}L^f_{2, i}([tn])< t+\ell, r\geq r^*\right).
\end{equation}
Apply Lemma \ref{lem:max load} to obtain
\begin{equation}\label{eq:retry maxload}
\P\left(\max_{i\in S}L^f_{2, i}([tn])< t+\ell, r\geq r^*\right) \leq 2\exp\left(-\frac{|S|(r^*/n)^{t+\ell}}{e(t+\ell)!}\right).
\end{equation}
Using the upper bound of $I(x)$ in \eqref{eq:approx-I-1} and the fact that $\ell>4t$ for $t\leq \frac{\sqrt{\log n}}{50}$, we obtain $r^*>\frac{|S|}{2}\left(\frac{et}{\ell}\right)^{6\ell}$. This, together with
Stirling's approximation $k!\leq e\sqrt k (k/e)^k$, yields that for $n$ large enough
\begin{equation*}\label{eq:retry maxload-1}
\frac{(r^*/n)^{t+\ell}}{e(t+\ell)!}\geq \frac{1}{e^2\sqrt{t+\ell}}\left(\frac{e}{2(t+\ell)(t+\ell+1)}\right)^{t+\ell}\left(\frac{et}{\ell}\right)^{6\ell(t+\ell)}>\left(\frac{t}{\ell}\right)^{12 \ell^2}.
\end{equation*}
This, together with \eqref{eq:lower bound single time maxload b-1} and \eqref{eq:retry maxload}, yields
\begin{equation}\label{eq:lower bound single time maxload b-2}
\P\left(\maxload^f(tn)<\ell, r\geq r^*\right)\leq 2\exp\left(-\frac{n}{(t+\ell+1)}\left(\frac{t}{\ell}\right)^{12 \ell^2}\right)\leq 2e^{-\sqrt n},
\end{equation}
where the second inequality follows from the fact that 
\begin{equation*}\label{eq:retry maxload-2}
12 \ell^2\log\frac{\ell}{t} =\frac{\log n}{2}\left(1-\frac{\log (\log\log n-2\log t)+\log 12}{\log\log n-2\log t}\right)<\frac{\log n}{2}.
\end{equation*}
Then we can obtain \eqref{eq:single-time lower bound case 1} by combining \eqref{eq:lower bound single time maxload a}, \eqref{eq:lower bound single time maxload b}, \eqref{eq:lower bound single time maxload a-1}, \eqref{eq:lower bound single time maxload a-2} and  \eqref{eq:lower bound single time maxload b-2}.
\end{proof}
\section{All-time load discrepancy: upper bound}\label{sec:all time upper bound}

In the previous sections, we studied different thinning strategies which yield a good control of $\maxload^f(m)$, the maximum load at the end of the process. Here we are interested in thinning strategies that can control  $\maxload^f([m])$, the maximum load throughout the entire process.

As before, we assume that $m=tn$ for $t\in \N$. Clearly, $\maxload^f([m])\ge\maxload^f(m)$ and that $\maxload^f([m])$ is monotone non-decreasing function of $m$. On the other hand, we also have $ \maxload^f([m])\leq \maxload^f(m)+t$, where the RHS is the maximum number of balls in a single bin at the end of the process. Hence, for $t=O(\sqrt {\log n})$, we can apply the $(t+\ell)$-threshold strategy as per the analysis in Section \ref{sec:case 1} and obtain an optimal all-time maximum load (up to some multiplicative constants).  In the following couple of sections, we prove the upper bound in Theorem~\ref{thm:all-time load discrepancy} for $t=\omega(\sqrt {\log n})$.

\subsection{Case: 
\texorpdfstring{$\omega(\sqrt {\log n})\le t\le O(\log ^2 n/(\log\log n)^3)$}{ω(√(log n))≤t ≤ O((log² n)/(log log n)³)}}
\label{sec: all time upper bound case 1}

\begin{prop}\label{prop:all time upper bound case 1}
Suppose that $\omega(\sqrt {\log n})\leq t\leq \frac{\log ^2n}{(24\log\log n)^3}$. Set $\ell=(t\log n)^{1/3}$. We also assume that for all $i\in[n]$ the initial load satisfies $L_i(0)\leq L_0$ for some $L_0>0$. Then for any $c>0$ and sufficiently large $n$, the $\ell$-relative threshold strategy $f$ satisfies
\begin{equation}\label{eq:all time upper bound case 1}
\P\left( \maxload^f([tn])>L_0+(12c+9)\ell\right)\leq n^{-c}.
\end{equation}
\end{prop}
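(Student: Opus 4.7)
The key design of the $\ell$-relative threshold strategy is that its threshold $\ell + (k-1)/n$ grows in lockstep with the average load $k/n$. Consequently, for any bin $i$ that has never triggered the low-load exception ($L_i^f < -\log n$) up to time $k$, one has $L_{1,i}^f(k) \le \lceil \ell + (k-1)/n \rceil$, and hence
\begin{equation*}
L_i^f(k) \;=\; L_i(0) + L_{1,i}^f(k) + L_{2,i}^f(k) - k/n \;\le\; L_0 + \ell + 1 + L_{2,i}^f(tn),
\end{equation*}
using monotonicity of $L_{2,i}^f(\cdot)$ in the final inequality. It therefore suffices to bound $\max_i L_{2,i}^f(tn)$ by $(12c+8)\ell$ with probability at least $1 - n^{-c}/2$, and to handle the low-load exception separately.

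I would first bound the total number of retries $R_{tn}$. A batch-wise analysis---observing that in each batch of $n$ consecutive allocations, primaries per bin are $\bino(n,1/n)$, while the threshold grows by exactly $1$ across the batch---shows that $R_{tn}$ is essentially controlled by $\sum_i \max(0, N_i(tn) - \lceil\ell + t\rceil)$, where $N_i(tn)$ counts primary allocations directed at $i$. (The exception can only increase acceptances and hence decrease retries.) The Poisson approximation (Lemma~\ref{lem:poisson approximation}) and Lemma~\ref{lem:retry bound} applied with $\lambda = t$ and threshold $\ell$ then yield $R_{tn} \le r^* := 6n e^{-tI(\ell/t)}/\log(1+\ell/t)$ with probability $1 - \exp(-n^{1-o(1)})$. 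Using \eqref{eq:approx-I}, the assumption $t \le \log^2 n / (24\log\log n)^3$ gives $tI(\ell/t) \ge \ell^2/(4t) \ge 6\log\log n$, so $r^* = O(n/(\log n)^5)$, well below $n$.

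Conditional on $R_{tn} \le r^*$, each secondary load $L_{2,i}^f(tn)$ is stochastically dominated by $\bino(r^*, 1/n)$. Combining Lemma~\ref{lem:poisson approximation} with the Poisson upper-tail bound \eqref{eq:poss upper tail} yields
\begin{equation*}
\P\bigl(L_{2,i}^f(tn) > (12c+8)\ell\bigr) \;\le\; 2\Bigl(\frac{e\, r^*}{n(12c+8)\ell}\Bigr)^{(12c+8)\ell}.
\end{equation*}
Substituting the bound on $r^*$ and using $\ell \ge (\log n)^{1/2+o(1)}$ throughout the regime, a short computation shows the exponent is at most $-(c+1)\log n$ for large $n$; a union bound over the $n$ bins then gives $\max_i L_{2,i}^f(tn) \le (12c+8)\ell$ with probability at least $1 - n^{-c}/2$.

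The main technical obstacle is the low-load exception. When bin $i$'s load has dipped below $-\log n$ at some earlier time, the strategy accepts extra primaries, and the bound $L_{1,i}^f(k) \le \lceil\ell+(k-1)/n\rceil$ can fail. I would handle this by arguing that if $L_i^f(k) > L_0 + (12c+9)\ell$, and $k^*$ denotes the last time with $L_i^f(k^*) \le -\log n + 1$, then bin $i$ must have received at least $L_0 + (12c+9)\ell + \log n + (k-k^*)/n$ balls during $(k^*, k]$. Subtracting the secondaries (bounded by $(12c+8)\ell$ with high probability) forces an excess primary deviation of at least $L_0 + \ell + \log n$ over the mean $(k-k^*)/n$, which a Poisson tail bound (Lemma~\ref{lem:poss tail}) controls with margin sufficient to absorb a union bound over $i$ and $k^*$ in our parameter range. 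Combining all these estimates then yields the desired $n^{-c}$ bound on the probability that $\maxload^f([tn]) > L_0 + (12c+9)\ell$.
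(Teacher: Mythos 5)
Your overall skeleton matches the paper's (cap accepted primaries by the relative threshold, bound the total number of retries via Poissonization and Lemma~\ref{lem:retry bound}, then bound each bin's secondary load by a Poisson tail plus a union bound), but two of your key steps do not hold as stated. First, the claim that $R_{tn}$ is controlled by the end-of-process excess $\sum_i \max(0, N_i(tn)-\lceil \ell+t\rceil)$ is false pathwise: retries depend on \emph{when} primaries arrive relative to the growing threshold. A bin that receives $\ell+10$ primary suggestions in the first batch and none afterwards generates $10$ retries even though $N_i(tn)-\ell-t<0$. The correct per-bin statement involves the running maximum $\max_{k\le tn}\bigl(N_i(k)-(k-1)/n\bigr)-\ell$ (via the time of the last retry at bin $i$), and controlling that requires a maximal inequality and a process-level Poissonization, neither of which is supplied by Lemma~\ref{lem:poisson approximation} or Lemma~\ref{lem:retry bound}. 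The paper avoids this by bounding retries batch by batch: the retries in stage $k$ are at most $\sum_i\max(0,N_i(kn)-(k-1)-\ell)$, controlled by $r_k^*$, so the total is $t\,r_t^*$ rather than your $r^*$; the extra factor $t$ is then absorbed in the secondary-load tail computation, and this is exactly where $\ell=(t\log n)^{1/3}$ together with $t\le \log^2 n/(24\log\log n)^3$ (equivalently $\ell^2>12t\log t$) is used. Since your final exponent computation is carried out with the too-small $r^*$, it does not establish the stated bound without redoing this step.

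Second, your treatment of the low-load exception does not close. You reduce to the event that bin $i$ receives at least $(k-k^*)/n+L_0+\ell+\log n$ primaries during $(k^*,k]$ and claim a Poisson tail bound with "margin sufficient to absorb a union bound over $i$ and $k^*$". For an interval of length $\Theta(tn)$ the mean is $\Theta(t)$, and by \eqref{eq:poss upper tail} and \eqref{eq:approx-I} the probability of exceeding it by $\approx\log n$ is only about $\exp\bigl(-(\log n)^2/(2t)\bigr)$; at the top of the range $t=\log^2 n/(24\log\log n)^3$ this is $\exp\bigl(-\Theta((\log\log n)^3)\bigr)=n^{-o(1)}$, and already at $t=\log n$ it is only $n^{-1/2}$ — nowhere near enough for a union bound over $n\cdot tn$ pairs $(i,k^*)$, let alone $n^{-c}$. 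The event is in fact controlled not by a large-deviation estimate on suggested primaries but by the threshold cap, and the paper's handling is a two-line dichotomy at each time $sn$: if $L_{1,i}([sn])\le s+\ell$ then $L_i^f(sn)\le L_0+\ell+L^f_{2,i}([sn])$; otherwise the last accepted primary must have been accepted under the exception, i.e.\ at load below $-\log n$, after which the bin receives only secondaries, so $L_i^f(sn)\le L^f_{2,i}([sn])-\log n+1$, which is far below the target once $\max_i L^f_{2,i}$ is bounded by $O(\ell)$. Replacing your two defective steps by these arguments recovers the proof.
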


\begin{proof}
Observe that for any $s\in [t]$ and any $(s-1)n<k\leq sn$, 
$$
\maxload^f((s-1)n)-1\leq \maxload^f(k)\leq \maxload^f(sn)+1.
$$
Hence, it suffice to show  that
\begin{equation}\label{eq:all time upper bound case 1-a}
\P\left(\max_{s\in [t]}\maxload ^f(sn)>L_0+(12c+8)\ell\right)<n^{-c}.
\end{equation}
For $s\in [t]$, we denote by $r_s$ the number of retries in the $s$-th stage, i.e., in the time interval $(n(s-1),ns]$. 
On the one hand, if a bin $i\in [n]$ accepts more than $s+\ell$ primary allocations in the first $s$ stages, the load of this bin before accepting the last primary allocation has to be at most $-\log n$. For such a bin $i\in [n]$, we have
$$
L_i^f(sn)\leq L^f_{2, i}\left([sn]\right)-\log n+1,
$$
where the function $L^f_{2, i}$ given in \eqref{eq:primary and secondary load} is the number of balls bin $i$ receive from secondary allocations. On the other hand, if a bin $i$ accepts at most $s+\ell$ primary allocations in the first $s$ stages, we have
$$
L_i^f(sn)\leq L_i(0)+\ell+L^f_{2, i}\left([sn]\right)\leq L^f_{2, i}\left([sn]\right)+L_0+\ell.
$$
Write $E_s=\{r_k\leq r_k^*~\text{for all}~ 1\leq k\leq s\}$, where $r_k^*=6ne^{-k I(\ell/k)}/\log(1+\ell/k)$ and  $I(x)$ is given in Lemma \ref{lem:poss tail}. The inequalities above and the law of total probability imply that
\begin{align}\label{eq:all time upper bound case 1-b}
\P\left(\maxload ^f(sn)>L_0+(12c+8)\ell\right) &\leq \P\left(\max_{i\in [n]}L^f_{2, i}\left([sn]\right)>(12c+7)\ell\right) \notag\\
&\leq \P\left(\max_{i\in [n]}L^f_{2, i}\left([sn]\right)>(12c+7)\ell, ~E_s\right)+\P\big(E_s^c\big).
\end{align}

We first estimate $\P\big(E_s^c\big)$.  The definition of our $\ell$-relative threshold strategy given in Section \ref{sec:base strat} guarantees that if a retry occurs in the $k$-th stage, then it is necessary that the suggested bin has accepted at least $k-1+\ell$ primary allocations.
Hence, for a single bin, the number of retries in the $k$-th stage is either 0 or the difference between the number of times this bin was suggested as a primary allocation up to stage $k$ and $k-1+\ell$ provided that the difference is positive.
We write $\{X_i^k\}_{i\in[n]}$ for independent $\poss(k)$ random variables. Define $Y_i^k=\max\big\{0, X_i^k-k-\ell+1\big\}$ and $Y^k=\sum_{i=1}^nY_i^k$.  Lemmata \ref{lem:poisson approximation} \& \ref{lem:retry bound} yield 
$$
\P(r_k>r_k^*) \leq 2\P\big(Y^k>r_k^*\big)\leq 2\exp\left(-ne^{-k I(\ell/k)}\right).
$$
One can check that $I(x)/x$ is an increasing function. Then it is not hard to see that for any fixed $\ell>0$, the function $e^{-k I(\ell/k)}$ is increasing with respect to $k$. Hence, for all $k\in [t]$, we have
$$
\P(r_k>r_k^*)\leq2\exp\left(-ne^{-I(\ell)}\right)\leq2\exp\left(-n\left(\frac{e}{\ell}\right)^{3\ell}\right),
$$
where the last inequality follows from the upper bound of $I(x)$ in \eqref{eq:approx-I-1}. Our assumption of $t$ and the choice of $\ell$ yield $\ell\leq \frac{\log n}{24\log\log n}$ and hence
$$
n\left(\frac{e}{\ell}\right)^{3\ell}=\exp\left(\log n-3\ell\log\frac{\ell}{e}\right)>\sqrt n.
$$
Take the union bound to obtain (for $n$ large enough),
\begin{equation}\label{eq:all time upper bound case 1-c}
\P\big(E_s^c\big)\leq \sum_{k=1}^s\P(r_k>r_k^*)\leq 2se^{-\sqrt n}=e^{-(1-o(1))\sqrt n}.
\end{equation}

Now, we estimate the first term of \eqref{eq:all time upper bound case 1-b}. Recall that $r_k^*=6ne^{-k I(\ell/k)}/\log(1+\ell/k)$. We again use the fact that $I(x)/x$ is increasing to deduce that $r_k^*$ is an increasing function. Hence, when $E_s$ occurs, the total number of retries is no more than $tr_t^*$. We denote by  $\{Z_i\}_{i\in[n]}$ independent $\poss(\lambda)$ random variables, where
\begin{equation}\label{eq:lambda-upper}
\lambda=\frac{tr_t^*}{n}= \frac{6te^{-tI(\ell/t)}}{\log(1+\ell/t)}<\frac{12t^2}{\ell}\exp\left(-\frac{\ell^2}{4t}\right),
\end{equation}
where the inequality follows from the lower bound of $I(x)$ in \eqref{eq:approx-I} and $\log(1+x)\geq x/2$ for $0<x<1$ and the fact that $\ell\leq t$. Using Lemma \ref{lem:poisson approximation}, we obtain
\begin{equation}\label{eq:all time upper bound case 1-d}
\P\left(\max_{i\in [n]}L^f_{2, i}\left([sn]\right)>(12c+7)\ell, ~E_s\right)  \leq 2\P\left(\max_{i\in[n]}Z_i>(12c+7)\ell\right).
\end{equation}
Apply Lemma \ref{lem:poss tail} to obtain
\begin{align*}
\P(Z_1>(12c+7)\ell) &\leq\P(Z_1>\lambda+(12c+6)\ell) \leq e^{-\lambda I((12c+6)\ell/\lambda)}<\left(\frac{e\lambda}{(12c+6)\ell}\right)^{(12c+6)\ell}\notag\\
&<\exp\left(-\frac{(12c+6)\ell^3}{4t}+(12c+6)\ell\log\frac{2et^2}{(2c+1)\ell^2}\right),
\end{align*}
where the first inequality follows from that $\lambda<\ell$, the third inequality follows from the lower bound of $I(x)$ in \eqref{eq:approx-I-1}, and in the last inequality we use the upper bound on $\lambda$ in \eqref{eq:lambda-upper}.  Our choice of $\ell$ and the assumption on $t$ guarantees that $\ell^2>12t\log t$, which yields 
$$
\ell\log\frac{2et^2}{(2c+1)\ell^2}<\ell\log t<\frac{\ell^3}{12t}.
$$
Combine the two inequalities  above to obtain
$$
\P(Z_1>(12c+7)\ell)\leq\exp\left(-\frac{(2c+1)\ell^3}{t}\right)=n^{-(2c+1)}.
$$
This, together with \eqref{eq:all time upper bound case 1-d}, yields that
\begin{equation*}\label{eq:all time upper bound case 1-e}
\P\left(\max_{i\in [n]}L^f_{2, i}\left([sn]\right)>(12c+7)\ell, ~E_s\right) \leq 2n\P(Z_1>(12c+7)\ell) \leq 2n^{-2c}.
\end{equation*}
Combining the inequality above  with \eqref{eq:all time upper bound case 1-b}, \eqref{eq:all time upper bound case 1-c}, we obtain  that for any $s\in [t]$ and $n$ large enough, 
$$
\P\left(\maxload ^f(sn)>L_0+(12c+8)\ell\right)\leq e^{-(1-o(1))\sqrt n}+ 2n^{-2c}\leq 3n^{-2c}.
$$
Taking a union bound, we can obtain for $n$ large enough,
$$
\P\left(\max_{s\in [t]}\maxload ^f(sn)>L_0+(12c+8) \ell\right)\leq 3tn^{-2c}\leq n^{-c}.
$$
This proves \eqref{eq:all time upper bound case 1-a}, and hence \eqref{eq:all time upper bound case 1}.
\end{proof}


\subsection{Case: 
\texorpdfstring{$\omega(\log ^2 n/(\log\log n)^3))  \le t\le n^{O(1)}$}{ω((log² n)/(log log n)³)≤t ≤ n\^ O(1)}
}\label{subsection:case 2}

In this case, we utilize the varying drift strategy to control the all-time maximum load. We set $Z_k=i$ if the $k$-th point of $X(t)$ is a point of the process $X_i(t)$ define in Section~\ref{sec:base strat}. We will show that, with high probability, the random process $\{Z_k\}_{k\in\N}$ can be realized by some two-thinning strategy $f$ and that it achieves the desired bound.

\begin{prop}\label{prop:interval upper bound case 2}
Let $m,n\in \N$ sufficiently large and denote $d=\tfrac{\log m}{\log n}$. Let $\ell=\frac{2\log n}{\log\log n}$. The $\ell$-varying drift strategy $f$ defined above satisfies 
\begin{equation}\label{eq:interval upper bound case 2}
\P\left( \maxload^f([m])>(d+4)\ell\right)\leq \frac{2\log^3 n}{n}.
\end{equation}
\end{prop}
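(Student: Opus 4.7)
The plan is to work with the construction of the $\ell$-varying drift strategy via the independent continuous-time point processes $\{X_i(t)\}_{i \in [n]}$ from Section~\ref{sec:base strat}. Under the coupling $Z_k = i$ whenever the $k$-th jump of $X(t) := \sum_i X_i(t)$ belongs to $X_i$, the identity $L_i^f(k) = X_i(T_k) - k/n = Y_i(T_k) - \bar Y(T_k)$ holds at every allocation time, where $Y_i(t) := X_i(t) - t$ and $\bar Y(t) := \tfrac{1}{n}\sum_i Y_i(t)$. I will execute three steps: verify realizability via Lemma~\ref{lem:realizibility criterion}, bound $\sup_{t \le T_m, \, i} Y_i(t)$ via an excursion analysis above level $\ell$, and bound $\sup_{t \le T_m}|\bar Y(t)|$ via concentration.

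For realizability, I first observe that each $X_i(t)$ is upper $\theta_1$-standardizing with $\theta_1 = 1/\sqrt{\log n}$, since its intensity is at most $1 - \theta_2$ in both the middle and upper regime whenever $X_i(t) \ge t$. Corollary~\ref{coro:concentration-theta-standarizing-a} then yields $\E e^{\theta Y_i(t)} = O(\log n)$ uniformly in $t$ for $\theta = \theta_1/2$, whence Markov gives $\P(Y_i(t) > \ell) \le O(\log n \cdot e^{-\sqrt{\log n}/\log\log n}) \ll 1/\sqrt{\log n}$. By the independence of the $X_i$'s and a Chernoff bound, at each fixed $t$ the count $|\{i : X_i(t) > t + \ell\}|$ is at most $n/\sqrt{\log n}$ with exponentially small failure probability; a union bound over the $O(m)$ jump times of $X$ in $[0, T_m]$ extends this uniformly, so the hypothesis of Lemma~\ref{lem:realizibility criterion} holds throughout and the coupling realizes a genuine two-thinning strategy.

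For the excursion analysis, each excursion of $Y_i$ above $\ell$ starts at a jump of $X_i$ from $\ell$ to $\ell + 1$ and ends when $Y_i$ returns to $\ell$. Above $\ell$ the intensity is $\theta_3 = 12/\sqrt{\log n}$, so inter-jump waits during an excursion are $\tau_k \sim \mathrm{Exp}(\theta_3)$; letting $V_k$ be the value of $Y_i - \ell$ at the $k$-th jump within the excursion, $V_k - V_{k-1} = 1 - \tau_k$. Choosing $\alpha \approx \log(1/\theta_3) \sim \tfrac{1}{2}\log\log n$ yields $\E e^{\alpha(1-\tau)} < 1$, so $e^{\alpha V_k}$ is a supermartingale and Doob's maximal inequality gives $\P(\sup_k V_k \ge h) \le e^{\alpha(1-h)}$, which for $h = (d+2)\ell$ is $n^{-(d+2)+o(1)}$ using $\alpha\ell = \log n\,(1 - o(1))$. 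Since $Y_i$ decays monotonically between jumps, the continuous-time maximum of $Y_i$ over any excursion coincides with the maximum attained at its jump times, and the total number of excursions across all bins up to $T_m$ is at most the total number of ball allocations $m$; hence a union bound gives $\sup_{t \le T_m,\, i} Y_i(t) \le (d+3)\ell$ outside an event of probability $n^{-2+o(1)} \ll \log^3 n/n$.

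Finally, the independence of the $Y_i$'s together with the exponential moment bound from Step 1 yield, via Bernstein's inequality and a fine time-grid union bound (with the trivially bounded drift $|\bar Y'|\le 1$ and jump size $1/n$ controlling fluctuations between grid points), that $\sup_{t \le T_m}|\bar Y(t)| \le \ell/2$ with overwhelming probability. Combining the three steps gives $L_i^f(k) \le Y_i(T_k) + |\bar Y(T_k)| \le (d+3)\ell + \ell/2 < (d+4)\ell$ for all $i$ and all $k \le m$, with total failure probability at most $2\log^3 n/n$. The main technical obstacle will be the tightness of the Doob bound in the excursion step: one must choose $\alpha$ close to its critical value to absorb the $e^{\alpha}$ overhead and the regime transition at level $\ell$ cleanly into the $o(1)$ corrections in the exponent, while also handling the (minor) issue that excursions may overlap only in time but not per bin.
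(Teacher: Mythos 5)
Your overall architecture (verify realizability, then bound $\sup_{t\le T_m,i}Y_i(t)$ and $\sup_{t\le T_m}|\bar Y(t)|$ separately) is workable, and your excursion/Doob-maximal-inequality treatment of the per-bin maximum is a genuinely different route from the paper, which instead compares $L_i^f(k)+k/n\le X_i(t^*)$ at the deterministic time $t^*=k/n+\Delta$ (using monotonicity of $X_i$ and the event $\{X(t^*)\ge k\}$), applies the exponential moment bound \eqref{eq:concentration-theta-one-sided-a} with $\lambda=\tfrac{\log\log n}{2}$, and union-bounds over all $(i,k)$. However, there is a genuine gap in your Step~1. You establish the fixed-time tail bound $\P(Y_i(t)>\ell)\ll 1/\sqrt{\log n}$ and then claim that "a union bound over the $O(m)$ jump times of $X$" makes the hypothesis of Lemma~\ref{lem:realizibility criterion} hold throughout. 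This does not follow: the jump times $T_{k-1}$ are random times that are correlated with (indeed, size-biased toward large values of) the processes $Y_i$, so a marginal bound at each \emph{fixed} $t$ cannot be union-bounded over a \emph{random} collection of times. Note that the count $|\{i:Y_i(t)>\ell\}|$ increases only at jump times, so controlling it at the allocation times is essentially equivalent to controlling its running supremum over $[0,T]$ -- and that is precisely the statement that needs an extra argument beyond fixed-time marginals. Since the identity $L_i^f(k)=X_i(T_k)-k/n$ on which Steps~2--3 rely is only valid while the realizability criterion \eqref{eq:realizibility criterion} holds, this gap undermines the whole proof as written.

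The repair is exactly what the paper's Lemma~\ref{lem:realizibility prob} supplies: for each integer $t\le T$ and each bin, bound $\P\big(\sup_{s\in[t,t+1]}(X_i(s)-s)>\ell\big)$ by splitting into the fixed-time event $\{X_i(t)>t+\ell/2\}$ (controlled as in your Step~1) and a Poisson bound on the increment $X_i(t+1)-X_i(t)$ over the unit interval; then use independence across bins (Hoeffding) and a union bound over the $O(T)$ integer times. You clearly have the tools for this (you make exactly this kind of grid-plus-increment argument in Step~3, and your Step~2 excursion bound controls a single bin's supremum, not the simultaneous count across bins), but the needed uniform-in-time control of the count is missing from the proposal. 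Two smaller points: take $\alpha=\tfrac12\log\log n$ rather than $\alpha\approx\log(1/\theta_3)$ so that $\alpha\ell=\log n$ exactly; otherwise the $o(1)$ correction is multiplied by $d+2$ and the bound degrades when $d$ grows with $n$. Also, the union bound over excursions should be organized by conditioning at the (stopping) time each excursion starts, so that the per-excursion Doob estimate applies conditionally; as stated this is only implicit.
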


Next, we provide an estimate of the probability that the realizability criterion \eqref{eq:realizibility criterion} holds for a period of time, which implies that, with high probability, the process $\{Z_k\}_{k\in\N}$ can be realized by some two-thinning strategy $f$ for quasi-exponential time. 

\begin{lem}\label{lem:realizibility prob}
For any $T>0$ and sufficiently large $n$, we have
\begin{equation}\label{eq:realizibility prob}
\P\left(\exists t\in [0, T]\ :\ \Big|\Big\{i\in [n]: X_i(t)-t>\ell\Big\}\Big|>\frac{n}{\sqrt{\log n}} \right)\leq T\exp\left(-\frac{n}{2\log n}\right).
\end{equation}
\end{lem}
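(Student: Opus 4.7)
The plan is to reduce the continuous supremum to a maximum over integer times using monotonicity of $X_i$, then bound the single-bin tail via Markov's inequality from the upper-standardizing property, and finally use independence with a Chernoff bound plus a union bound.

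\textbf{Discretization.} Since each $X_i$ is non-decreasing and integer-valued while $t+\ell$ is non-decreasing in $t$, for any $k\in\N_0$ and $t\in[k,k+1)$,
$$\{X_i(t)>t+\ell\}\subseteq\{X_i(k+1)>k+\ell\}=\{Y_i(k+1)>\ell-1\},$$
where $Y_i(s):=X_i(s)-s$. Hence
$$\sup_{t\in[0,T]}\big|\{i:X_i(t)-t>\ell\}\big|\le\max_{k\in\{1,\dots,\lceil T\rceil\}}\tilde N(k),\qquad \tilde N(k):=|\{i:Y_i(k)>\ell-1\}|,$$
so it suffices to prove $\P(\tilde N(k)>n/\sqrt{\log n})\le\exp(-n/(2\log n))$ for each such $k$ and then union-bound.

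\textbf{Single-bin tail.} For $n$ large, $X_i$ is upper $2\theta$-standardizing with $2\theta:=\theta_2=1/\sqrt{\log n}$: whenever $X_i(t)\ge t$, its intensity is either $1-\theta_2$ or $\theta_3=12/\sqrt{\log n}$, both at most $1-\theta$. Applying inequality \eqref{eq:con-upper-theta-a} of Corollary~\ref{coro:concentration-theta-standarizing-a} with $s=0$ and $Y_i(0)=0$ gives
$$\E\,e^{\theta Y_i(k)}\le e^{-\theta^2 k}+\frac{2e^{2\theta}}{\theta^2}\le 1+\frac{2e^{2\theta}}{\theta^2}=O(\log n).$$
Since $\ell=2\log n/\log\log n$, we have $\theta\ell=\sqrt{\log n}/\log\log n$, and Markov's inequality gives, for $n$ large enough,
$$p_k:=\P(Y_i(k)>\ell-1)\le e^{-\theta(\ell-1)}\,\E\,e^{\theta Y_i(k)}\le\exp\!\Big(-\tfrac{\sqrt{\log n}}{2\log\log n}\Big).$$

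\textbf{Many-bin tail and union bound.} By the joint independence of $\{X_i\}_{i\in[n]}$ assumed in the construction of the varying-drift strategy, $\tilde N(k)\sim\mathrm{Binomial}(n,p_k)$. Set $a=n/\sqrt{\log n}$; since $np_k\ll a$ for large $n$, the Chernoff-type bound $\P(\mathrm{Bin}(n,p)\ge a)\le(enp/a)^a$ yields
$$\P\big(\tilde N(k)>a\big)\le \big(e\sqrt{\log n}\,p_k\big)^{n/\sqrt{\log n}}\le\exp\!\Big(-\tfrac{n}{2\log n}\Big)$$
for $n$ large enough, as $e\sqrt{\log n}\,p_k\le\exp(-\sqrt{\log n}/(3\log\log n))$. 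A union bound over the $\lceil T\rceil+1$ values of $k$ then produces \eqref{eq:realizibility prob}, up to absorbing a factor of $2$ into $T$ for $T\ge 2$ (the bound is trivial for smaller $T$).

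\textbf{Main obstacle.} The whole argument hinges on a delicate quantitative balance: the drift afforded by upper $2\theta$-standardizing is only $\theta=\Theta(1/\sqrt{\log n})$, and one must check that the product $\theta\ell=\sqrt{\log n}/\log\log n$ is large enough that the Markov bound on $p_k$ beats the prefactor $e\sqrt{\log n}$ in the Chernoff step with room to spare, producing the required $\exp(-\Omega(n/\log n))$ rate. The strong downward drift $\theta_3$ plays no direct role here but will be crucial downstream, both for the realizability criterion of Lemma~\ref{lem:realizibility criterion} via control on how long a bin sits above $t+\ell$ and for the subsequent load discrepancy bound.
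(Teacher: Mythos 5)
Your proof is correct, but it follows a genuinely different route from the paper's. The paper controls the continuous-time supremum over each unit interval $[t,t+1]$ directly: it splits on the event $\{X_i(t)\le t+\ell/2\}$, bounds the complementary probability via the two-sided exponential-moment bound \eqref{eq:con-theta-b} (crudely, by $\tfrac{1}{4\sqrt{\log n}}$), and bounds the within-interval excursion by stochastic domination of the increment by a $\poss(1+\theta_1)$ variable; the resulting per-bin, per-interval probability of order $\tfrac{1}{2\sqrt{\log n}}$ is then fed into Hoeffding's inequality, whose threshold $n/\sqrt{\log n}$ is exactly twice the mean, producing exactly the rate $\exp(-n/(2\log n))$. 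You instead exploit monotonicity of $X_i$ to reduce the whole interval $[k,k+1)$ to the single snapshot $Y_i(k+1)>\ell-1$, which removes the need for the increment-domination step entirely; the one-sided bound \eqref{eq:con-upper-theta-a} plus Markov then gives a much smaller per-bin probability $e^{-\Theta(\sqrt{\log n}/\log\log n)}$, and the elementary binomial tail $\binom{n}{a}p^a\le(enp/a)^a$ yields the stronger per-time rate $\exp(-n/(3\log\log n))$, which you then relax to the stated $\exp(-n/(2\log n))$. Your approach buys simplicity (no unit-interval excursion control) and extra slack in the exponent, which is also what lets you absorb the rounding of $T$ to $\lceil T\rceil$; the paper's approach is tuned so that Hoeffding delivers precisely the advertised constant but glosses the same non-integer-$T$ bookkeeping. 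Two cosmetic points: the relevant standardizing condition is $\lambda_i(t)\le 1-2\theta=1-1/\sqrt{\log n}$ (you wrote ``at most $1-\theta$''), and the equality case $\lambda_i(t)=1-\theta_2$ is harmless here exactly as in the paper's own assertion that the process is $\tfrac{1}{\sqrt{\log n}}$-standardizing, since only non-strict Poisson domination is used.
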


\begin{proof}
We first estimate the probability $\P(\sup_{s\in [t, t+1]}(X_i(s)-s)>\ell)$ for all $0\leq t\leq T-1$. We denote by $E=\{X_i(t)\leq t+\ell/2\}$. By the law of total probability, 
\begin{align}\label{eq:unit interval prob}
\P\left(\sup_{s\in [t, t+1]}(X_i(s)-s)>\ell\right) &\le\P\left(\sup_{s\in [t, t+1]}(X_i(s)-s)>\ell, ~E\right)+\P(E^c).
\end{align} 
Since $X_i(t)$, given in \eqref{eq:intensity xi a}, is $\frac{1}{\sqrt{\log n}}$-standardizing, we can apply inequality \eqref{eq:con-theta-b} in Corollary \ref{coro:con-theta-sum}
and Markov's inequality to obtain
\begin{equation}\label{eq:unit interval prob a}
\P(E^c)\leq\P\left(|X_i(t)-t|\geq \frac{\ell}{2}\right)\leq 80\log n\cdot \exp\left(-\frac{\sqrt{\log n}}{2 \log\log n}\right)<\frac{1}{4\sqrt{\log n}}.
\end{equation}
Next we bound $\P\left(\sup_{s\in [t, t+1]}(X_i(s)-s)>\ell, ~E\right)$.
Let $Y$ be a $\poss(1+\theta_1)$ variable. Observe that, by \eqref{eq:intensity xi a}, $X_i(t+1)-X_i(t)$ is stochastically dominated by $Y$. Hence, we have
\begin{align}\label{eq:unit interval prob b}
\P\left(\sup_{s\in [t, t+1]}(X_i(s)-s)>\ell, ~E\right) &\leq \P\left(Y>\frac{\ell}{2}\right) 
\leq \exp\left(-(1+\theta_1)I\left(\frac{\ell}{2(1+\theta_1)}\right)\right) \notag\\
&\leq \exp\left(-\frac{\ell}{3}\log\frac{\ell}{3e}\right)=n^{-\frac{2}{3}+o(1)},
\end{align}
where in the second inequality, the function $I$, appearing in Lemma \ref{lem:poss tail}, is the rate function of the deviation bound of Poisson random variables, and the last inequality follows from the fact that $I(x)>x\log (x/e)$ for $x>4$. Combine \eqref{eq:unit interval prob}, \eqref{eq:unit interval prob a} and  \eqref{eq:unit interval prob b} to obtain
$$
\P\left(\sup_{s\in [t, t+1]}(X_i(s)-s)>\ell\right)\leq \frac{1}{4\sqrt{\log n}}+n^{-\frac{2}{3}+o(1)}\leq \frac{1}{2\sqrt{\log n}}.
$$
We denote by $S(t)=\{i\in [n]: \sup_{s\in [t, t+1]}(X_i(s)-s)>\ell\}$. Let $W_i$ be the indicator function of the event $\{\sup_{s\in [t, t+1]}(X_i(s)-s)>\ell\}$. Hence, $\{W_i\}_{i\in[n]}$ are independent Bernoulli random variables such that
$$
\P(W_i=1)=\P\left(\sup_{s\in [t, t+1]}(X_i(s)-s)>\ell\right)\leq \frac{1}{2\sqrt{\log n}}.
$$
By Hoeffding's inequality, 
$$
\P\left(|S(t)|> \frac{n}{\sqrt{\log n}}\right)=\P\left(\sum_{i=1}^nW_i \geq\frac{n}{\sqrt{\log n}}\right)\leq \exp\left(-\frac{n}{2\log n}\right).
$$
The desired statement \eqref{eq:realizibility prob} follows by taking a union bound. 
\end{proof}


We are now ready to establish Proposition \ref{prop:interval upper bound case 2}.

\begin{proof}[Proof of Proposition \ref{prop:interval upper bound case 2}]
Set $T=m/n+\Delta$, where $\Delta=1+2\sqrt{\log n} \log(80\log n)$. Let $E$ be the event that $\{Z_i\}_{i\in \N}$ can be realized by some two-thinning strategy $f$. Lemma \ref{lem:realizibility prob} yields
\begin{equation}\label{eq:realizibility prob a}
\P(E^c)\leq T\exp\left(-\frac{n}{2\log n}\right).
\end{equation}
For each fixed $1\leq k\leq m$, we set $t^*=k/n+\Delta$. We write $F=\{X(t^*)\geq k\}$. The law of total probability yields
\begin{equation}\label{eq:single k large deviation}
\P\left( L_i^f(k)>(d+4)\ell\right)\leq \P\left( L_i^f(k)>(d+4)\ell, ~E\cap F\right) +\P(E^c)+\P(F^c).
\end{equation}
Since $X_i(t)$ given in \eqref{eq:intensity xi a} is $\frac{1}{\sqrt{\log n}}$-standarizing, we can apply the second inequality of \eqref{eq:con-theta-sum} in Corollary \ref{coro:con-theta-sum} and Markov's inequality to obtain
\begin{align}\label{eq:single k large deviation a}
\P(F^c) &= \P\left(X(t^*)<nt^*-n\Delta\right)\leq (80\log n)^n \exp\left(-\frac{n\Delta}{2\sqrt{\log n}}\right)=\exp\left(-\frac{n}{2\sqrt{\log n}}\right),
\end{align}
where the last equality follows from our choice of $\Delta$. The definition of $X_i(t)$ in  \eqref{eq:intensity xi a} implies that $X_i(t)-\ell$  is upper $(1-\frac{12}{\sqrt{\log n}})$-standardizing. One can check that the condition of inequality \eqref{eq:concentration-theta-one-sided-a} in Corollary \ref{coro:concentration-theta-standarizing-a} holds for $2\theta=1-\frac{12}{\sqrt{\log n}}$, $\lambda=\frac{\log\log n}{2}$. Hence, we apply inequality \eqref{eq:concentration-theta-one-sided-a} to obtain
\begin{equation}\label{eq:concentration-theta-one-sided-b}
\E\exp\left(\frac{\log\log n}{2}(X_i(t)-t-\ell)\right)<1+\frac{2e^{2\lambda}}{1-e^{-\lambda/2}}< \log^3 n.
\end{equation}
Whenever the event $E\cap F$ occurs, we have $L_i^f(k)+k/n\leq X_i(t^*)$. Inequality \eqref{eq:concentration-theta-one-sided-b} and Markov's inequality yield
\begin{align*}
\P\left( L_i^f(k)>(d+4)\ell, ~E\cap F\right) &\leq \P\left( X_i(t^*)>\frac{k}{n}+(d+4)\ell\right) \notag\\
&=\P\left( X_i(t^*)-t^*-\ell>(d+3)\ell-\Delta\right)\notag\\
&\leq \P\left(X_i(t^*)-t^*-\ell>(d+2)\ell\right)\notag\\
&\leq (\log n)^3 \cdot n^{-(d+2)}.
\end{align*}
This, together with \eqref{eq:realizibility prob a}, \eqref{eq:single k large deviation}, \eqref{eq:single k large deviation a}, yields that, for sufficiently large $n$,
$$
\P\left( L_i^f(k)>(d+4)\ell\right) \leq 2(\log n)^3 \cdot n^{-(d+2)}.
$$
Taking union bound over $m$ and $n$, we obtain
$$
\P\left( \maxload^f([m])>(d+4)\ell\right) \leq 2(\log n)^3mn^{-(d+1)}.
$$
Then, inequality \eqref{eq:interval upper bound case 2} follows from the fact that $m=n^d$
\end{proof}


\section{All-time load discrepancy: lower bound}\label{sec:all time lower bound}

Here we prove the lower bounds in Theorem~\ref{thm:all-time load discrepancy}.
We again assume that $m$ is divisible by $n$ and write $m=tn$ for some $t\in \Z$. Observe that the lower bound of the single-time maximum load in Theorem~\ref{thm:single-time load discrepancy} implies that of the all-time maximum load up to $t=O(\sqrt{\log n})$. 
Our next result covers the regime of $ \sqrt{\log n}< t< \log^2n/(24\log\log n)^3$.
This, together with the fact that the all-time maximum load is non-decreasing with respect to $t$, implies the lower bound of $\Theta\big(\tfrac{\log n}{\log\log n}\big)$ for $t\geq\log^2n/(24\log\log n)^3$. This completes the proof of the lower bounds in Theorem \ref{thm:all-time load discrepancy}.

\begin{prop}\label{prop:all-time lower bound case 1}
Suppose that $\sqrt{\log n}< t< \frac{\log ^2n}{(24\log\log n)^3}$. Set $\ell=\lfloor(t\log n)^{1/3}\rfloor$. Any two-thinning strategy $f$ satisfies that for $n$ large enough, 
\begin{equation}\label{eq:all-time lower bound case 1}
\P\left(\maxload^f([tn])<\ell\right)\leq \exp\left(-n^{1/5}\right).
\end{equation}
\end{prop}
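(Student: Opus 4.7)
The plan is to parallel Proposition~\ref{prop:single-time lower bound case 1} while crucially exploiting the all-time constraint via a sliding-window partition of $[tn]$. Fix any two-thinning strategy $f$ and let $E = \{\maxload^f([tn]) < \ell\}$; the goal is to show $\P(E) \leq \exp(-n^{1/5})$. The key point is that the single-time argument alone yields only $\ell \sim \sqrt{\log n/\log\log n}$ at time $tn$, which is weaker than the target $(t\log n)^{1/3}$ for $t \gg \sqrt{\log n}$, so the proof must extract additional information from the fact that $\maxload^f(k) < \ell$ holds \emph{for every} intermediate $k$.

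To do this, I would partition $[tn]$ into $t/\ell^2$ consecutive windows of length $\tau n := \ell^2 n$. On event $E$, the load of any bin changes by at most $2\ell$ across a window, so the number of primary allocations accepted by bin $i$ during window $I_j$ satisfies $A_i(I_j) \leq \tau + 2\ell$. Writing $P_i(I_j)$ for the number of primary allocations suggested to bin $i$ in $I_j$, this forces
\[
R_{I_j} \geq \sum_{i=1}^n \max\!\big(0,\, P_i(I_j) - \tau - 2\ell\big).
\]
Since $2\ell/\sqrt{\tau} = 2$, Lemmata~\ref{lem:poisson approximation} and~\ref{lem:poss tail} together with a Chernoff-type computation analogous to the one in Lemma~\ref{lem:retry bound} yield $\E[\max(0,\poss(\tau) - \tau - 2\ell)] = \Omega(\ell)$ and $R_{I_j} \geq c n\ell$ with probability $1 - \exp(-\Omega(n))$. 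Union-bounding over the $t/\ell^2$ windows gives $R_{tn} \geq c n t/\ell$ with probability at least $1 - \exp(-n^{1/4})$.

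With this in hand I would conclude by a two-case analysis on $R_{tn}$. If $R_{tn} < cnt/\ell$, the above window estimate already bounds this event by $\exp(-n^{1/4})$. Otherwise $R_{tn} \geq cnt/\ell$, and the secondary allocations are i.i.d.\ uniform on $[n]$, so by Lemma~\ref{lem:poisson approximation} the secondary counts $\{S_i(tn)\}_{i\in[n]}$ are comparable to iid $\poss(\mu)$ variables with $\mu = R_{tn}/n \geq ct/\ell$. Applying Lemma~\ref{lem:max load} with $k = t + \ell$ then yields $\max_i S_i(tn) \geq t + \ell$ with probability at least $1 - \exp(-n^{1/5})$, contradicting $E$ since $A_i + S_i \geq S_i \geq t + \ell$ would give $\maxload^f(tn) \geq \ell$.

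The hard part will be verifying this final application of Lemma~\ref{lem:max load}: by Stirling it reduces to the inequality $(t+\ell)\log((t+\ell)/(e\mu)) \leq \log n - n^{1/5}$, which must hold uniformly for $\omega(\sqrt{\log n}) \leq t \leq \log^2 n/(24\log\log n)^3$. The computation is delicate and in fact drives the optimal choice $\tau = \ell^2$ for the window length. In the subregime where this bound is marginal (notably $t \gtrsim \log n$) the cleanest fix is to exploit the identity $\sum_i A_i/n = t - R_{tn}/n \approx t$: since the strategy cannot simultaneously suppress $A_i$ on many bins, one can show that some bin with $S_i \geq \ell$ also has $A_i \approx t$ with overwhelming probability, reducing the target from $\max_i S_i \geq t+\ell$ to the much weaker $\max_i S_i \geq \ell$.
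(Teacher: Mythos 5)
Your overall plan (use the all-time constraint to force many retries, then turn the secondary allocations against the strategy) is the right spirit, but the closing step has a genuine quantitative failure that kills the argument on most of the range of $t$. With total retries $R_{tn}\ge cnt/\ell$, the per-bin mean of secondary hits is only $\mu=ct/\ell$, and the chance that a fixed bin collects $t+\ell$ of them is roughly $\exp\bigl(-(t+\ell)\log\tfrac{t+\ell}{e\mu}\bigr)=\exp\bigl(-\Theta(t\log\ell)\bigr)$; this beats $1/n$ only when $t\lesssim \log n/\log\log n$, whereas the proposition goes up to $t\approx\log^2 n/(\log\log n)^3$. So ``$\max_i S_i(tn)\ge t+\ell$ with probability $1-\exp(-n^{1/5})$'' is simply false in the upper part of the range (the max of $n$ independent $\poss(\mu)$ variables is far below $t+\ell$ there), and your fallback is not a proof: the set of bins with $S_i\ge\ell$ is tiny, and an adaptive strategy can refuse their primary suggestions at an extra retry cost of order $\sum_i\max(0,S_i-\ell)$, which is negligible, so nothing forces a bin with $S_i\ge\ell$ to also have $A_i\approx t$. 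The missing idea is time localization: the excess load need not appear at time $tn$ at all (indeed the paper's own multi-stage strategy keeps $\maxload^f(tn)=(\log n)^{1/2+o(1)}\ll\ell$), so the contradiction must be produced at some intermediate time. The paper does this by a dichotomy on the total retry count with the much smaller threshold $r^*=\tfrac n2 e^{-\ell^2/t}$: if $r<r^*$ then with probability $1-\exp(-n^{1-o(1)})$ more than $r^*$ bins are \emph{suggested} $t+\ell$ primaries, so some bin accepts $t+\ell$ of them and already the final load is $\ge\ell$; if $r\ge r^*$, pigeonholing gives a single batch of $n$ balls containing $\ge r^*/t$ retries, at the start of that batch at least $n/(\ell+1)$ bins have load $\ge 0$ (sum of loads is zero plus the all-time bound), and Lemma~\ref{lem:max load} shows some such bin receives $\ge\ell$ secondaries \emph{within that batch}, during which the average moves by only $1$ — producing load $\approx\ell$ at the end of that batch, not at time $tn$.

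There is also a smaller (repairable) gap earlier: on $E$ you claim every bin accepts at most $\tau+2\ell$ primaries per window, but $E$ only bounds loads from above, so a bin starting a window at load $-K$ may accept up to $\tau+\ell+K$ primaries; you must restrict to bins whose load at the window start is $\ge 0$ (or $\ge -O(\ell)$), of which there are at least $n/(\ell+1)$ by the same sum-zero argument the paper uses to define $S$. Once restricted, your Poissonization step needs a lower-tail bound for the total excess (Lemma~\ref{lem:retry bound} only gives an upper tail), e.g.\ by counting bins whose primary count exceeds $\tau+3\ell$ and applying Lemma~\ref{lem:poisson approximation} with Hoeffding. These fixes salvage the retry lower bound, but not the final step, which is where the proof as proposed breaks.
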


\begin{proof}
We denote by $r$ the total number of retries and set $r^*=\frac{n}{2}e^{-\ell^2/t}$. Then we have
\begin{align}
\P\left(\maxload^f([tn])<\ell\right) &=\P\left(\maxload^f([tn])<\ell,  r< r^*\right) \label{eq:all-time lower bound case 1-a}\\
&~~~+\P\left(\maxload^f([tn])<\ell,  r\geq r^*\right) \label{eq:all-time lower bound case 1-b}.
\end{align}
We estimate \eqref{eq:all-time lower bound case 1-a}. Recall that $\psi^{t+\ell}(tn)$ defined in \eqref{eq:primay allocation level set} represents the number of bins that are suggested as primary allocations at least $t+\ell$ times after allocating $tn$ balls. If we retry less than $\psi^{t+\ell}(tn)$ balls, then we will have $\maxload ^f(tn)\geq\ell$. Hence we obtain
\begin{align}
\P\left(\maxload^f([tn])<\ell,  r< r^*\right)
&\leq\P\left(\maxload ^f(tn)<\ell,  r< r^*\right)\notag\\
&\leq\P\left(\psi^{t+\ell}(tn)<r^*\right). \label{eq:all-time lower bound case 1-d}
\end{align}
We denote by $\{X_i\}_{i\in[n]}$ independent $\poss(t)$ random variables. Write $Y_i$ for the indicator function of the event $\{X_i> t+\ell\}$. Hence, $\{Y_i\}_{i\in[n]}$ are independent $\bernoulli(p)$ random variables with
\begin{equation*}\label{eq:bound p 1}
p=\P(X_1>t+\ell)\geq e^{-2tI(\ell/t)}\geq e^{-\ell^2/t},
\end{equation*}
where the first inequality follows from Lemma \ref{lem:poss tail} and the second inequality uses the upper bound of $I(x)$ in \eqref{eq:approx-I} and the fact that $\ell<t$. 
Apply Lemma \ref{lem:poisson approximation} and Hoeffding's inequality to obtain 
\begin{align}\label{eq:all-time lower bound case 1-e}
\P\left(\psi^{t+\ell}(tn)<r^*\right) &\leq\P\left(\psi^{t+\ell}(tn)<\frac{pn}{2}\right)\leq 2\P\left(\sum_{i=1}^nY_i<\frac{pn}{2}\right)\notag\\
&\leq 2\exp\left(-\frac{p^2n}{2}\right)<\exp\left(-\frac{n}{2}e^{-2\ell^2/t}\right)\notag\\
&=\exp\left(-n^{1-o(1)}\right).
\end{align}

Next we estimate \eqref{eq:all-time lower bound case 1-b}. Recall that $R_k$ given in \eqref{eq:def-retry} is the number of retries after allocating $k$ balls. Define
$s_0=\inf\big\{s\in[t] : R_{s n}-R_{(s-1) n}\ge r^*/t\big\}$. Whenever the event $\{r\geq r^*\}$ occurs,  we have $s_0<\infty$. Write $S=\big\{i\in [n] : L_i^f((s_0-1)n)\geq 0\big\}$. As per \eqref{eq:average level bound},
we show that whenever the event $\big\{\maxload^f((s_0-1)n)< \ell\big\}$ occurs, we have
\begin{equation}\label{eq:average level bound II} 
|S|\geq  \frac{n}{\ell+1}.
\end{equation}
To see this, observe that
$$
0=\sum_{i\in[n]}L_i^f((s_0-1)n)=\sum_{i\in S}L_i^f((s_0-1)n)+\sum_{i\in S^c}L_i^f((s_0-1)n).
$$ 
This, together the fact that $\big\{L_i^f((s_0-1)n)\big\}_{i\in [n]}\in\Z^n$ and $\maxload^f((s_0-1)n)< \ell$, yields
$$
|S^c|\leq\sum_{i\in S^c}|L_i^f((s_0-1)n)|=\sum_{i\in S}L_i^f((s_0-1)n)\leq |S|\cdot (\ell+1).
$$
Then we can obtain \eqref{eq:average level bound II} using $|S^c|=n-|S|$. 

Apply Lemma \ref{lem:max load} to obtain
\begin{align}\label{eq:all-time lower bound case 1-f}
\P\left(\maxload^f([tn])<\ell,  r\geq r^*\right)&\leq \P\big(s_0<\infty,~\maxload^f(s_0n)<\ell\big)\notag\\
& \leq \P\left(s_0<\infty,~ \max_{i\in S}L^f_{2, i}\big(((s_0-1)n,s_0 n]\big)<\ell\right)\notag\\
& \leq 2\exp\left(-\frac{|S|}{e\ell!}\left(\frac{r^*}{tn}\right)^\ell\right).
\end{align}
Recall that $r^*=\frac{n}{2}e^{-\ell^2/t}$, $\ell=\lfloor(t\log n)^{1/3}\rfloor$ and $|S|\geq  n/(\ell+1)$. One can check that
\begin{equation}\label{eq:all-time lower bound case 1-g}
\frac{|S|}{e\ell!}\left(\frac{r^*}{tn}\right)^\ell\geq\frac{\sqrt n}{e(\ell+1)!}\left(\frac{1}{2t}\right)^\ell>\frac{\sqrt n t^{-\ell}}{(\ell+1)^{\ell+3/2}}>\frac{\sqrt n}{t^{3\ell}}>n^{1/4},
\end{equation}
where the second inequality uses Stirling's approximation $(\ell+1)!\leq e\sqrt{\ell+1}(\tfrac{\ell+1}{e})^{\ell+1}$; in the third inequality, we use the fact that $\ell<t$ and the last inequality follows from our choice of $\ell$ and the assumption on $t$. Combine \eqref{eq:all-time lower bound case 1-f} and \eqref{eq:all-time lower bound case 1-g} to obtain
$$
\P\left(\maxload^f([tn])<\ell,  r\geq r^*\right)\leq 2\exp\left(-n^{1/4}\right).
$$
This, together with \eqref{eq:all-time lower bound case 1-b}, \eqref{eq:all-time lower bound case 1-d}, \eqref{eq:all-time lower bound case 1-e}, yields
$$
\P\left(\maxload^f([tn])<\ell\right)\leq \exp\left(-n^{1-o(1)}\right)+2\exp\left(-n^{1/4}\right)<\exp\left(-n^{1/5}\right).
$$
This concludes the proof of \eqref{eq:all-time lower bound case 1}. 
\end{proof}


\section{Typical load discrepancy}\label{sec:typical load discrepancy}

In this section, we investigate two-thinning strategies for controlling the $\varepsilon$-typical maximum load $\maxload_\varepsilon^f([m])$. 
The main technical statement in this section is the following Proposition, which implies Theorem \ref{thm:typical load discrepancy}.

\begin{prop}\label{prop:typical-load-poly-time}
Fix $d\geq 1$. Set $\ell=(\log n)^{\frac{1}{2}+\frac{1}{\sqrt{\log\log\log n}}}$ and $\varepsilon=e^{-\frac{1}{2}\sqrt{\log\log\log n}}$. For sufficiently large $n\in\N$ and $m\le n^{d}$, there exists
a set $S\subset[m]$ with $|S|\ge(1-\varepsilon)m$
such that the $d$-multi-scaled long-term combined strategy $f$ satisfies
\begin{equation}\label{eq:typical-load-poly-time}
\P\left(\maxload^f(S)>\ell\right)\leq \frac{1}{n}.
\end{equation}
For $d\geq 2$ and general values of $m$, the $d$-multi-scaled long-term combined strategy $f$ satisfies
\begin{equation}\label{eq:typical-load-poly-time2}
\P\left(\maxload_\varepsilon^f([m])>\ell\right)\leq \frac{1}{n}.
\end{equation}
\end{prop}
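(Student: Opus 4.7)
The plan is to analyze the $d$-multi-scaled long-term combined strategy by decomposing the time interval $[m]$ according to the three-phase structure of its iterations and, within each Phase 2, according to the nested scales of the $Q$-multi-scale strategy. The good set $S$ will be defined deterministically as the collection of times that fall inside an \emph{innermost} $L$-relative-threshold subphase (i.e., a scale-$1$ subphase) within a Phase 2 of some iteration. I would then estimate $|S|$ by using the recursion \eqref{eq:alpha-i-prime-iteration}--\eqref{eq:alpha-i-iteration}: at each level of the multi-scale hierarchy the regulating subphases add a factor of order $\log^{\alpha_i' - \alpha_i} n$ of bad time, and by \eqref{eq:alpha-i-prime-iteration} these factors telescope to a total excess bounded by $\varepsilon/2$. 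The bad time additionally absorbs Phase 1 ($m_0 = \Theta(n\log n)$) and Phase 3 ($\Theta(nA)$ per iteration), both of which are negligible relative to the Phase 2 length $m_1 = n\log^{\alpha_{i_{\max}+1}} n$.

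The analytical core, proved by induction on the scale index $i$, is the following recursive claim. If the scale-$i$ strategy is initialized at a state $\mathcal{G}_i$ satisfying $\maxload \le Q + L + 2k\sum_{j<i}\ell_j$ together with the level-set bound $|\{j:L_j^f>L_0\}|\le 3n\exp(-\ell_i^2/(4\log^{\alpha_i+\eta}n))$, then with probability at least $1-n^{-e^{\sqrt{\log\log\log n}}}$: during every scale-$1$ subphase nested inside it, the maximum load stays below $Q+L+2k\sum_{j\le i}\ell_j\le\ell$; and at the end of every multi-stage threshold regulating subphase, the state is in $\mathcal{G}_i$ appropriately shifted so that the next lower-scale invocation can be started cleanly. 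The base case $i=1$ is exactly Proposition \ref{prop:all time upper bound case 1}. The inductive step applies Proposition \ref{prop:single time upper bound case 3} to each regulating subphase with $L_0$ taken from the state supplied by the inductive hypothesis, and observes that the secondary-allocation estimates inside Lemma \ref{lem:bound ri-hi} also supply the level-set bound needed at the \emph{next} regulating subphase.

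For the Phase 1/Phase 3 interface between iterations, I would verify that the three stopping conditions of Phase 3 imply $\mathcal{G}$-type initial conditions for the subsequent Phase 1; Lemma \ref{lem:single time upper bound} and Lemma \ref{lem:level set bound} then guarantee that the stopping conditions are met within $\Theta(nA)$ balls with probability at least $1-n^{-\Theta(d)}$ (the parameter $A$ is exactly tuned to make the failure probability beat $1/n^{d+1}$), while Lemma \ref{lem:expected normalization time} bounds the expected completion time. Phase 1, which runs the multi-stage $(m_0/n,L_0,L_0)$-threshold strategy, is then analyzed by Proposition \ref{prop:single time upper bound case 3} and produces the good state $\mathcal{G}$ required as input to Phase 2. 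A union bound over the at most $m/m_1$ iterations (at most $n^d/m_1$ for $m\le n^d$) establishes \eqref{eq:typical-load-poly-time}. The second assertion \eqref{eq:typical-load-poly-time2} for general $m$ and $d\ge 2$ would then follow by partitioning $[m]$ into chunks of length $n^d$ and applying the first assertion to each chunk, with the per-chunk deterministic good-set assembling into a random set of density $1-\varepsilon$ in $[m]$.

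The main obstacle is the book-keeping in the recursive claim: after each multi-stage threshold regulating subphase inside the $Q$-multi-scale, one must certify not just a max-load bound but also a level-set bound of the precise form required to re-invoke Proposition \ref{prop:single time upper bound case 3} one scale deeper. This forces a careful alignment between the parameters $\ell_i,\alpha_i,\alpha_i'$ and the exponent $\alpha+\eta$ appearing in the second hypothesis of Proposition \ref{prop:single time upper bound case 3}, and I expect that verifying \eqref{eq:alpha-i-prime-iteration} is exactly what makes this alignment hold at every scale while also keeping the telescoped bad-time under $\varepsilon$.
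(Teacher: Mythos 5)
Your treatment of the first assertion is essentially the paper's own route: your ``innermost scale-$1$ times'' are exactly the set $S=\bigcup_{j}(A_{m_1}+jM)$ used in the paper, your recursive claim is the content of Lemma~\ref{lem:key lem for typical iter} (base scale via Proposition~\ref{prop:all time upper bound case 1}, regulating segments via Proposition~\ref{prop:single time upper bound case 3}), and the phase interfaces are handled as in Propositions~\ref{prop:phase-0 low load}, \ref{prop:phase-1 minload} and \ref{prop:phase-2 typicaly m2}, with misalignment (a drift phase overshooting $m_2$) charged to the failure probability, which the union bound over the at most $n^d/M$ iterations absorbs. Two bookkeeping caveats: the level-set bound needed to restart a deeper scale after a lower-scale segment comes not from Lemma~\ref{lem:bound ri-hi} but from a separate estimate (Lemma~\ref{lem:heavilily-loaded-bins}), which itself requires an all-time max-load bound on that segment; and Proposition~\ref{prop:phase-1 minload} also needs the lower bound $\min_i L_i^f(m_1)\ge -A$ (a Chernoff argument your sketch omits), without which the drift phase is not guaranteed to normalize within $\Theta(nA)$ balls.

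The genuine gap is in the second assertion. Partitioning $[m]$ into chunks of length $n^{d}$ and ``applying the first assertion to each chunk'' does not work: chunk boundaries at multiples of $n^d$ do not coincide with iteration boundaries, and, more importantly, the third-phase durations $m_{2,j}$ are random with only their expectations controlled, so once a single drift phase overshoots $m_2$ --- which over an unbounded horizon happens with probability tending to one --- the deterministic alignment on which the first assertion rests is destroyed, and there is no per-chunk deterministic good set to which the first assertion applies. This is exactly where the hypothesis $d\ge 2$, which your sketch never uses, enters. The paper's argument for general $m$ abandons the deterministic set: it declares an iteration bad if any of the events $E_j,F_j,G_j$ or the typical-load bound fails, bounds the expected number of bad iterations and applies Markov, and --- crucially --- bounds the total length $\sum_{j\in J}m_{2,j}$ of the drift phases of bad iterations using $\E(m_{2,j})\le n^3$ from Lemma~\ref{lem:expected normalization time}; after Markov this yields a bad-time contribution of order $mn^{3-2d}$, and $d\ge2$ is needed to keep it below $\varepsilon m$ with probability $1-1/n$. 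Without some control of this kind on the random phase-$3$ durations, a single long bad iteration could by itself occupy a constant fraction of $[m]$, and your chunking scheme has no mechanism to rule that out.
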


The proof of this result requires the following four propositions, each of which tells us certain property of the process after a phase of an iteration. The proofs of these propositions are given in the following subsections. Throughout this section we use the notations in \eqref{eq:Q-L-A}, \eqref{eq:ms} and \eqref{eq:L0}.

\begin{prop}\label{prop:phase-0 low load}
Fix $d\geq 1$. Let $n\in\N$ be sufficiently large. Suppose that the initial load vector $\{L_i(0)\}_{i\in [n]}$ satisfies that $|\{i\in [n] : L_i(0)>L_0\}|\le 4000ne^{-L_0/15}$ and that $|L_i(0)|\le 100d\log n$ for all $i\in [n]$. Then  the multi-stage ($m_0/n, L_0, L_0$)-threshold strategy $f$ satisfies that
$$
\P\left(L_i^f(m_0)<-300d\log n~\text{or}~L_i^f(m_0)> L~\text{for some}~i\in [n]\right)\le n^{-e^{\sqrt{\log\log\log n}}}.
$$
\end{prop}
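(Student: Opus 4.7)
The plan is to split the bad event into a deterministic lower-tail part and an upper-tail part to which we apply Proposition~\ref{prop:single time upper bound case 3}.

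First I would observe that the lower bound $L_i^f(m_0)\ge -300d\log n$ is automatic from the hypotheses, independent of the strategy. Writing $L_i^f(m_0) = L_i(0) + |\{k\le m_0 : Z_k=i\}| - m_0/n \ge L_i(0) - m_0/n$, the assumption $L_i(0)\ge -100d\log n$ together with $m_0\le 200dn\log n$ immediately yields $L_i^f(m_0)\ge -300d\log n$ with probability one. So it suffices to bound $\P(\maxload^f(m_0)>L)$.

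For the upper tail I would apply Proposition~\ref{prop:single time upper bound case 3} with $t=m_0/n$ and the free parameter $\eta=k(\alpha-\tfrac12)/(2k+1)^2$ of that proposition, where $\alpha=\log t/\log\log n$. A short algebraic check shows that this $\eta$ lies in $[0,(\alpha-\tfrac12)/(4k-2)]$, and that the resulting $\beta_k=\tfrac12+(\alpha+\eta-\tfrac12)/(k+1)$ equals $\tfrac12+(2-\tfrac{1}{2k+1})(\alpha-\tfrac12)/(2k+1)$, so that the proposition's parameter $\ell=\lfloor\log^{\beta_k} n\rfloor$ coincides with the threshold parameter $L_0$ from \eqref{eq:L0} appearing in the multi-stage strategy. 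For $n$ large (depending on $d$) we have $\alpha = 1 + \log(200d)/\log\log n + o(1)$, which lies in the admissible range $[\tfrac12+1/\sqrt{\log\log\log n},\,1+\sqrt{\log\log\log n}/\log\log n]$.

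The two hypotheses of Proposition~\ref{prop:single time upper bound case 3} are then easy to verify. The bound $\maxload(0)<c t$ holds with $c=100/199$ since $t\ge 199d\log n$ for $n$ large and $|L_i(0)|\le 100d\log n$. The bound $|H_0|\le 3n\exp(-L_0^2/(4\log^{\alpha+\eta} n))$ reduces, using the hypothesis $|H_0|\le 4000n e^{-L_0/15}$, to $L_0/\log^{\alpha+\eta} n \le 4/15 - o(1)$; comparing exponents, $(\alpha+\eta)-\tfrac12-(4k+1)(\alpha-\tfrac12)/(2k+1)^2 = (\alpha-\tfrac12)(1-(3k+1)/(2k+1)^2) \to \alpha - \tfrac12$, so in fact $L_0/\log^{\alpha+\eta} n = (\log n)^{-\Theta(1)} = o(1)$. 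Proposition~\ref{prop:single time upper bound case 3} then yields $\P(\maxload^f(m_0) > (2k+1)L_0) \le n^{-e^{\sqrt{\log\log\log n}}}$, and it only remains to confirm $(2k+1)L_0 \le L$: comparing logs, the exponent of $L=(\log n)^{(1+\alpha_1)/3}$ equals $\tfrac12+\Theta(1/\sqrt{\log\log\log n})$ while that of $L_0$ is $\tfrac12+O(\log\log\log n/\log\log n)$, giving a gap in $\log n$ that easily dominates $\log(2k+1)=O(\log\log\log n)$. The main obstacle throughout is the careful bookkeeping of several exponents all clustering near $\tfrac12$; an incorrect identification of $\eta$ would break the match $\lfloor\log^{\beta_k} n\rfloor = L_0$ and derail the application of Proposition~\ref{prop:single time upper bound case 3}.
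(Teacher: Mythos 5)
Your proof is correct and follows essentially the same route as the paper: the lower bound $L_i^f(m_0)\ge -300d\log n$ is deterministic from $|L_i(0)|\le 100d\log n$ and $m_0/n\le 200d\log n$, and the upper bound is obtained by checking the two hypotheses of Proposition~\ref{prop:single time upper bound case 3} with $t=m_0/n$ and applying it, then comparing $(2k+1)L_0$ with $L$. The only difference is cosmetic: you tune $\eta=k(\alpha-\tfrac12)/(2k+1)^2$ so that $\lfloor\log^{\beta_k}n\rfloor$ matches $L_0$ exactly, whereas the paper takes $\eta=0$ and identifies $\ell$ with $L_0$ via its stated closed form for $\beta_k$; both choices are admissible and yield the same conclusion.
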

\begin{prop}\label{prop:typical-load}
Fix $c>0$. Let $m,n\in \N$ be sufficiently large. We write $\alpha=\frac{\log (m/n)}{\log\log n}$ and assume that 
$\alpha \in\big[\frac{1}{2}+\frac{3}{\sqrt{\log\log\log n}}, 1+\frac{1}{30\sqrt{\log\log\log n}}\big]$. Further, we denote by $\varepsilon=e^{-\frac{2}{3}\sqrt{\log\log\log n}}$ and $\ell=(\log n)^{\frac{1}{2}+\frac{1}{\sqrt{\log\log\log n}}}$. Suppose that 
the initial load vector $\{L_i(0)\}_{i\in [n]}$ satisfies that $L_i(0)\leq (\log n)^{\frac{1}{2}+\frac{1}{2\sqrt{\log\log\log n}}}$ for all $i\in[n]$. Then, there exists $A_m\subset [m]$ with $|A_m|\ge (1-\varepsilon)m$ such that the $0$-multi-scale strategy $f$ satisfies that
\begin{equation}\label{eq:typical-load}
\P\Big(\maxload^f(A_m)>\ell\Big)\leq n^{-c}.
\end{equation}
\end{prop}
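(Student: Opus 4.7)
The plan is to prove the proposition by induction on the scale index $i$ of the $0$-multi-scale strategy. Recall that at scale $i+1$, the strategy is a sequence of $N_i$ iterations, each consisting of a scale-$i$ block of length $n\log^{\alpha_i}n$ followed by a regulating multi-stage $(\log^{\alpha_i'}n, Q+Q^{i,j}+\ell_i,\ell_i)$-threshold block of length $n\log^{\alpha_i'}n$. Let $i^\ast$ be the smallest index with $n\log^{\alpha_{i^\ast+1}}n \ge m$; since $\alpha \le 1+\frac{1}{30\sqrt{\log\log\log n}}$ and the sequence $\{\alpha_i\}$ grows moderately, we will have $i^\ast = O(\log\log\log n)$. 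I take $A_m$ to be the set of times in $[m]$ that lie outside every regulating segment at every scale. Using $\alpha_i' < \alpha_i$, the explicit gap $\alpha_i - \alpha_i' = \frac{1}{5}\cdot\frac{2\alpha_i-1-\varepsilon_i}{2k+1}$, and the recurrence $\log^{\alpha_{i+1}}n = N_i(\lfloor\log^{\alpha_i}n\rfloor+\lfloor\log^{\alpha_i'}n\rfloor)$, the total regulating duration at each scale is a subleading fraction of the scale length, and summing over the $O(\log\log\log n)$ scales gives $|[m]\setminus A_m|\le \varepsilon m$ for $\varepsilon=e^{-\frac{2}{3}\sqrt{\log\log\log n}}$.

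For the base case $i=1$, the scale is simply the $L$-relative threshold strategy. Since $L = (\log n)^{(1+\alpha_1)/3}$ and the relevant time is $t\le \log^{\alpha_1}n$ which is well within the range of Proposition \ref{prop:all time upper bound case 1} with $\ell$ there playing the role of our $L$, that proposition immediately provides the bound $\maxload^f([n\log^{\alpha_1}n]) \le L_0+O(L) \ll \ell$ with failure probability at most $n^{-e^{\sqrt{\log\log\log n}}}$, using the hypothesis $L_i(0)\le (\log n)^{1/2+1/(2\sqrt{\log\log\log n})}$ as the initial $L_0$.

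The inductive step at scale $i+1$ proceeds iteration by iteration. At the start of the $j$-th iteration, the load vector is assumed to satisfy two conditions: $\maxload^f \le Q+Q^{i,j}$ and $|\{p: L_p^f > Q+Q^{i,j}-\ell_i\}| \le 3n\exp(-\ell_i^2/(4\log^{\alpha_i+\eta_i}n))$ for a suitable small $\eta_i$. The scale-$i$ strategy is then run, and the induction hypothesis yields the required typical-load bound for every $t\in A_m$ inside this block; in particular, on this good event the scale-$i$ regulating segments (at lower scales) leave $\maxload^f$ bounded by $Q+Q^{i,j}+ O(L)$, satisfying the first condition of Proposition \ref{prop:single time upper bound case 3} with room to spare. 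The second condition on $|H_0|$ is verified through a direct tail estimate on the Poisson approximation of the scale-$i$ block's output, which is analogous to Lemma \ref{lem:bound ri-hi} but applied to the aggregate process: any bin that reaches load $Q+Q^{i,j}+\ell_i$ must have received many secondary allocations and such count is tightly concentrated. Invoking Proposition \ref{prop:single time upper bound case 3} on the regulating block then yields $\maxload^f \le Q+Q^{i,j}+\ell_i+2k\ell_i = Q+Q^{i,j+1}$ at the end of the iteration, reinstating the inductive hypothesis for iteration $j+1$ (with the corresponding $H_0$ bound produced as a byproduct).

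The main obstacle will be the bookkeeping of cumulative error over iterations: the total number of iterations across all scales is bounded by $\prod_{j=1}^{i^\ast}N_j \le m/n\le n^{d}$, so each invocation of Propositions \ref{prop:all time upper bound case 1} and \ref{prop:single time upper bound case 3} must contribute a failure probability far smaller than $n^{-c}/(\text{iteration count})$. The quasi-polynomial bound $n^{-e^{\sqrt{\log\log\log n}}}$ in Proposition \ref{prop:single time upper bound case 3} defeats any polynomial $n^{d}$ iteration count, so a union bound succeeds; the verification that $Q+Q^{i,j}+2k\ell_i \le \ell$ at the top scale uses $Q^{i,j}=(2k+1)(j-1)\ell_i$ and $j\le N_i$, and telescoping across scales keeps the total threshold drift below $\ell=(\log n)^{1/2+1/\sqrt{\log\log\log n}}$ thanks to the choice of $\alpha_1 = 1/2 + 2/(\lfloor\sqrt{\log\log\log n}\rfloor+1/4)$. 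Assembling these estimates through the induction yields \eqref{eq:typical-load}.
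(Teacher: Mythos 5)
Your overall architecture is the same as the paper's: the same choice of $A_m$ (all times outside every regulating segment at every scale, whose density is controlled by the geometric decay $(\log n)^{\alpha_i'-\alpha_i}$ over the at most $i_{\max}<\sqrt{\log\log\log n}$ scales), a scale-by-scale induction maintaining, at the start of each iteration, a maximum-load bound and a bound on the number of relatively heavy bins, the base scale handled by Proposition~\ref{prop:all time upper bound case 1}, and the regulating blocks handled by Proposition~\ref{prop:single time upper bound case 3}. This is exactly the content of the paper's Lemmata~\ref{lem:low typical load segments}, \ref{lem:key lem for typical iter} and \ref{lem:key lem for typical iter aux}. However, two points in your write-up do not hold as stated.

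First, the error bookkeeping is internally inconsistent. Proposition~\ref{prop:all time upper bound case 1} only gives a \emph{polynomial} failure probability $n^{-c'}$ (the quasi-polynomial bound $n^{-e^{\sqrt{\log\log\log n}}}$ belongs to Proposition~\ref{prop:single time upper bound case 3}, not to the relative-threshold base case, so your base-case claim is a miscitation), and therefore a union bound over ``$\prod_j N_j\le m/n\le n^d$'' many iterations would \emph{not} close for the base-case invocations. What saves the argument — and what the paper actually uses — is that under the hypotheses of this proposition $m/n\le(\log n)^{1+o(1)}$, and more precisely $\prod_{i'\le i_{\max}}N_{i'}=(\log n)^{O(1)}$ by \eqref{eq:N-i-bd} and \eqref{eq:i-max-bound}; so each invocation need only contribute $n^{-2c}$, and the polylogarithmic multiplicity is absorbed. (The $m\le n^d$ regime is the business of Proposition~\ref{prop:typical-load-poly-time} and the long-term combined strategy, not of this statement.) You should replace the $n^d$ count by the polylogarithmic one and drop the claim that quasi-polynomial bounds are needed to defeat it.

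Second, the verification of the heavy-bin hypothesis of Proposition~\ref{prop:single time upper bound case 3} at the end of each scale-$i$ block is not a ``direct tail estimate on the Poisson approximation'': the number of secondary allocations a bin receives during the block is governed by the total number of retries, which is strategy-dependent and not a priori small. The paper needs a dedicated statement (Lemma~\ref{lem:heavilily-loaded-bins}) whose proof bounds $|H|\le|H'|+r$ and then controls $r$ \emph{using the all-time maximum-load bound} on the block (via the $1/(h+1)$ fraction of non-negatively loaded bins and Lemma~\ref{lem:max load}); Lemma~\ref{lem:bound ri-hi} is specific to the multi-stage threshold strategy and does not apply to the composite multi-scale block. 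Your proposal names the right target but omits this retry-control step, which is the genuinely non-trivial part of reinstating the induction hypothesis; with it (and the corrected bookkeeping above) your plan coincides with the paper's proof.
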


\begin{prop}\label{prop:phase-1 minload}
Fix $d\geq 1$. Let $n\in\N$ be sufficiently large. Suppose that the initial load vector $\{L_i(0)\}_{i\in [n]}$ satisfies that $-300d\log n\leq L_i(0)\le Q$ for all $i\in[n]$. 
Then the $Q$-multi-scale strategy $f$ satisfies that
\begin{equation}\label{eq:initial loads bound}
\P\left(\max_{i\in [n]}|L_i^f(m_1)|>A\right)\leq n^{-3d}.
\end{equation}
\end{prop}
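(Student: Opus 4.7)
The plan is to separately bound $\max_i L_i^f(m_1)$ from above and $\min_i L_i^f(m_1)$ from below.

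For the upper bound, I observe that by construction, $m_1 = n(\log n)^{\alpha_{i_{\max}+1}}$ coincides with the endpoint of scale $i_{\max}+1$ of the $Q$-multi-scale strategy, which always terminates with a multi-stage $(\log^{\alpha'_{i_{\max}}} n, L_0', \ell_{i_{\max}})$-threshold segment, where $L_0' = Q + Q^{i_{\max}, N_{i_{\max}}} + \ell_{i_{\max}} \le Q + L + \ell_{i_{\max}} = O((\log n)^{1/2+o(1)})$ since $N_{i_{\max}} \le L/(3k\ell_{i_{\max}})+1$. Applying Proposition~\ref{prop:single time upper bound case 3} to this final segment gives $\max_i L_i^f(m_1) \le L_0' + 2k\ell_{i_{\max}} = O((\log n)^{1/2+o(1)})$, which is negligible compared to $A = \Theta((\log n)^{(1+\alpha_{i_{\max}+1})/2})$, with failure probability $n^{-e^{\sqrt{\log\log\log n}}}$. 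The requisite preconditions of Proposition~\ref{prop:single time upper bound case 3} (a bounded max load and a small $|H_0|$ at the start of the final segment) propagate through the nested scales by induction, via an argument parallel to the one underpinning Proposition~\ref{prop:typical-load}.

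For the lower bound, I exploit the safety valve shared by every subroutine in the $Q$-multi-scale strategy: when $L_i^f(k-1) < -\log n$, any primary allocation targeting bin $i$ is accepted, so $\P(Z_k = i \mid \mathcal{F}_{k-1}) \ge 1/n$. Consequently, for $\lambda \in (0,1)$, the stopped process $\exp(-\lambda L_i^f(k \wedge \tau)) \exp(-(k \wedge \tau)\lambda^2/(2n))$, with $\tau = \inf\{k : L_i^f(k) \ge -\log n\}$, is a supermartingale: the expansion $\lambda + e^{-\lambda} - 1 \le \lambda^2/2$ gives $\E[e^{-\lambda \Delta L_i^f}\mid \mathcal{F}_{k-1}] \le \exp(\lambda^2/(2n))$ on the stopped process. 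Markov's inequality with the optimizing choice $\lambda = n(A - \log n)/m_1$ produces a Chernoff bound with exponent $n(A - \log n)^2/(2m_1) = 3d\log n(1 - o(1))$, using $nA^2/(2m_1) = 3d\log n$ and the fact that $A \gg \log n$ because $\alpha_{i_{\max}+1} > 1$. To handle trajectories that cross above $-\log n$ at some intermediate time, I partition on the last index $\sigma \le m_1$ at which $L_i^f(\sigma) \ge -\log n$ (setting $\sigma = 0$ if no such index exists) and apply the supermartingale restarted at $\sigma$, noting that $L_i^f(\sigma) \in [-\log n - 1/n, -\log n)$ degrades the bound only by a negligible factor $e^{\lambda(\log n + 1/n)}$. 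A union bound over $\sigma$ and over the $n$ bins then completes the estimate.

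The primary obstacle lies in the union bound over the $O(m_1) = n^{1+o(1)}$ candidate values of $\sigma$, which naively erodes the per-bin exponent from $n^{-3d}$ to $n^{-3d+1+o(1)}$. Closing this gap to reach $n^{-3d}$ requires either a Doob-style maximal inequality applied excursion-by-excursion to the exponential supermartingale (so that the excursion start need not be union-bounded), or a modestly sharper calibration of the constants so that the Chernoff exponent absorbs the polynomial correction. The upper bound's inductive verification of the preconditions of Proposition~\ref{prop:single time upper bound case 3} through the nested scales is conceptually routine but requires carrying level-set bounds of the form $|\{i : L_i^f > L_0\}| \le 3n\exp(-\ell^2/(4\log^{\alpha+\eta}n))$ across successive multi-stage segments, an estimate established in the proof of Proposition~\ref{prop:typical-load}.
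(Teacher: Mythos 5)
Your upper-bound half is essentially the paper's argument: since $m_1=n(\log n)^{\alpha_{i_{\max}+1}}$ consists of exactly $N_{i_{\max}}$ complete iterations of the $i_{\max}$-th scale, the paper simply invokes the event $\vF{i_{\max}}{N_{i_{\max}}}{Q}$ of Lemma~\ref{lem:key lem for typical iter} (which already packages the inductive propagation of the preconditions of Proposition~\ref{prop:single time upper bound case 3} through the nested scales, via the events $\vE{i}{j}{Q},\vG{i}{j}{Q}$), giving $\maxload^f(m_1)\le Q+Q^{i_{\max},N_{i_{\max}}+1}<Q+L<A$ up to probability $n^{-4d}$; your plan of re-running that induction and applying Proposition~\ref{prop:single time upper bound case 3} to the final regulating segment is the same argument, only without using the lemma that is already available.

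The lower-bound half, however, contains a genuine gap which you flag but do not close, and which the paper's proof is specifically structured to avoid. Your per-bin, per-$\sigma$ Chernoff exponent is $n(A-O(\log n))^2/(2m_1)=(1-o(1))\,3d\log n$, and your union bound over the $m_1=n^{1+o(1)}$ candidate last-crossing times $\sigma$ together with the union over the $n$ bins multiplies the resulting $n^{-3d(1-o(1))}$ by $n^{2+o(1)}$; for $d=1$ this is about $n^{-1+o(1)}$, nowhere near the required $n^{-3d}$, and neither of your two proposed remedies is carried out. The paper pays no union-bound factor over the crossing time: it sets $k_i=\sup\{k\le m_1:\ L_i^f(k)\ge -300d\log n\}$, conditions on $F_i$ and $\cF_{k_i}$, and iterates the one-step estimate $\E\big[e^{-\lambda(L_i^f(k)-L_i^f(k-1))}\,\big|\,\cF_{k-1}\big]\le e^{\lambda/n}\big(1-\tfrac{1-e^{-\lambda}}{n}\big)$ over $(k_i,m_1]$ (valid there because the load is below $-\log n$, so every primary allocation is accepted). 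Since this one-step factor is at least $1$, the exponent $m_1-k_i$ may be replaced by $m_1$, and $e^{-\lambda L_i^f(k_i)}\le e^{300\lambda d\log n}$, so the resulting bound is uniform in $k_i$ and only the union over the $n$ bins is taken; choosing the crossing level $-300d\log n$ rather than your $-\log n$ also lets the hypothesis on $L_i(0)$ dispose of the no-crossing case directly. (Your remark that the calibration is tight is fair: even the paper's own computation yields per-bin roughly $n^{-3d(1-o(1))}$ at this point, so the decisive saving over your version is the absence of the factor $m_1$, not sharper constants.)
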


\begin{prop}\label{prop:phase-2 typicaly m2}
Fix $d\geq 1$. Let $n\in\N$ be sufficiently large. Suppose that the initial load vector $\{L_i(0)\}_{i\in [n]}$ satisfies that $|L_i(0)|\le A$ for all $i\in[n]$. Then the $1/5$-drift strategy $f$ satisfies that
$$
\P\left(\max_{i\in[n]}|L^f_i(m_2)|> 100d\log n\text{ or }\left|\left\{i\in [n] : L^f_i(m_2)>L_0\right\}\right|> 4000ne^{-L_0/15}\right)\le 2n^{-3d}.
$$
\end{prop}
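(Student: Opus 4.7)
The plan is to derive Proposition~\ref{prop:phase-2 typicaly m2} as an essentially direct consequence of Lemma~\ref{lem:single time upper bound} and Lemma~\ref{lem:level set bound} applied to the $\theta$-drift strategy with $\theta=1/5$, followed by a union bound. Both lemmas are tailored to exactly this situation, so the work is mostly in verifying their preconditions and choosing parameters appropriately.

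First I would check that the preconditions of both lemmas hold under our hypothesis $|L_i(0)|\le A$. With $L=A$ and $\theta=1/5$, Lemma~\ref{lem:single time upper bound} requires $m\ge(15A+50\log 2000)\,n$, while Lemma~\ref{lem:level set bound} requires $m\ge 10nA$ and $k\ge 3(1+10\log 2000)$. Since $m_2=\lceil 16nA\rceil$ and $A=\sqrt{6d(\log n)^{1+\alpha_{i_{\max}+1}}}\to\infty$ as $n\to\infty$, both inequalities on $m_2$ hold for $n$ large enough. The condition on $k$ will be applied with $k=L_0$, which grows like $(\log n)^{1/2+o(1)}$ and hence exceeds any absolute constant.

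Second, for the first bad event, I apply Lemma~\ref{lem:single time upper bound} with $k:=100d\log n-\tfrac{5}{\theta}\log\tfrac{320n}{\theta^2}=100d\log n-25\log(8000n)$. For $d\ge 1$ and $n$ large this choice of $k$ is positive, and the lemma yields
\[
\P\!\left(\max_{i\in[n]}|L_i^f(m_2)|>100d\log n\right)\le \exp\!\left(-\tfrac{\theta k}{5}\right)=\exp(-k/25)\le n^{-3d}
\]
since $k/25\ge 3d\log n$ once $\log(8000n)\le d\log n$. Third, for the second bad event, I apply Lemma~\ref{lem:level set bound} with $k=L_0$; noting that $\frac{160}{\theta^2}=4000$ and $\frac{\theta L_0}{3}=\frac{L_0}{15}$, this gives
\[
\P\!\left(\big|\{i:L_i^f(m_2)>L_0\}\big|\ge 4000\,n e^{-L_0/15}\right)\le 2\exp\!\left(-2n\left(\tfrac{80}{\theta^2}\right)^{2} e^{-2L_0/15}\right).
\]
Since $L_0=(\log n)^{1/2+o(1)}$, we have $e^{-2L_0/15}=n^{-o(1)}$, so the exponent is $-n^{1-o(1)}$ and the bound is far smaller than $n^{-3d}$.

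The union bound over the two events then produces the claimed $2n^{-3d}$. I expect no genuine obstacle; the only care needed is in reconciling the factor $5/\theta$ and $160/\theta^2$ with the specific values of $L_0$ and the target $100d\log n$, and in confirming that $m_2=\lceil 16nA\rceil$ is large enough to trigger both lemmas---both of which hinge on $A$ dominating the absolute constants that arise from $\theta=1/5$ when $n$ is large.
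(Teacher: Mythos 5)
Your proposal matches the paper's proof essentially verbatim: the paper likewise obtains the two tail bounds from Lemma~\ref{lem:single time upper bound} and Lemma~\ref{lem:level set bound} with $\theta=1/5$ and concludes by a union bound, and your verification that $m_2=\lceil 16nA\rceil$ satisfies both lemmas' hypotheses (since $A\to\infty$) is the only detail the paper leaves implicit. One minor caveat: your justification ``$e^{-k/25}\le n^{-3d}$ once $\log(8000n)\le d\log n$'' fails at $d=1$ exactly, where the computation gives $8000\,n^{1-4d}=8000\,n^{-3}$ rather than $n^{-3}$; this constant-factor slack is equally present in the paper's own one-line argument and is harmless where the proposition is applied, but strictly it should be absorbed by a slightly more generous constant in the statement.
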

\begin{proof}[Proof of Proposition~\ref{prop:typical-load-poly-time}]
Observe that Lemma~\ref{lem:expected normalization time} guarantees that the third phase of each iteration eventually terminates so that there are almost surely infinitely many iterations. Set $M_{1,0}= M_{1,1}=0$, $M_{1,2}=m_1$, $ M_{1,3}=m_1+m_{2,1}$. For $j\ge 2$ and $k\in \{0,1,2,3\}$, we define 
$$ M_{j,0}= M_{j-1,3},~~  M_{j,1}=M_{j,0}+m_0,~~ M_{j,2}=M_{j,1}+m_1,~~
M_{j,3}=M_{j,2}+m_{2,j}.
$$ 
Hence, for $k\in\{0, 1, 2\}$, $ M_{j,k}$ is the starting time of the $(k+1)$-th phase in the $j$-th iteration.  For $j\in\N$, we define events
\begin{align*}
    E_j &=\left\{-300d\log n\leq L_i^f\left(M_{j,1}\right)\leq L~\text{for all}~i\in [n]\right\},\\
    F_j &=\left\{\max_{i\in [n]}\big|L_i^f\left(M_{j,2}\right)\big|\leq A\right\},\\
    G_j &=\{m_{2,j}=m_2\}.
\end{align*}
 Our strategy guarantees that the load vector $\big\{L^f_i(M_{j,0})\big\}_{i\in[n]}$ at the beginning of the $j$-th iteration satisfies that
$$\max_{i\in[n]}\big|L_i^f(M_{j,0})\big|\le  100d\log n\ \ \text{ and }\ \ \left|\left\{i\in [n] : L^f_i(M_{j,0})>L_0\right\}\right|\le 4000ne^{-L_0/15}.$$
Hence, we apply Proposition~\ref{prop:phase-0 low load} to obtain for all $j>1$ that
\begin{equation}\label{eq:ejc-1}
\P(E_j^c)\le n^{-e^{\sqrt{\log\log\log n}}}.
\end{equation} 
This inequality trivially holds for $E_1^c$. By Proposition~\ref{prop:typical-load}, we have for all $j\ge1$ that
\begin{equation}\label{eq:typical load given ej}
\P\left(\maxload^f\left(A_{m_1}+M_{j,1}  \right)>\ell\ |\ E_j\right) \le n^{-3d}.
\end{equation}
By Proposition~\ref{prop:phase-1 minload}, we have for all $j\ge1$ that
\begin{equation}\label{eq:fjc given fj}
\P\left(F_j^c\ |\ E_j\right) \le n^{-3d}.
\end{equation}
By Proposition~\ref{prop:phase-2 typicaly m2}, we have for all $j\ge1$ that
\begin{equation}\label{eq:gjc given fj}
 \P\left(G_j^c\ |\ F_j\right) \le 2 n^{-3d}.
\end{equation}
Set $M=m_0+m_1+m_2$. On the event $\bigcap_{j\in[\kappa]} G_j$, we have 
$$ \bigcup_{j=1}^{\kappa} \left(A_{m_1}+M_{j,1} \right)=\bigcup_{j=0}^{\kappa-1}(A_{m_1}+j M).
$$
Set $S=\bigcup_{j=0}^{\kappa-1}(A_{m_1}+j M)$. Putting together \eqref{eq:ejc-1}, \eqref{eq:typical load given ej}, 
\eqref{eq:fjc given fj},\eqref{eq:gjc given fj} and taking the union bound, we now get
$$
\P\left(\maxload^f(S) >\ell\right) \le 5\kappa n^{-3d}.
$$
For $m\leq n^d$, we take $\kappa=\lfloor m/M \rfloor \le n^d$, so that the probability above is less than $1/n$. 

Next, we complete the proof of \eqref{eq:typical-load-poly-time} by showing that $|S|\ge (1-\varepsilon/4)m$.
Notice that $S$ is a disjoint union of copies of $A_{m_1}$ shifted by multiples of $M$. Hence, it suffices to show that $|A_{m_1}|>(1-\varepsilon/4)M$.
Indeed, we have shown in Proposition \ref{prop:typical-load}  that  $|A_{m_1}|>(1-\varepsilon')m_1$ with $\varepsilon'=e^{-\frac{2}{3}\sqrt{\log\log\log n}}$. By the definitions of $m_0, m_1, m_2$ in \eqref{eq:ms}, we have 
\begin{equation}\label{eq:m_1/M}
\frac{m_1}{M}=1-\frac{m_0+m_2}{M}=1-O\Big((\log n)^{\frac{1-\alpha_{i_{\max}+1}}{2}}\Big)>1-O\Big((\log n)^{-\frac{1}{60\sqrt{\log\log\log n}}}\Big),
\end{equation}
where the equality follows from \eqref{eq:last scale}. These, together with $\varepsilon\gg \varepsilon'$, yield that for sufficiently large $n$ we have $|A_{m_1}|>(1-2\varepsilon')M>(1-\varepsilon/4)M$ and hence that $|S|\ge (1-\varepsilon/4)m$.

We now prove \eqref{eq:typical-load-poly-time2}. We say that the $j$-th iteration is \emph{bad} if either $E_j^c$, $F_j^c$, $G_j^c$ happened or $\maxload^f\left(A_{m_1}+M_{j,1}\right)>\ell$; otherwise we say that it is \emph{good}. We denote by $J$ the set of bad iterations among the first $\kappa$ iterations. By definition, each good iteration has length at most  $M=m_0+m_1+m_2$ and the maximum load over $\cup_{j\in [\kappa]\setminus J} \left(M_{j,1} + A_{m_1} \right)$ is bounded above by $\ell$. Hence, we have
\begin{align}\label{eq:bad times}
\left|\left\{m'<m: \maxload^f(m')>\ell\right\}\right| &\leq \sum_{j\in J}(m_0+m_1+m_{2, j})+(\kappa-|J|)(M-|A_{m_1}|) \notag\\
&\leq \sum_{j\in J}(m_0+m_1+m_{2, j})+\frac{\varepsilon m}{2},
\end{align}
where the second inequality follows from $\kappa\le m/M$,  $|A_{m_1}|>(1-\varepsilon/4)M$ and 
\eqref{eq:m_1/M}. 

We now estimate the first term of \eqref{eq:bad times}. As we have just seen, the probability of an iteration being bad is bounded above by $5 n^{-3d}$ and hence $\E|J|\leq 5\kappa n^{-3d}$. Then we apply Markov's inequality to obtain
\begin{equation}\label{eq:bound |J|}
\P\left(|J|>5\kappa n^{-2d}\right)\leq n^{-d}.
\end{equation}
This, together with $m_0<m_1$ and $\kappa<m/m_1$, yields
\begin{equation}\label{eq:bad time a}
\P\left((m_0+m_1)|J|>10mn^{-2d}\right)\leq n^{-d}.
\end{equation}

We now estimate $\sum_{j\in J}m_{2, j}$. Note that the load vector at the beginning of the third phase of each iteration satisfies
$$
\max_{i\in [n]}\big|L_i^f(M_{j, 2})\big|\le 100d \log{n}+m_0+m_1=o(n^2).
$$
We apply Lemma~\ref{lem:expected normalization time} to obtain
$\E(m_{2,j}) \le n^3$ and hence
\begin{equation*}
    \E\left(\sum_{j\in J}m_{2, j}\right)=\E\left(\sum_{j\in [\kappa]} m_{2, j} \ind_{j\in J}\right)\le 5 \kappa n^{3-3d}.
\end{equation*}
Then we apply Markov's inequality to obtain
\begin{equation}\label{eq:bad time b}
\P\left(\sum_{j\in J}m_{2, j}>5 m n^{3-2d}\right) \le \frac{\kappa}{m n^d} \le n^{-d}.
\end{equation}
For $d\geq 2$ and sufficiently large $n$, we combine \eqref{eq:bad times}, \eqref{eq:bad time a} and \eqref{eq:bad time b} to obtain
$$
\P\left(\big|\big\{m'<m: \maxload^f(m')>\ell\big\}\big|>\varepsilon m\right)<\frac{1}{n}. 
$$
This concludes the proof of \eqref{eq:typical-load-poly-time2}.
\end{proof}

\subsection{Proof of Proposition~\ref{prop:typical-load}}
We first make some technical observations on the parameters used in the $Q$-multi-scale strategy given in Section \ref{sec:combin}. Recall that $\alpha_1=\frac{1}{2}+\frac{2}{\lfloor\sqrt{\log\log\log n}\rfloor+1/4}$, $L=(\log n)^{\frac{1+\alpha_1}{3}}$, $k=\big\lfloor\frac{\log\log n}{3\log\log\log n}\big\rfloor$, $N_{i}=\lceil\frac{L}{3k\ell_i}\rceil$ and $Q^{i,j}=(2k+1)(j-1)\ell_i$. We first have for $i\in\N, j\in [N_i]$ that
\begin{equation}\label{eq:Qij-L}
Q^{i, j}<L.
\end{equation}
Observing from \eqref{eq:alpha-i-iteration} that $\{\alpha_i\}_{i\in\N}$ is a non-decreasing sequence, we have for $i\geq 1$ and sufficiently large $n$ that
\begin{align}
(\log n)^{\alpha_i'-\alpha_i} &=(\log n)^{-\frac{1}{5}\cdot\frac{2\alpha_i-1-\varepsilon_i}{2k+1}}=(\log n)^{-\left(\frac{1}{5}-o(1)\right)\frac{\alpha_i-1/2}{k+1/2}} \notag\\
&\le (\log n)^{-\left(\frac{1}{5}-o(1)\right)\frac{\alpha_1-1/2}{k+1/2}} 
\leq (\log n)^{-\frac{2/5-o(1)}{\sqrt{\log\log\log n}}\cdot\frac{1}{k+1/2}}\notag\\
&=(\log n)^{-\left(\frac{6}{5}-o(1)\right)\frac{\sqrt{\log\log\log n}}{\log\log n}}\notag\\
&<e^{-\sqrt{\log\log\log n}}.\label{eq:alpha-i-vs-alpha-i'-a}
\end{align}
Recall that $i_{\max}=\max\{i\in \N: \alpha_i\leq 1\}$. Using \eqref{eq:alpha-i-iteration}, we have for all $i\leq i_{\max}$ that
\begin{align}
N_i&=\Big\lceil\frac{L}{3k\ell_i}\Big\rceil =\frac{(1+o(1))L}{3k\ell_i}=\frac{(1+o(1))}{3k}(\log n)^{\frac{2\alpha_1-1}{6}-\frac{\alpha_i-1/2+k\varepsilon_i}{2k+1}}=(\log n)^{\frac{2\alpha_1-1}{6}-O\left(\frac{1}{k}\right)}.\label{eq:N-i-bd}
\end{align}
Using \eqref{eq:alpha-i-iteration} and \eqref{eq:alpha-i-vs-alpha-i'-a} we observe that $N_i=(1-o(1))(\log n)^{\alpha_{i+1}-\alpha_i}$. This, together with \eqref{eq:N-i-bd}, yields the iteration formula
\begin{align}
\alpha_{i+1}&=\alpha_i+\frac{2\alpha_1-1}{6}-O\left(\frac{1}{k}\right).\label{eq:alpha-i-iteration-a}
\end{align} 
This, along with the definition of $i_{\max}$, implies that
\begin{equation}\label{eq:i-max-bound}
i_{\max}\le(1+o(1)) \frac{6(1-\alpha_1)}{2\alpha_1-1}=\frac{3+o(1)}{2\alpha_1-1}<\sqrt{\log\log\log n}.
\end{equation}
In addition, we have
\begin{align}\label{eq:alpha-i-vs-alpha-i'}
(\log n)^{\alpha_i-\alpha_{i+1}'}=(\log n)^{\alpha_i-\alpha_{i+1}+\frac{1}{5}\cdot\frac{2\alpha_{i+1}-1-\varepsilon_{i+1}}{2k+1}}=(\log n)^{-\frac{2\alpha_1-1}{6}+\Theta\left(\frac{1}{k}\right)}=o(1).
\end{align}

The main technical instrument for establishing Proposition~\ref{prop:typical-load} is the following lemma, the proof of which is provided in the next subsection. 

\begin{lem}\label{lem:low typical load segments}
Consider the $Q$-multi-scale strategy with the initial load vector $\{L_p(0)\}_{p\in [n]}$ satisfying  $L_p(0)\le Q\le L$ for all $p\in[n]$. Fix $c>0$. Set $\ell=(\log n)^{\frac{1}{2}+\frac{1}{\sqrt{\log\log\log n}}}$. For any $s=\sum_{i=1}^{i_{\max}} j_in(\lfloor\log^{\alpha_i}\!n\rfloor+\lfloor\log^{\alpha'_i}\!n\rfloor)$  with $0\leq j_i\leq N_i-1$, we have 
\begin{equation}\label{eq:low typical load segments}
\P\Big(\maxload^f([s,s+n\lfloor\log^{\alpha_1}\! n\rfloor])>\ell\Big)<  n^{-c}.
\end{equation}
\end{lem}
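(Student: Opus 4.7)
The plan is to interpret the interval $[s,s+n\lfloor\log^{\alpha_1} n\rfloor]$ as an innermost (first-scale) iteration in the nested $Q$-multi-scale recursion. The particular form of $s$ as a sum over scales indicates that, at time $s$, the process has completed $j_i$ iterations of the $(i{+}1)$-st-scale loop (including their regulating multi-stage threshold segments) for every $i$, and is about to begin a fresh first-scale invocation. On the following interval of length $n\lfloor\log^{\alpha_1} n\rfloor$, the strategy being executed is therefore precisely the $L$-relative threshold strategy, restarted with the current load vector as its initial condition. This reduces the lemma to bounding the loads at time $s$ and then applying the all-time bound of Proposition~\ref{prop:all time upper bound case 1}.

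The first and most delicate step is to establish, by simultaneous induction on scale and iteration index, a deterministic bound $\max_{p\in[n]}L_p^f(s)\le L_0:=Q+Q^\ast$, with $Q^\ast\le \sum_{i=1}^{i_{\max}}j_i(2k+1)\ell_i$. The key input is Proposition~\ref{prop:single time upper bound case 3} applied at each completed regulating segment: the segment following the $j$-th iteration of the $i$-th scale runs the multi-stage $(\log^{\alpha_i'}n,Q_{\mathrm{loc}}+\ell_i,\ell_i)$-threshold strategy (where $Q_{\mathrm{loc}}$ is the running threshold parameter at the enclosing scales), and the proposition tells us it leaves the maximum load bounded by $Q_{\mathrm{loc}}+(2k+1)\ell_i$. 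This matches exactly the telescoping relation $Q^{i,j+1}=Q^{i,j}+(2k+1)\ell_i$ that is baked into the definition of $Q^{i,j}$, so iterating the argument across scales yields the desired bound $L_0$. Using the hypothesis $Q\le L$, the bound $N_i(2k+1)\ell_i=O(L)$ from \eqref{eq:N-i-bd}, and the estimate $i_{\max}<\sqrt{\log\log\log n}$ from \eqref{eq:i-max-bound}, I obtain $L_0=O\!\bigl(\sqrt{\log\log\log n}\cdot L\bigr)$. Since $L=(\log n)^{(1+\alpha_1)/3}$ with $\alpha_1=\tfrac12+2/(\lfloor\sqrt{\log\log\log n}\rfloor+1/4)$ while $\ell=(\log n)^{1/2+1/\sqrt{\log\log\log n}}$, a direct comparison of exponents shows $L_0=o(\ell)$.

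With the bound $L_0$ at time $s$ in hand, I would apply Proposition~\ref{prop:all time upper bound case 1} to the $L$-relative threshold strategy on the interval $[s,s+tn]$ with $t=\lfloor\log^{\alpha_1}n\rfloor$. The admissibility condition $\omega(\sqrt{\log n})\le t\le O(\log^2 n/(\log\log n)^3)$ holds since $\alpha_1$ tends to $1/2$ from above; moreover, the parameter $(t\log n)^{1/3}$ appearing in that proposition equals $L$, matching the threshold of our strategy. The proposition then yields $\maxload^f([s,s+tn])\le L_0+(12c+9)L$ with probability at least $1-n^{-c}$, and the right-hand side is at most $\ell$ by the comparison above, proving \eqref{eq:low typical load segments}.

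The principal obstacle lies in Step 1, namely making the induction through the nested scales fully rigorous. Proposition~\ref{prop:single time upper bound case 3} requires, in addition to the max-load bound at the start of each regulating segment, the level-set bound $|H_0|\le 3n\exp(-\ell_i^2/(4\log^{\alpha_i'+\eta}n))$ on the number of bins with load above the threshold. Maintaining this second invariant at each boundary of the recursion therefore forces a joint inductive statement, tracking both the running maximum load and the tail of the load distribution simultaneously, in the same spirit as Lemma~\ref{lem:bound ri-hi}. Verifying this second invariant—most naturally via Poisson approximation and a Hoeffding-type bound on the level set produced at the end of a relative-threshold phase—is where the bulk of the technical work will be concentrated.
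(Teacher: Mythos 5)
Your outline follows the same route as the paper: interpret $[s,s+n\lfloor\log^{\alpha_1}n\rfloor]$ as a fresh first-scale invocation of the $L$-relative threshold strategy, control the load vector entering it, and finish with Proposition~\ref{prop:all time upper bound case 1} (indeed the paper sets $\log^{\alpha_0}n=(12c+9)L$ for exactly this purpose), with the telescoping $Q^{i,j+1}=Q^{i,j}+(2k+1)\ell_i$ and the comparison $Q+i_{\max}L+O(L)=o(\ell)$ handled as you describe. However, the part you defer as ``the principal obstacle'' is precisely the content of the lemma, and your sketch of it has a genuine gap. First, the bound on the loads at time $s$ cannot be deterministic: each application of Proposition~\ref{prop:single time upper bound case 3} and each level-set estimate fails with small probability, so you must carry out a probabilistic induction over all nested iterations (up to $\prod_i N_i=(\log n)^{O(1)}$ of them), with a bookkeeping device for restarting the strategy from a worse initial bound after each block; the paper does this via the events $\vdE{i}{j}{Q},\vF{i}{j}{Q},\vG{i}{j}{Q}$, the quantity $P_{Q'}(\cdot)$, the self-similarity identity \eqref{eq:selfsim} and the inclusions \eqref{eq:E to E-tilde}--\eqref{eq: E to U} in Lemmata~\ref{lem:key lem for typical iter} and \ref{lem:key lem for typical iter aux}. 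Your proposal does not say how the error probabilities are to be accumulated across scales, and conditioning on a high-probability event at each boundary is not innocuous unless organized in this (or an equivalent) way.

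Second, and more substantively, your proposed verification of the level-set invariant --- ``Poisson approximation and a Hoeffding-type bound on the level set produced at the end of a relative-threshold phase'' --- does not suffice. A bin can end a lower-scale block with load above $Q^{i,j}+\ell_i+Q$ not because it was suggested as a primary allocation $\ge t+\ell_i$ times (the part Poisson/Hoeffding controls), but because it absorbed many \emph{secondary} allocations; to rule this out one must bound the total number of retries during the block. The paper's mechanism (Lemma~\ref{lem:heavilily-loaded-bins}, used in \eqref{eq:G from E}) is to derive the retry bound from the in-block all-time maximum-load event $\vdE{i}{j}{Q}$: if some round of $n$ allocations produced too many retries, then, since at least $n/(h+1)$ bins carry non-negative load, the secondary allocations alone would create a load exceeding $h$, contradicting $\vdE{i}{j}{Q}$. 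This interplay between the running max-load bound and the end-of-block level-set bound is the missing idea in your plan; without it the joint induction you describe cannot be closed, because condition (2) of Proposition~\ref{prop:single time upper bound case 3} cannot be re-established at the start of the next regulating segment.
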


\begin{proof}[Proof of Proposition~\ref{prop:typical-load}]
We will show that the $Q$-multi-scale strategy with $Q\leq L$ satisfies the statement in Proposition~ \ref{prop:typical-load}. Recall that $i_{\max} \!=\max\{i\in\N\!:\! \alpha_i\leq 1\}$. We first show  for sufficiently large $n$ that
\begin{equation}\label{eq:last scale}
\alpha_{i_{\max}+1}\ge  1+\frac{1}{30\sqrt{\log\log\log n}}.
\end{equation}
To this end, we iterate equation \eqref{eq:alpha-i-iteration-a} to obtain
\begin{equation}\label{eq:alpha_plus_one}
\alpha_{i_{\max}+1}=\alpha_1+i_{\max} \left(\frac{2\alpha_1-1}{6}-O\left(\frac{1}{k}\right)\right)=\frac{1}{2}+(i_{\max}+3)\cdot \frac{2\alpha_1-1}{6}-O\left(\frac{i_{\max}}{k}\right).
\end{equation}
The monotonicity of $\{\alpha_i\}_{i\in\N}$ and the definition of $i_{\max}$ implies that $\alpha_{i_{\max}+1}>1$. This inequality, equation \eqref{eq:alpha_plus_one} and the fact that $i_{{\max}}$ is an integer yield that
\begin{equation}\label{eq:lower bound i-max}
i_{\max}+3\geq \left\lceil\frac{6}{2\alpha_1-1}\left(\frac{1}{2}+O\left(\frac{i_{\max}}{k}\right)\right)\right\rceil.
\end{equation}
Recall that $\alpha_1=\frac{1}{2}+\frac{2}{\lfloor\sqrt{\log\log\log n}\rfloor +1/4}$, $k=\big\lfloor\frac{\log\log n}{3\log\log\log n}\big\rfloor$ and the bound $i_{\max}<\sqrt{\log\log\log n}$ given in \eqref{eq:i-max-bound}. Then, for sufficiently large $n$, we can further write inequality \eqref{eq:lower bound i-max} as
\begin{align*}
    i_{\max}+3&\ge \left\lceil\frac{3\lfloor \sqrt{\log\log\log n}\rfloor+3/4+o(1)}4\right\rceil\ge \frac{3\lfloor \sqrt{\log\log\log n}\rfloor +1}{4}.
\end{align*}
Plugging this into \eqref{eq:alpha_plus_one}, we obtain
$$
\alpha_{i_{\max}+1}\geq 1+\frac{1}{24\lfloor \sqrt{\log\log\log n}\rfloor+6} -O\left(\frac{i_{\max}}k\right)\ge 1+\frac{1}{30\sqrt{\log\log\log n}}.
$$
This proves \eqref{eq:last scale}. 

We next prove the main statement \eqref{eq:typical-load}. For $m\in\N$, we define the set 
\begin{equation*}\label{eq:good segments}
A_m=\cup_{s\in J_m}\big\{m'\in \N: s\leq m'\leq \max\big\{m, s+n\lfloor\log^{\alpha_1}\!n\rfloor\big\}\big\},
\end{equation*}
where $J_m$ is defined as
$$
J_m=\left\{s=\sum_{i\in\N}{j_in(\lfloor\log^{\alpha_i}\!n\rfloor+\lfloor\log^{\alpha'_i}\!n\rfloor)}: 0\leq j_i\leq N_i-1,~s<m\right\}.
$$
Observe that by the condition of Proposition~\ref{prop:typical-load}, we have 
$$\alpha=\frac{\log (m/n)}{\log\log n}\le 1+\tfrac{1}{30\sqrt{\log\log\log n}} \le \alpha_{i_{{\max}}+1},$$
and hence
$$ \frac{m}{n}=\log^{\alpha}n\le(\log n)^{\alpha_{i_{{\max}}+1}}=N_{i_{{\max}}}(\lfloor(\log n)^{\alpha_{i_{{\max}}}}\rfloor+\lfloor(\log n)^{\alpha'_{i_{{\max}}}}\rfloor).
$$
Together with \eqref{eq:last scale} we thus have
$$
J_m=\left\{s=\sum_{i=1}^{i_{\max}}j_in(\lfloor\log^{\alpha_i}\!n\rfloor+\lfloor\log^{\alpha'_i}\!n\rfloor): 0\leq j_i\leq N_i-1,~s<m\right\}.
$$
For any fixed constant $c>0$, we apply Lemma~\ref{lem:low typical load segments} and the union bound argument to obtain
\begin{align}\label{eq:low typical load segments-b}
\P\Big(\maxload^f(A_m)\geq\ell\Big) 
&\leq \sum_{s\in J_m}\P\Big(\maxload^f([s,s+n\lfloor\log^{\alpha_1}\! n\rfloor])\geq \ell\Big) \notag\\
&\leq |J_m|\cdot n^{-2c}<n^{-c},
\end{align}
where the last equality follows from that $|J_m|\leq m/n<\log^2 n$. 

We next show that $|A_m|/m\geq1-\varepsilon$.
We define $i^*=\max\{i\in\N: n\log^{\alpha_i}n< m\}$. It is clear from the condition of Proposition~\ref{prop:typical-load} and \eqref{eq:last scale} that $i^*\le i_{\max}$. We further denote 
\begin{align*}
\xi &=\min\big\{\xi'\in\N : \xi'n(\lfloor\log^{\alpha_{i^*}}n\rfloor+\lfloor\log^{\alpha'_{
i^*}}n \rfloor)\ge m\big\},\\ m_{\xi} & =\xi n(\lfloor\log^{\alpha_{i^*}}n\rfloor+\lfloor\log^{\alpha'_{
i^*}}n \rfloor).
\end{align*}
Observe from the definition of $i^*$ that $\xi\le N_{i^*}$.
This, along with \eqref{eq:alpha-i-vs-alpha-i'-a}, 
implies that $m_{\xi}<2m$. Hence it suffices to show that
\begin{equation}\label{eq:AMXI}
\frac{|A_{m_{\xi}}|}{m_{\xi}}\geq 1-\frac{\varepsilon}2.
\end{equation}
For $1\le i\le i^*$, we define
\begin{align*}
J^*_{i}&=\left\{\sum_{i'=i}^{i^*}j_{i'}n(\lfloor\log^{\alpha_{i'}}\!n\rfloor+\lfloor\log^{\alpha'_{i'}}\!n\rfloor): 0\leq j_{i'}\leq N_{i'}-1\right\},\\
B_i&=\bigcup_{j=0}^{N_i-1}\left(jn(\lfloor\log^{\alpha_i}\!n\rfloor+\lfloor\log^{\alpha'_i}\!n\rfloor)+C_i\right), ~\text{where}\\
C_i&=\left(0,n\lfloor\log^{\alpha_i}n\rfloor\right].
\end{align*}
Observe that $B_i+(J^*_{i+1}\cap[m_{\xi}])=C_{i}+(J^*_{i}\cap[m_{\xi}])$ consists of a disjoint union of shifted copies of $B_i$ and that $C_{i+1}+(J^*_{i+1}\cap[m_{\xi}])$ consists of a disjoint union of shifted copies of $C_{i+1}$. We thus obtain
$$
\frac{|C_i+(J^*_{i}\cap[m_{\xi}])|}{|C_{i+1}+(J^*_{i+1}\cap[m_{\xi}])|}=
\frac{|B_i+(J^*_{i+1}\cap[m_{\xi}])|}{|C_{i+1}+(J^*_{i+1}\cap[m_{\xi}])|}
=\frac{|B_i|}{|C_{i+1}|}.
$$
By \eqref{eq:alpha-i-iteration} and \eqref{eq:alpha-i-vs-alpha-i'-a}, we obtain
$$
\frac{|B_i|}{|C_{i+1}|}=\frac{\lfloor\log^{\alpha_i}\! n\rfloor}{\lfloor\log^{\alpha_i}\! n\rfloor+\lfloor\log^{\alpha_i'}\! n\rfloor}
=1-\frac{\lfloor\log^{\alpha_i'}\! n\rfloor}{\lfloor\log^{\alpha_i}\! n\rfloor+\lfloor\log^{\alpha_i'}\! n\rfloor}
>1-(\log n)^{\alpha_i'-\alpha_i}>1-\delta,
$$
where $\delta=e^{-\sqrt{\log\log\log n}}$.
Moreover, we have 
$$
\frac{|C_{i^*}+(J^*_{i^*}\cap [m_{\xi}])|}{m_{\xi}}=\frac{\lfloor \log^{\alpha_{i^*}}n \rfloor}{
\lfloor \log^{\alpha_{i^*}}n\rfloor + \lfloor\log^{\alpha_{i^*}'}n\rfloor}>1-\delta.
$$
Iterating these observations we obtain
\begin{align*}
\frac{|A_{m_\xi}|}{m_{\xi}}
&=\frac{|C_1+(J^*_{1}\cap[m_{\xi}])|}{m_{\xi}}=\frac{|C_{i^*}+(J^*_{i^*}\cap[m_{\xi}])|}{m_{\xi}}\cdot
\frac{|C_1+(J^*_{1}\cap[m_{\xi}])|}{|C_{i^*}+(J^*_{i^*}\cap[m_{\xi}])|}\\
&=\frac{|C_{i^*}+(J^*_{i^*}\cap[m_{\xi}])|}{m_{\xi}}\cdot
\prod_{i=1}^{i^*-1}\frac{|C_i+(J^*_{i}\cap[m_{\xi}])|}{|C_{i+1}+(J^*_{i+1}\cap[m_{\xi}])|}\\
&\ge 
(1-\delta)^{i_{\max}}>1-\sqrt{\log\log\log n}\ e^{-\sqrt{\log\log\log n}},
\end{align*}
where the inequalities follow from the fact that $i^*\leq i_{\max}<\sqrt{\log\log\log n}$. This completes the proof of \eqref{eq:AMXI}.
\end{proof}

\subsubsection{Proof of Lemma~\ref{lem:low typical load segments}}

For $i\in\N, j\in [N_i]$, we write $s^j_i=(j-1)n(\lfloor\log^{\alpha_i}\!n\rfloor+\lfloor\log^{\alpha_i'}\!n\rfloor)$ and
$t^j_i=s^j_i+n\lfloor\log^{\alpha_i}\!n\rfloor$.
Hence, $(s^j_i,t^j_i]$ and $(t_i^j, s_i^{j+1}]$ are discrete time intervals in the $(i+1)$-th scale where we apply the $j$-th iteration of the $i$-th scale strategy and the $j$-th iteration of the regulating multi-stage threshold strategy, respectively.
Fix $c>0$. We set $\alpha_0=\frac{1+\alpha_1}{3}+\frac{\log (12c+9)}{\log\log n}$ so that $\log^{\alpha_0}n=(12c+9)L$. One can check that $\alpha_0<\alpha_1$ for $n$ large enough. We introduce the following events
\begin{equation}
\begin{split}%
\vdE{i}{j}{Q} &=\left\{\maxload^f([s^j_i,t^j_i])\leq \log^{\alpha_{i-1}}n+Q^{i,j}+Q\right\}, \label{eq:events-Eij-Fj}\\
\vF{i}{j}{Q} &=\left\{\maxload^f\big(s^{j+1}_i\big)\leq Q^{i,j+1}+Q\right\},\\
\vG{j}{i}{Q}&=\left\{|\vH{i}{j}{Q}|\leq 3n\exp\left(-\tfrac{\ell_i^2}{4\log^{\alpha_i}n}\right)\right\},~\text{where}\\
\vH{i}{j}{Q} &=\left\{p\in[n]: L_p(t^j_i)\geq Q^{i,j}+\ell_i+Q\right\}.
\end{split}
\end{equation}

In this subsection, in order to simplify the notations,  we denote by $\overline E$ the complement of the event $E$. The following result plays a key role in establishing Lemma~\ref{lem:low typical load segments}.

\begin{lem}\label{lem:key lem for typical iter}
Consider the $Q$-multi-scale strategy with the initial load vector $\{L_p(0)\}_{p\in [n]}$ satisfying  $L_p(0)\le Q\le 2L\sqrt{\log\log\log n}$ for all $p\in[n]$. Fix $c>0$. For sufficiently large $n$ and all $i\in\N$ such that $\alpha_i \le 1$, we have
\begin{equation}\label{eq:i-th scale}
\P\left(\ \bigcup_{j\in[N_i]}{\overline{{\vdE{i}{j}{Q}\cap \vF{i}{j}{Q}\cap \vG{i}{j}{Q}}}}\ \right)\le n^{-c}.
\end{equation}
\end{lem}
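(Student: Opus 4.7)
The plan is to prove the lemma by induction on $i$, with the inductive hypothesis at level $i-1$ (applied with a shifted parameter $Q+Q^{i,j}$) controlling the behavior within each iteration of the $i$-th scale strategy.

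For the base case $i=1$, each of the $N_1$ iterations in the second scale begins with initial loads bounded by $Q+Q^{1,j}$ (by either the hypothesis of the lemma for $j=1$ or by $F^{1,j-1}_Q$ for $j>1$) and consists of running the $L$-relative threshold strategy for $\log^{\alpha_1}n$ time, followed by the multi-stage $(\log^{\alpha_1'}n,\,Q+Q^{1,j}+\ell_1,\,\ell_1)$-threshold strategy. The event $E^{1,j}_Q$ follows directly from Proposition~\ref{prop:all time upper bound case 1}, since $\log^{\alpha_0}n=(12c+9)L$ by our choice of $\alpha_0$. For $G^{1,j}_Q$, I would observe that a bin reaching load $Q+Q^{1,j}+\ell_1$ at time $t^j_1$ must receive at least $\ell_1$ secondary allocations during the relative threshold stage, since each bin accepts at most $\log^{\alpha_1}n+\ell_1$ primary allocations. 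The number of such bins is controlled via Poisson approximation (Lemma~\ref{lem:poisson approximation}) combined with the retry count bound (Lemma~\ref{lem:retry bound}) and Hoeffding-type estimates in the spirit of Lemma~\ref{lem:bound ri-hi}. Given $E^{1,j}_Q \cap G^{1,j}_Q$, the hypotheses of Proposition~\ref{prop:single time upper bound case 3} are satisfied with $\eta=0$ and $L_0=Q+Q^{1,j}+\ell_1$; its conclusion yields $\maxload\le L_0+2k\ell_1=Q+Q^{1,j}+(2k+1)\ell_1=Q+Q^{1,j+1}$, establishing $F^{1,j}_Q$.

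For the inductive step $i-1\to i$, each iteration $j$ of the $(i+1)$-th scale first runs the $(Q+Q^{i,j})$-multi-scale strategy at scale $i$ for $\log^{\alpha_i}n$ time, then the multi-stage threshold strategy for $\log^{\alpha_i'}n$ time. I would apply the inductive hypothesis at scale $i-1$ with $Q$ replaced by $Q+Q^{i,j}$; the bound $Q+Q^{i,j}\le 2L\sqrt{\log\log\log n}$ follows from $Q^{i,j}<L$ (in fact $Q^{i,j}\lesssim \tfrac{2}{3}L$ via the definition of $N_i$) together with $i_{\max}<\sqrt{\log\log\log n}$ from \eqref{eq:i-max-bound}, where we track the hypothesis on $Q$ with a small decrement at each level while still leaving ample room within $2L\sqrt{\log\log\log n}$. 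This inductive application controls all internal events at scale $i-1$ with $j'\in[N_{i-1}]$, which together imply $E^{i,j}_Q$ and give the requisite control on bins receiving many secondary allocations, yielding $G^{i,j}_Q$ via a Poisson-tail count. The multi-stage threshold portion then yields $F^{i,j}_Q$ via Proposition~\ref{prop:single time upper bound case 3} with $L_0=Q+Q^{i,j}+\ell_i$ and appropriate $\eta$. A final union bound over $j\in[N_i]$ absorbs the polynomial factor $N_i\le(\log n)^{O(1)}$ into a slight weakening of the constant $c$.

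The main obstacle will be establishing the $G^{i,j}_Q$ bound on the number of heavily loaded bins at the end of each scale-$i$ portion. We must carefully count bins accumulating load at least $Q^{i,j}+\ell_i+Q$ above their starting values, accounting for both accepted primary allocations (bounded via the layered threshold mechanisms inside the scale-$i$ strategy) and secondary allocations (bounded through total retry counts across all $N_{i-1}$ internal iterations). The resulting Poisson-tail estimate must match exactly the quantitative form $3n\exp(-\ell_i^2/(4\log^{\alpha_i}n))$ required by the hypothesis of Proposition~\ref{prop:single time upper bound case 3}, with associated failure probability that survives the nested union bound over all iterations at the current scale as well as all internal iterations at lower scales up to $i_{\max}$.
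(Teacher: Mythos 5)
Your overall architecture (recurse over scales, use Proposition~\ref{prop:all time upper bound case 1} for the relative-threshold portion, Proposition~\ref{prop:single time upper bound case 3} for the regulating portion to get $\vF{i}{j}{Q}$, and a union bound over the $(\log n)^{O(1)}$ iterations) matches the paper's in spirit, but there is a genuine gap exactly where you flag "the main obstacle": the bound on $|\vH{i}{j}{Q}|$ needed for $\vG{i}{j}{Q}$. Your base-case argument is incorrect as stated: during the first-scale window the strategy is the \emph{$L$-relative} threshold strategy, so a bin may accept up to $\lfloor\log^{\alpha_1}n\rfloor+L$ primary allocations, not $\lfloor\log^{\alpha_1}n\rfloor+\ell_1$; since $\ell_1\ll L$, a bin starting at load $Q+Q^{1,j}$ can reach $Q+Q^{1,j}+\ell_1$ with \emph{no} secondary allocations at all, so the reduction to "bins receiving at least $\ell_1$ secondaries" (the Lemma~\ref{lem:bound ri-hi} mechanism, which works there only because the stage threshold equals the target increment) does not transfer. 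For higher scales you offer no mechanism beyond "carefully count ... via the layered threshold mechanisms inside the scale-$i$ strategy", which is precisely the hard part: the nested regulating phases make a direct retry/threshold accounting unwieldy. The paper avoids this entirely with Lemma~\ref{lem:heavilily-loaded-bins}, which bounds the number of bins with load at least $L_0+\ell$ for an \emph{arbitrary} strategy using only the all-time bound $\maxload^f([tn])\le h$ supplied by $\vdE{i}{j}{Q}$: one writes $|H|\le |H'|+r$, where $H'$ (bins suggested as primary at least $t+\ell$ times) is controlled by Poisson tails, and the total number of retries $r$ is controlled by the observation that $r\ge r^*$ would force some batch with many retries, whose uniform secondary allocations would push some bin among the $\ge n/(h+1)$ nonnegatively loaded bins above $h$, contradicting the maxload bound. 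Some version of this idea (or an equivalent) is needed to close your induction; without it the $\vG{i}{j}{Q}$ step, and hence the applicability of Proposition~\ref{prop:single time upper bound case 3} in the next step, is unsupported.

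Two further, more cosmetic issues. First, your inductive hypothesis cannot literally be the statement with the fixed bound $Q\le 2L\sqrt{\log\log\log n}$: the shifts $Q^{i,j_i}+\cdots+Q^{1,j_1}$ accumulate by up to $L$ per level over up to $i_{\max}<\sqrt{\log\log\log n}$ levels, so the statement being inducted on must carry extra slack (the paper states the auxiliary estimates of Lemma~\ref{lem:key lem for typical iter aux} with the allowance $Q\le 3L\sqrt{\log\log\log n}$ for exactly this reason, and, rather than inducting on the lemma itself, unrolls the recursion through the inclusion $\overline{\vdE{i}{1}{Q'}}\subset \vU{i-1}{N_{i-1}}{Q'}$, which also requires handling the maximum load during the regulating phases via $\vF{i}{j}{Q}$ and the intermediate events $\vddE{i}{j}{Q}$ --- a step your "which together imply $\vdE{i}{j}{Q}$" glosses over). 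Second, in the base case the regulating phase is the multi-stage $(\log^{\alpha_1'}\!n,\,Q+Q^{1,j}+\ell_1,\,\ell_1)$-threshold strategy, so Proposition~\ref{prop:single time upper bound case 3} must be applied with $\alpha=\alpha_1'$ and $\eta=\alpha_1-\alpha_1'$ (not $\eta=0$), since with $\eta=0$ the strategy's internal parameter $\ell$ would not equal $\ell_1$ and the increment bookkeeping $L_0+2k\ell_1=Q+Q^{1,j+1}$ would not come out.
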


\begin{proof}[Proof of Lemma~\ref{lem:low typical load segments}]
Recall the notation $Q^{i,j}=(2k+1)(j-1)\ell_i$.
The statement \eqref{eq:low typical load segments} is a consequence of the following stronger statement
\begin{equation}\label{eq:low typical load segments-a}
\P\left(\maxload^f([s,s+n\lfloor\log^{\alpha_1}\! n\rfloor])>Q+\log^{\alpha_0}\!n+\sum_{i\in [i_{\max}]} Q^{i,j_i+1}\right)<n^{-c+o(1)}.
\end{equation}
Recall that $L=(\log n)^{\frac{1}{2}+\frac{2}{3(\lfloor\sqrt{\log\log\log n}\rfloor+1/4)}}$, $Q\leq L$ and $\log^{\alpha_0}\! n=(12c+9)L$. These, together with \eqref{eq:i-max-bound} and \eqref{eq:Qij-L}, yield
$$
Q+\log^{\alpha_0}\! n+\sum_{i\in [i_{\max}]}Q^{i,j_i+1}\leq Q+(12c+9)L+i_{\max} L<2L\sqrt{\log\log\log n}<\ell.
$$ 
For $0\leq i\leq i_{\max}$, we write 
 $$
 s_i=\sum_{h=i+1}^{i_{\max}}{j_hn(\lfloor\log^{\alpha_h}\!n\rfloor+\lfloor\log^{\alpha'_h}\!n\rfloor)}
 $$ 
 so that $s_0=s$ and $s_{i_{\max}}=0$. We further denote $Q_i=Q+\sum_{h=i+1}^{i_{\max}}Q^{h,j_h+1}$ so that $Q_{i_{\max}}=Q$ and define $K_i=\big\{\maxload^f(s_i)\le Q_i\big\}$. Keeping the notations $P_{Q'}(\cdot)$ as in the proof of Lemma \ref{lem:key lem for typical iter}, we have
\begin{equation}\label{eq:given K1}
\P\Big(\maxload^f([s,s+n\lfloor\log^{\alpha_1}\! n\rfloor])>Q_0+\log^{\alpha_0}\! n\, |\, K_{1}\Big)< P_{Q_1}\left(\,\overline{{\vdE{1}{j_1+1}{Q_1}}}\,\right).
\end{equation}
Since $Q_1<2L\sqrt{\log\log\log n}$, we may apply Lemma~\ref{lem:key lem for typical iter} to obtain
\begin{equation}\label{eq:prob E1}
P_{Q_1}\left(\,\overline{{\vdE{1}{j_1+1}{Q_1}}}\,\right)<  n^{-2c}.
\end{equation}
Next, we estimate $\P(K_{1})$. As mentioned in the proof of Lemma \ref{lem:key lem for typical iter}, the $j$-th iteration of the $i$-th scale of the $Q$-multi-scale strategy is identical to the first iteration of the $i$-th scale of the $(Q+Q^{i,j})$-multi-scale strategy. This self-similar property implies that  
$$
\P\big(\,\overline{K_i}\cap  K_{i+1}\big)< P_{Q_{i+1}}\left(\,\overline{\vF{i+1}{j_{i+1}}{Q_{i+1}}}\,\right).
$$
Using this inequality and the fact $K_{i_{\max}}=\big\{\maxload^f(0)\le Q\big\}$ which is trivially satisfied by the starting conditions, we obtain
\begin{align}\label{eq:prob K1}
\P\left(\,\overline{K_1}\,\right) &= \P\left(\bigcup_{i=1}^{i_{\max}-1}(\overline{K_i}\cap K_{i+1})\right) \le \sum_{i=1}^{i_{\max}-1}
P_{Q_{i+1}}\left(\,\overline{\vF{i+1}{j_{i+1}}{Q_{i+1}}}\,\right) < (i_{\max}-1) \cdot n^{-2c},
\end{align}
where the last inequality uses Lemma~\ref{lem:key lem for typical iter}, which is applicable since  $Q_i<2L\sqrt{\log\log\log n}$. Combining \eqref{eq:given K1}, \eqref{eq:prob E1}, \eqref{eq:prob K1} and the fact that ${i_{\max}}<\sqrt{\log\log\log n}$, we have for sufficiently large $n$ that
$$
\P\left(\maxload^f([s,s+n\lfloor\log^{\alpha_1}\! n\rfloor])>Q+\log^{\alpha_0}\! n\right)< i_{\max}\cdot n^{-2c}< n^{-c}.
$$
This concludes the proof of  \eqref{eq:low typical load segments-a}, and hence \eqref{eq:low typical load segments}. 
\end{proof}

We now present a couple of auxiliary lemmata that are used in our proof of Lemma~\ref{lem:key lem for typical iter}. The first lemma provides an upper bound on the number of bins with loads above certain level.

\begin{lem}\label{lem:heavilily-loaded-bins}
Let $t>\ell>0$, $h, r^*>0$ and $p\in[0, 1]$. Let $\{L_i(0)\}_{i\in [n]}$ be an initial load vector such that $L_i(0)\leq L_0$ for all $i\in[n]$.  Let $f$ be any two-thinning strategy, which satisfies that $\P\big(\maxload^f([tn])\leq h\big)\geq 1-p$. Define $H=\big\{i\in[n]: L_i(tn)\geq L_0+\ell\big\}$. Then we have
$$
\P\left(|H|>2n\exp\left(-\frac{\ell^2}{4t}\right)+r^*\right)\leq 2\exp\left(-2n\exp\left(-\frac{\ell^2}{2t}\right)\right)+4\exp\left(-\frac{n(r^*/tn)^h}{e(h+1)!}\right)+p.
$$
\end{lem}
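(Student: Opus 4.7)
The natural approach is to decompose $H$ based on the number of primary allocations each bin receives. Let $P_i$ denote the number of times bin $i$ is suggested as a primary allocation up to time $tn$, and write $H \subseteq H_1 \cup H_2$ with
$$H_1 = \{i \in H : P_i \ge t + \ell\}, \qquad H_2 = H \setminus H_1.$$
Since $L^f_{1, i}([tn]) \le P_i$ always, bins in $H_2$ accept strictly fewer than $t + \ell$ primary allocations, so their excess load must be supplied by secondary allocations.

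First, I would bound $|H_1|$ via Poisson approximation. The joint distribution of $(P_1, \dots, P_n)$ is multinomial, and the event $\{|H_1| \ge K\}$ is monotone increasing in the $P_i$'s, so Lemma~\ref{lem:poisson approximation} replaces these variables by independent $\poss(t)$ random variables $\{X_i\}$ at the cost of a factor of $2$. Lemma~\ref{lem:poss tail} combined with $I(x) \ge x^2/4$ for $x \in [0,4]$ (applicable since $\ell < t$) gives $\P(X_i \ge t + \ell) \le e^{-\ell^2/(4t)}$, so the indicators $\mathbf 1\{X_i \ge t+\ell\}$ are i.i.d.\ Bernoulli with parameter $q \le e^{-\ell^2/(4t)}$. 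Hoeffding's inequality for $\sum_i \mathbf 1\{X_i \ge t+\ell\}$ then yields
$$
\P\!\left(|H_1| > 2n e^{-\ell^2/(4t)}\right) \le 2 \exp\!\left(-2n e^{-\ell^2/(2t)}\right),
$$
which is the first term.

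Next, I would control $|H_2|$. For $i \in H_2$, combining $L^f_{1, i} + L^f_{2, i} \ge \ell + t$ (derived from $L^f_i(tn) \ge L_0 + \ell$ and $L_i(0) \le L_0$) with $L^f_{1, i} \le P_i < t + \ell$ forces $L^f_{2, i}([tn]) \ge 1$. Hence $|H_2| \le r$, where $r$ is the total number of rejections, and it suffices to prove
$$
\P\!\left(r > r^*,\ \maxload^f([tn]) \le h\right) \le 4 \exp\!\left(-\frac{n(r^*/tn)^h}{e(h+1)!}\right),
$$
which supplies the second term. The key is that the secondary pool $\{Z_j^2\}_{j \ge 1}$ is i.i.d.\ uniform on $[n]$ independently of the strategy's decisions, so on $\{r > r^*\}$ the first $r^*$ secondary allocations form a genuine uniform sample. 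Applying Lemma~\ref{lem:max load} to this sample with a level $k$ chosen so that a bin receiving $k$ secondaries would drive its total load above $h$ at some intermediate time, and then invoking Lemma~\ref{lem:poisson approximation} once more (accounting for the leading constant $4$), converts $\{r > r^*\}$ into the displayed exponential bound.

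A union bound combining the $|H_1|$ estimate, the $|H_2|$ estimate, and the contribution $p = \P(\maxload^f([tn]) > h)$ to accommodate the conditional framing then yields the claimed inequality. The main obstacle is the $|H_2|$ step: the hypothesis $\maxload^f([tn]) \le h$ controls only the sum $L^f_{1, i} + L^f_{2, i}$ in each bin, not the secondaries in isolation, so turning it into a quantitative bound on $r$ is indirect. The delicate point is calibrating the threshold $k$ in Lemma~\ref{lem:max load} to extract precisely the exponent $n(r^*/tn)^h/(e(h+1)!)$ --- this is where the factorial $(h+1)!$ and the power $h$ (rather than $h+1$) must be matched via a careful Stirling-type computation balancing the uniform distribution of secondaries against the per-bin ball cap imposed by the maxload constraint.
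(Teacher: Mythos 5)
Your decomposition is the same as the paper's: $|H|\le |H'|+r$, where $H'$ is the set of bins suggested as primary allocations at least $t+\ell$ times and $r$ is the total number of retries, and your treatment of $|H'|$ (Poissonization via Lemma~\ref{lem:poisson approximation}, the bound $e^{-tI(\ell/t)}\le e^{-\ell^2/(4t)}$, Hoeffding) is exactly the paper's first term, including the handling of $p$. The gap is in the second step, the bound on $\P\big(r>r^*,\ \maxload^f([tn])\le h\big)$, which is the heart of the lemma and which your sketch does not actually carry out. Applying Lemma~\ref{lem:max load} to the first $r^*$ secondary allocations globally, with ``a level $k$ chosen so that a bin receiving $k$ secondaries would drive its total load above $h$ at some intermediate time,'' cannot yield the stated exponent: over the whole horizon the centering term $k/n$ grows by $t$, so a bin can absorb on the order of $h+t$ secondary allocations (even more if its load is well below zero when they arrive, and the lemma imposes no lower bound on loads) without its load ever exceeding $h$. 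The threshold you would be forced to use is therefore about $h+t$ rather than $h$, and Lemma~\ref{lem:max load} would then produce an exponent of order $n(r^*/tn)^{h+t}/(h+t)!$, far weaker than the claimed $n(r^*/tn)^h/(e(h+1)!)$; no Stirling-type calibration fixes this, because the obstruction is the deterministic cap, not the tail estimate.

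What is missing is the time localization the paper uses. On $\{r\ge r^*\}$, by pigeonhole some block of $n$ consecutive allocations, say $\big((s_0-1)n, s_0 n\big]$ with $s_0$ the first such index, contains at least $r^*/t$ retries. If $\maxload^f((s_0-1)n)\le h$, then the zero-sum, integer-valued load vector forces the set $S=\{i: L_i^f((s_0-1)n)\ge 0\}$ to satisfy $|S|\ge n/(h+1)$ (the same balance argument as in \eqref{eq:average level bound}); this is precisely where the $(h+1)!$ in the exponent comes from. Within a block of length $n$ the average increases by only $1$, so on the event $\{\maxload^f([tn])\le h\}$ no bin of $S$ can receive more than roughly $h$ secondary allocations during the block; but the at least $r^*/t$ secondaries thrown in that block are uniform on $[n]$, so Lemma~\ref{lem:max load} applied to $S$ with parameter $r^*/(tn)$, together with one more application of Lemma~\ref{lem:poisson approximation}, gives a bound of the form $4\exp\big(-|S|(r^*/tn)^{h}/(e\,h!)\big)\le 4\exp\big(-n(r^*/tn)^{h}/(e(h+1)!)\big)$. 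Your sketch contains neither the pigeonhole over blocks nor the count $|S|\ge n/(h+1)$ restricted to bins with non-negative load at the block start, and without these two ingredients the power $h$ and the factor $(h+1)!$ in the stated bound cannot be recovered.
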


Given an event $E$ and $Q\ge 0$, we write $P_{Q}(E)$ for the maximum probability of $E$ under the $Q$-multi-scale strategy with the initial maximum load bounded above by $Q$. Then the second lemma is as follows.

\begin{lem}\label{lem:key lem for typical iter aux}
Consider the $Q$-multi-scale strategy with the initial load vector  $\{L_p(0)\}_{p\in [n]}$ satisfying  $L_p(0)\le Q\le 3L\sqrt{\log\log\log n}$ for all $p\in[n]$. Fix $c>0$. For  sufficiently large $n$ and all $i\in\N$ such that $\alpha_i \le 1$ and all $j\in [N_i]$, we have
\begin{align}
P_Q\left(\,\overline{{\vdE{1}{1}{Q}}}\,\right)&<  n^{-c}, \label{eq:First E}\\
P_Q\left(\,\overline{\vF{i}{j}{Q}}, \vdE{i}{j}{Q}, \vG{i}{j}{Q}\right)&< n^{-e^{\sqrt{\log\log\log n}}}, \label{eq: j step induction}\\
P_Q\left(\,\overline{\vG{i}{j}{Q}},  \vdE{i}{j}{Q},\vF{i}{j-1}{Q}\right)&< 2\exp\left(-n^{1/2-o(1)}\right). \label{eq:G from E}
\end{align}

\end{lem}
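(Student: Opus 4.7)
The plan is to establish the three bounds \eqref{eq:First E}, \eqref{eq: j step induction}, and \eqref{eq:G from E} independently, each by reduction to an earlier technical result: Proposition~\ref{prop:all time upper bound case 1} handles the ``good iteration'' event, Lemma~\ref{lem:heavilily-loaded-bins} counts heavily-loaded bins, and Proposition~\ref{prop:single time upper bound case 3} controls the regulating phase. Before any of that, I would invoke the self-similarity noted in the proof of Lemma~\ref{lem:low typical load segments}: the $j$-th iteration of scale $i$ under the $Q$-multi-scale strategy coincides with the first iteration of scale $i$ under the $(Q+Q^{i,j-1})$-multi-scale strategy. Since $Q^{i,j}\le L$ by \eqref{eq:Qij-L} and $Q\le 3L\sqrt{\log\log\log n}$ by hypothesis, the shifted parameter still satisfies $Q'\le 4L\sqrt{\log\log\log n}$, so it suffices to treat $j=1$.

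For \eqref{eq:First E}, the strategy on $[s_1^1,t_1^1]$ is the $L$-relative threshold strategy applied for $n\lfloor\log^{\alpha_1} n\rfloor$ balls with initial loads bounded by $Q$. Since $L=(\log^{\alpha_1}\! n\cdot \log n)^{1/3}$ and $\alpha_1\in(1/2,1)$ puts the time horizon $t=\log^{\alpha_1}n$ strictly in the range $\omega(\sqrt{\log n})\le t\le \log^2n/(24\log\log n)^3$, Proposition~\ref{prop:all time upper bound case 1} applies directly and yields $\maxload^f([s_1^1,t_1^1])\le Q+(12c+9)L$ with probability at least $1-n^{-c}$. By the defining identity $\log^{\alpha_0}\!n=(12c+9)L$, this is exactly~$\vdE{1}{1}{Q}$.

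For \eqref{eq:G from E}, I condition on $\vdE{i}{1}{Q}$ so that the strategy realised on $[s_i^1,t_i^1]$ satisfies the uniform bound $\maxload^f\le h:=\log^{\alpha_{i-1}}\!n+Q$; a standard truncation of the strategy past the first violation leaves the marginal law of the final loads unchanged on this event. Apply Lemma~\ref{lem:heavilily-loaded-bins} with $t=\log^{\alpha_i}\!n$, $\ell=\ell_i$, $L_0=Q$, $h$ as above, $p=\P(\overline{\vdE{i}{1}{Q}})$, and the choice $r^*=n\exp(-\ell_i^2/(4\log^{\alpha_i}\!n))$ so that $2n\exp(-\ell_i^2/(4t))+r^*\le 3n\exp(-\ell_i^2/(4\log^{\alpha_i}\!n))$, matching the definition of $\vG{i}{1}{Q}$. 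The technical task is verifying that the tail term $4\exp\bigl(-n(r^*/(tn))^h/(e(h+1)!)\bigr)$ is at most $\exp(-n^{1/2-o(1)})$; using Stirling this reduces to $h\log(tn/r^*)+h\log h<(1/2-o(1))\log n$, which follows from the computation $h\ell_i^2/(4\log^{\alpha_i}\!n)\le \ell_i^3/\log^{\alpha_i}\!n=o(\log n)$ (the analogue of \eqref{eq:technical-bound-3}) together with $h\log h\ll \log n$ since $h=O(\log^{\alpha_{i-1}}\!n)$.

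For \eqref{eq: j step induction}, the strategy on $[t_i^1,s_i^2]$ is the multi-stage $(\log^{\alpha_i'}\!n,\,Q+\ell_i,\,\ell_i)$-threshold strategy. The first hypothesis of Proposition~\ref{prop:single time upper bound case 3}, namely $\maxload(0)<c\log^{\alpha_i'}n$, is supplied by $\vdE{i}{1}{Q}$ since $\log^{\alpha_{i-1}}\!n+Q\ll \log^{\alpha_i'}\!n$ (as $\alpha_i'-\alpha_{i-1}=(2\alpha_1-1)/6-O(1/k)>0$ by \eqref{eq:alpha-i-iteration-a} and \eqref{eq:alpha-i-vs-alpha-i'}, and $Q\le 3L\sqrt{\log\log\log n}\ll \log^{\alpha_i'}\!n$). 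The second hypothesis $|H_0|\le 3n\exp(-\ell^2/(4\log^{\alpha+\eta}\!n))$ is supplied by $\vG{i}{1}{Q}$ once one chooses $\eta=\alpha_i-\alpha_i'=(1/5)(2\alpha_i-1-\varepsilon_i)/(2k+1)$, which lies in the permitted range $[0,(\alpha-1/2)/(4k-2)]$ of Section~\ref{sec:combin}. The key algebraic check is that this same choice of $\eta$ forces the $\beta_k$ appearing in Proposition~\ref{prop:single time upper bound case 3} to coincide with the exponent $1/2+(\alpha_i-1/2+k\varepsilon_i)/(2k+1)$ defining $\ell_i$ in \eqref{eq:alpha-i-iteration}, so the threshold $\ell$ of Proposition~\ref{prop:single time upper bound case 3} equals our $\ell_i$. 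Proposition~\ref{prop:single time upper bound case 3} then gives $\maxload^f(s_i^2)\le Q+\ell_i+2k\ell_i=Q+(2k+1)\ell_i=Q+Q^{i,2}$ with probability at least $1-n^{-e^{\sqrt{\log\log\log n}}}$, which is $\vF{i}{1}{Q}$.

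The main obstacle is the last parameter-matching calculation: verifying that the unique $\eta$ needed to align $\alpha+\eta$ with $\alpha_i$ (so that $\vG{i}{1}{Q}$ feeds directly into Proposition~\ref{prop:single time upper bound case 3}) is the same $\eta$ that makes the resulting $\beta_k$ equal to the exponent of~$\ell_i$. This is not a coincidence but the design principle behind the definition \eqref{eq:alpha-i-prime-iteration} of $\alpha_i'$, and carefully tracing the arithmetic through the recursion for $\beta_k$ is the backbone of the whole multi-scale construction.
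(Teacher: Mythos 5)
Your overall skeleton is the paper's: \eqref{eq:First E} via Proposition~\ref{prop:all time upper bound case 1} with $t=\lfloor\log^{\alpha_1}n\rfloor$, $\ell=L$ and the identity $\log^{\alpha_0}n=(12c+9)L$; \eqref{eq: j step induction} via Proposition~\ref{prop:single time upper bound case 3} with $\alpha=\alpha_i'$, $\eta=\alpha_i-\alpha_i'$, $L_0=Q+Q^{i,j}+\ell_i$, including the correct observation that this $\eta$ makes the $\beta_k$-threshold coincide with $\ell_i$; and \eqref{eq:G from E} via Lemma~\ref{lem:heavilily-loaded-bins}. Your choice $r^*=n\exp(-\ell_i^2/(4\log^{\alpha_i}n))$ is larger than the paper's $n\exp\big(-\tfrac{\log n}{2(\log^{\alpha_{i-1}}n+L+Q)}\big)$ and lets you hit $\vG{i}{j}{Q}$ directly without the paper's intermediate event $\vtG{i}{j}{Q}$; that variation is legitimate. (Two small slips elsewhere: the self-similar shift is by $Q^{i,j}$, not $Q^{i,j-1}$, and for \eqref{eq: j step induction} the reduction to $j=1$ at time $s_i^j$ is not available since $\vF{i}{j-1}{Q}$ is not in the intersected event --- but it is also unnecessary, because Proposition~\ref{prop:single time upper bound case 3} is applied by conditioning at $t_i^j$, where $\vdE{i}{j}{Q}\cap\vG{i}{j}{Q}$ supplies both hypotheses.)

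There are, however, two genuine gaps in your treatment of \eqref{eq:G from E}. First, the handling of the maximum-load hypothesis of Lemma~\ref{lem:heavilily-loaded-bins}: taking $p=\P\big(\,\overline{\vdE{i}{j}{Q}}\,\big)$ produces a bound containing an additive $p$, and this $p$ is only polynomially small (it is controlled only through the main induction, roughly $n^{-c}$), so it cannot yield the stated bound $2\exp(-n^{1/2-o(1)})$; and the proposed ``truncation'' cannot repair this, since no modification of the strategy after the first violation can force the maximum load to stay below $h$ almost surely. The statement \eqref{eq:G from E} is a bound on the probability of the intersection with $\vdE{i}{j}{Q}$ and $\vF{i}{j-1}{Q}$, and the correct move (which the paper makes, writing $p=0$) is to carry that event through the proof of Lemma~\ref{lem:heavilily-loaded-bins}: as in \eqref{eq:bound-r-a}, one bounds $\P(r>r^*,\,\maxload^f\le h)\le 4\exp\big(-n(r^*/tn)^h/(e(h+1)!)\big)$ with no $+p$ term at all. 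Second, your verification of the tail term rests on the inequality $h\ell_i^2/(4\log^{\alpha_i}n)\le \ell_i^3/\log^{\alpha_i}n$, i.e.\ $h\le 4\ell_i$, which is false: here $h\ge\log^{\alpha_{i-1}}n\ge\log^{\alpha_0}n=(12c+9)L$, and $L$ (hence $h$) exceeds $\ell_i$ by a positive power of $\log n$, since $L$ carries exponent $\tfrac12+\Theta(1/\sqrt{\log\log\log n})$ while $\ell_i$ carries exponent $\tfrac12+O(\tfrac{\log\log\log n}{\log\log n})$. The conclusion you need, $h\ell_i^2/\log^{\alpha_i}n=o(\log n)$, is nevertheless true, but it must be proved by the computation $h\ell_i^2/\log^{\alpha_i}n=(\log n)^{1-(\alpha_i-\alpha_{i-1})+O(1/k)}$ together with $\alpha_i-\alpha_{i-1}=\tfrac{2\alpha_1-1}{6}-O(1/k)\gg\tfrac1{\log\log n}$ from \eqref{eq:alpha-i-iteration-a} (this is exactly the estimate the paper performs when showing $\vtG{i}{j}{Q}\subset\vG{i}{j}{Q}$), with the analogous estimate for $i=1$ where $h=O(L\sqrt{\log\log\log n})$. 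With these two repairs your argument for \eqref{eq:G from E} closes, and the other two bounds are proved as in the paper.
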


With these two lemmata at hand, we now prove Lemma~\ref{lem:key lem for typical iter}.

\begin{proof}[Proof of Lemma~\ref{lem:key lem for typical iter}]
For $i\in\N, j\in [N_i]$, we write 
\[
\vU{i}{j}{Q}= \bigcup_{j'\le j}\overline{\vdE{i}{j'}{Q}\cap \vF{i}{j'}{Q} \cap \vG{i}{j'}{Q}}\]
with $\vU{i}{0}{Q}=\emptyset$. 
Observe that for $i\in\N, j\in [N_i]$ we have
\begin{equation*}
   \vU{i}{j}{Q}=
\left(\,\overline{\vF{i}{j}{Q}}\cap \vdE{i}{j}{Q}\cap \vG{i}{j}{Q}\right)\cup
\left(\,\overline{\vG{i}{j}{Q}}\cap  \vdE{i}{j}{Q}\cap \vF{i}{j-1}{Q}\right)
\cup
\left(\,\overline{\vdE{i}{j}{Q}}\cap  \vF{i}{j-1}{Q}\right)
\cup \vU{i}{j-1}{Q}.
\end{equation*}
Notice that this indeed holds for $j=1$ since the initial load condition implies that $\vF{i}{0}{Q}=\Omega$. This, along with \eqref{eq: j step induction}, \eqref{eq:G from E} from Lemma~\ref{lem:key lem for typical iter aux}, yields
\begin{align}
P_{Q}\left(\vU{i}{j}{Q}\right)&\le 
P_Q\left(\,\overline{\vF{i}{j}{Q}}, \vdE{i}{j}{Q}, \vG{i}{j}{Q}\right)+\
P_Q\left(\,\overline{\vG{i}{j}{Q}},  \vdE{i}{j}{Q},\vF{i}{j-1}{Q}\right)\notag\\
&\phantom{\le}\,+
P_Q\left(\,\overline{\vdE{i}{j}{Q}}, \vF{i}{j-1}{Q}\right)
+P_{Q}\left(\vU{i}{j-1}{Q}\right)\notag\\
&\le 
P_Q\left(\,\overline{\vdE{i}{j}{Q}}, \vF{i}{j-1}{Q}\right)
\!+\!P_{Q}\left(\vU{i}{j-1}{Q}\right)\!+\!n^{-e^{\sqrt{\log\log\log n}}}\!+\!
2\exp\left(-n^{1/2-o(1)}\right).\label{eq:iteration step}
\end{align}
Observe that the $j$-th iteration of the $i$-th scale of the $Q$-multi-scale strategy is identical to the first iteration of the $i$-th scale of the $(Q+Q^{i,j})$-multi-scale strategy. Recall that the event $\vF{i}{j-1}{Q}$ asserts that the load at time $s^j_i$ is at most $Q+Q^{i,j}$. Hence we have

\begin{equation}\label{eq:selfsim}
P_{Q}\left(\,\overline{{\vdE{i}{j}{Q}}}, \vF{i}{j-1}{Q} \right)\le P_{Q+Q^{i,j}}\left(\,\overline{{\vdE{i}{1}{Q+Q^{i,j}}}}\,\right).
\end{equation}
Iteration of (\ref{eq:iteration step}) and the above inequality yield
\begin{equation}
P_{Q}\left(\vU{i}{j}{Q}\right) 
\le \sum_{j'\le j}P_{Q+Q^{i,j'}}\left(\,\overline{{\vdE{i}{1}{Q+Q^{i,j'}}}}\,\right)+n^{-\omega(1)}
.\label{eq:i-th scale-a}
\end{equation}

In order to iterate this inequality, we now show that for all $Q'>0$ and $i\ge 2$ the following inclusion inequality holds
\begin{equation}\label{eq:inclu-rela}
\overline{{\vdE{i}{1}{Q'}}} \subset \vU{i-1}{N_{i-1}}{Q'}.
\end{equation}
To see this, we define the event
$$
\vddE{i}{j}{Q} =\left\{\maxload^f\big((s^j_i,s^{j+1}_i]\big)\leq \log^{\alpha_i'}n+Q^{i,j+1}+Q\right\}.
$$
The statement \eqref{eq:inclu-rela} follows from the monotonicity of $\vU{i}{j}{Q'}$ and the following inclusion relations
\begin{align}
\overline{\vdE{i}{1}{Q'}} &\subset \left(\cup_{j\in[N_{i-1}]}\overline{\vddE{i-1}{j}{Q'}}\,\right) \label{eq:E to E-tilde}, \\
\overline{\vddE{i}{j}{Q'}} &\subset \left(\,\overline{\vdE{i}{j}{Q'}}\cup \overline{\vF{i}{j}{Q'}}\,\right)\subset \vU{i}{j}{Q'}. \label{eq: E to U}
\end{align}
To see \eqref{eq:E to E-tilde}, observe that  $\overline{\vdE{i}{1}{Q'}}$ asserts that over $(0,t_i^1]=\cup_{j\in [N_{i-1}]}(s^j_{i-1},s^{j+1}_{i-1}]$, the maximum load is greater than $\log^{\alpha_{i-1}}n+Q'$, while $\overline{\vddE{i-1}{j}{Q'}}$ asserts that over $(s^j_{i-1},s^{j+1}_{i-1}]$,  the maximum load is greater than $\log^{\alpha_{i-1}'}n+Q^{i-1, j+1}+Q'$. Using \eqref{eq:alpha-i-vs-alpha-i'-a}, we have $\log^{\alpha_{i-1}'}n=o(\log^{\alpha_{i-1}}n)$ and by \eqref{eq:Qij-L} we have $Q^{i-1, j+1}<L=o(\log^{\alpha_{i-1}}n)$. These observations yield \eqref{eq:E to E-tilde}. To see \eqref{eq: E to U}, observe that
$$\maxload^f(s_i^{j+1})\ge \maxload^f\big((t^j_i,s^{j+1}_i]\big)-|(t^j_i,s^{j+1}_i]|=\maxload^f\big((t^j_i,s^{j+1}_i]\big)-\log^{\alpha_i'}n.$$
Hence, whenever $\vF{i}{j}{Q'}$ occurs, we have $\maxload^f\big((t^j_i,s^{j+1}_i]\big)\leq \log^{\alpha_i'}n+Q^{i, j+1}+Q'$. This, along with $\vE{i}{j}{Q'}$, implies that $\big(\vE{i}{j}{Q'}\cap \vF{i}{j}{Q'}\big)\subset \vddE{i}{j}{Q'}$, which is equivalent to the first inclusion inequality in \eqref{eq: E to U}.
The second inclusion inequality in \eqref{eq: E to U} is trivial. 

Then we can use \eqref{eq:inclu-rela} to iterate \eqref{eq:i-th scale-a} and obtain
\begin{align*}
    P_Q\left(\vU{i}{j}{Q}\right)\le \sum_{j_i\leq j}\sum_{j_{i-1}\leq N_{i-1}}\cdots\sum_{j_1\leq N_1}P_{Q+Q^{i,j_i}+\cdots+Q^{1, j_1}}\left(\,\overline{\vE{1}{1}{Q+Q^{i,j_i}+\cdots+Q^{1, j_1}}}\,\right)+n^{-\omega(1)}.
\end{align*}
One can use \eqref{eq:i-max-bound} and  \eqref{eq:Qij-L} to check that 
$$
Q+Q^{i,j_i}+\cdots+Q^{1, j_1}\leq Q+iL\leq Q+i_{\max}L\leq 3L\sqrt{\log\log\log n}. 
$$
Then we apply \eqref{eq:First E} from Lemma~\ref{lem:key lem for typical iter aux}, \eqref{eq:N-i-bd} and \eqref{eq:i-max-bound} to obtain for sufficiently large $n$ that
\begin{align*}
    P_Q\left(\vU{i}{j}{Q}\right) &\le \left(\prod_{i'=1}^{i}N_{i'}\right)n^{-2c}+n^{-\omega(1)}\le(\log n)^{i_{\max}\cdot\frac{2\alpha_1-1}{6}}n^{-2c}+n^{-\omega(1)}\\
    &= (\log n)^{O(1)}\cdot n^{-2c}+n^{-\omega(1)}\leq n^{-c}.
\end{align*}
This concludes the proof.
\end{proof}
\subsubsection{Proofs of Lemmata \ref{lem:heavilily-loaded-bins} and \ref{lem:key lem for typical iter aux}}

\begin{proof}[Proof of Lemma~\ref{lem:heavilily-loaded-bins}]
We denote by $r$  the total number of retries up to time $tn$ and by $H'$ the set of bins which are suggested as primary allocations at least $t+\ell$ times by time $tn$. Then, we have
$$
|H|\leq |H'|+r. 
$$
Hence, we have
\begin{equation}\label{eq:bound-H}
\P\left(|H|>2n\exp\left(-\frac{\ell^2}{4t}\right)+r^*\right)\leq \P\left(|H'|>2n\exp\left(-\frac{\ell^2}{4t}\right)\right)+\P(r>r^*).
\end{equation}

We now estimate the first term. We denote by $\{X_i\}_{i\in[n]}$ independent $\poss(t)$ random variables. Write $Y_i$ for the indicator function of the event $\{X_i\geq t+\ell\}$ and $Y=\sum_{i=1}^nY_i$. By Lemma \ref{lem:poss tail}, we have
$$
\P(Y_i=1)=\P(X_i\geq t+\ell)\leq e^{-tI(\ell/t)}\leq \exp\left(-\frac{\ell^2}{4t}\right),
$$
where the second inequality follows from the lower bound of $I(x)$ in \eqref{eq:approx-I} and the assumption that $\ell/t<1$. Lemma \ref{lem:poisson approximation} and Hoeffding's inequality imply that
\begin{equation}\label{eq:bound-H'}
\P\left(|H'|>2n\exp\left(-\frac{\ell^2}{4t}\right)\right)\leq 2\P\left(Y>2n\exp\left(-\frac{\ell^2}{4t}\right)\right)\leq 2\exp\left(-2n\exp\left(-\frac{\ell^2}{2t}\right)\right).
\end{equation}

Next, we estimate the second term. Set $E=\{\maxload^f([tn])\leq h\}$.  By the law of total probability,
\begin{align}\label{eq:bound-r}
\P(E) &= \P(E, r\ge r^*)+\P(E, r < r^*)\leq \P(E, r\ge r^*)+\P(r< r^*)\notag\\
&=\P(E, r\ge r^*)+1-\P(r\ge r^*).
\end{align}

Recall that $R_k$ given in \eqref{eq:def-retry} is the number of retries after allocating $k$ balls. We denote by
$s_0=\inf\big\{s\in[t] : R_{s n}-R_{(s-1) n}\ge r^*/t\big\}$. Whenever $\{r\geq r^*\}$ occurs,  we have $s_0<\infty$. Write $S=\big\{i\in [n] : L_i^f((s_0-1)n)\geq 0\big\}$. As per \eqref{eq:average level bound},
we show that whenever $E$ occurs, then
\begin{equation}\label{eq:average level bound III} 
|S|\geq  \frac{n}{h+1}.
\end{equation}
To see this, observe that
$$
0=\sum_{i\in[n]}L_i^f((s_0-1)n)=\sum_{i\in S}L_i^f((s_0-1)n)+\sum_{i\in S^c}L_i^f((s_0-1)n).
$$ 
This, together with the fact that $\big\{L_i^f((s_0-1)n)\big\}_{i\in [n]}\in\Z^n$ and $\maxload^f((s_0-1)n)< h$, yields
$$
|S^c|\leq\sum_{i\in S^c}|L_i^f((s_0-1)n)|=\sum_{i\in S}L_i^f((s_0-1)n)\leq |S|\cdot (h+1).
$$
Then we can obtain \eqref{eq:average level bound III} using $|S^c|=n-|S|$. 

Denote by $\{Z_i\}_{i\in[n]}$ independent $\poss\left(r^*/tn\right)$ random variables. By Lemma \ref{lem:poisson approximation} and Lemma \ref{lem:max load},
\begin{equation}\label{eq:bound-r-a}
\P(r>r^*, E)\leq 2\P\left(\max_{i\in S}Z_i\leq h\right)\leq 4\exp\left(-\frac{n(r^*/tn)^h}{e(h+1)!}\right).
\end{equation}
Inequalities \eqref{eq:bound-r}, \eqref{eq:bound-r-a} and the fact that $\P(E)\geq 1-p$ imply that
$$
\P(r>r^*)\leq 4\exp\left(-\frac{n(r^*/tn)^h}{e(h+1)!}\right)+p.
$$
We can conclude the proof by combining this with \eqref{eq:bound-H} and \eqref{eq:bound-H'}.
\end{proof}

\begin{proof}[Proof of Lemma~\ref{lem:key lem for typical iter aux}.]
\textbf{Proof of \eqref{eq:First E}.} The statement readily follows from the application of Proposition~\ref{prop:all time upper bound case 1} with the parameters $L_0:=Q$, $t:=\lfloor\log^{\alpha_1}\! n\rfloor$, $\ell:=L=(\log n)^{\frac{1+\alpha_1}{3}}$ and our definition of $\alpha_0$ such  that $\log^{\alpha_0}
n=(12c+9)L$.

\textbf{Proof of \eqref{eq: j step induction}.} The statement follows from the application of Proposition \ref{prop:single time upper bound case 3} with the parameters $t_0:=t_i^j$, $t:=s_i^{j+1}$, $\alpha:=\alpha_i'$, $\eta:=\alpha_i-\alpha_i'$, $L_0:=Q^{i,j}+\ell_i+Q$. Hence it suffice to show that the conditions of Proposition \ref{prop:single time upper bound case 3} are satisfied. 

We first verity the technical requirement $\eta\le \frac{\alpha-1/2}{4k-2}$, which is assumed in our definition of the multi-stage threshold strategy in Section~\ref{sec:combin}. Using $\eta=\alpha_i-\alpha_i', \alpha=\alpha_i'$ and \eqref{eq:alpha-i-prime-iteration}, we can rewrite this requirement as 
$$
\frac{\alpha_i-1/2-\varepsilon_i/2}{5k+5/2}\le\frac{\alpha_i-1/2}{4k-1},
$$
which clearly holds.

We next show that both assumptions in Proposition \ref{prop:single time upper bound case 3} hold when  $\vE{i}{j}{Q}$ and $\vG{i}{j}{Q}$ occur. Given the event $\vG{i}{j}{Q}$, the second assumption trivially holds. We now verify the first assumption that
$\maxload^f(t_i^j)=o(t-t_0)$.
Assuming the event $\vE{i}{j}{Q}$, we have
$$
\maxload^f(t_i^j)\leq \log^{\alpha_{i-1}}n+Q^{i,j}+Q\leq \log^{\alpha_{i-1}}n+L+Q, 
$$
where the last inequality follows from \eqref{eq:Qij-L}. Recall $\alpha_1=\frac{1}{2}+\frac{2}{\lfloor\sqrt{\log\log\log n}\rfloor+1/4}$, $L=(\log n)^{\frac{1+\alpha_1}{3}}$, $Q\leq 3L\sqrt{\log\log\log n}$ and $k=\big\lfloor\frac{\log\log n}{3\log\log\log n}\big\rfloor$.
We have
$$
L+Q<4L\sqrt{\log\log\log n}=(\log n)^{\frac{1}{2}+\frac{2}{3\sqrt{\log\log\log n}}+O(\frac{\log\log\log\log n}{\log\log n})},
$$
while
$$
t-t_0=\log^{\alpha_i'}n=(\log n)^{\alpha_i-O(\frac{1}{k})}>(\log n)^{\alpha_1-O(\frac{1}{k})}=(\log n)^{\frac{1}{2}+\frac{2}{\sqrt{\log\log\log n}+1/4}-O(\frac{1}{k})}.
$$
These, together with \eqref{eq:alpha-i-vs-alpha-i'}, verify the first assumption of Proposition \ref{prop:single time upper bound case 3}. Hence, we can apply Proposition \ref{prop:single time upper bound case 3} to obtain \eqref{eq: j step induction}.

\textbf{Proof of \eqref{eq:G from E}.} Recall our definition $\vG{i}{j}{Q}=\big\{|\vH{i}{j}{Q}|\leq 3n\exp\big(\!-\tfrac{\ell_i^2}{4\log^{\alpha_i}\!n}\big)\big\}$. We introduce
$$
\vtG{i}{j}{Q}=\left\{|\vH{i}{j}{Q}|\leq 2n\exp\left(\!-\frac{\ell_i^2}{4\log^{\alpha_i}\! n}\right)+n\exp\left(\!-\frac{\log n}{2(\log^{\alpha_{i-1}}\! n+L+Q)}\right)\right\}.
$$
We will show that $\vtG{i}{j}{Q}\subset \vG{i}{j}{Q}$ and that 
\begin{equation}\label{eq:G-tilde from E}
P\left(\,\overline{\vtG{i}{j}{Q}}, \vE{i}{j}{Q}, \vF{i}{j-1}{Q}\right)\le 2\exp\left(-n^{1/2-o(1)}\right),
\end{equation}
which implies  \eqref{eq:G from E}.

To see $\vtG{i}{j}{Q}\subset \vG{i}{j}{Q}$, it suffice to show 
that $\frac{\ell_i^2}{\log^{\alpha_i}\! n}=o\left(\frac{\log n}{\log^{\alpha_{i-1}}\! n+L+Q}\right)$. We recall that $\alpha_0=\frac{1+\alpha_1}{3}+\Theta(\frac{1}{\log\log n})$, $\alpha_1=\frac{1}{2}+\Theta(\frac{1}{\sqrt{\log\log\log n}})$, $\ell_i=(\log n)^{\frac{1}{2}+O(\frac{1}{k})}$ and $k=\big\lfloor\frac{\log\log n}{3\log\log\log n}\big\rfloor$. Hence, using again $\log^{\alpha_0}\!n=(12c+9)L$, we have
\begin{align*}
\frac{\ell_1^2}{\log^{\alpha_1}\!n}\Big/\frac{\log n}{\log^{\alpha_0}\! n+L+Q}
&=\frac{\log^{\alpha_0}\! n+L+Q}{\log^{\alpha_0}\! n}\cdot \frac{\ell_1^2/\log^{\alpha_1}\! n}{\log n/\log^{\alpha_0}\! n}\\
&=\frac{(12c+10)L+Q}{(12c+9)L}\cdot(\log n)^{-\frac{2\alpha_1-1}{3}+O(\frac{1}{k})}\\
&<O\big(\sqrt{\log\log\log n}\big)\cdot(\log n)^{-\frac{2\alpha_1-1}{3}+O\left(\frac{1}{k}\right)}=o(1),
\end{align*}
where the inequality follows from that $Q\leq3L\sqrt{\log\log\log n}$.
For $i\geq 2$, we use the fact that  $\log^{\alpha_{i-1}}\! n+L+Q<2\log^{\alpha_{i-1}}\! n$ to obtain
\begin{align*}
\frac{\ell_i^2}{\log^{\alpha_i}\!n}\Big/\frac{\log n}{\log^{\alpha_{i-1}}\! n+L+Q}&<\frac{2\ell_i^2} {(\log n)^{1+\alpha_i-\alpha_{i-1}}} =2(\log n)^{-(\alpha_i-\alpha_{i-1})+O(\frac{1}{k})}\\
&=(\log n)^{-\frac{2\alpha_1-1}{6}+O\left(\frac{1}{k}\right)}=o(1),
\end{align*}
where the second identity follows from \eqref{eq:alpha-i-iteration-a}. 

Towards showing inequality \eqref{eq:G-tilde from E}, we observe that given $\vE{i}{j}{Q}$ and $\vF{i}{j-1}{Q}$, we can 
apply Lemma~\ref{lem:heavilily-loaded-bins} to the process started at time $s_i^j$ with  $L_0=Q^{i, j}+Q$, $t=\lfloor\log^{\alpha_i}\! n\rfloor$, $\ell=\ell_i$, $p=0$, $h=\log^{\alpha_{i-1}}\! n+Q^{i, j}+Q$  and $r^*=n\exp\big(\!-\frac{\log n}{2(\log^{\alpha_{i-1}}\! n+L+Q)}\big)$ to obtain \begin{align}
&P\left(\left\{|H_i^j|>2n\exp\left(-\frac{\ell_i^2}{4\log^{\alpha_i}\! n}\right)+n\exp\left(-\frac{\log n}{2(\log^{\alpha_{i-1}}\! n+L+Q)}\right)\right\}\cap  \vE{i}{j}{Q} \cap \vF{i}{j-1}{Q}\right) \notag\\
&\hspace{12pt}\le  2\exp\left(-2n\exp\left(-\frac{\ell_i^2}{2\lfloor\log^{\alpha_i}\! n\rfloor}\right)\right)+
4\exp\left(-\frac{\sqrt{n}(\log n)^{-\alpha_i(\log^{\alpha_{i-1}}n+L+Q) }}{e\lceil \log^{\alpha_{i-1}}n+L+Q\rceil !}\right),\label{eq:lem app main}
\end{align}
where, in the second term of \eqref{eq:lem app main}, we use the fact that $h<\log^{\alpha_{i-1}} n+L+Q$. For the first term of \eqref{eq:lem app main}, we have
\begin{align}\label{eq:bound-prob-lemma 9.2-a}
2\exp\left(-2n\exp\left(-\frac{\ell_i^2}{2\lfloor\log^{\alpha_i}\! n\rfloor}\right)\right)=\exp\left(-n^{1-o(1)}\right).
\end{align}
The second term of \eqref{eq:lem app main} is increasing with respect to $\alpha_{i-1}$, which, in turn, is increasing with respect to $i$. Hence, we can assume that $i\geq 2$ and use $\log^{\alpha_{i-1}}\! n+L+Q<2\log^{\alpha_{i-1}}\! n$ to obtain 
\begin{align}
4\exp\left(-\frac{\sqrt{n}(\log n)^{-\alpha_i(\log^{\alpha_{i-1}}\!n+L+Q) }}{e\lceil \log^{\alpha_{i-1}}n+L+Q\rceil !}\right)
&\leq4\exp\left(-\frac{\sqrt{n}(\log n)^{-2\alpha_i(\log n)^{\alpha_{i-1}} }}{(2\log^{\alpha_{i-1}}\! n)^{2(\log n)^{\alpha_{i-1}}}}\right)
 \notag\\
&\leq4\exp\left(-\frac{\sqrt{n}}{(2\log n)^{2(\alpha_i+\alpha_{i-1})(\log n)^{\alpha_{i-1}}}}\right) \notag\\
&\leq4\exp\left(-\frac{\sqrt{n}}{\exp(5\log\log n\cdot \log^{\alpha_{i-1}}\! n)}\right) \notag\\
&=\exp\left(-n^{1/2-o(1)}\right),\label{eq:bound-prob-lemma 9.2-b}
\end{align}
where the first inequality follows from Stirling's approximation $n!\leq e\sqrt{n}(n/e)^n$, 
and the last inequality -- from the observation that $\alpha_{i-1}<1-\frac{1}{4\sqrt{\log\log\log n}}$, which, in turn, follows from the fact that $\alpha_i\leq 1$ and  \eqref{eq:alpha-i-iteration-a}.
Plugging and \eqref{eq:bound-prob-lemma 9.2-a}, \eqref{eq:bound-prob-lemma 9.2-b} into \eqref{eq:lem app main}, inequality \eqref{eq:G-tilde from E}, and hence \eqref{eq:G from E}, follows.
\end{proof}

\subsection{Proof of Proposition~\ref{prop:phase-1 minload}}

\begin{proof}[Proof of Proposition~\ref{prop:phase-1 minload}]  
To establish equality \eqref{eq:initial loads bound}, it would clearly suffice to show the following estimates
\begin{align}
\P\Big(\maxload^f(m_1)>A\Big) &\leq n^{-4d}, \label{eq:all-time load m1}\\
\P\left(\min_{i\in[n]}L_i^f(m_1)<-A\right) &\leq n^{-4d}. \label{eq: target lower bound}
\end{align}
We first show that inequality \eqref{eq:all-time load m1} follows from Lemma \ref{lem:key lem for typical iter}. Our choice of $m_1$ in \eqref{eq:ms} guarantees that the allocation of $m_1$ balls using the $Q$-multi-scale strategy ends up with $N_{i_{\max}}$ complete iterations of the $i_{\max}$-th scale strategy followed by the regulating multi-stage threshold strategy. Recall the definition of $\vF{i}{j}{Q}$ given in \eqref{eq:events-Eij-Fj} and apply Lemma \ref{lem:key lem for typical iter} to obtain
$$
\P\left(\maxload^f(m_1)>Q+Q^{i_{\max},N_{i_{\max}}+1}\right)=\P\left(\,\overline{\vF{i_{\max}}{N_{i_{\max}}}{Q}}\,\right)\leq n^{-4d}.
$$
Using \eqref{eq:Qij-L}, we have    $Q+Q^{i_{\max},N_{i_{\max}}+1}<Q+L<A$. The two inequalities above yield \eqref{eq:all-time load m1}.

Next, we estimate $\P(L_i^f(m_1)<-A)$, which, together with the union bound argument, implies inequality \eqref{eq: target lower bound}. For each $i\in [n]$, we denote 
$$
k_i=\sup\{k\in[1, m_1] : L_{i}(k)\ge -300d\log n\}
$$
and write $F_i$ for the event $\{-\infty<k_i<m_1-\log n\}$.
Observe that, given our assumptions on $L_i(0)$, on $F_i^c$ we have $L_i^f(m_1)\ge -A$ almost surely. We denote by $\cF_k$ the filtration generated by $\big\{L_i^f(p)\!: 1\leq p\leq k, i\in [n]\big\}$.
By Chernoff's argument, we thus have that for any $\lambda>0$,
\begin{align}\label{eq:target lower bound-a}
\P\big(L_i^f(m_1)<-A\,|\,F_i,\cF_{k_i}\big)&\leq  e^{-\lambda A}\cdot \E \big[e^{-\lambda L_i^f(m_1)}\,|\, F_i,\cF_{k_i}\big].
\end{align}
We write $p_{k,\cF_{k-1}}=\P\big(L_i^f(k)-L_i^f(k-1)=1-1/n\,|\, \cF_{k-1}\big)$, i.e., the probability that the $k$-th ball has been allocated to the $i$-th bin conditioned on the load vector in time $k-1$. Our strategy never retries a ball if its primary allocation is a bin with load below $-\log n$. This and the definition of $k_i$ imply that $p_{k,\cF_{k-1}}\geq 1/n$ for all $k_i<k\le m_1$. We now compute
\begin{align*}
\E \big[e^{-\lambda L_i^f(m_1)}\,|\,F_i, \cF_{k_i}\big] &= \E\big[e^{-\lambda L_i^f(m_1-1)}\cdot \E \big[e^{-\lambda (L_i^f(m_1)-L_i^f(m_1-1))}\,|\, \cF_{m_1-1}\big]\,|\,F_i,\cF_{k_i}\big] \\
&\le\E \big[e^{-\lambda L_i^f(m_1-1)} \cdot \big(p_{m_1, \cF_{m_1-1}} e^{-\lambda (1-1/n)}+(1-p_{m_1, \cF_{m_1-1}})e^{\lambda/n}\big)\,|\,F_i, \cF_{k_i}\big] \\
&=\E \big[e^{-\lambda L_i^f(m_1-1)}\cdot e^{\lambda/n}\cdot \big(1-(1-e^{-\lambda})p_{m_1, \cF_{m_1-1}}\big)\,|\,F_i, \cF_{k_i}\big]\\
&\leq \E \big[e^{-\lambda L_i^f(m_1-1)}\,|\,F_i, \cF_{k_i}\big]\cdot e^{\lambda/n}\cdot \left(1-\frac{1-e^{-\lambda}}{n}\right),
\end{align*}
Iterate this inequality to obtain 
\begin{align}\label{eq:mgf-load}
\E \big[e^{-\lambda L_i^f(m_1)}\,|\,F_i,  \cF_{k_i}\big] &\leq \E \big[e^{-\lambda L_i^f(k_i)}\,|\,F, \cF_{k_i}\big]\cdot \left(e^{\lambda/n}\cdot \left(1-\frac{1-e^{-\lambda}}{n}\right)\right)^{m_1-k_i} \notag\\
&= e^{-\lambda L_i^f(k_i)}\cdot \left(e^{\lambda/n}\cdot \left(1-\frac{1-e^{-\lambda}}{n}\right)\right)^{m_1-k_i} \notag\\
&\leq e^{300\lambda d \log n}\cdot \left(e^{\lambda/n}\cdot \left(1-\frac{1-e^{-\lambda}}{n}\right)\right)^{m_1},
\end{align}
where the second inequality follows from that $L_i^f(k_i)\geq-300d\log n$ and that $e^{\lambda/n} \left(1-\frac{1-e^{-\lambda}}{n}\right)$ is increasing for $\lambda>0$.
Combining \eqref{eq:target lower bound-a} and \eqref{eq:mgf-load}, we obtain  
\begin{align*}
\P\big(L_i^f(m_1)<-A\,|\,F_i, \cF_{k_i}\big) &\leq e^{-\lambda A}\cdot e^{300\lambda d \log n}\cdot e^{\lambda m_1/n}\cdot \left(1-\frac{1-e^{-\lambda}}{n}\right)^{m_1}\\
&=\exp\left(-\lambda A+300\lambda d\log n+\frac{\lambda m_1}{n}+m_1\log\left(1-\frac{1-e^{-\lambda}}{n}\right)\right)\\
&\leq \exp\left(-\lambda A+300\lambda d \log n+\frac{\lambda m_1}{n}-m_1\cdot\frac{1-e^{-\lambda}}{n}\right)\\
&= \exp\left(-\lambda (A-300d\log n)+\frac{m_1}{n}(\lambda-1+e^{-\lambda})\right)\\
&\leq \exp\left(-\lambda (A-300d\log n)+\frac{\lambda^2m_1}{2n}\right),
\end{align*}
where the second inequality uses $\log(1-x)\leq -x$ for $0\leq x\leq 1$, and the last inequality follows from that $e^{-x}<1-x+x^2/2$ for $x>0$.  We plug $\lambda=\frac{n(A-300d\log n)}{m_1}$ into the above inequality to obtain
\begin{align*}
\P\big(L_i^f(m_1)<-A\,|\,F_i,\cF_{k_i}\big)&\leq \exp\left(-\frac{n(A-300d\log n)^2}{2m_1}\right)=\exp\left(-\frac{(1-o(1))A^2}{2\log^\alpha\! n}\right)<n^{-5d}.
\end{align*}
We recall that 
$\P\big(L_i^f(m_1)<-A\big)= \P(L_i^f(m_1)<-A, F)\le\P(L_i^f(m_1)<-A\,|\,F)$. Hence
inequality \eqref{eq: target lower bound} follows from taking a union bound of the above inequality over $i\in[n]$.
\end{proof}


\subsection{Proofs of Propositions \ref{prop:phase-0 low load} and \ref{prop:phase-2 typicaly m2}}

\begin{proof}[Proof of Proposition~\ref{prop:phase-0 low load}] The statement follows as easy consequence of  Proposition~\ref{prop:single time upper bound case 3} with the parameters $t=m_0/n, \alpha=\frac{\log (m_0/n)}{\log\log n}, \eta=0$. We first show that Proposition~\ref{prop:single time upper bound case 3} is applicable with the aforementioned parameters. Recall that $m_0=\lfloor200dn\log n\rfloor$. This, together with the assumption that $|L_i(0)|\le 100d\log n$ for all $i\in [n]$, implies that $\maxload^f(0)\leq t/2$. Hence, the first condition of Proposition~\ref{prop:single time upper bound case 3} is satisfied. 
Notice that $\alpha\ge1$ and that $L_0=(\log n)^{\frac{1}{2}+\Theta(\frac{1}{k})}$, where $k=\big\lfloor\frac{\log\log n}{3\log\log\log n}\big\rfloor$. We thus have $L_0\gg (L_0)^2/\log^{\alpha} n$. This, along with the assumption that $|\{i\in [n] : L_i(0)>L_0\}|\le 4000ne^{-L_0/15}$, guarantees the validity of the second condition of Proposition~\ref{prop:single time upper bound case 3}. Thus we can apply Proposition \ref{prop:single time upper bound case 3} to obtain that
$$
\P\left(\maxload^f(m_0)>(2k+2)L_0\right)\leq n^{-e^{\sqrt{\log\log\log n}}}.
$$
Observe that $(2k+2)L_0=(\log n)^{\frac{1}{2}+O\big(\frac{\log\log\log n}{\log\log n}\big)}$ and that $L=(\log n)^{\frac{1}{2}+\Theta\big(\frac{1}{\sqrt{\log\log\log n}}\big)}$. Hence, we obtain
\begin{equation}\label{eq:maxload M-b-2}
\P\left(\maxload^f(m_0)>L\right)\leq n^{-e^{\sqrt{\log\log\log n}}}.
\end{equation}
Notice that the load of each bin can decrease by at most $m_0/n\leq 200d\log n$ after the allocation of $m_0$ balls. Since $\max_{i\in [n]}|L_i^f(0)|\le 100d\log n$, we have
$$\min_{i\in[n]}L_i^f(m_0)\ge -300d\log n.$$ This, together with \eqref{eq:maxload M-b-2}, concludes the proof of Proposition \ref{prop:phase-0 low load}.
\end{proof}

\begin{proof}[Proof of Proposition~\ref{prop:phase-2 typicaly m2}] We apply Lemma \ref{lem:single time upper bound} and Lemma \ref{lem:level set bound} to obtain for sufficiently large $n$ that
$$
\P\left(\max_{i\in[n]}|L_i^f(m_2)|>100d\log n\right)\leq n^{-3d},
$$
$$
\P\left(\big|\big\{i\in[n]: L_i^f(m_2)>L_0\big\}\big|> 4000ne^{-L_0 /15}\right) \le \exp\left(-n^{1-o(1)}\right).
$$
Then we can conclude the proof by taking the union bound.
\end{proof}

\end{document}